\newcommand{\inner}[2]{\langle #1,#2\rangle}
\newtheorem{thm}{Theorem}[section]
\newtheorem{theorem}[thm]{Theorem}
\newtheorem{lemma}[thm]{Lemma}
\newtheorem{prop}[thm]{Proposition}
\newtheorem{proposition}[thm]{Proposition}
\newtheorem{corollary}[thm]{Corollary}
\newtheorem{definition}[thm]{Definition}
\newcommand{\Ik}{{\mathcal{I}}_k}
\newcommand{\argmin}{\mathrm{argmin}\,}
\newcommand{\Prox}{\mathrm{Prox}}
\newcommand{\R}{\mathbb{R}}
\newcommand{\dom}{\mathrm{dom }}
\begin{document}

\renewcommand{\tamumanuscripttitle}{Block Decomposable Methods for Large-Scale Optimization Problems}

\renewcommand{\tamupapertype}{Dissertation}

\renewcommand{\tamufullname}{Leandro Farias Maia}

\renewcommand{\tamudegree}{Doctor of Philosophy}
\renewcommand{\tamuchairone}{David Huckleberry Gutman}

\renewcommand{\tamumemberone}{Alfredo Garcia}
\newcommand{\tamumembertwo}{Sergiy Butenko}
\newcommand{\tamumemberthree}{Jonathan W. Siegel}
\renewcommand{\tamudepthead}{Lewis Ntaimo}

\renewcommand{\tamugradmonth}{August}
\renewcommand{\tamugradyear}{2025}
\renewcommand{\tamudepartment}{Industrial Engineering}
\newcommand{\rev}[1]{{\textcolor{blue}{#1}}}
\newcommand{\qedflush}{\hfill \scalebox{0.7}{$\blacksquare$}}

\newcommand{\key}{\alpha}
\newcommand{\SA}{\textproc{S-ADMM}}
\newcommand{\DA}{\textproc{A-ADMM}}
\newcommand{\DP}{\textproc{DP-ADMM}}
\newcommand{\PD}{\textproc{P-ADMM}}
\newcommand{\SD}{\textproc{SDD-ADMM}}
\newcommand{\SBAPen}{\textproc{S-BAPen}}
\newcommand{\SBIPP}{\textproc{B-IPP}}
\newcommand{\ABIPP}{\textproc{AB-IPP}}
\newcommand{\revision}[1]{{\textcolor{blue}{#1}}}
\newcommand{\tx}{\tilde x}
\newcommand{\ty}{\tilde y}
\newcommand{\tT}{\tilde T}
\newcommand{\tz}{\tilde z}
\newcommand{\tu}{\tilde u}
\newcommand{\tq}{\tilde q}
\newcommand{\tv}{\tilde v}
\newcommand{\tp}{\tilde p}
\newcommand{\tf}{\tilde f}
\newcommand{\psis}{\psi_s}
\newcommand{\psin}{\psi_n}
\newcommand{\ts}{\tilde\sigma}
\newcommand{\tgamma}{\tilde \gamma}
\newcommand{\mK}{\mathrm{E}}
\newcommand{\Lc}{{\cal L}_c}
\newcommand{\Il}{{\cal I}_\ell}
\newcommand{\mQ}{\mathrm{Q}}
\newcommand{\ulam}{\underline{\lambda}}
\newcommand{\lam}{\lambda}
\newcommand{\bConv}[1]{\overline{\mbox{\rm Conv}}\,(\R^{#1})}
\newcommand{\chione}{\sigma_1}
\newcommand{\chitwo}{\sigma_2}
\newcommand{\chisum}{\sigma_1+c\sigma_2}

%
%
%
%
%


\providecommand{\tabularnewline}{\\}

\begin{titlepage}
\begin{center}
\begin{doublespace}

\MakeUppercase{  \tamumanuscripttitle}
\end{doublespace}
\vspace{4em}

A \tamupapertype

\vspace{1em}

by

\vspace{1em}

\MakeUppercase{\tamufullname}

\vspace{4em}

\begin{singlespace}

Submitted to the Graduate and Professional School of \\
Texas A\&M University \\

in partial fulfillment of the requirements for the degree of \\
\end{singlespace}

\vspace{1em}

\MakeUppercase{\tamudegree}
\par\end{center}
\begin{doublespace}

\end{doublespace}
\begin{tabular}{ll}
 & \tabularnewline
& \cr
Chair of Committee, & \tamuchairone\tabularnewline
Committee Members, & \tamumemberone\tabularnewline
 & \tamumembertwo\tabularnewline
 & \tamumemberthree\tabularnewline
Head of Department, & \tamudepthead\tabularnewline

\end{tabular}

\vspace{3em}

\begin{center}
\tamugradmonth \hspace{2pt} \tamugradyear

\vspace{3em}

Major Subject: \tamudepartment \par
\vspace{3em}
Copyright \tamugradyear \hspace{.5em}\tamufullname 
\par\end{center}
\end{titlepage}
\pagebreak{}


%
%
%
%
%

\chapter*{ABSTRACT}
\addcontentsline{toc}{chapter}{ABSTRACT} 

\pagestyle{plain} 
\pagenumbering{roman} 
\setcounter{page}{2}

\indent This dissertation explores block decomposable methods for large-scale optimization problems. It focuses on alternating direction method of multipliers (ADMM) schemes and block coordinate descent (BCD) methods. Specifically, it introduces a new proximal ADMM algorithm and proposes two BCD methods.

The first part of the research presents a new proximal ADMM algorithm. This method is adaptive to all problem parameters and solves the proximal augmented Lagrangian (AL) subproblem inexactly. This adaptiveness facilitates the highly efficient application of the algorithm to a broad swath of practical problems. The inexact solution of the proximal AL subproblem overcomes many key challenges in the practical applications of ADMM. The resultant algorithm obtains an approximate solution of an optimization problem in a number of iterations that matches the state-of-the-art complexity for the class of proximal ADMM schemes.

The second part of the research focuses on an inexact proximal mapping for the class of block proximal gradient methods. Key properties of this operator is established, facilitating the derivation of convergence rates for the proposed algorithm. Under two error decreases conditions, the algorithm matches the convergence rate of its exactly computed counterpart. Numerical results demonstrate the superior performance of the algorithm under a dynamic error regime over a fixed one.

The dissertation concludes by providing convergence guarantees for the randomized BCD method applied to a broad class of functions, known as Hölder smooth functions. Convergence rates are derived for non-convex, convex, and strongly convex functions. These convergence rates match those furnished in the existing literature for the Lipschtiz smooth setting.

\pagebreak{}

%
%
%
%
%

\chapter*{ACKNOWLEDGMENTS}
\addcontentsline{toc}{chapter}{ACKNOWLEDGMENTS}  

\indent I would like to express my deepest gratitude to my advisor, Dr. David Huckleberry Gutman, for his invaluable guidance throughout my doctoral studies. This work would not have been possible without his support. Under his mentorship, I not only learned about continuous optimization but also gained insight into the broader research community. Since my first year, Dr. Gutman has been an extraordinary guide, from advising me on foundational research techniques to introducing me to influential conferences and collaborators in the field. I am profoundly grateful for the many ways he has shaped my academic journey.

I am also sincerely grateful to my research committee, Dr. Alfredo Garcia, Dr. Sergiy Butenko, and Dr. Jonathan W. Siegel, for their time and dedication in reviewing this document.

My thanks also extend to Dr. Ismael R. De Farias Junior, whose advice during our collaboration in the summer of 2017 played a key role in my decision to pursue a Ph.D. I am also grateful to Dr. Renato D.C. Monteiro at Georgia Tech, whose expertise over the past two years has been invaluable in advancing my understanding of optimization.

My Ph.D. journey, beginning at Texas Tech University and culminating at Texas A\&M University, has been enriched by the friendships and professional relationships across the Departments of Industrial and Systems Engineering and the Department of Mathematics and Statistics.

Finally, I want to thank my family for their unwavering support. To my loving wife, Thais, who has been by my side throughout these five years, thank you—your support made this accomplishment possible. I am also deeply grateful to my parents, Valmir and Zuleide, who are my main inspiration in life for perseverance, and my brothers, Junior and Luis, who have been a constant source of inspiration.

\pagebreak{}
%
%
%
%
%

\chapter*{CONTRIBUTORS AND FUNDING SOURCES}
\addcontentsline{toc}{chapter}{CONTRIBUTORS AND FUNDING SOURCES}  

\subsection*{Contributors}
This work was supported by a dissertation committee consisting of Professor David Huckleberry Gutman [advisor], Professor Alfredo Garcia, and Professor Sergiy Butenko of the Department of Industrial and Systems Engineering and Professor Jonathan W. Siegel of the Department of Mathematics.

All other work conducted for the dissertation was completed by the student independently.
\subsection*{Funding Sources}
Graduate study was supported by the NSF grant \#241032.
\pagebreak{}

%
%
%
%
%

\phantomsection
\addcontentsline{toc}{chapter}{TABLE OF CONTENTS}  

\begin{singlespace}
\renewcommand\contentsname{\normalfont} {\centerline{TABLE OF CONTENTS}}

\setcounter{tocdepth}{4} 

\setlength{\cftaftertoctitleskip}{1em}
\renewcommand{\cftaftertoctitle}{%
\hfill{\normalfont {Page}\par}}

\tableofcontents

\end{singlespace}

\pagebreak{}


\phantomsection
\addcontentsline{toc}{chapter}{LIST OF FIGURES}  

\renewcommand{\cftloftitlefont}{\center\normalfont\MakeUppercase}

\setlength{\cftbeforeloftitleskip}{-12pt} 
\renewcommand{\cftafterloftitleskip}{12pt}

\renewcommand{\cftafterloftitle}{%
\\[4em]\mbox{}\hspace{2pt}FIGURE\hfill{\normalfont Page}\vskip\baselineskip}

\begingroup

\begin{center}
\begin{singlespace}
\setlength{\cftbeforechapskip}{0.4cm}
\setlength{\cftbeforesecskip}{0.30cm}
\setlength{\cftbeforesubsecskip}{0.30cm}
\setlength{\cftbeforefigskip}{0.4cm}
\setlength{\cftbeforetabskip}{0.4cm}



\listoffigures

\end{singlespace}
\end{center}

\pagebreak{}

%
\phantomsection
\addcontentsline{toc}{chapter}{LIST OF TABLES}  

\renewcommand{\cftlottitlefont}{\center\normalfont\MakeUppercase}

\setlength{\cftbeforelottitleskip}{-12pt} 

\renewcommand{\cftafterlottitleskip}{1pt}

\renewcommand{\cftafterlottitle}{%
\\[4em]\mbox{}\hspace{2pt}TABLE\hfill{\normalfont Page}\vskip\baselineskip}

\begin{center}
\begin{singlespace}

\setlength{\cftbeforechapskip}{0.4cm}
\setlength{\cftbeforesecskip}{0.30cm}
\setlength{\cftbeforesubsecskip}{0.30cm}
\setlength{\cftbeforefigskip}{0.4cm}
\setlength{\cftbeforetabskip}{0.4cm}

\listoftables 

\end{singlespace}
\end{center}
\endgroup
\pagebreak{}  

\pagenumbering{arabic} 
\setcounter{page}{1}

%
%
%
%
%


\pagestyle{plain} 
\pagenumbering{arabic} 
\setcounter{page}{1}

\chapter{\uppercase {Introduction}}

Optimization algorithms play a crucial role in the process of handling enormous datasets in the age of big data. Traditional optimization methods often struggle with modern applications, sometimes taking days or even weeks to find a solution. This research thus consists in developing and providing fast and resource-efficient, large-scale optimization methods to meet the demands of today's challenges.

It is focused on the theme of \underline{block decomposable} methods, which are methods that not only speed up solution time, sometimes reducing it from days to mere minutes or seconds, but also make it feasible to tackle problems that might previously be considered computationally intractable. In particular, block decomposable methods achieve this goal by breaking the decision variable down into smaller, more manageable pieces, or ``blocks''.

The dissertation further describes in greater detail the two classes of algorithms under this thematic area.

\section{Block Decomposable Methods}

Block decomposable methods update only a small block of variables at each iteration, providing iterates that are cheap in terms of memory and computational costs. These methods are particularly well-suited for high-dimensional problems, including target detection for military applications~\cite{Military}, image reconstruction for medical image analysis \cite{ImageAnalysis}, and 
portfolio management for optimizing investment decisions
\cite{PortfolioOptimization}.

Block decomposable methods are one of a few candidate algorithm classes for solving specially structured composite optimization problems. A composite optimization problem is characterized by the structure of its objective function. The objective function splits into the sum of two components, one of which is smooth and the other of which is potentialy non-smooth, but ``simple". This structure offers several advantages, including the development of specialized methods, like the proximal gradient method, that efficiently handle both the smooth and nonsmooth parts of the problem.

This research provides an in-depth exploration of two algorithm classes: the \textbf{Alternating Direction Method of Multipliers} and \textbf{Block Coordinate Descent} Methods. It begins by highlighting some key practical applications of these methods, followed by a detailed outline of this research’s main contributions.


\noindent
\textbf{Alternating Direction Method of Multipliers (ADMM)}, a block decomposable method, is a powerful  algorithm to solve composite optimization problems. It is employed in many applications such as matrix completion for recommendation systems \cite{applicationMC1}, image processing \cite{applicationCS1} and neural networks \cite{applicationCS1}. ADMM offers several advantages, including its ability to handle linearly constrained problems. These methods can also benefit from parallelization to significantly reduce computation time, as the blocks can be solved concurrently.

The main step of ADMM is the solution of its augmented Lagrangian (AL) subproblem(s). An effective and widely used approach within ADMM is the application of the proximal operator to the AL, which gives rise to the term proximal ADMM. However, obtaining an exact solution to the proximal AL can be very challenging, as many practical applications, such as nonlinear mixed-integer programming, optimal control, and stochastic programming problems, do not offer an easily computable solution.  This challenge was tackled in Chapter~\ref{chapter:admm}, where it is proposed an algorithm 
that is adaptive to all problem parameters and that solves the proximal AL subproblem inexactly. This adaptiveness facilitates the highly efficient application of the algorithm to any problem instance. The inexact solution of the proximal AL subproblem overcomes many key challenges in the practical application of ADMM. The resultant algorithm obtains an approximate solution of an optimization problem in a number of iterations that matches the state-of-the-art complexity for the class of proximal ADMM schemes.


\noindent
\textbf{Block Coordinate Descent (BCD) Methods} are blockwise adaptations of gradient descent, and, more generally, proximal gradient methods. At each iteration, a BCD method selects a block cyclically or randomly, then performs a (proximal) gradient update on it. The use of merely partial gradient information, coupled with the ability to leverage parallelization, make BCD methods powerful tools for modern optimization problems.

The Cyclic Block Proximal Gradient (CBPG) method depends upon the efficient, exact solution of each of its defining block proximal subproblems. However, for some real-world applications, the exact solution of said problems might be computationally intractable. In Chapter~\ref{chapter:jota}, it is expanded the CBPG method to allow for only the inexact solution of these subproblems. The resultant algorithm shares the same convergence rate as its exactly computed counterpart, provided the allowable errors decrease sufficiently quickly or are pre-selected to be sufficiently small. In a more theoretical flavor, Chapter~\ref{chapter:holder} provides the first convergence analysis for a block coordinate method applied to a broad class of objective functions, called \textit{H\"older} smooth functions, where at each iteration the selection of the block is random.

%
%
%
%
%


\pagestyle{plain} 


\chapter{\MakeUppercase{An Adaptive Proximal ADMM for Nonconvex Linearly-Constrained Composite Programs$^*$}}\label{chapter:admm}

\renewcommand{\thefootnote}{\fnsymbol{footnote}} 
\footnotetext[1]{This chapter is based on the preprint \cite{MaiaGM24_arXiv}}
\renewcommand{\thefootnote}{\arabic{footnote}} 

\section{Overview}

   This chapter develops an adaptive Proximal Alternating Direction Method of Multipliers (\textproc{P-ADMM}) for solving linearly-constrained, weakly convex, composite optimization problems. 
   This method is adaptive to all problem parameters, including smoothness and weak convexity constants.
   It is assumed that the smooth component of the objective is weakly convex and possibly nonseparable, while the non-smooth component is convex and block-separable. The proposed method is tolerant to the inexact solution of its block proximal subproblem so it does not require that the non-smooth component has easily computable block proximal maps. Each iteration of our adaptive \textproc{P-ADMM} consists of two steps: 
   (1) the sequential solution of each block proximal subproblem, and (2) adaptive tests to decide whether to perform a full Lagrange multiplier and/or penalty parameter update(s). Without any rank assumptions on the constraint matrices, it is shown that  the adaptive \textproc{P-ADMM} obtains an approximate first-order stationary point of the constrained problem in 
   a number of iterations that matches the state-of-the-art complexity for the class of \textproc{P-ADMM}s. The two proof-of-concept numerical experiments that conclude this chapter suggest our adaptive \textproc{P-ADMM} enjoys significant computational benefits.

\section{Introduction}
This chapter develops an adaptive proximal Alternating Direction Method of Multipliers, called {\DA}, for solving the linearly constrained, smooth, weakly convex, composite optimization problem  

\begin{equation}\label{initial.problem}
    F^* = \min_{y\in\R^n} \left\{ F(y) := f(y)+\sum_{t=1}^B\Psi_t(y_t)\ \ : \ \ \sum_{t=1}^B A_ty_t=b \right\},  
\end{equation}
where $n=n_1+\ldots+n_B$, 
$y=(y_{1},\ldots,y_{B})\in\mathbb{R}^{n_{1}}\times\cdots\times\mathbb{R}^{n_{B}}$,
$b\in \mathbb{R}^l$,
$f:\mathbb{R}^n\to \mathbb{R}$ is a real-valued differentiable function which is $m$-weakly convex,
and $\Psi_t:\mathbb{R}^{n_t}\rightarrow (-\infty, \infty]$ is a proper, closed, convex function which is $M_{\Psi}$-Lipschitz continuous on its  compact domain, for every $t\in\{1,\ldots, B\}$.

To ease notation, for a given $x=(x_{1},\ldots,x_{B})\in\mathbb{R}^{n_{1}}\times\cdots\times\mathbb{R}^{n_{B}}$, let $A(x):=\sum_{t=1}^BA_t(x_t)$ and  $\Psi(x):=\sum_{t=1}^B\Psi_t(x_t)$.
The goal in this chapter is to find a $(\rho,\eta)$-stationary solution of \eqref{initial.problem}, i.e., a quadruple  $(\hat x, \hat p, \hat v,\hat\varepsilon) \in (\dom \Psi) \times A(\mathbb{R}^n) \times \mathbb{R}^l
\times \R_{+}$ satisfying 
\begin{equation}\label{eq:stationarysol}
\hat v \in \nabla f(\hat x) + \partial_{\hat\varepsilon} \Psi(\hat x) + A^*\hat p, \quad \sqrt{\|\hat v\|^2 + \hat\varepsilon}  \leq \rho , \quad \|A\hat x-b\|\leq \eta,
\end{equation}
where $(\rho,\eta)\in \mathbb{R}^2_{++}$ is 
a given tolerance pair.

A popular primal-dual algorithmic framework for solving problem \eqref{initial.problem} that takes advantage of its block structure is the Proximal Alternating Direction Method of Multipliers ({\PD}), which is based on the augmented Lagrangian function,
\begin{equation}\label{DP:AL:F}
{\cal L}_{c}(y;p)  :=F(y)+\left\langle p,Ay-b\right\rangle +\frac{c}{2}\left\Vert Ay-b\right\Vert ^{2},
\end{equation}
where $c>0$ is a penalty parameter. 
Given $(x^{k-1}, p^{k-1},c_{k-1})$, {\PD} finds the next triple $(x^k,p^k,c_k)$ as follows. 
Starting from $x^{k-1}$,
it first performs $\ell_k$ iterations
of a block inexact proximal
point (IPP) method  applied to ${\cal L}_{c_{k-1}}(\cdot\,;\,p^{k-1})$ to obtain $x_k$, where $\ell_k$ is a positive integer. 
Next, it performs a Lagrange multiplier update according to 
\begin{equation}
    p^{k} = (1-\theta)\Big[p^{k-1}+\chi c_{k-1}\left(Ax^k-b\right)\Big],\label{eq:p_update}
\end{equation}
where $\theta\in [0,1)$  is a dampening parameter and $\chi$ is a positive relaxation parameter, and chooses a scalar $c_k \ge c_{k-1}$ as the next penalty parameter.

We now formally describe how $x^k$ is obtained from $x^{k-1}$.
We set $z^0 = x^{k-1}$ and, for each $j=1,\ldots,\ell_k$, perform a block IPP iteration from $z^{j-1}$ to obtain $z^j$. After $\ell_k$ iterations, we set $x^k = z^{\ell_k}$.
Each block IPP iteration updates $z^{j-1}$ to $z^j$ by sequentially, for $t=1,\ldots,B$, inexactly solving the $t$-th block proximal augmented Lagrangian subproblem with prox-stepsize $\lambda_t$:
\begin{align}\label{eq:x_update}
z_t^{j} \approx  \operatorname*{argmin}_{u_t\in\R^{n_{t}}}\left\{ \lam_t{\cal L}_{c_{k-1}}(z_1^{j},\ldots,z_{t-1}^j,u_t,z_{t+1}^{j-1},\ldots,z_B^{j-1};p^{k-1})+\frac{1}{2}\|u_t-z^{j-1}_t\|^{2}\right\} ,
\end{align} 
where $\lambda=(\lam_1,\cdots,\lam_B)$ is the block prox stepsizes.

The recent publication  \cite{KongMonteiro2024} 
proposes a version of 
{\PD} for solving \eqref{initial.problem} which assumes that $\ell_k=1$,
$\lam_1=\cdots=\lam_B$, and
$(\chi, \theta) \in (0,1]^2$ satisfies
\begin{equation}\label{eq:assumptio:B}
2\chi B(2-\theta)(1-\theta)\le \theta^2.
\end{equation} 

One of the main contributions of \cite{KongMonteiro2024} is that its convergence guarantees do not require \textit{the last block condition}, 
${\rm{Im}}(A_B)\supseteq \{b\}\cup{\rm{Im}}(A_1)\cup\ldots\cup{\rm{Im}}(A_{B-1})$ and $h_B\equiv 0$, 
that hinders many instances of {\PD}, see \cite{chao2020inertial,goncalves2017convergence,themelis2020douglas,zhang2020proximal}.
However, the main drawbacks of the {\PD} of \cite{KongMonteiro2024} include: 
(i) the strong assumption \eqref{eq:assumptio:B} 
on $(\chi, \theta)$; (ii) subproblem \eqref{eq:x_update}
must be solved exactly;
(iii) the stepsize parameter $\lam$ is conservative and requires the knowledge of $f$'s weak convexity parameter;
(iv) it (conservatively) updates the Lagrange multiplier after each primal update cycle (i.e., $\ell_k=1$);
(v) its iteration-complexity has a high dependence on the number of blocks $B$, namely,
${\cal O}(B^8)$.
Paper \cite{KongMonteiro2024} also presents computational results comparing its {\PD} with a more practical variant where $(\theta,\chi)$, instead of satisfying  \eqref{eq:assumptio:B}, is set to $(0,1)$. Intriguingly, this $(\theta,\chi)=(0,1)$ regime substantially outperforms the theoretical regime of \eqref{eq:assumptio:B} in the provided computational experiments.  
No convergence analysis for the $(\theta,\chi)=(0,1)$ regime is forwarded in \cite{KongMonteiro2024}. 
Thus, \cite{KongMonteiro2024} leaves open the tantalizing question of whether the convergence of {\PD} with $(\theta,\chi)=(0,1)$ can be theoretically secured.\\

\noindent{\bf Contributions:}
This chapter partially addresses the convergence analysis issue raised above by studying
a {\it completely parameter-free} {\PD}, with $(\theta,\chi)=(0,1)$
and $\ell_k$ adaptively chosen, called {\DA}.  
Rather than making the conservative determination that $\ell_k=1$, the studied adaptive method ensures the dual updates are committed as frequently as possible.
It is shown that {\DA} finds a $(\rho,\eta)$-stationary solution
in 
${\cal O}(B\max\{\rho^{-3}, \eta^{-3}\})$ iterations. {\DA} also exhibits the following additional features:

\begin{itemize}
\setlength{\itemsep}{0pt}

    \item 
    Similar to the {\PD} of \cite{KongMonteiro2024}, its complexity is established without assuming that the {\it last block condition} holds;

    \item Compared to the 
   ${\cal O}(B^8\max\{\rho^{-3},\eta^{-3}\})$ iteration-complexity of the {\PD} of \cite{KongMonteiro2024},
    the one for  {\DA} vastly  {\it improves the dependence on $B$};

    \item
    {\DA}  uses an adaptive scheme that aggressively computes {\it variable block prox stepsizes}, instead of fixed ones that require knowledge of the weak convexity parameters $m_1,\ldots,m_B$
    (e.g., the choice $\lam_1=\ldots=\lam_B \in (0, 1/(2m))$ where $m
    := \max\{m_1,\ldots,m_B\}$
    made by the {\PD} of \cite{KongMonteiro2024}). In contrast to the {\PD} of \cite{KongMonteiro2024}, {\DA} may generate various $\lam_t $'s which are larger than $1/m_t$
    (as observed in our computational results), and hence
    which do not guarantee convexity of \eqref{eq:x_update}.
    
    \item
    {\DA} is also adaptive to Lipschitz parameters;

    \item 
    In contrast to the {\PD} in \cite{KongMonteiro2024},
    {\DA} allows the block proximal subproblems \eqref{eq:x_update} to be either exactly or {\it inexactly} solved.
\end{itemize}

\noindent \textbf{Related Works}:  
ADMM methods with $B=1$ are well-known to be equivalent to
augmented Lagrangian methods. Several references have studied augmented Lagrangian and proximal augmented Lagrangian methods in the convex (see e.g., \cite{Aybatpenalty,AybatAugLag,LanRen2013PenMet,LanMonteiroAugLag,ShiqiaMaAugLag16, zhaosongAugLag18,IterComplConicprog,Patrascu2017, YangyangAugLag17}) and nonconvex 
(see e.g. \cite{bertsekas2016nonlinear, birgin2,solodovGlobConv,HongPertAugLag, AIDAL, RJWIPAAL2020, NL-IPAL, RenWilmelo2020iteration, YinMoreau, ErrorBoundJzhang-ZQLuo2020, ADMMJzhang-ZQLuo2020,sun2021dual}) settings.
Moreover, ADMMs and proximal ADMMs in the convex setting have also been broadly studied in the literature (see e.g. \cite{bertsekas2016nonlinear,boyd2011distributed,eckstein1992douglas,eckstein1998operator,eckstein1994some,eckstein2008family,eckstein2009general,gabay1983chapter,gabay1976dual,glowinski1975approximation,monteiro2013iteration,rockafellar1976augmented,ruszczynski1989augmented}).
So from now on, we
just discuss {\PD} variants where $f$ is nonconvex and $B > 1$.

A discussion of the existent literature on nonconvex {\PD} is best framed by dividing it into two different corpora: those papers that assume the last block condition and those that do not. Under the \textit{last block condition}, the iteration-complexity established is ${\cal O}(\varepsilon^{-2})$, where $\varepsilon := \min\{\rho,\eta\}$.  Specifically, \cite{chao2020inertial,goncalves2017convergence,themelis2020douglas, wang2019global} 
introduce {\PD} approaches assuming $B=2$, while \cite{jia2021incremental, jiang2019structured,melo2017iterationJacobi,melo2017iterationGauss} present (possibly linearized) {\PD}s assuming $B\geq 2$. 
A first step towards removing the last block condition was made by \cite{jiang2019structured} which
proposes an ADMM-type method applied to a penalty reformulation of \eqref{initial.problem} that artificially satisfies the last block condition. This method possesses an ${\cal O}(\varepsilon^{-6})$ iteration-complexity bound.

On the other hand, development of ADMM-type methods directly applicable to \eqref{initial.problem} is considerably more challenging and only a few works addressing this topic have surfaced. In addition to \cite{jiang2019structured}, earlier contributions to this topic were obtained in  \cite{hong2016convergence,sun2021dual,zhang2020proximal}. More specifically,
\cite{hong2016convergence,zhang2020proximal} develop a novel small stepsize ADMM-type method without establishing its complexity. Finally, \cite{sun2021dual} considers an interesting but unorthodox negative stepsize for its Lagrange multiplier update, that sets it outside the ADMM paradigm, and thus justifies its qualified moniker, ``scaled dual descent ADMM''.

\subsection{Chapter's Organization}

In this subsection, we outline this article's structure. This section's lone remaining subsection, Subsection~\ref{subsec:notation}, briefly lays out the basic definitions and notation used throughout. Section \ref{subsec:subpro:e} 
decribes assumptions for Problem~\eqref{initial.problem} and introduces a notion of an inexact solution of {\DA}'s foundational block proximal subproblem \eqref{eq:x_update} along with  efficient subroutines designed to find said solutions. Section~\ref{adp:ADMM} presents the static version of the main algorithm of this chapter, {\SA}, and states the theorem governing its iteration complexity (Theorem~\ref{thm:static.complexity}). Section~\ref{subsubsec:subroutine} provides the detailed proof of the iteration-complexity theorem for {\SA} and presents all supporting technical lemmas. Section~\ref{subsection:DA:ADMM} introduces the centerpiece algorithm of this work, a {\PD} method with fixed stepsizes, namely {\DA}. It also states the main theorem of this chapter (Theorem~\ref{the:dynamic}), which establishes the iteration complexity of the method. Section~\ref{section:Adaptive.ADMM} extends {\DA} to an adaptive stepsize version and briefly describes how to obtain a completely adaptive method. Section~\ref{sec:numerical} presents proof-of-concept numerical experiments that display the superb efficiency of {\DA} for two different problem classes. Section \ref{sec:concluding} gives some concluding remarks that suggest further research directions. Finally, Section~\ref{sec:tech.lagrange.multiplier} collects some technical results on convexity and linear algebra.

\subsection{Notation and Basic Definitions}\label{subsec:notation}

This subsection lists the elementary notation deployed throughout the chapter. We let $\R_{+}$  and $\R_{++}$ denote the set of non-negative and positive real numbers, respectively. 
We shall assume that the $n$-dimensional Euclidean space, $\R^n$, is equipped with an inner product, $\inner{\cdot}{\cdot}$. 
We use  $\R^{l\times n}$ to denote the set of all $l\times n$ matrices and  ${\cal S}^n_{++}$ to denote the set of positive definite matrices in $\R^{n\times n}$.
The associated induced norm will be written as $\|\cdot\|$. 

When the Euclidean space of interest has the block structure $\mathbb{R}^{n_{1}}\times\cdots\times\mathbb{R}^{n_{B}}$, we will often consider, for $x=(x_{1},\ldots,x_{ B}) \in\mathbb{R}^{n_{1}}\times\cdots\times\mathbb{R}^{n_{B}}$, the aggregated quantities 
\begin{gather}
\begin{gathered}
x_{<t}:=(x_{1},\ldots,x_{t-1}), \quad  x_{>t}:=(x_{t+1},\ldots,x_{ B}),\quad x_{\leq t}:=(x_{<t},x_{t}),\quad x_{\geq t}:=(x_{t},x_{>t}).
\end{gathered}
\label{eq:intro_agg}
\end{gather}
For a given closed, convex set $Z \subset \R^n$, we let $\partial Z$ designate its boundary. The distance of a point $z \in \R^n$ to $Z$, measured in terms of $\|\cdot\|$, is denoted ${\rm dist}(z,Z)$. 
The indicator function of $Z$, $\delta_Z$, is defined by $\delta_Z(z)=0$ if $z\in Z$, and $\delta_Z(z)=+\infty$ otherwise.

The set of points where a function  $g :\R^n\to (-\infty,\infty]$ is finite-valued, $\dom g := \{x\in \R^n : g(x) < +\infty\}$, is called its domain.
We say that $g$ is proper if $\dom g \ne \emptyset$.
The set of all lower semi-continuous proper convex
functions defined in $\R^n$ is denoted by $\bConv{n}$.
 For $\epsilon\geq 0$, the $\epsilon$-subdifferential of $g\in \bConv{n}$ at $z\in \dom g$ is denoted by 
\[
\partial_\epsilon g(z):=\{w\in \R^n ~:~ g(z')\geq g(z)+\inner{w}{z'-z}-\varepsilon, \forall z'\in \R^n\}.
\]

When $\epsilon=0$, the $\epsilon$-subdifferential recovers the classical subdifferential, $\partial g(\cdot):=\partial_0 g(\cdot)$. If $\psi$ is a real-valued function that is differentiable at $\bar z \in \R^n$, then its affine approximation $\ell_\psi(\cdot,\bar z)$ at $\bar z$ is the function defined, for arbitrary $z\in\R^n$, by the rule $z\mapsto \psi(\bar z) + \inner{\nabla \psi(\bar z)}{z-\bar z}$. The smallest positive singular value of a  nonzero linear operator $Q:\R^n\to \R^l$ is denoted $\nu^+_Q$ and its operator norm is $\|Q\|:=\sup \{\|Q(w)\|:\|w\|=1\}.$  

We conclude this subsection by presenting two well-known properties about the $\epsilon$-subdifferential: for any $\beta>0$, $\varepsilon\ge 0$ and $g\in \bConv{n}$, it holds that
\begin{align}\label{prop1:subd}
\partial_{\varepsilon} \beta g(\cdot ) = \beta \partial_{\varepsilon/\beta} g(\cdot ),
\end{align}
and if $
\mathbb{R}^{n_{1}}\times\cdots\times\mathbb{R}^{n_{B}}\ni (y_1,\cdots, y_B)=y\mapsto g(y)=g_1(y_1)+\cdots+g_B(y_B)
$
and $\varepsilon_t\ge 0$ for any $t \in \{1,\ldots,B\}$, 
then
\begin{align}\label{prop2:subd}
\partial_{\varepsilon}g(y) =\cup\left\{  \partial_{\varepsilon_1}g_1(y_1) \times \ldots \times
\partial_{\varepsilon_B}g_B(y_B) : \varepsilon\ge 0, \;\; \varepsilon_1+\cdots+\varepsilon_B\le \varepsilon\right\}.
\end{align}
For more details, see \cite[Proposition 1.3.1]{lemarechal1993} and \cite[Remark 3.1.5]{lemarechal1993}.

\section{Assumptions for Problem~\eqref{initial.problem} and  an Inexact Solution Concept of \eqref{eq:x_update}}\label{subsec:subpro:e}

This section constains two subsections. Subsection \ref{subsec:assump} details a few mild technical assumptions imposed on \eqref{initial.problem}. Subsection \ref{subsec:inexact-solution} introduces a notion of an inexact stationary point for the block proximal subproblem \eqref{eq:x_update} along with an efficient method for discovering such points. 
This method permits the application of our main algorithm, {\DA}, even when \eqref{eq:x_update} is not exactly solvable.

\subsection{ Assumptions for Problem~\eqref{initial.problem}}\label{subsec:assump}
This subsection describes a series of mild assumptions on this chapter's main problem of interest \eqref{initial.problem}. We assume that vector $b\in \mathbb{R}^l$,
linear operator
$A:\mathbb{R}^n\rightarrow \mathbb{R}^l$, and functions $f:\mathbb{R}^{n}\rightarrow (-\infty, \infty]$ and  $\Psi_t:\mathbb{R}^{n_t}\rightarrow (-\infty, \infty]$ for $t=1,\ldots, B$, satisfy the following conditions:

\begin{enumerate}
\item[(A1)] 
for every $t\in \{1,\ldots,B\}$,
function
$\Psi_t:\R^{n_t}\to(-\infty,\infty]$ is proper closed convex with ${\cal H}_t:=\dom \Psi_t$ compact; also, $\Psi_t(\cdot )$ is prox friendly, i.e., its proximal operator is easily computable;

\item[(A2)] $A$ is nonzero and ${\cal F}:=\{x\in {\cal H}:Ax=b\}\neq \emptyset$, where ${\cal H}:= {\cal H}_1\times\cdots\times {\cal H}_B$;

\item[(A3)] $f$ is block $m$-weakly convex for $m=(m_1,\ldots,m_B)\in\R^B_{++}$, that is, for every $t\in \{1,\ldots,B\}$,
\[
f(x_{<t},\cdot,x_{>t}) + \delta_{{\cal H}_t}(\cdot) + \frac{m_{t}}{2}\|\cdot\|^{2}\text{ is convex for all } x\in{\cal H};
\]
    \item[(A4)] $f$ is differentiable on ${\cal H}$ and, for every $t\in \{1,\ldots,B-1\}$, there exists $L_{t}\ge 0$ such that
\begin{gather}
\begin{aligned}
& \|\nabla_{x_{t}}f(x_{\le t},\tilde{x}_{>t})-\nabla_{x_{t}}f(x_{\le t},{x}_{>t})\|
\leq L_t\|\tilde{x}_{>t}-x_{>t}\| \quad  \forall~ x,\tilde{x}\in{\cal H};
\end{aligned}
\label{eq:lipschitz_x}
\end{gather}

    \item[(A5)] for some $M_{\Psi}\ge 0$, function $\Psi(\cdot)$ is $M_{\Psi}$-Lipschitz continuous on ${\cal H};$
    
    \item[(A6)]  there exists $\bar{\mathrm{x}}\in {\cal F}$ such that $\bar{d}:=\text{dist}(\bar{\mathrm{x}}, \partial {\cal H})>0$.
\end{enumerate}

Note that since ${\cal H}$ is compact, it follows from (A1) and (A2) that the scalars
\begin{equation}\label{def:damH}
       D_{\Psi}:=\sup_{z \in {\cal H}} \|z-\bar{x}\|, \quad \nabla_f:=\sup_{u \in {\cal H}} \| \nabla f(u)\|, \quad F_{\inf} := \inf_{u \in {\cal H}} F(u), \quad F_{\sup} := \sup_{u \in {\cal H}}  F(u)  
\end{equation}
are bounded. Furthermore, throughout this chapter, we let
\begin{equation}\label{eq:block_norm}
\|L\|^2:=\sum_{t=1}^{B-1} \|L_t\|^2, \quad \|A\|_\dagger^2 := \sum_{t=1}^B \|A_t\|^2.
\end{equation}

\subsection{An Inexact Solution Concept for \eqref{eq:x_update}}\label{subsec:inexact-solution}

This subsection introduces our notion (Definition \ref{def:BB}) of an inexact solution of the block proximal AL subproblem \eqref{eq:x_update}. To cleanly frame this solution concept, observe that \eqref{eq:x_update} can be cast in the form 
\begin{equation}\label{ISO:problem}
\psi^*=\min \{\psi(z):=\psi_{\text{s}}(z)+\psi_{\text{n}}(z) : z\in\R^n\},
\end{equation}
where
\begin{equation}\label{def:ps}
\psi_{\text{s}}(\cdot) =\lam \hat {\cal L}_{c}(y_{<t}^{i},\cdot,y_{>t}^{i-1};\tilde q^{k-1})+\frac{1}{2}\|\cdot-y_{t}^{i-1}\|^{2}, \quad \psi_{\text{n}}(\cdot) = \lam \Psi_t(\cdot),
\end{equation}
and  ${\cal \hat L}_{c}(\cdot\,;\tilde q^{k-1})$ is the smooth part of \eqref{DP:AL:F}, defined as
\begin{align}\label{def:smooth:ALM}
{\cal \hat L}_{c}(y;\tilde q^{k-1}):= f(y)+\left\langle \tilde q^{k-1},Ay-b\right\rangle +\frac{c}{2}\left\Vert Ay-b\right\Vert ^{2}.
\end{align}
Hence, to describe a notion of an inexact solution for \eqref{eq:x_update}, it is suffices to describe it in the more general context of \eqref{ISO:problem}
where $\psi_{\text{s}}:\R^n \to \R$ is a differentiable function and
$\psi_{\text{n}}: \R^n\to (-\infty,+\infty]$ is a proper, closed, convex function.

\begin{definition}\label{def:BB}
     For a given $z^0 \in \dom \psi_{\mathrm{n}}$ and parameter
$\tau \in \R_+$,
a triple $(\bar z, \bar r, \bar \epsilon)\in \R^n\times\R^n\times\R_{+}$ satisfying
\begin{align}
\bar r \in \nabla \psi_{\mathrm{s}}(\bar z) + \partial_{\bar\varepsilon} \psi_{\mathrm{n}}(\bar z) \quad \text{and}\quad
\|\bar r\|^2 + 2 \bar\varepsilon \le 
\tau \|z^0-\bar z\|^2 \label{ISO:Cond:1&2}  
\end{align}
is called a
$(\tau;z^0)$-stationary solution of \eqref{ISO:problem} with composite term $\psi_n$.
\end{definition}
\noindent We now make some remarks about  Definition \ref{def:BB}. First, if $\tau=0$, then \eqref{ISO:Cond:1&2} implies that $(\bar r,\bar\varepsilon)=(0,0)$ and \eqref{ISO:Cond:1&2} then implies that $\bar z$ is an exact stationary point of \eqref{ISO:problem}, in the sense that $ 0\in \nabla \psi_{\text{s}}(\bar z) + \partial \psi_{\text{n}}(\bar z)$. Thus, if the triple $(\bar z, \bar r, \bar\varepsilon)$ is a $(\tau; z^0)$-stationary solution of \eqref{ISO:problem}, then $\bar z$ can be viewed as an approximate stationary solution of \eqref{ISO:problem} where the residual pair $(\bar r, \bar\varepsilon)$
is bounded according to \eqref{ISO:Cond:1&2} (instead of being zero as in the exact case). 
Second, if $\bar z$ is an exact stationary point of \eqref{ISO:problem}
(e.g., $\bar z$ is an
exact solution of
\eqref{ISO:problem}), then the triple $(\bar z,0,0)$ is a $(\tau; z^0)$-stationary point of \eqref{ISO:problem} for any $\tau \in \R_+$. 

In general, an exact solution or  stationary point of \eqref{ISO:problem} is not easy to compute. In such a case, 
the following well-known result 
(see, e.g., Proposition~3.5 of \cite{HeMonteiro2015}) states that a variant of an accelerate composite gradient method, due to Nesterov \cite{nesterov2013gradient}, is able to find a $(\tau;z^0)$-stationary solution of \eqref{ISO:problem}.

\begin{prop}\label{pro.inexact.sol}
Assume that for some $(\tilde M,\tilde \mu) \in \R^2_{++}$, the above functional pair $(\psi_{\text{s}},\psi_{\text{n}})$ is such that $\nabla \psi_{\text{s}}(\cdot)$ is $\tilde M$-Lipschitz 
continuous and $(\psi_s+\psi_n)$ is $\tilde \mu$-strongly convex.
Then, Algorithm 1 in \cite{HeMonteiro2015}
finds a  $(\tau;z^0)$-stationary solution of \eqref{ISO:problem} in a number of iterations bounded by
\[
{\cal O} \left( 
\left( 1+\sqrt{ \frac{\tilde M}{\tilde \mu} }\right)
\left\{  1 + \log\left[ 1+ \tilde M( \tilde M+1)\lceil\tau^{-1}\rceil\right] \right\} \right).
\] 
\end{prop}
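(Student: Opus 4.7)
My plan is to simply invoke the complexity guarantee already proved in \cite{HeMonteiro2015} for the accelerated composite gradient (ACG) method and verify that its hypotheses are met for the problem at hand. I would break the argument into three short steps.

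First, I would check applicability. The hypothesis provides exactly what Algorithm~1 of \cite{HeMonteiro2015} needs: $\psi_{\text{s}}:\R^n\to \R$ differentiable with $\tilde M$-Lipschitz gradient, and $\psi_{\text{s}}+\psi_{\text{n}}$ proper, closed, and $\tilde\mu$-strongly convex. Strong convexity further guarantees that \eqref{ISO:problem} has a unique minimizer $z^*$, so $\|z^0-z^*\|$ is finite.

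Second, I would invoke the per-iteration residual bound from \cite[Proposition~3.5]{HeMonteiro2015}. That result produces, after $k$ inner iterations, a triple $(\bar z_k,\bar r_k,\bar\varepsilon_k)$ satisfying the inclusion $\bar r_k\in \nabla\psi_{\text{s}}(\bar z_k)+\partial_{\bar\varepsilon_k}\psi_{\text{n}}(\bar z_k)$ together with a quantitative bound of the form
\[
\|\bar r_k\|^2 + 2\bar\varepsilon_k \;\le\; C(\tilde M)\,\alpha_k\,\|z^0-\bar z_k\|^2,
\]
where $C(\tilde M)$ is a polynomial factor (of order $\tilde M(\tilde M+1)$ arising from converting the accelerated primal-gap decay into a residual bound) and $\alpha_k$ is the ACG linear-convergence factor. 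This is the exact shape of the stationarity notion in Definition~\ref{def:BB}, so matching it to \eqref{ISO:Cond:1&2} requires only driving $C(\tilde M)\alpha_k$ below $\tau$.

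Third, I would count iterations. For a $\tilde\mu$-strongly convex objective with $\tilde M$-Lipschitz smooth part, the ACG rate satisfies $\alpha_k\le (1-\sqrt{\tilde\mu/\tilde M})^k$. Imposing $C(\tilde M)\alpha_k\le \tau$ and solving for $k$ yields, after using $\log(1/(1-\sqrt{\tilde\mu/\tilde M}))\ge \sqrt{\tilde\mu/\tilde M}$, the bound
\[
k \;=\; \mathcal{O}\!\left(\sqrt{\tilde M/\tilde \mu}\;\log\!\left[1+\tilde M(\tilde M+1)\lceil\tau^{-1}\rceil\right]\right),
\]
and absorbing the constant start-up cost into the stated $(1+\sqrt{\tilde M/\tilde\mu})$ and $\{1+\log[\cdots]\}$ envelopes gives the claimed complexity.

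The only real obstacle is bookkeeping in the second step: \cite{HeMonteiro2015} phrases its bound in terms of $\|z^0-\bar z_k\|^2$ as written, which is precisely what Definition~\ref{def:BB} asks for, but one must be careful that the sequence produced by the algorithm (as opposed to an auxiliary sequence used internally) is indeed the one entering both the inclusion and the quadratic bound. I would cite the exact statement of Proposition~3.5 there to line up the variables, and no further analysis would be required beyond that direct invocation.
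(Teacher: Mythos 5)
Your proposal is correct and takes essentially the same route as the paper, which offers no proof at all beyond citing Proposition~3.5 of \cite{HeMonteiro2015} as a well-known result; your three-step elaboration (hypothesis check, residual bound, iteration count) is just a more explicit unpacking of that same direct invocation. The caveat you raise about matching the algorithm's actual iterate sequence to the one in Definition~\ref{def:BB} is reasonable bookkeeping but does not change the argument.
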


\section{A Static {\PD}}\label{adp:ADMM}

This section is divided into two subsections. Subsection~\ref{sub:B-IPP} introduces an important component of the ADMM method: a subroutine called {\SBIPP}, which performs a block IPP iteration within ADMM, as discussed in the paragraph containing equations \eqref{DP:AL:F} and \eqref{eq:p_update}. Subsection~\ref{subsection:SA-ADMM} presents an ADMM method with a constant penalty parameter for solving problem~\eqref{initial.problem}, referred to as the static ADMM ({\SA} for short).

\subsection{ A Block IPP Black-Box: {\SBIPP}}\label{sub:B-IPP}


The goal of this subsection is to present the {\SBIPP} subroutine, state its main properties, and provide relevant remarks. We begin by describing {\SBIPP}.

{\floatname{algorithm}{Subroutine}
\renewcommand{\thealgorithm}{}
\begin{algorithm}[H]
\setstretch{1}
\caption{{\SBIPP}}\label{algo:BlockIPP}
\begin{algorithmic}[1]
\Statex \hskip-1.8em
\Require $(z,p,\lam, c)
\in {\cal H} \times A(\mathbb{R}^n) 
\times \R^B_{++}\times \R_{++} $
\Ensure $(z^+,v^+,\delta_+,\lam^+)\in {\cal H} \times \mathbb{R}^l 
\times \R_{++} \times \R^B_{++}$
\vspace{1em}
\\ 
\textcolor{violet}{STEP 1: Block-IPP Iteration}
  \For{$t=1,\ldots, B$}\label{subroutine:loop.inexact.sol}
    
    \State set $\lam_t^+= \lam_t$
    \State compute a $\left(1/8;z_t\right)$-stationary solution $(z_t^+,r_t^+,\varepsilon_t^{+})$
         of \label{algo:BIPP.inexact.subp}
    \begin{equation}\label{algo:BIPP.inexact.subproblem}
    \min_{u \in \R^{n_t}} \left  \{ \lam_t^+ \hat {\cal L}_{c}(z_{<t}^+,u,z_{>t};p)+\frac{1}{2}\|u-z_{t}\|^{2}+\lam_t^+ \Psi_t(u) \right\} 
    \end{equation}
    \hspace{1.3em} with composite term $\lam_t^+ \Psi_t(\cdot)$ (see Definition \ref{def:BB})

  \EndFor
  \State $z^+\gets (z_1^+,\ldots ,z_B^+)$ and
  $\lam^+\gets (\lam_1^+,\ldots ,\lam_B^+)$\label{subroutine.primal}

\vspace{1em}     
    \\
    \textcolor{violet}{STEP 2: Computation of the residual pair $(v^+,\delta_+)$ for $(z^+,p)$}

    \For{$t=1,\ldots ,B$}
    \State $ v_t^+\gets \nabla_t f(z_{< t}^+,z_t^+, z_{>t}^+)- \nabla_t f(z_{< t}^+,z_t^+, z_{>t}) + \dfrac{r_t^+}{\lambda_t^+}+ cA_{t}^{*}\sum_{s=t+1}^{ B}A_{s} (z^+_{s}-z_s) -\dfrac{1}{\lambda_t^+}( z^+_{t}-z_t)$ \label{algo.stat.vit}\;
    \EndFor
    \State $v^+\gets (v_1^+,\ldots,v_B^+)$ and $\delta_+\gets (\varepsilon_1^+/\lam_1^+) + \ldots +  (\varepsilon_B^+/\lam_B^+)$ \label{algo:IPP.v.delta}
    \State \Return $(z^+,v^+,\delta_+, \lam^+)$
   
\end{algorithmic}
\end{algorithm}
\renewcommand{\thealgorithm}{\arabic{algorithm}}
}

We now clarify some aspects of {\SBIPP}.
First, line~\ref{algo:BIPP.inexact.subp} requires a subroutine to find an approximate solution of \eqref{algo:BIPP.inexact.subproblem} as in Definition \ref{def:BB}. A detailed discussion giving examples of such subroutine will be  given  in the second paragraph after Proposition~\ref{main:subroutine:1}. Second, Proposition~\ref{main:subroutine:1} shows that the iterate $z^+$ and the residual pair $(v^+,\delta_+)$ computed in
lines \ref{subroutine.primal} and \ref{algo:IPP.v.delta}, respectively, satisfy the approximate stationary inclusion $v^+ \in \nabla f(z^+) + \partial_{\delta_+} \Psi(z^+) + \textrm{Im}(A^*)$. Hence, upon termination of {\SBIPP}, its output satisfies the first condition in \eqref{eq:stationarysol} (though it may not necessarily fulfill the remaining two) and
establishes an important bound on
the residual pair $(v^+,\delta_+)$ in terms of a Lagrangian function variation
that will be used later to determine a suitable potential function.

We now make some remarks about the prox stepsizes. First, {\SBIPP} does not change the prox stepsize and hence, in principle, both $\lam$ and $\lam^+$ could be removed from its input and output, respectively.
ADMMs based on {\SBIPP} will result in (constant stepsize) ADMM variants that keep their prox stepsize constant throughout.
In Section~\ref{section:Adaptive.ADMM}, we will consider adaptive stepsize ADMM variants based on a adaptive version of {\SBIPP}.
The reason for
including $\lam$ and $\lam^+$ on the input and output of the constant stepsize  {\SBIPP}, and its corresponding ADDMs, at this early stage is
to facilitate
the descriptions of 
their adaptive counterparts, which will essentially involve a minimal but important change to line~\ref{algo:BIPP.inexact.subp} of {\SBIPP}.



\vspace{1em}
Before stating the main result of this subsection, we define some quantities that are used in its statement, namely:
\begin{gather}
\begin{gathered}
\label{eq:sigma1&2}
\chione := 8(25\overline{m} +6\|L\|^2\underline{m}^{-1})+1, \qquad \chitwo:=24B\|A\|^2_{\dagger}
\end{gathered}
\end{gather}
where
\begin{equation}\label{eq:max.m.min.m}
\underline{m}:=\min_{1\leq t\leq B}m_t, \qquad \overline{m}:=\max_{1\leq t\leq B}m_t,
\end{equation}
and $\|L\|$ and $\|A\|_{\dagger}$ are defined in \eqref{eq:block_norm}.

\begin{proposition}\label{main:subroutine:1}
Assume that $(z^+,v^+,\delta_+, \lam^+)={\SBIPP}(z,p,\lam, c)$ for some $(z,p,\lam, c)
\in {\cal H} \times A(\mathbb{R}^n) 
\times \R_{++}^B\times \R_{++}$.
Then,
\begin{gather}\label{prop.B-IPP.inclusion}
\begin{gathered}
v^+  \in \nabla f (z^+)+ \partial_{\delta_+} \Psi(z^+) +
A^*[p+c(Az^+-b)].
\end{gathered}
\end{gather}
Moreover, if the initial prox stepsize is chosen as
\begin{equation}\label{choo:lamt}
\lam_t = \frac{1}{2m_t} \quad \forall t \in \{1,\ldots,B\},
\end{equation}
then:
\begin{itemize}
    \item[(a)] for any $t\in\{1,\ldots,B\}$, the objective function of the $t$-th block subproblem~\eqref{algo:BIPP.inexact.subproblem} is $(1/2)$-strongly convex;
    \item[(b)] it holds that
\begin{gather}\label{lemma:norm:residual:Ine}
\begin{gathered}
\quad \|v^+\|^2 +\delta_+  
\le (\sigma_1 + c \sigma_2)
  \Big[ {\cal L}_c(z;p) - {\cal L}_c(z^+;p)\Big],
\end{gathered}
\end{gather}
where $\chione$ and  $\chitwo$ are as in  \eqref{eq:sigma1&2}.
\end{itemize}
\end{proposition}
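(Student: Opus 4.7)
The plan is to prove the three assertions in order. For the inclusion \eqref{prop.B-IPP.inclusion}, I will use the $\varepsilon$-subdifferential inclusion that $(z_t^+,r_t^+,\varepsilon_t^+)$ satisfies as a $(1/8;z_t)$-stationary solution of \eqref{algo:BIPP.inexact.subproblem}, namely $r_t^+ \in \lam_t^+\nabla_t\hat{\cal L}_c(z_{<t}^+,z_t^+,z_{>t};p) + (z_t^+-z_t) + \partial_{\varepsilon_t^+}(\lam_t^+\Psi_t)(z_t^+)$. Dividing by $\lam_t^+$, extracting the scalar via \eqref{prop1:subd}, and rewriting the penalty gradient using $A(z_{<t}^+,z_t^+,z_{>t}) = Az^+ - \sum_{s>t}A_s(z_s^+-z_s)$ reveals the definition of $v_t^+$ from line~\ref{algo.stat.vit}, leaving $v_t^+ - \nabla_t f(z^+) - A_t^*[p+c(Az^+-b)] \in \partial_{\varepsilon_t^+/\lam_t^+}\Psi_t(z_t^+)$. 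The blockwise aggregation via \eqref{prop2:subd}, combined with $\delta_+ = \sum_t \varepsilon_t^+/\lam_t^+$, yields \eqref{prop.B-IPP.inclusion}. Part~(a) is a direct decomposition: the objective of \eqref{algo:BIPP.inexact.subproblem} consists of the $\lam_t^+ m_t$-weakly convex piece $\lam_t^+ f(z_{<t}^+,\cdot,z_{>t})$ from (A3), convex pieces from the linear multiplier, the penalty quadratic, and $\lam_t^+\Psi_t$, and the $1$-strongly convex prox term, netting modulus $1-\lam_t^+ m_t = 1/2$ when $\lam_t^+ = 1/(2m_t)$.

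For part~(b), I will combine a descent estimate with a residual estimate. The $\varepsilon$-subgradient inequality applied to the $(1/2)$-strongly convex subproblem objective $\psi$ at $(z_t,z_t^+)$, after accounting for the $(1/2)\|z_t^+-z_t\|^2$ contribution to $\psi(z_t^+)-\psi(z_t)$, gives
\[
\lam_t^+\!\left[\hat{\cal L}_c(z_{<t}^+,z_t,z_{>t};p)+\Psi_t(z_t)-\hat{\cal L}_c(z_{<t}^+,z_t^+,z_{>t};p)-\Psi_t(z_t^+)\right] \ge \tfrac{3}{4}\|z_t-z_t^+\|^2 + \langle r_t^+, z_t-z_t^+\rangle - \varepsilon_t^+.
\]
Cauchy--Schwarz on the inner product together with the tolerance $\|r_t^+\|^2+2\varepsilon_t^+ \le (1/8)\|z_t-z_t^+\|^2$ reduces the right-hand side to a positive multiple of $\|z_t-z_t^+\|^2$. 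Crucially, retaining the additional $c\|A_t(z_t-z_t^+)\|^2$ quadratic supplied by the $cA_t^*A_t$ curvature of the subproblem, then dividing by $\lam_t^+ = 1/(2m_t)$ and telescoping $\sum_t[\mathcal{L}_c(z_{<t}^+,z_{\ge t};p)-\mathcal{L}_c(z_{\le t}^+,z_{>t};p)] = \mathcal{L}_c(z;p)-\mathcal{L}_c(z^+;p)$, produces a two-term descent $\mathcal{L}_c(z;p)-\mathcal{L}_c(z^+;p) \ge C_1\sum_t m_t\|z_t-z_t^+\|^2 + C_2\, c\sum_t\|A_t(z_t-z_t^+)\|^2$ with absolute constants $C_1,C_2>0$.

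The residual estimate splits $v_t^+$ (line~\ref{algo.stat.vit}) into its four summands, applies $\|\sum_i a_i\|^2 \le 4\sum_i\|a_i\|^2$, and uses (A4), $\|r_t^+\|^2 \le (1/8)\|z_t-z_t^+\|^2$, and $\|\sum_{s>t}A_s(z_s^+-z_s)\|^2 \le B\sum_{s>t}\|A_s(z_s^+-z_s)\|^2$. Summing over $t$ and swapping the two resulting double sums collapses the $L_t$'s and $\|A_t\|$'s into $\|L\|^2$ and $\|A\|_\dagger^2$, producing
\[
\|v^+\|^2 \le \left(\tfrac{4\|L\|^2}{\underline m}+18\overline m\right)\sum_t m_t\|z_t-z_t^+\|^2 + 4B\|A\|_\dagger^2\, c^2\sum_s\|A_s(z_s^+-z_s)\|^2,
\]
and the tolerance on $\varepsilon_t^+$ gives $\delta_+ \le (1/8)\sum_t m_t\|z_t-z_t^+\|^2$. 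Feeding the descent into this bound and absorbing one factor of $c$ out of the $c^2$ summand via the descent's $c$-curvature term yields \eqref{lemma:norm:residual:Ine} with a constant of the form $\sigma_1+c\sigma_2$; careful bookkeeping recovers the exact values in \eqref{eq:sigma1&2}.

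The main obstacle is the \emph{mismatch in powers of $c$}: naively the cross-block penalty summand in $v_t^+$ contributes $c^2$ to $\|v^+\|^2$, whereas the target bound is linear in $c$. The resolution lies in retaining the refined $c\|A_t(z_t-z_t^+)\|^2$ curvature in the descent (rather than collapsing to the weaker $m_t$-only descent available from the $1/2$-modulus alone), so that exactly one of the two $c$'s is absorbed when the descent is substituted. The remainder is a mechanical exercise: choosing the Young-inequality slacks to match the strong convexity modulus $1/2$ against the tolerance $1/8$, and tracking constants through the double-sum manipulations to land on $\sigma_1 = 8(25\overline m + 6\|L\|^2/\underline m)+1$ and $\sigma_2 = 24B\|A\|_\dagger^2$.
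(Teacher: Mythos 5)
Your proposal follows essentially the same route as the paper's proof, which is carried out for the more general adaptive subroutine {\ABIPP} (of which {\SBIPP} with $\lam_t=1/(2m_t)$ is the one-pass special case) via Lemma~\ref{Lemma:ABIPP} and Proposition~\ref{prop:ABIPP.output}: the inclusion by the same $\varepsilon$-subdifferential manipulation of line~\ref{algo.stat.vit}, strong convexity by the same decomposition, the two-term descent retaining the $c\|A_t(z_t^+-z_t)\|^2$ curvature (the paper's \eqref{eq:test:adap:new} and \eqref{eq:adapt.Delta.Lc.ineq}), and the residual bound with the $c^2\to c$ absorption that you correctly identify as the crux.

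One step needs repair as written. Your displayed descent inequality applies the $\varepsilon$-subgradient inequality to the $(1/2)$-strongly convex subproblem objective while keeping the \emph{full} strong-convexity quadratic and paying only $-\varepsilon_t^+$; that inequality is false in general (e.g.\ for $\psi(x)=\tfrac{\mu}{2}x^2$ at $y=0$ with $v=\sqrt{2\mu\varepsilon}\in\partial_\varepsilon\psi(0)$, the claimed bound fails for large $u$). The correct statement trades modulus for error: with $v\in\partial_\eta\psi(y)$ and $\psi-\tfrac{\xi}{2}\|\cdot\|_Q^2$ convex one only gets $\psi(u)\ge\psi(y)+\inner{v}{u-y}-(1+\tau^{-1})\eta+\tfrac{(1+\tau)^{-1}\xi}{2}\|u-y\|_Q^2$ (the paper's Lemma~\ref{conv:result}), which with $\tau=1$ yields coefficient $\tfrac12+\tfrac18=\tfrac58$ rather than your $\tfrac34$, and $-2\varepsilon_t^+$ rather than $-\varepsilon_t^+$. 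Since the tolerance $\|r_t^+\|^2+2\varepsilon_t^+\le\tfrac18\|z_t^+-z_t\|^2$ still dominates the degraded error terms, the argument goes through and only the absolute constants change, landing on the paper's $\tfrac{1}{8\lam_t^+}\|z_t^+-z_t\|^2+\tfrac{c}{4}\|A_t(z_t^+-z_t)\|^2$ descent; but the corrected lemma is genuinely needed, not just bookkeeping.
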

\begin{proof}
   At the end of Section~\ref{section:Adaptive.ADMM}.
\end{proof}

\vspace{1em}

We now make some remarks about Proposition \ref{main:subroutine:1}.
First, the inclusion~\eqref{prop.B-IPP.inclusion} shows that $(v^+, \delta_+)$ is a residual pair for the point $z^+$. Second, the inequality in \eqref{lemma:norm:residual:Ine} provides a bound on the magnitude of the residual pair $(v^+, \delta_+)$ in terms of a variation of the Lagrangian function
which, in the analysis of the next section, will play the role of a potential function.
Third, the prox stepsize selection in \eqref{choo:lamt} is so as to guarantee that \eqref{lemma:norm:residual:Ine} holds and will play no further role in
the analyses of ADMM's presented in the subsequent sections. Fourth, adaptive ways of choosing the prox stepsizes will be discussed on Subsection \ref{section:Adaptive.ADMM} which will yield ADMM's that do not require knowledge of the parameters $m_t$'s. They are designed so as  to guarantee that a slightly modified version of the inequality in \eqref{lemma:norm:residual:Ine} holds
(i.e., with a different choice of constant $\sigma_1$) and hence enable the arguments and proofs of the subsequent sections to follow through in a similar fashion for ADMMs based on an adaptive stepsize version of {\SBIPP}.

We now comment on the possible implementations of 
line~\ref{algo:BIPP.inexact.subp} of {\SBIPP}.
As already observed in the paragraph following Definition \ref{def:BB},
if an exact solution $z^+_t$ of \eqref{algo:BIPP.inexact.subproblem}
can be computed in closed form, then
$(z_t^+,v_t^+,\varepsilon_t^+)=(z^+_t,0,0)$ fulfills the
requirements of line~\ref{algo:BIPP.inexact.subp}.
Otherwise, assume that $\nabla_{x_t} f (x_1,\ldots,x_B)$ exists for every $x=(x_1,\ldots,x_B) \in {\cal H}$ and is
$\tilde L_{t}$-Lipschitz continuous with respect to the $t$-th block $x_t$.
Then, using this assumption and  Proposition~\ref{main:subroutine:1}(a),
it follows from Proposition~\ref{pro.inexact.sol} with 
$\tilde \mu=1/2$ and $\tilde M =\lam_t(\tilde L_f + c \|A_t\|^2 )
$ that
the accelerate composite gradient variant of \cite{HeMonteiro2015} (see Algorithm 1 thereof) obtains a triple $(z_t^+,v_t^+,\varepsilon_t^{+})$
fulfilling the requirements of  line~\ref{algo:BIPP.inexact.subp} 
in a number of accelerate composite gradient iterations bounded (up to logarithmic terms) by
${\cal O} ( [\lam_t(\tilde L_f + c \|A_t\|^2 )]^{1/2})$  with $\lam_t=1/(2m_t)$.

Finally, as already observed before, {\SBIPP} is a key component that is invoked once in every iteration of the ADMM's presented in subsequent sections. The complexity bounds 
for these ADMMs will be given in terms of ADMM iterations (and hence {\SBIPP} calls)
and will not take into account the complexities of implementing line~\ref{algo:BIPP.inexact.subp}.
The main reason for doing so is due to the different possible ways of solving the block subproblems (e.g., in closed form, by an accelerate composite gradient variant, or by some convex optimization other solver).
Nevertheless, the discussion in the previous paragraph provides ways of estimating the contribution of each block to the overall algorithmic effort.


\vspace{1em}

\subsection{{\SA}: A Static Version of {\DA}}\label{subsection:SA-ADMM}

This subsection presents {\SA}, a static version of our {\DA}, and its main complexity result (Theorem \ref{thm:static.complexity}). The qualifier ``static" is attached because this variant keeps the penalty parameter constant throughout its course. The proof of Theorem \ref{thm:static.complexity} is the focus of Section \ref{subsubsec:subroutine}. We start by elaborating {\SA}.

\setcounter{algorithm}{0}
\begin{algorithm}[H]
\setstretch{1}
\caption{{\SA}}\label{alg:static}
\begin{algorithmic}[1]
\Statex \hskip-1.8em
\textbf{Universal Input:}  $\rho>0$, $\alpha\in [\rho^2, +\infty)$, $C \in [\rho,+\infty)$

\Require $(y^0,q^0,\lam^0, c)
\in {\cal H} \times A(\mathbb{R}^n) 
\times \R_{++}^B\times \R_{++}$
\Ensure $(\hat y, \hat q, \hat v, \hat \delta , \hat \lam)$
\vspace{1em}

\State $T_0=0$, $k=0$ \label{def:T0} 
\For{$i\gets 1,2,\ldots$} \label{star:cycle}

   \Statex \hspace{1.3em}\textcolor{violet}{STEP 1: Block-IPP call} 
   \State $(y^i,v^i,\delta_i,\lam^i)= \SBIPP(y^{i-1},q^{i-1},\lam^{i-1},c)$\label{call:SBIPP}
   \If{$\|v^i\|^2 +\delta_i  \le \rho^2$}\label{cond:vi:stop} \Comment{termination criteria}
    \State $k\gets k+1$, \ $j_k\gets i$\Comment{end of the last epoch}\label{algo:j_k&k.update:1}
        \State  
        $q^i =  q^{i-1} + c (A y^i - b) $  
    \State \Return 
        $(\hat y, \hat q, \hat v, \hat \delta , \hat \lam)=(y^i, q^i, v^i, \delta_i, \lam^i)$\label{algo.sa.return}

    \EndIf

\vspace{1em}
\Statex \hspace{1.3em}\textcolor{violet}{STEP 2: Test for Multiplier Update}
    \State $T_i= {\cal L}_c( y^{i-1};q^{i-1}) - {\cal L}_c(y^i;q^{i-1}) + T_{i-1}$\label{def:T:i}
    
    \If{$\|v^i\|^2 +\delta_i\le C^2$ and $\dfrac{\rho^2}{\alpha(k+1)} \ge \dfrac{T_i}{i}  $}\label{stat:begin.test.cond}
    \State $k\gets k+1$, \ $j_k\gets i$ \Comment{end of epoch $\Ik$} \label{algo:j_k&k.update}
        \State  
        $q^{i} =  q^{i-1} + c (A y^i - b) $  \label{def:qi:chi}
 \label{stat:test.i.qi.ti.updates} \Comment{Multiplier update}
\label{line:update:i}

    \Else
    \State $q^i= q^{i-1}$ \label{sa.qi.update}
   
\EndIf
\EndFor
\end{algorithmic}
\end{algorithm}

We now make comments about {\SA}. The iteration index $i$ counts the number of iterations
of {\SA}, referred to as {\SA} iterations throughout the chapter.
Index $k$ counts the number of
Lagrange multiplier updates performed by  {\SA}.
The index $j_k$ computed either in lines \ref{algo:j_k&k.update:1} or \ref{algo:j_k&k.update} of {\SA} is the iteration index where the $k$-th Lagrange multiplier occurs. It is shown in Theorem~\ref{thm:static.complexity}(a) that the total number of iterations performed by {\SA} is finite, and hence that the index $j_k$ is well-defined.
If the inequality in line~\ref{cond:vi:stop} is satisfied, {\SA} performs the last Lagrange multiplier update and stops in line~\ref{algo.sa.return}. Otherwise, depending on
the test in line~\ref{stat:begin.test.cond}, {\SA}
performs a Lagrange multiplier in line \ref{def:qi:chi},
or leaves it unchanged in line~\ref{sa.qi.update},
and in both cases moves on to the next iteration.

We next define a few concepts that will be used in the discussion and analysis of {\SA}.
Define the $k$-th epoch $\Ik$
as the index set 
\begin{equation}
\label{eq:cy}
{\cal I}_k := \{j_{k-1}+1, \ldots, j_k\},
\end{equation}
with the convention that $j_0=0$, 
and let
\begin{equation}\label{def.variablesTilde}
    (\ty^k,\tq^k,\tilde\lam^k, \tT_k):=(y^{j_k},q^{j_k},\lam^{j_k}, T_{j_k}) \quad \forall k\geq 0 \quad \text{and}\quad (\tv^k,\tilde \delta_k):=(v^{j_k},\delta_{j_k}) \quad \forall k\ge 1.
\end{equation}

We now make three additional remarks  about the logic of {\SA} regarding the prox stepsize and the Lagrange multiplier. First, since the prox stepsize $\lam^+$ output by {\SBIPP} is equal to the prox stepsize $\lam$ input to it, it follows from line 3 of {\SA} that $\lam^i=\lam^{i-1}$, and hence that $\lam^i=\lam^{0}$ for every $i \ge 1$. Second, due to the definition of $j_k$, it follows that $q^i=q^{i-1}$ for every $i \in \{j_{k-1}+1,\ldots, j_k-1\}=\Ik\setminus \{j_k\}$, which implies that 
\begin{equation}\label{eq:q.constant}
    q^{i-1}=q^{j_{k-1}}=\tq^{k-1} \ \ \forall i\in \Ik.
\end{equation}
Moreover, \eqref{def.variablesTilde} and \eqref{eq:q.constant} with $i=j_k$
imply that 
\begin{equation}\label{eq:q.update}
    \tq^{k} = q^{j_k}= q^{j_k-1} + c(Ay^{j_k}-b)=\tq^{k-1} +  c(A\ty^k-b) \quad \forall k\geq 1.
\end{equation}
Noting that
(A.2) implies that $b \in \text{Im}(A)$,
and using the assumption that $\tq^0=q^0 \in \text{Im}(A)$, identity \eqref{eq:q.update}, and
a simple induction argument,
we conclude that
$\tilde q^k \in A(\R^n)$ for every $k \ge 1$.

\vspace{1em}

Before stating the main result of this subsection,  we define the quantities
\begin{gather}
\begin{gathered}
\label{eq:def.Gamma.Static}
{\kappa(C):=\frac{2D_{\Psi}M_{\Psi}+
(2D_{\Psi}+1)(C + C^2+ \nabla_f)}{\bar d \nu^{+}_A}}, \\ 
\Gamma(y^0,q^0;c):=F_{\sup}-F_{\inf} + c\|Ay^0-b\|^2 + \left[\frac{4 (\chisum)}{\alpha c}+ \frac{1}{c}\right]\left(\|q^0\|^2 + \kappa^2(C)\right),
\end{gathered}
\end{gather}
where $(y^0,q^0,\lam^0, c)$ is the input of {\SA}, $(\chione, \chitwo)$ is as in \eqref{eq:sigma1&2}, $M_{\Psi}$ and $\bar{d}$ are as in (A5) and (A6), respectively, $(D_{\Psi}, \nabla_f)$ is as in \eqref{def:damH},  and  $\nu^{+}_A$ is the smallest positive singular value of the nonzero linear operator $A$.

The main iteration-complexity result for {\SA}, whose
proof is given in Section \ref{subsubsec:subroutine}, is stated next.

\begin{thm}[{\SA} Complexity]\label{thm:static.complexity}
Assume that $(\hat y, \hat q, \hat v, \hat \delta , \hat \lam) = {\SA}(y^0,q^0,\lam^0, c)$ for some
$(y^0,q^0,\lam^0, c) \in {\cal H} \times A(\mathbb{R}^n)\times \R_{++}^B \times \R_{++}$ such that
\begin{equation}\label{def:initial:stepsize}
\lam^0_t = \frac{1}{2m_t} \quad \forall t \in \{1,\ldots,B\}.
\end{equation}
 Then, for any
tolerance pair $(\rho,\eta)\in \R^2_{++}$,
the following statements hold for {\SA}:
\begin{enumerate}
\item[(a)] its total  number of iterations (and hence {\SBIPP} calls) is bounded by  
\begin{align}\label{thm:iter.complexity:SADMM}
 \left(\frac{\chisum}{\rho^2}\right)\Gamma(y^0,q^0; c) + 1,
\end{align}
where  $(\chione, \chitwo)$ and $\Gamma(y^0, q^0;c)$ are as in \eqref{eq:sigma1&2} and \eqref{eq:def.Gamma.Static}, respectively. 

\item[(b)] its output  $(\hat y,\hat q, \hat v, \hat \delta,\hat \lam)$ satisfies
$\hat \lam = \lam^0$,
\begin{equation}\label{residual:theorem:1}
\hat v \in \nabla f(\hat y) + \partial_{\hat \varepsilon} \Psi(\hat y)+A^*\hat q   \quad \text{and}\quad \|\hat v\|^2+\hat \varepsilon \le \rho^2,
\end{equation}
and the following bounds
\begin{equation}\label{feasi:theorem:1}
c\|A\hat y-b\| \le 2\max\{\|q^0\|,\kappa(C)\}  \quad \text{and} \quad
\quad \|\hat q\| \le \max\{\|q^0\|,\kappa(C)\};
\end{equation}
\item[(c)] if $
   c\geq 2 \max\{\|q^0\|,\kappa(C)\}/\eta$,
then the output $(\hat y, \hat q, \hat v, \hat \delta,\hat \lam)$ of {\SA}
  is a $(\rho,\eta)$-stationary solution of problem \eqref{initial.problem} according to \eqref{eq:stationarysol}. 
\end{enumerate}
\end{thm}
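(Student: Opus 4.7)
My plan is to derive a potential-function descent estimate from Proposition~\ref{main:subroutine:1}(b), establish a uniform bound $\|\tilde q^k\|\le \kappa(C)$ on the dual iterates at every epoch endpoint, and combine these via a telescoping argument; parts (b) and (c) will come out as corollaries of the intermediate bounds used to prove (a), so I would focus on (a) first.

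For (a), the first key observation is that {\SBIPP} preserves $\lambda$, so hypothesis \eqref{def:initial:stepsize} keeps Proposition~\ref{main:subroutine:1}(b) in force throughout the run; every non-terminal iteration fails the test in line~\ref{cond:vi:stop} and hence satisfies $\|v^i\|^2+\delta_i>\rho^2$. Summing Proposition~\ref{main:subroutine:1}(b) over all iterations and invoking the definition of $T_i$ in line~\ref{def:T:i} yields $(I-1)\rho^2 \le (\sigma_1+c\sigma_2)T_I$, where $I$ is the terminal iteration index. So it suffices to prove $T_I\le \Gamma(y^0,q^0;c)$. Using identity \eqref{eq:q.update} and expanding ${\cal L}_c$ via its quadratic form, I would rewrite
\begin{equation*}
T_I = {\cal L}_c(\tilde y^0;\tilde q^0) - {\cal L}_c(\tilde y^{k^*};\tilde q^{k^*}) + \frac{1}{c}\sum_{k=1}^{k^*}\|\tilde q^k-\tilde q^{k-1}\|^2,
\end{equation*}
where $k^*$ is the number of multiplier updates at termination. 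Given the dual-iterate bound $\|\tilde q^k\|\le \max\{\|q^0\|,\kappa(C)\}$ for all $k$, a standard Young inequality bounds the Lagrangian difference by $F_{\sup}-F_{\inf}+c\|Ay^0-b\|^2+\tfrac{1}{c}(\|q^0\|^2+\kappa^2(C))$. To control the remaining sum, I would combine the lower bound $(j_k-1)\rho^2<(\sigma_1+c\sigma_2)T_{j_k}$ with the test inequality $T_{j_k}\le j_k\rho^2/(\alpha k)$ from line~\ref{stat:begin.test.cond}, which at each test-success $k\le k^*-1$ yields $\alpha k<(\sigma_1+c\sigma_2)j_k/(j_k-1)$, hence $k^* = O((\sigma_1+c\sigma_2)/\alpha)$; then $\sum_{k=1}^{k^*}\|\tilde q^k-\tilde q^{k-1}\|^2 \le 4k^*(\|q^0\|^2+\kappa^2(C))$ produces a contribution of order $\tfrac{(\sigma_1+c\sigma_2)}{\alpha c}(\|q^0\|^2+\kappa^2(C))$ to $T_I$, matching the remaining term in $\Gamma(y^0,q^0;c)$.

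For the dual-iterate bound $\|\tilde q^k\|\le \kappa(C)$, I would apply Proposition~\ref{main:subroutine:1}'s inclusion at $i=j_k$ to obtain $\tilde v^k - \nabla f(\tilde y^k) - A^*\tilde q^k \in \partial_{\tilde\delta_k}\Psi(\tilde y^k)$, and use the $\varepsilon$-subgradient inequality against perturbations of the interior feasible point $\bar x$ from (A6), combined with the Lipschitz continuity of $\Psi$ and the test-enforced bounds $\|\tilde v^k\|\le C$ and $\tilde\delta_k\le C^2$, to bound $\|A^*\tilde q^k\|$; this transfers to $\tilde q^k$ via $\|\tilde q^k\|\le \|A^*\tilde q^k\|/\nu_A^+$, which is valid because $\tilde q^k\in A(\mathbb{R}^n)$ as noted after \eqref{eq:q.update}. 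Parts (b) and (c) are then immediate: $\hat\lambda=\lambda^0$ because {\SBIPP} never modifies $\lambda$; inclusion \eqref{residual:theorem:1} is Proposition~\ref{main:subroutine:1} applied after the terminal multiplier update in line~\ref{algo.sa.return}; $\|\hat v\|^2+\hat\delta\le \rho^2$ is the termination criterion itself; $\|\hat q\|\le \max\{\|q^0\|,\kappa(C)\}$ is the dual-iterate bound at $\tilde q^{k^*}=\hat q$; and $c\|A\hat y-b\|\le 2\max\{\|q^0\|,\kappa(C)\}$ follows from $c(A\hat y-b)=\hat q-\tilde q^{k^*-1}$ and the triangle inequality. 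The hypothesis on $c$ in (c) then forces $\|A\hat y-b\|\le \eta$, completing \eqref{eq:stationarysol}.

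The main obstacle will be establishing the dual-iterate bound $\|\tilde q^k\|\le \kappa(C)$ with the precise constants shown, without assuming $\bar d>D_\Psi$. A naive Cauchy--Schwarz estimate on the unbounded inner product $\langle s, \bar x - \tilde y^k\rangle$ (with $s = \tilde v^k - \nabla f(\tilde y^k) - A^*\tilde q^k$) leaves a factor of $(\bar d - D_\Psi)\|A^*\tilde q^k\|$ on the left-hand side, forcing that spurious assumption. Avoiding it requires a more delicate construction---possibly a perturbation-style approximation of the $\varepsilon$-subgradient by an exact subgradient at a nearby interior point, or a pair of $\varepsilon$-subgradient inequalities at $\bar x$ and $\bar x+\bar d u$ with a carefully chosen direction $u$---that I expect to appear among the technical results in Section~\ref{sec:tech.lagrange.multiplier}.
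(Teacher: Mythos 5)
Your proposal is correct and follows essentially the same route as the paper's proof: the potential $T_i$ with the descent estimate from Proposition~\ref{main:subroutine:1}(b), the decomposition of $T_i$ into a Lagrangian difference plus $\frac{1}{c}\sum_\ell\|\tilde q^\ell-\tilde q^{\ell-1}\|^2$ (Lemma~\ref{lemma:Ti.expression}), the epoch-count bound extracted from the test in line~\ref{stat:begin.test.cond} (Proposition~\ref{lem:number.of.cycles}(a)), and the multiplier bound via the $\varepsilon$-subgradient inequality at the interior point $\bar{\mathrm{x}}$ transferred through $\nu_A^+\|q\|\le\|A^*q\|$ --- the ``delicate construction'' you anticipate is exactly Lemma~\ref{lem:qbounds-2}, which produces the recursive bound $\|p^i\|\le\max\{\|\tilde q^{k-1}\|,\kappa(C)\}$ and hence, by induction, $\|\tilde q^k\|\le\max\{\|q^0\|,\kappa(C)\}$. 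The only cosmetic differences are that the paper obtains the epoch bound by a clean contradiction argument, which yields the exact constant $4$ appearing in $\Gamma(y^0,q^0;c)$ rather than your order-of-magnitude estimate, and that your identity for $T_I$ (written with $\tilde q^{k^*}$ and the sum up to $k^*$) is an equivalent re-expression of the paper's version with $\tilde q^{k^*-1}$ and the sum up to $k^*-1$.
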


We now make some remarks about Theorem \ref{thm:static.complexity}. First, Theorem \ref{thm:static.complexity}(b) implies that {\SA} returns a quintuple $(\hat y, \hat q, \hat v, \hat \delta , \hat \lam)$ satisfying both conditions in \eqref{residual:theorem:1}, but not necessarily the feasibility condition $\|A\hat y-b\|\le \eta$. 
However, Theorem \ref{thm:static.complexity}(c) guarantees that, if $c$ is chosen large enough, i.e., $c = \Omega(\eta^{-1})$,
then the feasibility also holds,
and hence that $(\hat y,\hat q, \hat v,\hat \delta)$ is a $(\rho,\eta)$-stationary solution of \eqref{initial.problem}. 
Second, assuming for simplicity that $q^0=0$, it follows from \eqref{eq:def.Gamma.Static} and \eqref{thm:iter.complexity:SADMM} that the overall complexity of {\SA} is 
\[
{\cal O}\left(\frac{1+c}{\rho^2} \left( 1 + c f_0^2 \right) \right),
\]
where $f_0:=\|Ay^0-b\|$. Moreover,
 if the initial point $y^0$ satisfies $cf_0^2={\cal{O}}(1)$, then the bound further reduces to  ${\cal{O}}((1+c) \rho^{-2})$. 
Third, under the assumption made in  Theorem \ref{thm:static.complexity}(c), i.e., that
$c  = {\Theta}(\eta^{-1})$, then
the above two complexity estimates reduces to 
$ {\cal O}(\eta^{-2}\rho^{-2})$ if $y^0$ is arbitrary and to
${\cal O}(\eta^{-1}\rho^{-2})$ if
$y^0$ satisfies $cf_0^2 = {\cal O}(1)$.

Finally, it is worth discussing the dependence of the complexity bound~\eqref{thm:iter.complexity:SADMM} in terms of number of blocks $B$ only.
It follows from the definition of $\sigma_2$ 
in \eqref{eq:sigma1&2} that $\sigma_2={\Theta}(B)$. This implies that  $\Gamma(y^0,q^0;
c) = {\cal O}
(1+ B/\alpha)$ due to \eqref{eq:def.Gamma.Static}, and hence that
the complexity bound~\eqref{thm:iter.complexity:SADMM} is
${\cal O}(B(1+B\alpha^{-1})$.
Thus, if $\alpha$ is chosen to be
$\alpha=\Omega(B)$ then the dependence of
\eqref{thm:iter.complexity:SADMM} in terms of $B$ only is ${\cal O}(B)$.

In Section \ref{subsection:DA:ADMM}, we present an ADMM variant, namely {\DA}, which gradually increases the penalty parameter and achieves the complexity
bound $ {\cal O}(\eta^{-1}\rho^{-2})$ of the previous paragraph regardless of the choice of the initial point $y^0$.
Specifically,
{\DA}
 repeatedly invokes
{\SA}  using a warm-start strategy, i.e.,
if $c$ is the  penalty parameter used in the previous {\SA} call and $(\hat y, \hat q, \hat v, \hat \delta , \hat \lam)$ denotes its output, then the current {\SA} call uses 
$(\hat y,\hat q,\hat \lam, 2c)$
as input, and hence with penalty parameter
multiplied by two.

\vspace{1em}

\section{The Proof of {\SA}'s Complexity Theorem (Theorem \ref{thm:static.complexity})}
\label{subsubsec:subroutine}

This section gives the proof 
of Theorem~\ref{thm:static.complexity}. Its first result shows that every iterate $(y^i,v^i,\delta_i,\lam^i)$ of {\SA} satisfies the stationary inclusion $v^i  \in \nabla f (y^i)+ \partial_{\delta_i} \Psi(y^i) +
\text{Im}(A^*)$
and derives a bound on the residual error
$(v^i,\delta_i)$.

\begin{lemma}\label{lemma:SADMM.BIPP}
The following statements about {\SA} hold:
\begin{itemize}
    \item[(a)]
    for every iteration index $i \ge 1$,
\begin{equation}\label{eq.SA.stat.inclusion}
    v^i  \in \nabla f (y^i)+ \partial_{\delta_i} \Psi(y^i) +
A^*[q^{i-1} + c(Ay^i-b)];
\end{equation}
\item[(b)] if the initial prox stepsize $\lam^0$ is chosen as in \eqref{def:initial:stepsize}, then
for every iteration index $i \ge 1$,
\begin{align}\label{claim:bounded:iterations}
    \frac{1}{\chisum}(\|v^i\|^2+\delta_i) \le {\cal L}_c( y^{i-1};q^{i-1}) - {\cal L}_c(y^i;q^{i-1}) =  T_i-T_{i-1}.
\end{align}
\end{itemize}
\end{lemma}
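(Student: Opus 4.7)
The plan is to derive both parts as nearly immediate consequences of Proposition~\ref{main:subroutine:1} applied to the \SBIPP\ call in line~\ref{call:SBIPP} of \SA, combined with the bookkeeping observation that the prox stepsize $\lambda^i$ is invariant across iterations.

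First, for part (a), I would fix an arbitrary iteration index $i \ge 1$ and recall from line~\ref{call:SBIPP} that $(y^i, v^i, \delta_i, \lambda^i) = \SBIPP(y^{i-1}, q^{i-1}, \lambda^{i-1}, c)$. The input quadruple satisfies the hypotheses of Proposition~\ref{main:subroutine:1}: $y^{i-1} \in \mathcal{H}$ (this can be established by an easy induction using $y^0 \in \mathcal{H}$ and the fact that \SBIPP\ produces iterates in $\mathcal{H}$), $q^{i-1} \in A(\R^n)$ by the remark containing~\eqref{eq:q.update} (extended to cover the no-update branch in line~\ref{sa.qi.update}), and $\lambda^{i-1} \in \R^B_{++}$, $c \in \R_{++}$ by construction. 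Inclusion~\eqref{prop.B-IPP.inclusion} from Proposition~\ref{main:subroutine:1} then gives
\[
v^i \in \nabla f(y^i) + \partial_{\delta_i} \Psi(y^i) + A^*\bigl[q^{i-1} + c(Ay^i - b)\bigr],
\]
which is exactly \eqref{eq.SA.stat.inclusion}.

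For part (b), I would first verify that the prox-stepsize hypothesis \eqref{choo:lamt} of Proposition~\ref{main:subroutine:1} holds at every iteration. The key observation (already highlighted in the remarks following \SA) is that \SBIPP\ outputs $\lambda^+ = \lambda$, so by a trivial induction $\lambda^i = \lambda^0$ for all $i \ge 0$. Hence the assumption \eqref{def:initial:stepsize} that $\lambda^0_t = 1/(2m_t)$ implies $\lambda^{i-1}_t = 1/(2m_t)$ for every $t$, which is precisely \eqref{choo:lamt}. Applying part (b) of Proposition~\ref{main:subroutine:1} to the \SBIPP\ call then yields
\[
\|v^i\|^2 + \delta_i \;\le\; (\sigma_1 + c\sigma_2)\bigl[\mathcal{L}_c(y^{i-1};q^{i-1}) - \mathcal{L}_c(y^i;q^{i-1})\bigr],
\]
which, after dividing by $\sigma_1 + c\sigma_2 > 0$, gives the inequality part of \eqref{claim:bounded:iterations}. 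The equality $\mathcal{L}_c(y^{i-1};q^{i-1}) - \mathcal{L}_c(y^i;q^{i-1}) = T_i - T_{i-1}$ follows by simply rearranging the recursive definition of $T_i$ in line~\ref{def:T:i} of \SA.

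Because both parts are direct corollaries of Proposition~\ref{main:subroutine:1}, there is no real technical obstacle; the only care needed is to verify that each hypothesis of that proposition is satisfied at every iteration of \SA, most notably the invariance of the prox stepsize and the fact that $q^{i-1}$ remains in $A(\R^n)$ under both branches of the multiplier-update logic.
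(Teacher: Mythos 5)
Your proposal is correct and follows essentially the same route as the paper's own proof: both parts are obtained by applying Proposition~\ref{main:subroutine:1} to the \SBIPP{} call in line~\ref{call:SBIPP}, using the inclusion \eqref{prop.B-IPP.inclusion} for part (a) and the stepsize invariance $\lam^i=\lam^0$ together with \eqref{lemma:norm:residual:Ine} for part (b). The extra hypothesis checks you include (that $y^{i-1}\in{\cal H}$ and $q^{i-1}\in A(\R^n)$) are sound elaborations the paper leaves implicit.
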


\begin{proof}
It follows from line \ref{call:SBIPP} of {\SA}, Proposition~\ref{main:subroutine:1} with $(z,p,\lam,c)=(y^{i-1},q^{i-1},\lam^{i-1},c)$ and $(z^+,v^+,\delta_+,\lam^+,)=(y^{i},v^{i},\delta_i,\lam^{i})$, and inclusion \eqref{prop.B-IPP.inclusion} that \eqref{eq.SA.stat.inclusion} holds. It also follows that $\lam^i=\lam^{i-1}$, which easily implies that $\lam^j=\lam^0$ for every iteration index $j$ generated {\SA}. Thus, the previous fact and inequality \eqref{lemma:norm:residual:Ine} with $(z^+,p,v^+,\delta_+)=(y^{i},q^{i-1},v^{i},\delta_i)$ imply that \eqref{claim:bounded:iterations} holds. 
\end{proof}

\vspace{1em}

We make make some remarks about Lemma \ref{lemma:SADMM.BIPP}.
First,  \eqref{claim:bounded:iterations} implies that:
$\{T_i\}$ is nondecreasing;
and,
if $T_i=T_{i-1}$, then $(v^i,\delta_i)=(\mathbf{0},0)$,
which together with ~\eqref{eq.SA.stat.inclusion}, implies that the algorithm stops in line~\ref{algo.sa.return} with an exact stationary point for problem~\eqref{initial.problem}. 
In view of this remark, it is natural to view 
$\{T_i\}$ as a potential sequence.
Second,
if  $\{T_i\}$ is bounded, \eqref{claim:bounded:iterations} immediately implies that 
the quantity $\|v^i\|^2+\delta_i$ converges to zero, and hence that $y^i$ eventually becomes a near stationary point for problem~\eqref{initial.problem}, again
in view of ~\eqref{eq.SA.stat.inclusion}.
A major effort of our analysis will be to show that $\{T_i\}$ is  bounded.

With the above goal in mind, the following result gives an  expression for $T_i$ that plays an important role in our analysis.

\begin{lemma}\label{lemma:Ti.expression}
If $i$ is an iteration index generated by {\SA} such that $i\in \Ik$, then
\begin{equation}\label{eq:Ti.value}
T_i = \left [ {\cal L}_c(\ty^{0};\tq^{0}) - {\cal L}_c(y^i;\tq^{k-1}) \right ]+\frac{1}{c}\sum_{\ell=1}^{k-1}\|\tq^\ell-\tq^{\ell-1}\|^2.
\end{equation}   
\end{lemma}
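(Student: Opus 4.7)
The plan is to unroll the recursion for $T_i$ into a telescoping sum, split it along the epoch boundaries $j_1 < j_2 < \cdots < j_{k-1} < i$, and then use the multiplier update rule to introduce the $\|\tilde q^{\ell}-\tilde q^{\ell-1}\|^2/c$ correction terms.

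First I would iterate the defining recursion on line~\ref{def:T:i} (using $T_0=0$) to get
\[
T_i \;=\; \sum_{j=1}^{i}\bigl[{\cal L}_c(y^{j-1};q^{j-1}) - {\cal L}_c(y^j;q^{j-1})\bigr].
\]
Next, since $i\in\Ik$, I would partition $\{1,\ldots,i\}$ into the complete epochs ${\cal I}_1,\ldots,{\cal I}_{k-1}$ and the partial epoch $\{j_{k-1}+1,\ldots,i\}\subseteq \Ik$. Invoking \eqref{eq:q.constant}, the dual variable is frozen at $\tq^{\ell-1}$ throughout ${\cal I}_\ell$, so the primal-only telescoping inside each epoch yields (using the notational convention in \eqref{def.variablesTilde})
\[
T_i \;=\; \sum_{\ell=1}^{k-1}\bigl[{\cal L}_c(\ty^{\ell-1};\tq^{\ell-1}) - {\cal L}_c(\ty^{\ell};\tq^{\ell-1})\bigr]
\;+\; \bigl[{\cal L}_c(\ty^{k-1};\tq^{k-1}) - {\cal L}_c(y^i;\tq^{k-1})\bigr].
\]

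The key remaining step is to convert ${\cal L}_c(\ty^{\ell};\tq^{\ell-1})$ into ${\cal L}_c(\ty^{\ell};\tq^{\ell})$ so that the sum telescopes across epochs as well. Since ${\cal L}_c(y;\cdot)$ is affine in its dual argument with gradient $Ay-b$, and since the multiplier update \eqref{eq:q.update} gives $A\ty^\ell-b = (\tq^\ell-\tq^{\ell-1})/c$, a one-line computation yields
\[
{\cal L}_c(\ty^{\ell};\tq^{\ell-1}) \;=\; {\cal L}_c(\ty^{\ell};\tq^{\ell}) + \bigl\langle \tq^{\ell-1}-\tq^{\ell},\, A\ty^{\ell}-b\bigr\rangle
\;=\; {\cal L}_c(\ty^{\ell};\tq^{\ell}) - \tfrac{1}{c}\|\tq^{\ell}-\tq^{\ell-1}\|^2.
\]
Substituting this into the bracketed differences and telescoping the resulting sum $\sum_{\ell=1}^{k-1}[{\cal L}_c(\ty^{\ell-1};\tq^{\ell-1})-{\cal L}_c(\ty^{\ell};\tq^{\ell})]$ collapses it to ${\cal L}_c(\ty^0;\tq^0) - {\cal L}_c(\ty^{k-1};\tq^{k-1})$. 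This term then cancels the matching ${\cal L}_c(\ty^{k-1};\tq^{k-1})$ from the partial epoch, leaving the claimed identity \eqref{eq:Ti.value}.

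The only conceptual subtlety is the accounting of indices at the boundaries—specifically that $y^{j_{\ell-1}}=\ty^{\ell-1}$ and $y^{j_\ell}=\ty^\ell$ by \eqref{def.variablesTilde}, and that the partial epoch may consist of just the first index when $k=1$ (in which case the sum over $\ell=1,\ldots,k-1$ is empty and the formula reduces to $T_i = {\cal L}_c(\ty^0;\tq^0) - {\cal L}_c(y^i;\tq^0)$, matching the within-epoch telescoping directly). I do not expect any step to be a real obstacle; the whole argument is just careful bookkeeping plus the single affine-in-dual identity above.
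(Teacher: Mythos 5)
Your proposal is correct and is essentially the same argument as the paper's: both unroll $T_i$ into a telescoping sum, exploit that $q^{j}$ is frozen within each epoch, and generate the correction terms $\tfrac{1}{c}\|\tq^{\ell}-\tq^{\ell-1}\|^2$ from the identity ${\cal L}_c(y;q)-{\cal L}_c(y;q')=\langle q-q',Ay-b\rangle$ combined with the update $\tq^{\ell}=\tq^{\ell-1}+c(A\ty^{\ell}-b)$. The only difference is organizational (you partition by epochs and telescope within and then across them, whereas the paper splits the global sum into a dual-variation part and a diagonally telescoping part), and the two bookkeeping schemes yield identical computations.
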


\begin{proof}
We first note that
\begin{align}\label{eq.Ti.sum.1}
T_i -T_1 &= \sum_{j=2}^i (T_j-T_{j-1}) =
\sum_{j=1}^{i-1} (T_{j+1}-T_{j})=
\sum_{j=1}^{i-1}
 \left[ \Lc (y^{j};q^{j}) - \Lc(y^{j+1};q^{j})\right] 
 \nonumber \\
 &= \sum_{j=1}^{i-1} \left[ \Lc (y^{j};q^{j}) - \Lc (y^{j};q^{j-1})\right] +  \sum_{j=1}^{i-1}\left[\Lc(y^{j};q^{j-1}) - \Lc(y^{j+1};q^{j})\right]. 
\end{align}
Moreover, using the definition of $T_i$ with $i=1$ (see line \ref{def:T:i} of {\SA}), the fact that $q^{i-1}=\tq^{k-1}$ due to \eqref{def.variablesTilde} and simple algebra, we have
\begin{align}\label{eq.Ti.sum.2}
T_1 &+ \sum_{j=1}^{i-1} \left[\Lc(y^{j};q^{j-1}) - \Lc(y^{j+1};q^{j})\right] = T_1+ \Lc (y^1;q^0) - \Lc(y^i;q^{i-1})\nonumber \\
&= [\Lc(y^0;q^0) - \Lc(y^1;q^0)] + 
[\Lc (y^1;q^0) - \Lc(y^i;\tq^{k-1})] = \Lc(y^0;q^0) - \Lc(y^i;\tq^{k-1}).
\end{align}
Using the definition of the Lagrangian function (see definition in~\eqref{DP:AL:F}), relations \eqref{eq:q.update} and \eqref{eq:q.constant}, we conclude that for any $\ell \le k$,
\[
\Lc (y^{j};q^{j}) - \Lc (y^{j};q^{j-1}) \overset{\eqref{DP:AL:F}}=
\inner{Ay^j-b}{q^j-q^{j-1}} \overset{\eqref{eq:q.update}, \eqref{eq:q.constant}}= \left\{
\begin{array}{ll}
0 & \mbox{, if $j\in \Il\setminus \{j_\ell\}$;} \\
\dfrac{\|\tq^{\ell}-\tq^{\ell-1}\|^2}{c} & \mbox{, if $j=j_{\ell}$,}
\end{array}
\right.
\]
and hence that
\begin{equation*}
\sum_{j=1}^{i-1} \left[ \Lc (y^{j};q^{j}) - \Lc (y^{j};q^{j-1})\right] = \frac{1}{ c}\sum_{\ell=1}^{k-1}\|\tq^{\ell}-\tq^{\ell-1}\|^2.
\end{equation*}
Identity~\eqref{eq:Ti.value} now follows by combining the above identity with the ones in \eqref{eq.Ti.sum.1} and \eqref{eq.Ti.sum.2}.
\end{proof}

\vspace{1em}
The next technical  result will be used to establish an upper bound on the first term of the right hand side of \eqref{eq:Ti.value}.

\begin{lemma}\label{lem:lowerbound.L}
For any given 
$c >0$ and pairs
$(u,p) \in  {\cal H} \times \mathbb{R}^l$ and
$(\tilde u,\tilde p) \in  {\cal H} \times \mathbb{R}^l$, we have
\begin{equation}\label{def:diff:lagrangian}
{\cal L}_{c}(u; p)
- {\cal L}_{c}(\tu ; \tp )
\le F_{\sup}-F_{\inf}
+ c\|Au-b\|^2 +\frac{1}{c}\max\{\|p\|,\|\tilde p\|\}^2
\end{equation}
where  $(F_{\sup},F_{\inf} )$ is as in \eqref{def:damH}.
\end{lemma}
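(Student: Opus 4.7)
The plan is to expand the Lagrangian difference directly from the definition~\eqref{DP:AL:F}, then dispose of the three pieces (the $F$-difference, the linear penalty terms, and the quadratic penalty terms) one at a time. Writing out the left-hand side of~\eqref{def:diff:lagrangian} gives
\[
F(u)-F(\tu) \;+\; \inner{p}{Au-b} \;+\; \frac{c}{2}\|Au-b\|^2 \;-\; \inner{\tp}{A\tu-b} \;-\; \frac{c}{2}\|A\tu-b\|^2.
\]
Since $u,\tu\in{\cal H}$, assumption (A1)--(A5) and the definition of $F_{\sup},F_{\inf}$ in~\eqref{def:damH} immediately yield $F(u)-F(\tu)\le F_{\sup}-F_{\inf}$, which accounts for the first term of the right-hand side of~\eqref{def:diff:lagrangian}.

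Next I would handle the two remaining chunks using Young's inequality and completion of the square, respectively. For the $u$-chunk, Young's inequality with parameter $c$ gives
\[
\inner{p}{Au-b} \;\le\; \frac{1}{2c}\|p\|^2 + \frac{c}{2}\|Au-b\|^2,
\]
so that
\[
\inner{p}{Au-b}+\frac{c}{2}\|Au-b\|^2 \;\le\; \frac{1}{2c}\|p\|^2 + c\|Au-b\|^2,
\]
which produces the $c\|Au-b\|^2$ term of~\eqref{def:diff:lagrangian}. For the $\tu$-chunk, I would complete the square in $v:=A\tu-b$:
\[
-\inner{\tp}{A\tu-b}-\frac{c}{2}\|A\tu-b\|^2
\;=\; -\frac{c}{2}\Bigl\|v+\tfrac{\tp}{c}\Bigr\|^2 + \frac{1}{2c}\|\tp\|^2
\;\le\; \frac{1}{2c}\|\tp\|^2,
\]
thereby avoiding any need to bound $\|A\tu-b\|$ itself.

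Adding the three bounds and using $\tfrac{1}{2c}\|p\|^2+\tfrac{1}{2c}\|\tp\|^2\le \tfrac{1}{c}\max\{\|p\|,\|\tp\|\}^2$ delivers exactly the right-hand side of~\eqref{def:diff:lagrangian}. This is essentially a one-line argument once the decomposition is made; the only potential wrinkle is recognizing that the $\tu$-chunk should be treated by completion of the square rather than Young's inequality, since a Young-type bound would leave a leftover $\|A\tu-b\|^2$ term that is not controlled by any quantity appearing on the right-hand side. With completion of the square this term is simply discarded (nonpositive), and the bound follows.
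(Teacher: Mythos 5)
Your proof is correct and follows essentially the same route as the paper's: the paper also bounds the $u$-part by $F_{\sup}$ plus a Cauchy--Schwarz/Young estimate yielding $c\|Au-b\|^2+\tfrac{1}{2c}\|p\|^2$, and bounds the $\tilde u$-part from below by $F_{\inf}-\tfrac{1}{2c}\|\tilde p\|^2$ via exactly the completion-of-square identity you use (written there with $\sqrt{c}$ factors). The decomposition, the handling of each chunk, and the final $\max$ step all coincide.
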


\begin{proof}
Using the definitions of ${\cal L}_c(\cdot\,;\,\cdot)$ and $F_{\inf}$ as in \eqref{DP:AL:F} and \eqref{def:damH}, respectively, we have
\begin{align*}
{\cal L}_{c}(\tu ; \tp)-F_{\inf} &\overset{\eqref{def:damH}}\geq  {\cal L}_{c}(\tu ; \tp)-(f+h)(\tu) \\
&\overset{\eqref{DP:AL:F}}=\langle \tp, A \tu-b\rangle +\frac{c}{2}\|A \tu-b\|^2=\frac{1}{2}\left\|\frac{\tp}{\sqrt{c}}+\sqrt{c}(A \tu-b)\right\|^2-\frac{\|\tp\|^2}{2c} \ge -\frac{\|\tp \|^2}{2c}.
\end{align*}
On the other hand, using the definitions of ${\cal L}_{c}(\, \cdot \,;\, \cdot \, )$  and $F_{\sup}$ as in \eqref{DP:AL:F} and \eqref{def:damH},
respectively, and the Cauchy-Schwarz inequality, we have
\begin{align*}
{\cal L}_{c}(u; p) - F_{\sup} 
&\overset{\eqref{def:damH}}\le {\cal L}_{c}(u; p)-(f+h)(u)  \overset{\eqref{DP:AL:F}}= \langle p, A u -b\rangle +\frac{c}{2}\|A u-b\|^2\\
&\le \|p\|\|A u-b\|+\frac{c}{2}\|A u-b\|^2 .
\end{align*}
 Combining the above two relations we then conclude that \eqref{def:diff:lagrangian} holds.
\end{proof}

\vspace{1em}

The following result shows that the sequence $\{T_i\}$ generated by {\SA} is bounded.

\begin{prop}\label{lem:number.of.cycles}
The following statements about {\SA} hold:
\begin{itemize}
    \item[(a)] 
    its total
    number ${\mK}$ of epochs is bounded by $\lceil  (\chione+c\chitwo)/\alpha\rceil$
where $\chione$ and $\chitwo$ are as in \eqref{eq:sigma1&2};
\item[(b)]
for every iteration index $i$, 
we have
$T_i \le \Gamma_{\mK}(y^0;c)$, where
\begin{equation}\label{pre:Ti}
 \Gamma_{\mK}(y^0;c) := F_{\sup}-F_{\inf}
+ c\|A\ty^0-b\|^2 +\frac{Q_{\mK}^2}{c}+\frac{(\chisum)F_{\mK}^2}{c\alpha},
\end{equation}
and
\begin{equation}\label{eq:def.Qk.Fk}
Q_{\mK} := \max \left\{ \|\tilde q^k\| : k \in \{0,\ldots,{\mK}-1\} \right\},  \quad F_{\mK} := \max \left\{ \| \tq^k-\tq^{k-1} \| : k \in \{1,\ldots,{\mK}-1\} \right\};
\end{equation}
\item[(c)]
 the number of iterations  performed by {\SA} is bounded by
\begin{equation}\label{total:complexity:E}
1+ \left( \frac{\chisum}{\rho^2} \right) \Gamma_{\mK}(y^0;c).
\end{equation}

\end{itemize} 
\end{prop}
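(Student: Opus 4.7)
My plan is to prove parts (a), (b), and (c) in order, with (a) providing the key epoch bound that feeds into (b), which in turn yields (c) by a telescoping argument on the potential $\{T_i\}$.

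For part (a), I first observe that at any iteration $i$ prior to termination, the stopping criterion in line~\ref{cond:vi:stop} has failed, so $\|v^i\|^2 + \delta_i > \rho^2$. Lemma~\ref{lemma:SADMM.BIPP}(b) then gives $T_i - T_{i-1} > \rho^2/(\chisum)$, and telescoping yields $T_i > i\rho^2/(\chisum)$. Now consider an index $i = j_m$ that ends a regular (i.e., non-terminal) epoch $m \in \{1, \ldots, \mK - 1\}$. At this iteration the test in line~\ref{stat:begin.test.cond} evaluated to true with the pre-increment counter equal to $m - 1$, so $\rho^2/(\alpha m) \ge T_{j_m}/j_m$. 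Combining with the lower bound on $T_{j_m}$ gives $\rho^2/(\alpha m) > \rho^2/(\chisum)$, hence $m < (\chisum)/\alpha$. Applying this to $m = \mK - 1$ (the case $\mK = 1$ being trivial since $\sigma_1,\sigma_2,c > 0$) yields $\mK \le \lceil (\chisum)/\alpha \rceil$.

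For part (b), fix an iteration index $i \in \Ik$ for some $k \in \{1, \ldots, \mK\}$. Lemma~\ref{lemma:Ti.expression} expresses $T_i$ as a Lagrangian gap plus a sum of squared multiplier jumps. I bound the Lagrangian gap by invoking Lemma~\ref{lem:lowerbound.L} with $(u, p) = (\ty^0, \tq^0)$ and $(\tilde u, \tilde p) = (y^i, \tq^{k-1})$; since $\{\|\tq^0\|, \|\tq^{k-1}\|\} \subseteq \{\|\tq^\ell\| : \ell \in \{0, \ldots, \mK - 1\}\}$, both norms are controlled by $Q_{\mK}$, producing the first three terms of $\Gamma_{\mK}(y^0; c)$ in \eqref{pre:Ti}. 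Each term in the sum of jumps is bounded by $F_{\mK}^2$, and since $k - 1 \le \mK - 1 \le \lceil (\chisum)/\alpha \rceil - 1 \le (\chisum)/\alpha$ by part (a), the full sum is at most $(\chisum) F_{\mK}^2/\alpha$, which after dividing by $c$ recovers the last term of $\Gamma_{\mK}(y^0; c)$.

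Part (c) then follows directly. Let $N$ denote the total number of iterations performed by {\SA}. For each $i \in \{1, \ldots, N-1\}$ the stopping criterion has not yet been triggered, so as in (a) we have $T_i - T_{i-1} \ge \rho^2/(\chisum)$; telescoping gives $T_{N-1} \ge (N-1)\rho^2/(\chisum)$. Combining with the upper bound $T_{N-1} \le \Gamma_{\mK}(y^0; c)$ from part (b) yields $N \le 1 + (\chisum) \Gamma_{\mK}(y^0; c)/\rho^2$, which is exactly \eqref{total:complexity:E}.

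I expect the main delicate point to be the bookkeeping of the epoch counter against the ceiling in part (a), and then ensuring the bound $\mK - 1 \le (\chisum)/\alpha$ from part (b) is tight enough to extract the factor $(\chisum)/(c\alpha)$ in $\Gamma_{\mK}(y^0; c)$ without an off-by-one. The elementary identity $\lceil x \rceil - 1 \le x$ for every $x \ge 0$ is what makes these three parts fit together cleanly.
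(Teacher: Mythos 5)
Your proposal is correct and follows essentially the same route as the paper: part (a) combines the lower bound $T_{j_m}-T_{j_m-1}>\rho^2/(\chisum)$ from Lemma~\ref{lemma:SADMM.BIPP}(b) (telescoped) with the epoch test $\rho^2/(\alpha m)\ge T_{j_m}/j_m$, part (b) combines Lemma~\ref{lemma:Ti.expression} with Lemma~\ref{lem:lowerbound.L} and the epoch count from (a), and part (c) telescopes the potential against the failed stopping criterion — exactly the paper's chain of inequalities, merely stated directly rather than by contradiction in (a) and (c), and for a general epoch $k$ rather than via monotonicity of $\{T_i\}$ in (b). These are cosmetic differences; the decomposition, key lemmas, and bookkeeping (including the $\lceil x\rceil-1\le x$ step) all match.
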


\begin{proof}
(a) Assume for the sake of contradiction that {\SA} generates an epoch ${\mathcal{I}}_{K}$ such that $K > \lceil  (\chione+c\chitwo)/\alpha\rceil$, and hence $K \ge 2$. Using the
fact that $j_{K-1}$ is the last index of ${\cal I}_{K-1}$ and
noting the
epoch termination criteria
in line \ref{stat:begin.test.cond} of {\SA}, we then conclude that
$\tilde T_{K-1}/j_{K-1}\leq \rho^2/K$.
Also, since {\SA} did not terminate during epochs ${\cal I}_1,\ldots,{\cal I}_{K-1}$, it follows from its termination criterion in line~\ref{cond:vi:stop} that 
$\|v^i\|^2+\delta_i > \rho^2$ for every
iteration $i \le j_{K-1}$.
These two previous observations, \eqref{claim:bounded:iterations} with $i\in \{1,\ldots ,j_{K-1}\}$, the facts that $T_0=0$ by definition and
$T_{j_{K-1}}=\tilde T_{K-1}$ due to \eqref{def.variablesTilde}, imply that
\begin{align*}
\rho^2 < \frac{1}{j_{K-1}}\sum_{i=1}^{j_{K-1}}(\|v^i\|^2+\delta_i) \overset{\eqref{claim:bounded:iterations}}\leq \frac{\chisum}{j_{K-1}}\sum_{i=1}^{j_{K-1}}(T_{i}-T_{i-1}) =\frac{(\sigma_1+ c \sigma_2)\tilde T_{K-1}}{j_{K-1}}\leq \frac{(\sigma_1+ c \sigma_2)}{\alpha K} \rho^2.
\end{align*}
Since this inequality and the assumption (for the contradiction)  that  $K >\lceil (\sigma_1+c\sigma_2)/\alpha\rceil$
yield an immediate contradiction, the conclusion of the statement follows.

(b)  Since $\{T_i\} $ is nondecreasing, it suffices to show that $T_i \le \Gamma_{\mK}(y^0;c)$ holds for any $i\in {\cal I}_{\mK}$, where $\mK$ is the total number of epochs of {\SA} (see statement (a)). It follows from the definition of $Q_{\mK}$, and Lemma~\ref{lem:lowerbound.L} with $(u,p)=(\ty^{0};\tq^{0})$ and $(\tilde u,\tilde p)=(y^i;\tq^{\mK-1})$, that
\[
{\cal L}_c(\ty^{0};\tq^{0}) - {\cal L}_c(y^i;\tq^{\mK-1})\le F_{\sup}-F_{\inf}
+ c\|A\ty^0-b\|^2 +\frac{1}{c}Q_{\mK}^2.
\]
Now, using the definition of $F_{\mK}$ as in~\eqref{eq:def.Qk.Fk}, we have that
\[
 \frac{1}{ c}\sum_{j=1}^{\mK-1}\|\tq^j-\tq^{j-1}\|^2\leq  \frac{(\mK-1)}{c}F_{\mK}^2\leq       \frac{ (\chisum)}{c\alpha}F_{\mK}^2,
\]
where the last inequality follows from the fact that $\mK -1 \le (\chisum)/\alpha $  due to statement (a). 
The inequality $T_i \le \Gamma_{\mK}(y^0;c)$ now follows from the two inequalities above and identity \eqref{eq:Ti.value} with $k=\mK$.

(c) Assume by contradiction that there exists an iteration index $i$ generated by {\SA} such that
\begin{equation}\label{prel.contr:assum:i}
i > \left(\frac{\chisum}{\rho^2}\right)\Gamma_{\mK}(y^0;c) + 1.
\end{equation}
Since {\SA} does not stop at any iteration index smaller than $i$,  the stopping criterion in line \ref{cond:vi:stop} is violated at these iterations, i.e.,
$
\|v^j\|^2 +\delta_j > \rho^2
$ for every $j \le i-1$.
Hence, it follows from \eqref{claim:bounded:iterations}, the previous inequality, the fact that $T_0=0$ due to line \ref{def:T0} of {\SA}, and statement (b) that
\[
\frac{(i-1) \rho^2}{\chisum} < \frac{1}{\chisum}\sum_{j=1}^{i-1}(\|v^j\|^2+\delta_j) \le \sum_{j=1}^{i-1}(T_j-T_{j-1}) = T_{i-1}-T_0 \le T_i \leq  \Gamma_{\mK}(y^0,c),
\]
which contradicts \eqref{prel.contr:assum:i}.
Thus, statement (c)  holds.
\end{proof}




\vspace{1em}

We now make some remarks about Lemma~\ref{lem:number.of.cycles}.
First, Lemma~\ref{lem:number.of.cycles}(a) shows that the number of epochs depends linearly on $c$.
Second, Lemma~\ref{lem:number.of.cycles}(c) shows that the total number of iterations of
{\SA} is bounded
but
the derived bound is given in terms of the quantities $Q_{\mK}$ and $F_{\mK}$ in
\eqref{eq:def.Qk.Fk}, both of which depend on the magnitude of the sequence of generated Lagrange multipliers $\{\tilde q_k : k=1,\ldots,E\}$.
Hence, the bound in~\eqref{total:complexity:E} is algorithmic dependent in that it depends on the sequence $\{\tilde q^k\}$ generated by {\SA}.

In what follows, we derive a bound on the total number of iterations performed by {\SA} that depends only on the instance of \eqref{initial.problem} under consideration.
With this goal in mind,
the following result provides a uniform bound on the sequence of  Lagrange multipliers generated by {\SA} that depends only on
the instance of \eqref{initial.problem}.

\begin{lemma}\label{bounds:q:f}
    The following statements about {\SA} hold:
    \begin{itemize}
        \item[(a)] it holds that
        \begin{equation}\label{eq:Lag.Feas.Bound.2}
        \|\tq^k\| \leq \max\{\|q^0\|,\kappa (C)\}
        , \ \ \ \text{$\forall$ $k \in \{1,\ldots, {\mK}\}$} ;
         \end{equation}
        \item[(b)] if $i$ is an iteration index such that $\|v^i\|^2+\delta_i\leq C^2$, then
        \[
        c\|Ay^i-b\| \le 2 \max\{\|q^0\|,\kappa (C)\} ;
        \]

     \item[(c)] it  holds that
        \begin{equation}\label{bound:f:final}
        \|\tq^k - \tq^{k-1}\| \le 2 \max\{\|q^0\|,\kappa (C)\} 
        \ \ \ \text{$\forall$ $k \in \{1,\ldots, {\mK}\}$}.
        \end{equation}
    \end{itemize}
\end{lemma}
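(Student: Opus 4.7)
The plan is to establish all three parts through a single interior-point argument on the $\varepsilon$-subdifferential, proving (a) by induction on $k$ and deducing (b) and (c) as consequences. A preliminary observation I will use throughout is that $\tq^k \in \mathrm{Im}(A)$ for every $k \ge 0$ (by an easy induction from $q^0 \in A(\R^n)$, the update $\tq^k = \tq^{k-1}+c(A\ty^k-b)$, and $b \in \mathrm{Im}(A)$ from (A2)), so $\|A^*\tq^k\| \ge \nu_A^+ \|\tq^k\|$; the same bound will apply to any would-be multiplier $p := q^{i-1}+c(Ay^i-b)$.

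For part (a), I will set up a dichotomy: either $\|\tq^k\| \le \|\tq^{k-1}\|$ (closing the induction step immediately), or $\|\tq^k\| \le \kappa(C)$ directly. The base $k=0$ is trivial since $\tq^0 = q^0$. To handle the second case at $i=j_k$, I will first note that $\|v^{j_k}\|^2 + \delta_{j_k} \le C^2$: this holds either through the stopping rule in line~\ref{cond:vi:stop} (using $C \ge \rho$) or through the explicit test in line~\ref{stat:begin.test.cond}. Lemma~\ref{lemma:SADMM.BIPP}(a) then supplies $\xi \in \partial_{\delta_{j_k}} \Psi(\ty^k)$ with $A^*\tq^k = v^{j_k} - \nabla f(\ty^k) - \xi$. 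The central step will be applying the $\delta_{j_k}$-subdifferential inequality at the test point $w := \bar{\mathrm{x}} - \bar d\, A^*\tq^k / \|A^*\tq^k\|$, which lies in $\mathcal{H}$ by (A6) (if $A^*\tq^k = 0$ then $\tq^k \in \mathrm{Im}(A) \cap \ker A^* = \{0\}$ and the bound is trivial). After rearrangement, using $A\bar{\mathrm{x}} = b$ and $\|w - \ty^k\| \le \bar d + D_\Psi \le 2 D_\Psi$ (since $B(\bar{\mathrm{x}}, \bar d) \subseteq \mathcal{H}$ forces $\bar d \le D_\Psi$), I will reach
\[
\bar d \, \|A^*\tq^k\| \;\le\; \langle \tq^k,\, b - A\ty^k\rangle \;+\; 2 D_\Psi \bigl(\|v^{j_k}\| + \nabla_f + M_\Psi\bigr) \;+\; \delta_{j_k}.
\]

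The crucial subtlety is that $\langle \tq^k, b - A\ty^k\rangle \le 0$ under the hypothesis $\|\tq^k\| > \|\tq^{k-1}\|$. Indeed, the dual update identity $\tq^k - \tq^{k-1} = c(A\ty^k - b)$ yields
\[
\langle \tq^k,\, b - A\ty^k\rangle \;=\; \frac{\langle \tq^k, \tq^{k-1}\rangle - \|\tq^k\|^2}{c} \;\le\; \frac{\|\tq^k\|\bigl(\|\tq^{k-1}\| - \|\tq^k\|\bigr)}{c} \;\le\; 0.
\]
Plugging $\|v^{j_k}\| \le C$, $\delta_{j_k} \le C^2$, and $\|A^*\tq^k\| \ge \nu_A^+ \|\tq^k\|$ into the previous display, I will arrive at $\bar d \nu_A^+ \|\tq^k\| \le 2 D_\Psi M_\Psi + 2 D_\Psi (C + \nabla_f) + C^2$; a coefficient-by-coefficient comparison against the definition of $\kappa(C)$ in \eqref{eq:def.Gamma.Static} then yields $\|\tq^k\| \le \kappa(C)$, closing the induction.

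For part (b), I will rerun the identical argument with $p := q^{i-1} + c(Ay^i - b)$, $y^i$, and $i$ in place of $\tq^k$, $\ty^k$, and $j_k$: either $\|p\| \le \|q^{i-1}\|$ or $\|p\| \le \kappa(C)$, so since $q^{i-1} = \tq^{k-1}$ whenever $i \in \mathcal{I}_k$, part (a) gives $\|p\| \le \max\{\|q^0\|, \kappa(C)\}$ in both subcases. The stated bound then follows from $c\|Ay^i - b\| = \|p - q^{i-1}\| \le \|p\| + \|q^{i-1}\|$. Part (c) is immediate: at $i = j_k$ one has $\|v^{j_k}\|^2 + \delta_{j_k} \le C^2$, so (b) applied to $\tq^k - \tq^{k-1} = c(A\ty^k - b)$ gives the bound. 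The main obstacle is spotting the dichotomy and the corresponding choice of test point $w$; once the dual-update algebra is seen to render $\langle \tq^k, b - A\ty^k\rangle$ nonpositive precisely when $\|\tq^k\|$ grows, the remainder is routine bookkeeping against the coefficients of $\kappa(C)$.
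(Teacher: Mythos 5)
Your proof is correct, and its geometric core coincides with the paper's: both arguments bound the multiplier by testing the $\varepsilon$-subdifferential inequality for $\xi = v^{j_k}-\nabla f(\ty^k)-A^*\tq^k$ at the interior point $\bar{\mathrm{x}} - \bar d\,A^*\tq^k/\|A^*\tq^k\|$, then use $A\bar{\mathrm{x}}=b$, the $M_{\Psi}$-Lipschitz bound on ${\cal H}$, and $\|A^*u\|\ge \nu^+_A\|u\|$ for $u\in A(\R^n)$ --- machinery the paper packages as Lemmas~\ref{lem:linalg}, \ref{lem:bound_xiN} and \ref{lem:qbounds-2} and which you have re-derived inline. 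The one place you genuinely diverge is in extracting $\|q\|\le\max\{\|q^-\|,\kappa(C)\}$ from the resulting inequality. The paper substitutes $\inner{q}{Az-b}=(\|q\|^2-\inner{q^-}{q})/\varrho$, keeps the quadratic term, and compares both sides against $W:=\max\{\|q^-\|,\varphi(\|r\|+\delta)\}$ so that the common factor $\bar d\nu^+_A+\|q\|/\varrho$ cancels (the final display in the proof of Lemma~\ref{lem:qbounds-2}). You instead dichotomize: when $\|\tq^k\|>\|\tq^{k-1}\|$ the same substitution renders $\inner{\tq^k}{b-A\ty^k}$ nonpositive so it can simply be dropped, and otherwise the induction closes for free. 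The two devices are interchangeable here; yours is more elementary and makes the origin of the maximum transparent (and your constant $2D_{\Psi}M_{\Psi}+2D_{\Psi}(C+\nabla_f)+C^2$ is slightly tighter than $\bar d\nu^+_A\,\kappa(C)$, which is why your coefficient comparison succeeds), whereas the paper's quadratic comparison is isolated in a standalone lemma precisely so it can be invoked verbatim for part (b) with $(q^-,q)=(q^{i-1},p^i)$ instead of re-running a case analysis. Your treatment of parts (b) and (c), including the observation that $\|v^{j_k}\|^2+\delta_{j_k}\le C^2$ holds at every multiplier update via either line~\ref{cond:vi:stop} (since $C\ge\rho$) or line~\ref{stat:begin.test.cond}, matches the paper's.
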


\begin{proof} 
(a) We first define the index set
\begin{equation}\label{def:I:k:C1}
    {\cal I}_k(C) := \{i\in {\cal I}_k:\|v^{i}\|^2+\delta_i\leq C^2\},
\end{equation}
where $C>0$ is part of the input for {\SA} and $(v^i, \delta_i)$ is as in line \ref{algo:IPP.v.delta} of {\SBIPP}.
We now claim that the vector pair $(\tq^{k-1}, y^i)$ satisfies
\begin{equation}\label{eq:FeasBound.proof.1}
\|\tilde q^{k-1} + c A(y^i-b)\| \le 
\max \{ \|\tilde q^{k-1}\|,\kappa(C)\}, \quad \forall i \in \Ik(C).
\end{equation}
To show the claim, 
let $i \in \Ik(C)$ be given. To simplify notation, define
\[
p^i:=\tilde q^{k-1}+c(Ay^i-b) \quad \text{and} \quad r^i := v^i-\nabla f(y^i)
\]
and note that the triangle inequality for norms, and the definitions of $\nabla_f$ in \eqref{def:damH} and $\Ik(C)$ in \eqref{def:I:k:C1}, imply that
\begin{equation}\label{eq:ri.deltai.C}
  \delta_i + \|r^i\|  \le C^2+\|v^i||+\|\nabla f(y^i)\| \le C^2 + (C + \nabla_f).  
\end{equation}

Using the fact that 
$(y^{i},v^{i},\delta_{i}, \lam^i)={\SBIPP}(y^{i-1},q^{i-1},\lam^{i-1}, c)$
in view of line \ref{call:SBIPP} of {\SA},
 the definitions of $p^i$ and $r^i$, Lemma \ref{lemma:SADMM.BIPP}(a)
 and \eqref{eq:q.constant},   
we have that
$
r^i \in  \partial_{\delta_i} \Psi(y^i) + A^*p^i
$,
and hence that the pair $(q^-,\varrho) = (q^{i-1},c)$ and the quadruple $(z,q,r,\delta)=(y^i, p^i, r^i, \delta_i)$ satisfy the 
conditions in \eqref{eq:cond:Lem:A1} of Lemma~\ref{lem:qbounds-2}. 
Hence, the conclusion of the same lemma, inequality~\eqref{eq:ri.deltai.C}, and the fact that $\varphi$ is non-decreasing, imply that
\begin{align*}
 \|p^i\| &\leq \max\left\{\| q^{i-1}\|,\varphi(\|r^i\|+\delta_i) \right\}\leq \max\left\{\| q^{i-1}\|,\varphi(C+C^2 + \nabla_f) \right\}
\overset{\eqref{eq:sigma1&2}}= \max\{\|\tq^{k-1}\|,\kappa(C)\},
\end{align*}
where the equality follows from 
\eqref{eq:q.constant} and the definitions  of $\kappa(\cdot)$ and $\varphi(\cdot )$ in \eqref{eq:def.Gamma.Static} and \eqref{eq:Technical.varphi.def}, respectively.
We have thus proved that the claim holds.

We now show that \eqref{eq:Lag.Feas.Bound.2} holds. 
Using \eqref{eq:FeasBound.proof.1} with $i=j_k$, the facts that $\tq^k=\tq^{k-1}+ c(A\ty^k-b)$ due to \eqref{eq:q.update},
and $\tilde y^{k}=y^{j_k}$ due to \eqref{def.variablesTilde}, and the triangle inequality for norms, we have that
\begin{align*}
\|\tq^k \|= \|\tilde q^{k-1} + c A(\ty^k-b)\| \le \max \{ \|\tilde q^{k-1}\|,\kappa(C)\}.
\end{align*}
Inequality \eqref{eq:Lag.Feas.Bound.2} now follows by recursively using the last inequality and the fact that $\tq^0=q^0$ due to  \eqref{def.variablesTilde}.  

 (b)
Using that $c(Ay^i-b) = p^i-\tq^{k-1}$ and \eqref{eq:FeasBound.proof.1} we have 
\begin{align*}
c\|A y^i-b\| \leq \| p^i\|+\|\tq^{k-1}\|
\overset{\eqref{eq:FeasBound.proof.1}}\le \max\{\|\tilde q^{k-1}\|,\kappa(C)\}+\|\tq^{k-1}\|
\overset{\eqref{eq:Lag.Feas.Bound.2}}\leq 2\max\{\|\tilde q^{0}\|,\kappa(C)\}
\end{align*}
where the last inequality above follows from \eqref{eq:Lag.Feas.Bound.2} with $k=k-1$ and the fact that $\tq^0=q^0$ due to  \eqref{def.variablesTilde}.

 (c) Statement (c) follows from \eqref{eq:q.update}, the triangle inequality for norms, statement (b) with $i=j_{k}$ and the fact that $\tilde y^k = y^{j_{k}}$ due to \eqref{def.variablesTilde}.
\end{proof}




\vspace{1em}

\noindent
{\bf Proof of Theorem~\ref{thm:static.complexity}:} (a) It follows from Proposition \ref{lem:number.of.cycles}(c) that the total number of iterations generated by {\SA} is bounded by the expression in \eqref{total:complexity:E}.
Now, recalling that $\mK$ is the last epoch generated by {\SA}, using \eqref{eq:def.Qk.Fk}, \eqref{eq:Lag.Feas.Bound.2} and \eqref{bound:f:final} we see that $Q_{\mK} \leq \max\{\|q^0\|,\kappa (C)\}$ and $ F_{\mK} \leq 2\max\{\|q^0\|,\kappa (C)\}$, which implies that $\Gamma_{\mK}(y^0;c)\leq \Gamma(y^0, q^0;c)$, 
where $\Gamma_{\mK}(y^0;c)$ and   $\Gamma(y^0, q^0;c)$ are as in \eqref{pre:Ti} and \eqref{eq:def.Gamma.Static}, respectively.
The conclusion now follows from the two previous observations.

(b) We first prove that 
the inclusion in \eqref{residual:theorem:1} holds. It follows from Lemma~\ref{lemma:SADMM.BIPP}(a) with $i=j_{\mK}$ and~\eqref{def.variablesTilde} with $k=\mK$ that
\[
    \tv^{\mK}  \in \nabla f (\ty^{\mK})+ \partial_{\tilde \delta_{\mK}} \Psi(\ty^{\mK}) +
A^*[q^{j_{\mK}-1} + c(A\ty^{\mK}-b)].
\]
We conclude that \eqref{residual:theorem:1} holds by using~\eqref{eq:q.constant} with $i=j_{\mK}$, \eqref{eq:q.update} with $k=\mK$, and the fact that $(\hat y,\hat q, \hat v, \hat \delta)=(\ty^{\mK},\tq^{\mK},\tv^{\mK},\tilde \delta_{\mK})$.

The inequality in \eqref{residual:theorem:1} follows from the fact that {\SA} terminates in  line~\ref{cond:vi:stop} with the condition $\|\hat v\|^2+\hat \delta = \|v^{j_{\mK}}\|^2+\delta_{j_{\mK}}\le \rho^2$ satisfied.

The first inequality in~\eqref{feasi:theorem:1} follows from Lemma \ref{bounds:q:f}(b) with $i=j_{{\mK}}$ and the fact that $\tilde y^{\mK}= y^{j_{\mK}}$ due to \eqref{def.variablesTilde}.
Finally, the second inequality in \eqref{feasi:theorem:1} follows from  Lemma~\ref{bounds:q:f}(a) and the fact that $(y^{j_{\mK}}, q^{j_{\mK}})=(\ty^{\mK}, \tq^{\mK})$ due to \eqref{def.variablesTilde}.

(c) Using the assumption that $
   c\geq 2 \max\{\|q^0\|,\kappa(C)\}/\eta$,  statement (b)  guarantees that {\SA} outputs $\hat y= y^{j_k}$ satisfying $\|A\hat y-b\|\leq [2\max\{\|q^0\|,\kappa(C)\}]/c\le \eta.$ Hence, the conclusion that $(\hat y,\hat q,\hat v, \hat \delta)=(\tilde y^k, \tilde q^k,\tilde v^k,\tilde\delta_k)$ satisfies \eqref{eq:stationarysol} follows from the previous inequality, the inclusion in \eqref{residual:theorem:1}, and the last inequality in \eqref{feasi:theorem:1}.

\section{An {\DA} With Fixed Prox Stepsizes}
\label{subsection:DA:ADMM}

This section describes  {\DA}, the main algorithm of this chapter, and its iteration complexity.  
The version of {\DA} presented in this section keeps the prox stepsize constant throughout since it performs multiple calls to the {\SA} 
which, as already observed, also has this same attribute.
An adaptive variant of {\DA}
with variable prox stepsizes is presented in Section~\ref{section:Adaptive.ADMM}.

{\DA} is formally stated next.

\begin{algorithm}[H]
\setstretch{1}
\caption{{\DA}}\label{alg:dynamic}
\begin{algorithmic}[1]

\Statex \hskip-1.8em
\textbf{Universal Input:} tolerance pair $(\rho,\eta) \in \R^2_{++}$, $\alpha\in [\rho^2, +\infty)$, and $C \in [\rho,+\infty)$ 
\Require $x^0\in {\cal H}$ and $\gamma^0=(\gamma_1^0,\ldots,\gamma_B^0)\in \R^B_{++}$
\Ensure $(\hat x,\hat p, \hat v, \hat{\varepsilon},\hat c)$
\vspace{1em}
\State $p^0=(p^0_1,\ldots,p^0_B) \gets (0,\ldots,0)$ and $c_0\gets 1/[1+\|Ax^0-b\|]$ \label{def:c0:p0} 

\For{$\ell \gets 1,2,\ldots$}
    \State $(x^\ell ,p^\ell,v^\ell,\varepsilon_\ell, \gamma^\ell )=\SA(x^{\ell-1},p^{\ell-1},\gamma^{\ell-1}, c_{\ell-1})$ \label{dyn:stat.call}
    \State $c_{\ell}=2c_{\ell-1}$ \label{dyn:penalty.update}
    \If{$\|Ax^\ell-b\|\leq \eta$}\label{test:DA:ADMM}

        \State $(\hat x,\hat p, \hat v , \hat\varepsilon,\hat c)=(x^\ell,p^\ell, v^\ell, \varepsilon_\ell, c_\ell)$
        \State \Return $(\hat x,\hat p, \hat v , \hat\varepsilon,\hat c)$   
    \EndIf
\EndFor
\end{algorithmic}
\end{algorithm}

We now make some remarks about {\DA}. First,  even though an
initial penalty parameter $c_0$
is prescribed in line~\ref{def:c0:p0} for the sake of analysis simplification, {\DA} can be equally shown to converge for other choices of $c_0$. Second, it uses a ``warm-start" strategy for calling {\SA}, i.e.,  after the first call to {\SA},  the input of  any 
 {\SA} call is the output 
of the previous {\SA} call. 
Third, Lemma~\ref{lemma:translate} below and Theorem~\ref{thm:static.complexity}(b) imply that each  {\SA} call in line \ref{dyn:stat.call} of {\DA} generates a 
quintuple $(x^\ell ,p^\ell,v^\ell,\varepsilon_\ell, \gamma^\ell )$ satisfying the first two conditions in \eqref{eq:stationarysol}, but not necessarily the feasibility bound. To achieve the feasibility bound in \eqref{eq:stationarysol}, {\DA} increases the penalty parameter according to the rule $c_{\ell} = 2c_{\ell-1}$, as specified in its line~\ref{dyn:penalty.update}. Finally, {\DA} stops
if the test in line~\ref{test:DA:ADMM} is satisfied; if the test fails, then {\SA} is called again using the ``warm-start" strategy.

\vspace{1em}

Before describing the main result, we define the following constant, and which appear in the total iteration complexity,
\begin{equation}\label{eq:def.Gamma.dynamic}
\bar \Gamma(x^0;C):=F_{\sup}-F_{\inf} +  \frac{8\sigma_2\kappa^2(C)}{\alpha} +2\kappa^2(C)\left(\frac{4\sigma_1}{\alpha}+9\right)(1+\|Ax^0-b\|),
\end{equation}
where $(\sigma_1,\sigma_2)$ is as in~\eqref{eq:sigma1&2} and $\kappa(C)$ is as in \eqref{eq:def.Gamma.Static}.

Recalling that every {\DA} iteration makes a {\SA} call, the following result 
translates the properties of {\SA} established  in Theorem~\ref{thm:static.complexity} to the context of {\DA}.

\begin{lemma}\label{lemma:translate}
Let $\ell$ be an iteration index of ${\DA}$. Then,  the following statements hold:
\begin{itemize}
    \item[(a)] the sequences $\{(x^{k},p^{k},v^k,\varepsilon_k,\gamma^k)\}_{k=1}^\ell$ 
    and $\{c_k\}_{k=1}^{\ell}$ satisfy
\begin{equation}\label{eq:prop:input.station}
v^k \in \nabla f(x^k) + \partial_{\varepsilon_k} \Psi(x^k)+A^*p^k   \quad \text{and}\quad \max_{1 \le k \le \ell} \|v^k\|^2+\varepsilon_k \le \rho^2,
\end{equation}
the identity $\gamma^{k}=\gamma^0$,
and the following bounds
\begin{equation}
\label{eq:prop:input.bounds}
\max_{1 \le k \le \ell} \|p^{k}\| \le \kappa(C) \quad \text{and} \quad \max_{1 \le k \le \ell}c_{k} \|Ax^{k} -b \|
\le 4\kappa(C);
\end{equation}

\item[(b)] the number of iterations performed
by the  {\SA} call  within the $\ell$-th iteration of {\DA} (see line~\ref{dyn:stat.call} of {\DA}) is bounded by 
\begin{equation}\label{eq:DA.ell-th.SA.call}
\left(\frac{\sigma_1+c_{\ell-1}\sigma_2}{\rho^2}\right)\bar \Gamma(x^0;C) + 1,
\end{equation}
where $\bar \Gamma(x^0;C)$ is as in \eqref{eq:def.Gamma.dynamic};

\item[(c)] if $c_{\ell}\geq 4\kappa(C)/\eta$
then $(x^\ell, p^\ell, v^\ell, \varepsilon_\ell, \gamma^\ell)$ is a $(\rho,\eta)$-stationary solution of problem \eqref{initial.problem}. 
\end{itemize}
\end{lemma}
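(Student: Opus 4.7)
The plan is to prove the three statements of Lemma~\ref{lemma:translate} by invoking Theorem~\ref{thm:static.complexity} at each call to {\SA} inside {\DA} and carefully propagating the bounds through the warm-start strategy in line~\ref{dyn:stat.call}.

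For part (a), I would proceed by induction on $\ell$. At iteration $\ell$, applying Theorem~\ref{thm:static.complexity}(b) to the {\SA} call with inputs $(x^{\ell-1}, p^{\ell-1}, \gamma^{\ell-1}, c_{\ell-1})$ directly delivers the stationary inclusion and residual bound in \eqref{eq:prop:input.station}, as well as the identity $\gamma^{\ell} = \gamma^{\ell-1}$; chained with the induction hypothesis, this gives $\gamma^{\ell} = \gamma^0$. The Lagrange multiplier bound from Theorem~\ref{thm:static.complexity}(b), namely $\|p^\ell\| \le \max\{\|p^{\ell-1}\|, \kappa(C)\}$, combined with the base case $\|p^0\| = 0$ from line~\ref{def:c0:p0}, gives $\|p^k\| \le \kappa(C)$ for $k \le \ell$ by induction. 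Finally, the feasibility bound $c_{\ell-1}\|Ax^\ell - b\| \le 2\max\{\|p^{\ell-1}\|, \kappa(C)\} \le 2\kappa(C)$ together with the penalty update $c_\ell = 2c_{\ell-1}$ in line~\ref{dyn:penalty.update} yields $c_\ell\|Ax^\ell - b\| \le 4\kappa(C)$, completing \eqref{eq:prop:input.bounds}.

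For part (b), I would apply Theorem~\ref{thm:static.complexity}(a) to the $\ell$-th {\SA} call and bound $\Gamma(x^{\ell-1}, p^{\ell-1}; c_{\ell-1}) \le \bar\Gamma(x^0;C)$ term by term in the definition \eqref{eq:def.Gamma.Static}. The bound $\|p^{\ell-1}\|^2 + \kappa^2(C) \le 2\kappa^2(C)$ follows from part (a). Since $c_{\ell-1} \ge c_0 = 1/(1+\|Ax^0-b\|)$, we get $1/c_{\ell-1} \le 1+\|Ax^0-b\|$, which handles the coefficient multiplying $2\kappa^2(C)$. The remaining term $c_{\ell-1}\|Ax^{\ell-1}-b\|^2$ splits into two cases: when $\ell=1$, the specific choice $c_0 = 1/(1+\|Ax^0-b\|)$ yields $c_0\|Ax^0-b\|^2 \le \|Ax^0-b\| \le 1+\|Ax^0-b\|$; when $\ell \ge 2$, part (a) applied with $k=\ell-1$ gives $c_{\ell-1}\|Ax^{\ell-1}-b\| \le 4\kappa(C)$, so that $c_{\ell-1}\|Ax^{\ell-1}-b\|^2 \le 16\kappa^2(C)/c_{\ell-1} \le 16\kappa^2(C)(1+\|Ax^0-b\|)$. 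Aggregating these pieces and matching constants gives the target bound $\bar\Gamma(x^0;C)$ in \eqref{eq:def.Gamma.dynamic}, and substituting into Theorem~\ref{thm:static.complexity}(a) delivers \eqref{eq:DA.ell-th.SA.call}.

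For part (c), the hypothesis $c_\ell \ge 4\kappa(C)/\eta$ combined with the feasibility bound $c_\ell\|Ax^\ell - b\| \le 4\kappa(C)$ from part (a) immediately yields $\|Ax^\ell - b\| \le \eta$. Together with the inclusion and residual bound in \eqref{eq:prop:input.station}, this certifies $(x^\ell, p^\ell, v^\ell, \varepsilon_\ell)$ as a $(\rho,\eta)$-stationary solution in the sense of \eqref{eq:stationarysol}. The main obstacle I anticipate is the bookkeeping in part (b): harmonizing the special case $\ell = 1$ (where $c_0$ was tailored to tame $c_0\|Ax^0-b\|^2$) with the generic case $\ell \ge 2$ (where one relies on part (a)), and packaging the accumulated constants into the specific form of $\bar\Gamma(x^0;C)$ required by \eqref{eq:def.Gamma.dynamic}.
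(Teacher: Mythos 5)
Your proposal is correct and follows essentially the same route as the paper: induction on $\ell$ using Theorem~\ref{thm:static.complexity}(b) for part (a), a term-by-term bound $\Gamma(x^{\ell-1},p^{\ell-1};c_{\ell-1})\le\bar\Gamma(x^0;C)$ with the same $\ell=1$ versus $\ell\ge 2$ case split for part (b), and the feasibility bound for part (c). The only cosmetic difference is in (c), where you verify $\|Ax^\ell-b\|\le\eta$ directly from the bound $c_\ell\|Ax^\ell-b\|\le 4\kappa(C)$ rather than routing through Theorem~\ref{thm:static.complexity}(c) as the paper does; both arguments rest on the same estimates.
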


\begin{proof}
(a)  Using Theorem \ref{thm:static.complexity}(b) with $(y^0,q^0, \lam^0, c)= (x^{k-1},p^{k-1}, \gamma^{k-1}, c_{k-1})$ and noting line~\ref{dyn:stat.call} of {\DA}, we conclude that for any $k\in\{1,\ldots,\ell\}$, the quintuple $(x^{k},p^{k},v^{k},\varepsilon_{k},\gamma^k)$ satisfies \eqref{eq:prop:input.station} and the conditions
\begin{equation}\label{step:induction}
\gamma^{k}=\gamma^{k-1}, \quad \|p^{k}\| \le \max\{\|p^{k-1}\|, \kappa(C)\}, \quad  c_{k-1} \|Ax^{k} -b \|
\le 2\max\{\|p^{k-1}\|, \kappa(C)\}.
\end{equation}
A simple induction argument applied to both the identity and the first inequality in \eqref{step:induction}, with the fact that $p^0 = 0$, show that $\gamma^k=\gamma^0$ and that the first inequality in \eqref{eq:prop:input.bounds}  holds.
The second inequality in \eqref{step:induction}, the assumption that $p^0=0$, the fact that $c_k=2c_{k-1}$ for every $k \in \{1,\ldots, \ell\}$,
and the first inequality in~\eqref{eq:prop:input.bounds}, imply that the second inequality in \eqref{eq:prop:input.bounds} also holds. 

(b) Theorem \ref{thm:static.complexity}(a) with $(y^0,q^0,\lam^0, c) = (x^{\ell-1},p^{\ell-1},\gamma^{k-1},c_{\ell-1})$ imply that the total number of iterations performed
by the  {\SA} call  within the $\ell$-th iteration of {\DA} is bounded by
\[
\left(\frac{\sigma_1+c_{\ell-1}\sigma_2}{\rho^2}\right)\Gamma(x^{\ell-1},p^{\ell-1};c_{\ell-1})+1,
\]
where $\Gamma(\cdot,\cdot;\cdot)$ is as in~\eqref{eq:def.Gamma.Static}. 
Thus, to show 
\eqref{eq:DA.ell-th.SA.call}, it suffices to show that $\Gamma(x^{\ell-1},p^{\ell-1};c_{\ell-1})\leq \bar \Gamma(x_0;C)$.

Before showing the above inequality, we first show that
$c_{\ell-1}\|Ax^{\ell-1}-b\|^2\leq 16\kappa^2(C)/c_0$ for every index $\ell$. 
Indeed, this observation trivially holds for $\ell=1$ due to the fact that $c_0=1/(1+\|Ax^0-b\|) \le 1$ (see line~\ref{def:c0:p0} of {\DA}) and the assumption that  $\kappa(C)\geq 1$. Moreover, the second inequality in~\eqref{eq:prop:input.bounds} and the fact that $c_{\ell-1}\geq c_0$ show that the inequality also holds for
$\ell>1$, and thus it holds for any $\ell \ge 1$.

Using the last conclusion, the definition of $\Gamma(x^{\ell-1},p^{\ell-1};c_{\ell-1})$, the fact that $c_{\ell-1}\geq c_0$, and the first inequality in \eqref{eq:prop:input.bounds}, we have

\begin{align*}
\Gamma(x^{\ell-1},p^{\ell-1};c_{\ell-1})&\leq F_{\sup}-F_{\inf} + \frac{16\kappa^2(C)}{c_0} + \left[\frac{4\sigma_1}{\alpha c_{0}}+\frac{1}{c_{0}}+\frac{4\sigma_2}{\alpha}\right]\left(\|p^{\ell-1}\|^2 + \kappa^2(C)\right)\\
&\overset{\eqref{eq:prop:input.bounds}}\leq F_{\sup}-F_{\inf} + \frac{16\kappa^2(C)}{c_0} + \left[\frac{4\sigma_1}{\alpha c_{0}}+\frac{1}{c_{0}}+\frac{4\sigma_2}{\alpha}\right]\left(2\kappa^2(C)\right)\\
&=F_{\sup}-F_{\inf} +  \frac{8\sigma_2\kappa^2(C)}{\alpha} +\frac{2\kappa^2(C)}{c_0}\left(\frac{4\sigma_1}{\alpha}+9\right)=\bar \Gamma(x_0;C),
\end{align*}
where the last identity follows from $c_0=1/(1+\|Ax^0-b\|)$ and the definition of $\bar \Gamma(x_0;C)$ in \eqref{eq:def.Gamma.dynamic}.

(c) Assume that  $c_{\ell}\geq 4\kappa(C)/\eta$.
This assumption, the first inequality in \eqref{eq:prop:input.bounds}, and the fact that $c_{\ell}=2c_{\ell-1}$, immediately imply that $c_{\ell-1}\geq 2\max\{\|p^{\ell-1}\|,\kappa(C)\}/\eta$. The statement now follows from the previous observation, line~\ref{dyn:stat.call} of {\DA}, and Theorem~\ref{thm:static.complexity}(c) with $(y^0,q^0,\lam^0,c) = (x^{\ell-1},p^{\ell-1},\gamma^{\ell-1},c_{\ell-1})$.

\end{proof}

\vspace{1em}
The next result describes the iteration-complexity of {\DA} in terms of total ADMM iterations (and hence {\SBIPP} calls within {\SA}). 

\begin{theorem}[{\DA} Complexity]\label{the:dynamic}

The following statements about {\DA} hold:
   \begin{itemize}
       \item[(a)]
       it obtains a $(\rho,\eta)$-stationary solution of \eqref{initial.problem} in no more than $
   \log_2 \left[1+4\kappa(C)/(c_0\eta) \right]+1$ calls to {\SA};
   \item[(b)]
   its total number of {\SA} iterations (and hence {\SBIPP} calls within {\SA}) is bounded by
\[
\frac{8\sigma_2\bar \Gamma(x^0;C) 
\kappa(C)}{\rho^2\eta} + \frac{\sigma_2\bar \Gamma(x^0;C)}{c_0\rho^2}+
\left[ 1+\frac{\sigma_1\bar \Gamma(x^0;C)}{\rho^2}\right]\log_2\left( 2+ \frac{8\kappa(C)}{c_0\eta}\right) 
\]
\end{itemize} 
where $(\sigma_1,\sigma_2)$, $\kappa(C)$   and $\bar \Gamma(x^0;C)$ are as in~\eqref{eq:sigma1&2}, \eqref{eq:def.Gamma.Static} and \eqref{eq:def.Gamma.dynamic}, respectively, and $c_0$ is defined in line \ref{def:c0:p0} of {\DA}.
\end{theorem}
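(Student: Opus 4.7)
My plan is to reduce both parts of the theorem to Lemma \ref{lemma:translate}, which already encodes the per-iteration cost of each {\SA} call in (b) and the termination certificate for a single outer iteration in (c). The key structural fact driving the entire argument is that line \ref{dyn:penalty.update} of {\DA} yields a geometric progression of penalty parameters: $c_\ell = 2^\ell c_0$ for every $\ell \ge 0$.

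For part (a), I would invoke Lemma \ref{lemma:translate}(c): the test $\|Ax^\ell - b\| \le \eta$ in line \ref{test:DA:ADMM} is guaranteed to succeed as soon as $c_\ell \ge 4\kappa(C)/\eta$, since this condition certifies that the output of the $\ell$-th {\SA} call is $(\rho,\eta)$-stationary. Substituting $c_\ell = 2^\ell c_0$ and solving for $\ell$ yields that termination occurs at some $\ell^* \le \lceil \log_2(4\kappa(C)/(c_0\eta)) \rceil$; the stated bound $1 + \log_2(1 + 4\kappa(C)/(c_0\eta))$ then follows from the elementary inequality $\lceil \log_2 x \rceil \le 1 + \log_2(1 + x)$.

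For part (b), since each outer iteration makes exactly one {\SA} call, Lemma \ref{lemma:translate}(b) allows me to sum the per-call bound over $\ell = 1, \ldots, \ell^*$ and split the result into a term linear in $\ell^*$ and a geometric tail:
\[
\ell^*\Big[1 + \frac{\sigma_1 \bar\Gamma(x^0;C)}{\rho^2}\Big] + \frac{\sigma_2 \bar\Gamma(x^0;C)}{\rho^2} \sum_{\ell=1}^{\ell^*} c_{\ell-1}.
\]
The part-(a) bound, rewritten via $1 + \log_2 y = \log_2(2y)$ as $\ell^* \le \log_2(2 + 8\kappa(C)/(c_0\eta))$, feeds directly into the linear-in-$\ell^*$ term and matches the third term in the stated complexity.

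The only nontrivial step, and the principal obstacle I foresee, is bounding the geometric tail $\sum_{\ell=1}^{\ell^*} c_{\ell-1} = c_0(2^{\ell^*} - 1)$ tightly enough to produce exactly the first two terms of the stated complexity. From the part-(a) bound one has $2^{\ell^*} - 1 \le 1 + 8\kappa(C)/(c_0\eta)$, hence $c_0(2^{\ell^*} - 1) \le c_0 + 8\kappa(C)/\eta$; and since $c_0 = 1/(1 + \|Ax^0 - b\|) \le 1$ implies $c_0 \le 1/c_0$, this upgrades to $\sum_\ell c_{\ell-1} \le 1/c_0 + 8\kappa(C)/\eta$. Multiplying through by $\sigma_2 \bar\Gamma(x^0;C)/\rho^2$ recovers the remaining two terms of the theorem's bound. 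The subtle point is to telescope $2^{\ell^*}-1$ rather than $2^{\ell^*}$ (otherwise one picks up an extraneous $2c_0$), and then use $c_0 \le 1$ to absorb the stray $c_0$ into $1/c_0$.
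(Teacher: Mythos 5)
Your proposal is correct and follows essentially the same route as the paper: both parts reduce to Lemma \ref{lemma:translate}, with (a) obtained from the geometric growth $c_\ell = 2^\ell c_0$ together with Lemma \ref{lemma:translate}(c) (the paper phrases this as a contradiction argument, you argue directly, which is cosmetic), and (b) obtained by summing the per-call bound of Lemma \ref{lemma:translate}(b), telescoping the geometric tail to $c_0(2^{\ell^*}-1)$, and invoking $c_0 \le 1 \le 1/c_0$. Your handling of the stray $c_0$ via $c_0 \le 1/c_0$ is in fact a step the paper leaves implicit, so no gaps remain.
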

\begin{proof}
(a)
Assume for the sake of contradiction that {\DA} generates an iteration index $\hat \ell$ such that $\hat \ell  > 1 + \log_2 \left[1+4\kappa(C)/(c_0\eta) \right] > 1$, and hence 
\[
    c_{\hat \ell-1} = c_0 2^{\hat \ell-1} > c_0\left(1+\frac{4 \kappa(C)}{c_0\eta} \right) > \frac{4 \kappa(C)}{\eta}.
\]
Using Lemma~\ref{lemma:translate}(c) with $\ell=\hat\ell-1 \ge 1$, we conclude that the quintuple $(x^{\hat\ell-1},p^{\hat\ell-1},v^{\hat\ell-1},\varepsilon_{\hat\ell-1}, \gamma^{\hat\ell-1})$ is a $(\rho,\eta)$ stationary solution of problem \eqref{initial.problem}, and hence satisfies $\|Ax^{\hat\ell-1}-b\| \le \eta$. In view of line~\ref{test:DA:ADMM} of {\DA}, this implies that {\DA} stops at the $(\hat \ell-1)$-th iteration, which hence contradicts the fact  that $\hat \ell$ is an iteration index. We have thus proved that (a) holds.

(b) Let $\tilde \ell$ denote the total number of {\SA} calls and observe that $\tilde{\ell}\le1+\log_2[1+4\kappa(C)/(c_0\eta)]$ due to (a). Now, using Lemma~\ref{lemma:translate}(b) and the previous observation, we have that the overall number of iterations performed by {\SA} is bounded by
\begin{align*}
\sum_{\ell=1}^{\tilde \ell}\left[ \left(\frac{\sigma_1+c_{\ell-1}\sigma_2}{\rho^2}\right)\bar \Gamma(x^0;C) + 1 \right] &=\left[ 1+\frac{\sigma_1\bar \Gamma(x^0;C)}{\rho^2}\right] \tilde \ell + \frac{\sigma_2 \bar \Gamma(x^0;C)}{\rho^2}\sum_{\ell=1}^{\tilde \ell}c_{\ell-1} \\
&\leq \left[ 1+\frac{\sigma_1\bar \Gamma(x^0;C)}{\rho^2}\right]\tilde \ell + \frac{c_0\sigma_2\bar \Gamma(x^0;C)}{\rho^2}\left( 2^{\tilde \ell} -1\right)\\
&\leq \left[ 1+\frac{\sigma_1\bar \Gamma(x^0;C)}{\rho^2}\right]\tilde \ell + \frac{c_0\sigma_2\bar \Gamma(x^0;C)}{\rho^2}\left( 1+\frac{8\kappa(C)}{c_0\eta} \right).
\end{align*}
The result now follows by using that $\tilde{\ell}\le1+\log_2[1+4\kappa(C)/(c_0\eta)]$.
\end{proof}

We now make some comments about Theorem~\ref{the:dynamic}.
First, it follows from Theorem~\ref{the:dynamic}(a)  that {\DA} ends with a $(\rho,\eta)$-stationary solution of \eqref{initial.problem} by calling
{\SA} no more than ${\cal O} (\log_2(\eta^{-1}))$ times.
Second, under the mild assumption that $\|Ax^0-b\| = {\cal O}(1)$, Theorem~\ref{the:dynamic}(b) implies that the complexity of {\DA}, in terms of the tolerances only, is ${\cal O} (\rho^{-2}\eta^{-1})$, and thus
${\cal O}(\epsilon^{-3})$
where $\epsilon:=\min\{\rho, \eta\}$.
On the other hand,
{\SA} only achieves this complexity 
with (a generally non-computable)
$c = \Theta(\eta^{-1})$ and with the condition that
$c\|Ax^0-b\|^2 =
{\cal O}(1)$ (see the first paragraph following Theorem~\ref{thm:static.complexity}), or equivalently,
$\|Ax^0-b\|= {\cal O}(\eta^{1/2})$, and hence the initial point $x^0$ being nearly feasible. 


\section{An {\DA} With Adaptive Prox Stepsizes}\label{section:Adaptive.ADMM}

This section presents an
{\DA}, with adaptive proximal stepsizes, which does not require knowledge of the weakly convexity parameters $m_t$'s. 

In view of our description of {\SA} and {\DA}, which redundantly included prox stepsizes in their input and output, the description of their adaptive prox stepsize versions now requires minimal changes to the presentation of the previous subsections.
Specifically,
instead of calling {\SBIPP},
the new prox stepsizes of {\SA} now calls an adaptive version of {\SBIPP}, referred to as {\ABIPP}, which we now describe.

Recall that {\SBIPP}, presented in Subsection \ref{sub:B-IPP}, assumes that the $m_t$'s are  available and keeps every block proximal stepsize constant,
i.e., the $t$-block prox stepsize $\lam_t$ is set to $1/(2m_t)$ at every iteration within {\SA}.
Instead,
{\ABIPP}
requires as input an arbitrary initial prox stepsize $\lam_t^0$ for each block $t \in \{1,\ldots,B\}$, which are then
adaptively
changed during its course.

{\ABIPP} is formally stated below.




{\floatname{algorithm}{Subroutine}
\renewcommand{\thealgorithm}{}
\begin{algorithm}[H]
\setstretch{1}
\caption{{\ABIPP}}\label{algo:BlockIPP}
\begin{algorithmic}[1]
\Statex \hskip-1.8em
\Require $(z,p,\lam, c)
\in {\cal H} \times A(\mathbb{R}^n) 
\times \R^B_{++}\times \R_{++} $
\Ensure $(z^+,v^+,\delta_+,\lam^+)\in {\cal H} \times \mathbb{R}^l 
\times \R_{++} \times \R^B_{++}$
\vspace{1em}
\\ 
\textcolor{violet}{STEP 1: Block-IPP Iteration}
  \For{$t=1,\ldots, B$}\label{algo:ABIPP.loop.inexact.sol}
    
    \State set $\lam_t^+= \lam_t$\label{line:ABIPP:lam}
    \State compute an $\left(1/8;z_t\right)$-stationary solution $(z_t^+,r_t^+,\varepsilon_t^{+})$ of \eqref{algo:BIPP.inexact.subproblem} with composite
     term $\lam_t^+\Psi_t(\cdot)$  \label{output:step:2:ABIPP}
    \vspace{1em}
\If{$z_t^+$ does \textbf{NOT} satisfy 
\begin{equation}\label{eq:test:adap:new}
{\cal L}_c(z_{<t}^+,z_{t}, z_{>t}; p)-{\cal L}_c(z_{<t}^+,z_t^+ ,z_{>t}; p)  \ge 
 \frac{1}{8\lam_t^+} \|z_t^+-z_t\|^2 + \frac{c}{4} \|A_t (z_t^+-z_t)\|^2
 \end{equation}\hskip1.3em}\label{algo:ABIPP.if.loop}
 \State $\lambda_t^+ \gets \lambda_t^+/2$  and go to line \ref{output:step:2:ABIPP}.\label{eq:test:ABIPP}
\EndIf
\EndFor
\vspace{1em}
\State $z^+\gets (z_1^+,\ldots ,z_B^+)$ and
  $\lam^+\gets (\lam_1^+,\ldots ,\lam_B^+)$\label{algo:ABIPP.primal}

\vspace{1em}     
    \\
    \textcolor{violet}{STEP 2: Proceed exactly as in STEP 2 of {\SBIPP} to obtain $(v^+,\delta_+)$} \label{algo:ABIPP.dual}
   
\end{algorithmic}
\end{algorithm}
\renewcommand{\thealgorithm}{\arabic{algorithm}}
}

We now clarify some aspects of {\ABIPP}. 
First, in contrast to {\SBIPP} which sets $\lam_t=1/(2m_t)$ based on the assumption that the $m_t$'s are available,
{\ABIPP} has to perform a prox stepsize search as $m_t$'s are assumed not be available.
Specifically, starting with $\lam_t^+$ set to $\lam_t$, it searches for an appropriate $\lam_t^+$ in the loop consisting of lines \ref{output:step:2:ABIPP} to \ref{eq:test:ABIPP}
that will eventually satisfies~\eqref{eq:test:adap:new}.
Second, the main motivation to enforce condition~\eqref{eq:test:adap:new}
is that it allows us to show
(see Proposition \ref{prop:ABIPP.output}(b) below) an inequality similar to
the one in~\eqref{claim:bounded:iterations} which, as already observed in the
third and fourth remarks of the
paragraph
immediately following Proposition~\ref{main:subroutine:1}, plays a fundamental role in the analysis of {\SA}
given in Sections \ref{adp:ADMM} and \ref{subsubsec:subroutine}.

\vspace{1em}
The following result shows that any of the $B$ loops (lines \ref{output:step:2:ABIPP}-\ref{eq:test:ABIPP}) of {\ABIPP} always terminate.

\begin{lemma}\label{Lemma:ABIPP}    
The following statements about {\ABIPP} hold for every $t \in \{1,\ldots,B\}$:
\begin{itemize}
    
\item[(a)] The quadruple $(z_t^+,r_t^+,\varepsilon_t^{+}, \lam_t^+)$ computed in line~\ref{output:step:2:ABIPP} of {\ABIPP} satisfies the relations
\begin{gather}\label{eq:ref:lemma:A1}
\begin{gathered}
r_t^+\in \nabla\left[\lam_t^+ {\cal \hat L}_{c}(z_{<t}^+,\cdot ,z_{>t}; p)+\frac{1}{2}\|\cdot -z_{t}\|^2\right](z_t^+)+\partial_{\varepsilon_t^+}(\lam_t^+ \Psi_t)(z_t^+), \\
\|r_t^+\|^2 + 2 \varepsilon_t^+ \leq \frac{1}{8}\| z_t^+-z_t\|^2,
\end{gathered}
\end{gather}
where ${\cal \hat L}_{c}(z_{<t}^+,\cdot ,z_{>t}; p)$ is defined in \eqref{def:smooth:ALM};

    \item
    [(b)] if $\lam_t \in (0,1/(2m_t)]$
    then 
    the loop consisting of lines \ref{output:step:2:ABIPP} to \ref{eq:test:ABIPP} terminates  in one 
    iteration with a pair $(z_t^+,\lam_t^+)$
    satisfying the identity $\lam_t^+=\lam_t$ and the inequality \eqref{eq:test:adap:new},
    and the condition that
    the objective function of the $t$-th block subproblem \eqref{algo:BIPP.inexact.subproblem} 
 is $(1/2)$-strongly convex;
 \item[(c)]
 for any $\lam_t>0$,  
  the loop consisting of lines \ref{output:step:2:ABIPP} to \ref{eq:test:ABIPP} terminates  in at most $1+\lceil \log_2(1+4m_t\lam_t) \rceil$ iterations with a pair $(z_t^+,\lam_t^+)$ satisfying \eqref{eq:test:adap:new} and the inequality $\lam_t^+ \ge \min\{ \lam_t, 1/(4m_t)\}$.

\end{itemize}
\end{lemma}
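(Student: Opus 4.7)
The plan is to handle the three parts in sequence, with (b) carrying the main technical weight and (a), (c) following by bookkeeping. Part (a) is essentially a restatement of Definition~\ref{def:BB} applied to the subproblem \eqref{algo:BIPP.inexact.subproblem}: identifying its smooth part as $\psi_{\mathrm{s}}(u) := \lam_t^+ {\cal \hat L}_c(z_{<t}^+, u, z_{>t}; p) + \frac{1}{2}\|u - z_t\|^2$ and its composite part as $\lam_t^+ \Psi_t$, the $(1/8; z_t)$-stationary solution guarantee invoked in line~\ref{output:step:2:ABIPP} directly yields \eqref{eq:ref:lemma:A1}.

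For part (b), assume $\lam_t \in (0, 1/(2m_t)]$, so $\lam_t^+ = \lam_t$ on the first pass of the loop. Strong convexity of the $t$-th subproblem follows because $\lam_t^+ f(z_{<t}^+, \cdot, z_{>t})$ is $\lam_t^+ m_t$-weakly convex by (A3), the $A$-dependent terms and $\lam_t^+ \Psi_t$ are convex, and $\frac{1}{2}\|\cdot - z_t\|^2$ is $1$-strongly convex, so the net strong convexity modulus is at least $1 - \lam_t^+ m_t \ge 1/2$. To establish \eqref{eq:test:adap:new}, I would lower-bound the Lagrangian difference ${\cal L}_c(z_{<t}^+, z_t, z_{>t}; p) - {\cal L}_c(z_{<t}^+, z_t^+, z_{>t}; p)$ by combining three ingredients: (i) weak convexity of $f$ in the $t$-th block, giving a $-(m_t/2)\|z_t - z_t^+\|^2$ curvature term; (ii) the exact three-point identity for the quadratic penalty, contributing $(c/2)\|A_t(z_t - z_t^+)\|^2$ beyond its linear cross-term; and (iii) the $\varepsilon$-subdifferential inequality for $\lam_t^+ \Psi_t$ evaluated with subgradient $w := r_t^+ - \nabla \psi_{\mathrm{s}}(z_t^+)$ from part (a). Because $\nabla \psi_{\mathrm{s}}(z_t^+) = \lam_t^+ \nabla_{x_t} {\cal \hat L}_c(z_{<t}^+, z_t^+, z_{>t}; p) + (z_t^+ - z_t)$, the linear ${\cal \hat L}_c$-gradient cross-terms telescope against those coming from (i)--(ii), and the $-\langle z_t^+ - z_t, z_t - z_t^+\rangle$ factor produces an extra $+\|z_t - z_t^+\|^2$, leaving an overall lower bound $\langle r_t^+, z_t - z_t^+\rangle + (1-\lam_t^+ m_t/2)\|z_t - z_t^+\|^2 + (\lam_t^+ c/2)\|A_t(z_t-z_t^+)\|^2 - \varepsilon_t^+$. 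Young's inequality $\langle r_t^+, z_t - z_t^+\rangle \ge -\frac{1}{2}\|r_t^+\|^2 - \frac{1}{2}\|z_t - z_t^+\|^2$ together with $\frac{1}{2}\|r_t^+\|^2 + \varepsilon_t^+ \le \frac{1}{2}(\|r_t^+\|^2 + 2\varepsilon_t^+) \le \frac{1}{16}\|z_t^+ - z_t\|^2$ and $\lam_t^+ m_t \le 1/2$ collapses the coefficient on $\|z_t^+ - z_t\|^2$ to at least $\frac{1}{2} - \frac{1}{4} - \frac{1}{16} = \frac{3}{16} \ge \frac{1}{8}$, while the $(\lam_t^+ c/2)\|A_t(\cdots)\|^2 \ge (\lam_t^+ c/4)\|A_t(\cdots)\|^2$ term is retained; dividing through by $\lam_t^+$ recovers \eqref{eq:test:adap:new}.

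For part (c), the halving rule makes $\lam_t^+ = \lam_t/2^{k-1}$ on the $k$-th attempt, so by part (b) the test succeeds once $\lam_t/2^{k-1} \le 1/(2m_t)$, i.e., once $k \ge 1 + \log_2(2m_t\lam_t)$. Hence termination occurs within $1 + \max\{0, \lceil\log_2(2m_t\lam_t)\rceil\}$ attempts, and since $2m_t\lam_t \le 1 + 4m_t\lam_t$ monotonicity of $\lceil\log_2(\cdot)\rceil$ yields the claimed bound $1 + \lceil \log_2(1 + 4m_t\lam_t)\rceil$. The stepsize bound follows by cases: if termination occurs on the first attempt then $\lam_t^+ = \lam_t$; otherwise the previous value $2\lam_t^+$ failed the test, so the contrapositive of (b) forces $2\lam_t^+ > 1/(2m_t)$, i.e., $\lam_t^+ > 1/(4m_t)$. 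Both cases give $\lam_t^+ \ge \min\{\lam_t, 1/(4m_t)\}$.

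The main obstacle I anticipate lies in the bookkeeping of part (b): correctly identifying that the prox-regularization contribution $(z_t^+ - z_t)$ in $\nabla \psi_{\mathrm{s}}(z_t^+)$ interacts with the $\varepsilon$-subdifferential inequality to yield the extra $+\|z_t - z_t^+\|^2$ term, and then calibrating Young's split so that, after absorbing both the residual bound $\|r_t^+\|^2 + 2\varepsilon_t^+ \le \frac{1}{8}\|z_t^+ - z_t\|^2$ and the weak-convexity deficit $\lam_t^+ m_t/2 \le 1/4$, the surviving coefficient still dominates $1/8$ while preserving the full $(\lam_t^+c/2)\|A_t(\cdots)\|^2$ term required on the right-hand side of \eqref{eq:test:adap:new}.
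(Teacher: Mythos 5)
Your proposal is correct in all three parts. Parts (a) and (c) track the paper's own argument essentially verbatim: (a) is a direct unpacking of Definition~\ref{def:BB} applied to \eqref{algo:BIPP.inexact.subproblem}, and (c) combines the halving rule with the claim of part (b) exactly as the paper does, arriving at a count ($1+\max\{0,\lceil\log_2(2m_t\lam_t)\rceil\}$) that is slightly tighter than, and hence implies, the stated bound, with the same two-case argument for $\lam_t^+\ge\min\{\lam_t,1/(4m_t)\}$. The genuine divergence is in part (b). The paper first establishes $(1/2)$-strong convexity of the subproblem objective with respect to the norm induced by $B_t=(1-\lam_t^+ m_t)I+\lam_t^+ cA_t^*A_t$, and then invokes the general three-point inequality for $\varepsilon$-subgradients of strongly convex functions (Lemma~\ref{conv:result} with $\tau=1$), which returns the descent with the degraded constants $\tfrac14\|z_t^+-z_t\|_{B_t}^2$ and $-2\varepsilon_t^+$. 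You instead decompose $\Lc$ into its $f$, quadratic-penalty, and $\Psi_t$ pieces and apply, respectively, blockwise weak convexity (A3), the exact second-order expansion of the quadratic, and the $\varepsilon$-subdifferential inequality at $z_t^+$ with subgradient $r_t^+-\nabla\psi_{\mathrm{s}}(z_t^+)$; the prox term $(z_t^+-z_t)$ inside $\nabla\psi_{\mathrm{s}}(z_t^+)$ then yields the extra $+\|z_t^+-z_t\|^2$ exactly as you describe. This direct route bypasses the appendix lemma entirely and retains the full curvature $\tfrac{\lam_t^+c}{2}\|A_t(z_t^+-z_t)\|^2$ together with only a single $-\varepsilon_t^+$, so your surviving coefficients ($3/16$ on the quadratic term and $c/2$ on the $A_t$ term) strictly dominate the required $1/8$ and $c/4$, whereas the paper's computation lands exactly on $1/8$ and $c/4$. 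Both arguments are valid; yours is more elementary and marginally sharper, while the paper's reuses a lemma it states anyway for other purposes.
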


\begin{proof}
(a) Both relations in~\eqref{eq:ref:lemma:A1} immediately follows from line~\ref{output:step:2:ABIPP} of {\ABIPP} and Definition~\ref{def:BB}.

(b) This statement follows immediately from the following claim, namely,
if $\lam_t^+\leq 1/(2m_t)$ at the beginning of a loop iteration then the objective function of the $t$-th block subproblem \eqref{algo:BIPP.inexact.subproblem} 
 is $(1/2)$-strongly convex, the pair
$(z_t^+, \lam_t^+)$ satisfies \eqref{eq:test:adap:new}, and hence that the loop ends at this iteration.

To prove the claim, let $h_t(\cdot)$ denote the objective function of the $t$-th block subproblem \eqref{algo:BIPP.inexact.subproblem} and assume that $\lam_t^+\leq 1/(2m_t)$ at the beginning of some loop iteration. We first prove that the function in  $h_t(\cdot)$  is $(1/2)$-strongly convex. The assumption that  $\lam_t^+ \in (0, 1/(2m_t)]$ implies that the matrix $B_t : = (1-\lam_t^+ m_t)I+\lam_t^+ cA_t^*A_t$ is clearly positive definite, and hence defines the norm whose square is
\begin{equation}\label{def:norm:Bt:Ine:AB}
\|\cdot\|_{B_t}^2:=\langle\;\cdot\;, \, B_t(\cdot) \, \rangle 
\ge
\lam_t^+ c \|A_t (\cdot)\|^2 + \frac12 \|\cdot\|^2.
\end{equation}
Moreover, using assumption (A3), the definition of the Lagrangian function in \eqref{DP:AL:F}, and the definition of $h_t(\cdot)$, we can easily see that the function 
$h_t(\cdot) - \frac12 \|\cdot\|^2_{B_t}$ is  convex, and hence $h_t(\cdot)$ is $(1/2)$-strongly convex due to the inequality in \eqref{def:norm:Bt:Ine:AB}.
We now prove that inequality \eqref{eq:test:adap:new} holds. 
Due to \eqref{eq:ref:lemma:A1}, the quadruple $(z_t^+,r_t^+, \varepsilon_t^+,\lam_t^+)$ satisfies
\begin{align*}
r_t^+\overset{\eqref{eq:ref:lemma:A1}}\in  \nabla \Big[\lam_t \hat {\cal L}_{c}(z_{<t}^+,\cdot,z_{>t}; p)+\frac{1}{2}\|\cdot-z_{t}\|^{2}\Big](z_t^+) 
         + \partial_{\varepsilon_t^+}(\lam_t \Psi_t) (z_t^+)
         \subset \partial_{\varepsilon_t^+}
         h_t(z_t^+) 
        \end{align*}
where the set inclusion is due to
\cite[Theorem 3.1.1]{lemarechal1993} and the fact that $\lam_t^+ \hat {\cal L}_{c}(z_{<t}^+,\cdot,z_{>t}; p)+\frac{1}{2}\|\cdot-z_{t}\|^{2}$, $\lam_t^+ \Psi_t(\cdot )$ and $h_t(\cdot )$ are convex functions.
By applying Lemma \ref{conv:result}  with $\psi=h_t$
   , $(\xi,\tau, Q)=(1, 1, B_t)$, $(u,y,v) = (z_{t},z_t^+,r_t^+)$, and $\eta=\varepsilon_t^+$,
   we see that
\begin{align*}
    \lam_t^+ {\cal L}_c(z_{<t}^+,z_{t}, z_{>t}; p)&-\lam_t^+ {\cal L}_c(z_{<t}^+,z_t^+, z_{>t}; p) \ge  \frac{1}{2}\|z_t^+-z_t\|^2 + \frac{1}{4}\|z_t^+-z_t\|_{B_t}^2 -2\varepsilon_t^++\langle r_t^+, z_t-z_t^+\rangle\\
    &\overset{\eqref{def:norm:Bt:Ine:AB}}\ge \frac{1}{2}\|z_t^+-z_t\|^2 + \frac{\lam_t c}{4}\|A_t(z_t^+-z_t)\|^2 -2\varepsilon_t^+ +\langle r_t^+, z_t-z_t^+\rangle,
\end{align*}
where the last inequality is due to \eqref{def:norm:Bt:Ine:AB}.
Combining the previous inequality, the inequality $a b \le (a^2 + b^2)/2$ with $(a,b)=(\sqrt{2}\|r_t^+\|, (1/\sqrt{2})\|z_t^+-z_t\|)$, and the condition on the error $(r_t^+, \varepsilon_t^+)$ as in \eqref{eq:ref:lemma:A1}, we have
\begin{align}
\label{tes:SA:line:search:Ine}
 {\cal L}_c(z_{<t}^+,z_{t}, z_{>t}; p)&-{\cal L}_c(z_{<t}^+,z_t^+, ,z_{>t}; p) \nonumber\\ 
&\ge  \frac{1}{2\lam_t^+}\|z_t^+-z_t\|^2 + \frac{c}{4} \|A_t (z_t^+-z_t)\|^2  -\frac{1}{\lam_t^+}\left(\| \sqrt{2} r_t^+\| \left\|\frac1{\sqrt{2}} (z_t^+-z_t)\right\|+ 2\varepsilon_t^+\right) \nonumber\\
&\ge \frac{1}{2\lam_t^+}\|z_t^+-z_t\|^2 + \frac{c}{4} \|A_t (z_t^+-z_t)\|^2  -\frac{1}{\lam_t^+} \left(  \|r_t^+\|^2 + \frac14 \|z_t^+-z_t\|^2+ 2\varepsilon_t^+ \right) \nonumber\\
&\overset{\eqref{eq:ref:lemma:A1}}\ge \frac{1}{2\lam_t^+}\|z_t^+-z_t\|^2 + \frac{c}{4} \|A_t (z_t^+-z_t)\|^2  -\frac{1}{\lam_t^+} \left(  \frac14 +\frac18  \right) \|z_t^+-z_t\|^2 \nonumber\\
&= \frac{1}{8\lam_t^+} \|z_t^+-z_t\|^2 + \frac{c}{4} \|A_t(z_t^+-z_t)\|^2, \nonumber
\end{align}
which concludes that \eqref{eq:test:adap:new} holds.
We have thus proved that the claim holds, and hence that b) holds.

(c) Assume for the sake of contradiction that
there exists a loop iteration $j$ such that 
$j-1 \ge \lceil \log_2(1+4m_t\lam_t) \rceil$,
and hence $j \ge 2$.
In view of line \ref{eq:test:ABIPP} of {\ABIPP}, the stepsize $\lam_t^+$ at the beginning of the $(j-1)$-th loop iteration satisfies
\[
\lam_t^+ = \frac{\lam_t}{2^{j-2}} =
\frac{2\lam_t}{2^{j-1}} \le \frac{1}{2m_t},
\]
where the inequality follows from the fact that $j-1\geq \log_2(4m_t\lam_t)$. In view of the claim made at the beginning of b), it then follows that the loop ends at the $(j-1)$-iteration, a conclusion that contradicts the  assumption that  $j$ is a loop iteration.
The conclusion that $\lam_t^+ \ge \min\{ \lam_t, 1/(4m_t)\}$
follows from the claim at the beginning of b) and the fact that 
$\lam_t^+$ is halved at the loop iterations for which 
\eqref{eq:test:adap:new} does not hold.
\end{proof}

\vspace{1em}

The following result,
which is a more general version of Proposition \ref{main:subroutine:1},
describes the main properties of the quadruple
$(z^+,v^+,\delta_+,\lam^+)$ output 
by {\ABIPP}.

\begin{proposition}\label{prop:ABIPP.output}
Assume that {\ABIPP} with input $(z,p,\lam,c)
\in {\cal H} \times A(\mathbb{R}^n) 
\times \R^B_{++}\times \R_{++}$ is well-defined and let
$(z^+,v^+,\delta_+, \lam^+)={\ABIPP}(z,p,\lam, c)$. 
Then the following statements hold: 
\begin{itemize}
\item[(a)] there holds
\begin{equation}\label{eq:adapt.Delta.Lc.ineq}
\Delta {\cal L}_c := {\cal L}_c(z;p) - {\cal L}_c(z^+;p) \ge \frac{1}{8}\sum_{t =1}^B \frac{\|z_t^+-z_t\|^2}{\lam_t^+} + \frac{c}{4}\sum_{t =1}^B \|A_t (z_t^+-z_t)\|^2;
\end{equation}

\item[(b)] the quadruple $(z^+,v^+,\delta_+, \lam^+)$ satisfies
\begin{gather}
\begin{gathered}\label{lemma:norm:residual:Ine.2}
v^+  \in \nabla f (z^+)+ \partial_{\delta_+} \Psi(z^+) +
A^*[p+c(Az^+-b)] ,\\
\|v^+\|^2 +\delta_+  
\le \left[\tilde\sigma_1(\lam_{\min}^+,\lam_{\max}^+) + c \sigma_2\right]
  \Big[ {\cal L}_c(z;p) - {\cal L}_c(z^+;p)\Big],
\end{gathered}
\end{gather}
where $\chitwo$ is as in  \eqref{eq:sigma1&2},
\begin{align}\label{def.lambda.min.max}
\lam_{\min}^+ = \min_{1\leq t\leq B}\{ \lam_t^+\}, \quad\lam_{\max}^+ = \max_{1\leq t\leq B}\{ \lam_t^+\},
\end{align}
and
\begin{align}\label{def.sigm.lam}
    \tilde \sigma_1(\lam_{\min}^+,\lam_{\max}^+) = 48\lam_{\max}^+\|L\|^2+50(\lam_{\min}^+)^{-1}+1.
\end{align}
\end{itemize}
\end{proposition}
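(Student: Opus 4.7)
The plan is to prove parts (a) and (b) in turn, both leveraging the per-block descent condition \eqref{eq:test:adap:new} that the inner while-loop in line~\ref{algo:ABIPP.if.loop} of {\ABIPP} is guaranteed to enforce upon exit.

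For part (a), I would simply telescope the global Lagrangian drop across the $B$ block updates. Writing
\[
{\cal L}_c(z;p) - {\cal L}_c(z^+;p) \;=\; \sum_{t=1}^{B}\bigl[\,{\cal L}_c(z_{<t}^+, z_t, z_{>t};p) - {\cal L}_c(z_{<t}^+, z_t^+, z_{>t};p)\,\bigr]
\]
and applying \eqref{eq:test:adap:new} to each summand yields \eqref{eq:adapt.Delta.Lc.ineq} directly; this step is essentially bookkeeping, and it is the crucial bridge for part (b).

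For part (b), I would first derive the stationary inclusion. Lemma~\ref{Lemma:ABIPP}(a), together with property \eqref{prop1:subd}, lets me write $r_t^+ = \lam_t^+\nabla_{x_t}\hat{\cal L}_c(z_{<t}^+,z_t^+,z_{>t};p) + (z_t^+-z_t) + \lam_t^+ w_t^+$ for some $w_t^+ \in \partial_{\varepsilon_t^+/\lam_t^+}\Psi_t(z_t^+)$. Plugging this into the formula for $v_t^+$ on line~\ref{algo.stat.vit} of {\SBIPP} collapses the ``staircase'' gradient terms and the cross-block $A_t^*A_s$ terms via the identity $A(z_{<t}^+,z_t^+,z_{>t}) = Az^+ - \sum_{s>t}A_s(z_s^+-z_s)$, leaving $v_t^+ - \nabla_{x_t}f(z^+) - A_t^*[p+c(Az^+-b)] = w_t^+$. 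The blockwise inclusion claimed in \eqref{lemma:norm:residual:Ine.2} then follows from property \eqref{prop2:subd} and the definition $\delta_+ = \sum_t \varepsilon_t^+/\lam_t^+$.

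To bound $\|v^+\|^2 + \delta_+$, I would split each $v_t^+$ into its four summands (the Lipschitz gradient difference, $r_t^+/\lam_t^+$, the cross-block $cA_t^*\sum_{s>t}A_s(z_s^+-z_s)$, and $(z_t^+-z_t)/\lam_t^+$), apply $\|\sum_{i=1}^k a_i\|^2 \le k\sum_i \|a_i\|^2$, and dominate each piece: assumption (A4) controls the gradient term by $L_t^2\|z_{>t}^+-z_{>t}\|^2$; the error bound $\|r_t^+\|^2 + 2\varepsilon_t^+ \le \tfrac18\|z_t^+-z_t\|^2$ from Lemma~\ref{Lemma:ABIPP}(a) handles $r_t^+$ and simultaneously $\delta_+$; Cauchy-Schwarz gives $\bigl\|\sum_{s>t}A_s(z_s^+-z_s)\bigr\|^2 \le B\sum_s\|A_s(z_s^+-z_s)\|^2$. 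After summing in $t$, interchanging summation order for the Lipschitz double sum, and using $\|L\|^2 = \sum_t L_t^2$, $\|A\|_\dagger^2 = \sum_t \|A_t\|^2$, every resulting term is dominated by either $\sum_t \|z_t^+-z_t\|^2/\lam_t^+$ (which part (a) bounds by $8\Delta{\cal L}_c$) or $c\sum_t \|A_t(z_t^+-z_t)\|^2$ (also bounded by $4\Delta{\cal L}_c$ via part (a)); the stepsize-dependent prefactors $\lam_{\max}^+\|L\|^2$ and $1/\lam_{\min}^+$ appear exactly where $\tilde\sigma_1$ predicts, and the cross-block term contributes the $cB\|A\|_\dagger^2$ piece that matches $c\sigma_2$.

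The main obstacle is the constant-tracking in the second half of (b): juggling the $k=4$ expansion factor, the Cauchy-Schwarz factor $B$, and the substitutions through (a) while ending up with coefficients no larger than $48\lam_{\max}^+\|L\|^2$, $50/\lam_{\min}^+$, $1$, and $24B\|A\|_\dagger^2$ as required by \eqref{def.sigm.lam} and \eqref{eq:sigma1&2}. Everything conceptual reduces to part (a); the rest is careful arithmetic.
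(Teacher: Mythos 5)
Your proposal is correct and follows essentially the same route as the paper: part (a) by summing the test \eqref{eq:test:adap:new} over the blocks, and part (b) by deriving the inclusion from \eqref{eq:ref:lemma:A1} via \eqref{prop1:subd}--\eqref{prop2:subd} and then dominating each component of $v_t^+$ by $\Delta{\cal L}_c$ through part (a). The only cosmetic difference is your single four-way split of $v_t^+$ (factor $4$) versus the paper's nested split into $r_t^+/\lam_t^+$ plus a three-term remainder (factors $2$ and $3$); your version yields slightly smaller coefficients, which still sit below $48\lam_{\max}^+\|L\|^2$, $50(\lam_{\min}^+)^{-1}$, $1$, and $24B\|A\|_\dagger^2$ as required.
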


\begin{proof}
(a) 
We first observe that Lemma \ref{Lemma:ABIPP}(c) implies that the loop consisting of lines \ref{output:step:2:ABIPP} to \ref{eq:test:ABIPP} terminates with a pair $(z_t^+,\lam_t^+)$ satisfying  \eqref{eq:test:adap:new}.
Hence, by summing \eqref{eq:test:adap:new} from $t=1$ to $t=B$ we conclude that  \eqref{eq:adapt.Delta.Lc.ineq} holds.

\noindent
(b) We first prove the inclusion in \eqref{lemma:norm:residual:Ine.2}, and to ease notation we let $p^+=p+c(Az^+-b)$. Using \eqref{prop1:subd} with $(\varepsilon, \beta)=(\varepsilon_t^+, \lambda_t^+)$,
we easily see that 
\eqref{eq:ref:lemma:A1} implies that
\begin{align}\label{eq:COA1}
\frac{r_t^+}{\lambda_t^+}
&\overset{\eqref{eq:ref:lemma:A1}}\in \nabla_{z_{t}^+}f(z_{< t}^+,z_t^+, z_{>t})+A_{t}^{*}\left[ p+c[A(z_{< t}^+,z_t^+,z_{>t})-b]\right]+\frac{1}{\lambda_t^+}(z_t-z_t^+)+\partial_{(\varepsilon_t^+/\lambda_t^+)}\Psi_t(z_t^+)\nonumber \\
& =\nabla_{z_{t}^+}f(z_{< t}^+,z_t^+, z_{>t})+A_{t}^{*}\left(p^+ - c\sum_{s=t+1}^{ B}A_s (z_s^+-z_s)\right)+\frac{1}{\lambda_t^+}(z_t-z_t^+)+\partial_{(\varepsilon_t^+/\lambda_t^+)}\Psi_t(z_t^+), \nonumber
\end{align}
for every $t\in \{1,\ldots,B\}$. Rearranging the above inclusion and using the definition of $v_t^+$ (see STEP 2 of {\ABIPP}), we see that for every $t\in \{1,\ldots,B\}$,
\[
v_t^+ \in \nabla_{z_{t}^+}f(z^+)+\partial_{(\varepsilon_t^+/\lambda_t^+)}\Psi_t(z_t^+)+A_t^*p^+.
\]
Now using \eqref{prop2:subd} with $(\varepsilon,\varepsilon_t)=(\delta_+, \varepsilon_t^+/\lam_t^+)$ for every $t\in \{1,\ldots,B\}$, and $\delta_+=(\varepsilon_1^+/\lam_1^+) + \ldots +  (\varepsilon_B^+/\lam_B^+)$ (see STEP 2 of {\ABIPP}), we have that
\[
\partial_{\delta_+}\Psi(z^+) \supset  \partial_{(\varepsilon_1^+/\lambda_1^+)}h_1(z_1^+) \times \ldots \times
\partial_{(\varepsilon_B^+/\lambda_B^+)}h_B(z_B^+),
\]
and we conclude that
the inclusion in \eqref{lemma:norm:residual:Ine.2} holds.

We now prove the inequality in \eqref{lemma:norm:residual:Ine.2}. Using \eqref{eq:ref:lemma:A1}, \eqref{eq:adapt.Delta.Lc.ineq},  and that $1/\lam_t^+\leq (\lam_{\min}^+)^{-1}$ due to \eqref{def.lambda.min.max}, we have
\begin{align*}
\sum_{t=1}^B \left( 2\frac{\|r_t^+\|^2}{(\lam_t^+)^2}  + \frac{\varepsilon_t^+}{\lam_t^+} \right)
&\overset{\eqref{def.lambda.min.max}}\le (2(\lam_{\min}^+)^{-1}+1) \sum_{t=1}^B \left( \frac{\|r_t^+\|^2 + \varepsilon_t^+}{\lam_t^+}\right)\\
&\overset{\eqref{eq:ref:lemma:A1}}\le (2(\lam_{\min}^+)^{-1}+1)  \sum_{t=1}^B \left( \frac{\|z_t^+-z_t\|^2}{8\lam_t^+}  \right) \overset{\eqref{eq:adapt.Delta.Lc.ineq}}\le  (2(\lam_{\min}^+)^{-1}+1) \Delta {\cal L}_c.
\end{align*}
Defining $D_{t}:=\|v_t^+-r_t^+/\lam_t^+\|^2$, using the previous inequality, the definition of $\delta_+$ (see STEP 2 of {\ABIPP}), and that $\|a+b\|^2\leq 2\|a\|^2+2\|b\|^2$, for any $a,b\in \R^n$, we have
\begin{align}\label{eq:suff.decr.inter}
\|v^+\|^2 &+\delta_+  =
\sum_{t=1}^B \left( \|v_t^+\|^2 + \frac{\varepsilon_t^+}{\lam_t^+}  \right) 
\le 
\sum_{t=1}^B \left( 2 D_{t} + 2\frac{\|r_t^+\|^2}{(\lam_t^+)^2}  + \frac{\varepsilon_t^+}{\lam_t^+}
\right) \leq 2 \sum_{t=1}^B  D_{t} +  (2(\lam_{\min}^+)^{-1}+1) \Delta {\cal L}_c.
\end{align}
We will now bound $\sum_{t=1}^BD_{t}$. Using \eqref{eq:adapt.Delta.Lc.ineq} and that $\lam_s^+ \leq \lam_{\max}^+$ due to \eqref{def.lambda.min.max}, we have
\begin{align}\label{eq:bound:vi:interm1}
L_t^2 \|z_{>t}^+-z_{>t}\|^2 
=
L_t^2 \sum_{s =t+1}^B \|z_s^+-z_s\|^2
\overset{\eqref{def.lambda.min.max}}\le
L_t^2 \left( 
\lam_{\max}^+ \sum_{s =t+1}^B \frac{\|z_s^+-z_s\|^2}{\lam_s^+}
\right)   
\overset{\eqref{eq:adapt.Delta.Lc.ineq}}\le 8 \lam_{\max}^+ L_t^2 \Delta {\cal L}_c.
\end{align}
Moreover, it follows from the definitions of $D_{t}$ given above and $v_t$ (see STEP 2 of {\ABIPP}), the Cauchy-Schwarz inequality, and assumption (A4), that
\begin{align}\label{eq:bound:vi:first:Ine}
D_{t} &=\left\|v_{t}^+-\frac{r_t^+}{\lam_t^+}\right\|^2  =\left\|\nabla_{z_{t}^+}f(z_{\leq t}^+, z_{>t}^+)-\nabla_{z_{t}^+}f(z_{\leq t}^+, z_{>t})+A_{t}\left(c\sum_{s=t+1}^{ B}A_{s}(z_s^+-z_s)\right)-\frac{(z_s^+-z_s)}{\lambda_t^+}\right\|^2\nonumber\\
&\leq 3\left( \|\nabla_{z_{t}^+}f(z_{\leq t}^+, z_{>t}^+)-\nabla_{z_{t}^+}f(z_{\leq t}^+, z_{>t})\|^2+ \left( c\|A_t\| \sum_{s=t+1}^{B} \|A_{s} (z_s^+-z_s) \| \right)^2 +\frac{\|z_s^+-z_s\|^2}{(\lambda_t^+)^2}\right)\nonumber\\
&\overset{\eqref{eq:lipschitz_x}}\leq 3\left( L_t^2 \|z_{>t}^+-z_{>t}\|^2+c^2\|A_t\|^2 (B-t)\sum_{s=t+1}^{B}\|A_{s}(z_s^+-z_s)\|^2+(\lam_{\min}^+)^{-1} \frac{\|z_s^+-z_s\|^2}{\lambda_t^+} \right) \nonumber\\
& \overset{\eqref{eq:adapt.Delta.Lc.ineq},\eqref{eq:bound:vi:interm1}}\le
3  \left[  \left( 8 \lam_{\max}^+ L_t^2 
+  4 c \|A_t\|^2 B \right)  \Delta {\cal L}_c + (\lam_{\min}^+)^{-1} \frac{\|z_s^+-z_s\|^2}{\lambda_t^+} \right] . \nonumber
\end{align}
Summing up the previous inequality from $t=1$ to $t=B$, using the definitions of $\|L\|$ and $\|A\|_\dagger$  as in \eqref{eq:block_norm}, and using inequality \eqref{eq:adapt.Delta.Lc.ineq}, we have 
\begin{align*}
    \sum_{t=1}^B D_{t} & \overset{\eqref{eq:block_norm}}
\le  
\left[ 12 c \|A\|^2_\dagger B + 24 \lam_{\max}^+\|L\|^2   \right] \Delta {\cal L}_c + 3 (\lam_{\min}^+)^{-1} \sum_{t=1}^B \frac{\|z_t^+-z_t\|^2}{\lambda_t^+} \\
&
\overset{\eqref{eq:adapt.Delta.Lc.ineq}}\leq 12\left[  c \|A\|^2_\dagger B+ 2\lam_{\max}^+\|L\|^2+2(\lam_{\min}^+)^{-1} \right]\Delta {\cal L}_c. 
\end{align*}
Inequality \eqref{lemma:norm:residual:Ine.2} now follows by combining \eqref{eq:suff.decr.inter} with the
 previous inequality, and using the definitions of $\tilde \chione(\lam_{\min}^+,\lam_{\max}^+)$ and $\chitwo$ as in \eqref{def.sigm.lam} and \eqref{eq:sigma1&2}, respectively.
\end{proof}

\vspace{1em}
We now use Lemma \ref{Lemma:ABIPP} and Proposition \ref{prop:ABIPP.output} to show that Proposition \ref{main:subroutine:1} holds.

\vspace{1em}
\noindent
{\bf Proof of Proposition \ref{main:subroutine:1}:}
The assumption that
$\lam_t=1/(2m_t)$ for every $t\in\{1,
\ldots,B\}$,  the definitions in \eqref{eq:max.m.min.m}, \eqref{def.lambda.min.max},
and Lemma~\ref{Lemma:ABIPP}(b), imply that
\[
\lam_{\min}^+ =  \min_{1\leq t\leq B}\{ \lam_t\} = \min_{1\leq t\leq B} \left\{ \frac{1}{2m_t} \right \} = \frac{1}{2\overline{m}}, \quad \lam_{\max}^+ =\max_{1\leq t\leq B}\{ \lam_t\}=\lam_{\max}=\max_{1\leq t\leq B} \left\{ \frac{1}{2m_t} \right \} = \frac{1}{2\underline m},
\]
and hence that the conclusion of 
holds in view of the definitions of
$\sigma_1$ in \eqref{eq:sigma1&2} and $\tilde\sigma_1$ in \eqref{def.lambda.min.max}. \qedflush

\vspace{1em}

In the remaining of this subsection, we argue that similar complexity results obtained for {\SA} can also be derived for its adaptive stepsize analog that, instead of
invoking {\SBIPP} in its line \ref{call:SBIPP}, calls its adaptive counterpart {\ABIPP}. We refer to this modified {\SA} as the adaptive {\SA}.

We start with some remarks about the stepsizes $\{\lam_t^k\}$ generated by the adaptive {\SA}. First, a very simple induction applied to Lemma~\ref{Lemma:ABIPP}(b) shows that the stepsize vector $\lam^k=(\lam_1^k,\ldots,\lam_B^k)$ output by the $k$-th {\ABIPP} call within the adaptive {\SA}
satisfies 
\[
\min_{1\leq t\leq B}\{ \lam_t^k\} \ge  \min_{1\leq t\leq B} \left\{ \lam_t^0 , \frac{1}{4m_t} \right \} =: \underline{\lam},  \qquad
\max_{1\leq t\leq B}\{ \lam_t^k\}\le \max_{1\leq t\leq B} \left\{\lam_t^0, \frac{1}{2m_t} \right \} =\overline \lam.
\]
Moreover, it follows from Proposition~\ref{prop:ABIPP.output} with $(z,p,\lam,c)=(y^{i-1},q^{i-1},\lam^{i-1},c)$, the fact that 
$(y^i,v^i,\delta_i,\lam^i)={\ABIPP}(y^{i-1},q^{i-1},\lam^{i-1},c)$, and the above two bounds, that
the inclusion \eqref{eq.SA.stat.inclusion} holds and
\begin{gather}\label{eq.def.adaptive.stepsize}
\begin{gathered}
\|v^i\|^2 +\delta_i  
\le \left[\tilde \sigma_1 + c \sigma_2\right]
  \Big[ {\cal L}_c(y^{i-1};q^{i-1}) - {\cal L}_c(y^i;q^{i-1})\Big], 
\end{gathered}
\end{gather}
where $\tilde \sigma_1 := \tilde 
\sigma_1(\underline{\lam}, \bar \lam)$ and
$\tilde \sigma_1(\cdot,\cdot)$ is as in \eqref{def.sigm.lam}. Using the observation that all the complexity results for {\SA} were derived using~\eqref{claim:bounded:iterations}, one can similarly  obtain complexity results for  the adaptive {\SA} (and hence adaptive {\DA})   using \eqref{eq.def.adaptive.stepsize} and a similar reasoning.
For example, the complexity of the adaptive {\SA} is
\begin{align}\label{thm:iter.complexity:SADMM.adaptive}
 \left(\frac{\tilde \sigma_1+c\sigma_2}{\rho^2}\right)\Gamma(y^0,q^0; c) + 1.
\end{align}
Before ending this section, we consider the case where the function $f$ in
\eqref{initial.problem} is separable. In this case, the constant $L$ defined in~\eqref{eq:block_norm} is zero and hence $\tilde \sigma_1$ reduces to
\begin{equation}\label{eq:def.sigma1.adaptive}
\tilde \sigma_1= 50\underline{\lam}^{-1}+1.
\end{equation}
By examining \eqref{thm:iter.complexity:SADMM.adaptive} and \eqref{eq:def.sigma1.adaptive}, we see that the larger $\underline \lam$ is, the smaller the complexity bound 
\eqref{thm:iter.complexity:SADMM.adaptive} becomes.
It can be easily  seen that 
an alternative bound on the number of iterations is \eqref{thm:iter.complexity:SADMM.adaptive}
with $\tilde \sigma_1$ replaced by
$1 + 50 (\theta_k)^{-1}$ where
$\theta_k = \min \{ \lam_t^k : k=1,\ldots,K, \ t=1,\ldots,B\}$ and
$K$ is the last iteration of the adaptive {\SA}.
This observation thus provides a reasonable explanation for why  the 
 policy of choosing all the initial stepsizes $\{\lam^0_t\}_{t=1}^B$ large and gradually decreasing them as needed is superior from a practical point of view than choosing all the stepsizes constant, and hence for why the adaptive {\SA} performs better than its
the constant stepsize version.



     

\section{Numerical Experiments}\label{sec:numerical}

This section showcases the numerical performance of {\DA} on two linearly and box constrained, non-convex, quadratic programming problems. Subsection \ref{sub:numerics.Problem1} summarizes the performance of {\DA} on a distributed variant of our experimental problem, while Subsection \ref{sub:numerics.Problem2} focuses on a non-distributable variant. The distributed variant employs a small number of high-dimensional blocks while the non-distributable variant conversely has a large number of uni-dimensional blocks. These two proof-of-concept experiments indicate that {\DA} may not only substantially outperform the relevant benchmarking methods in practice, but also be relatively robust to the relationship between block counts and sizes.

All experiments were implemented and executed in  MATLAB 2021b and run on a macOS machine with a 1.7 GHz Quad-Core Intel processor, and 8 GB of memory.

\subsection{Distributed Quadratic Programming Problem}\label{sub:numerics.Problem1}

This subsection studies the performance of {\DA} for finding stationary points of a  box-constrained, nonconvex block distributed quadratic programming problem with $B$ blocks (DQP).  

The $B$-block DQP is formulated as
\begin{align}\label{eq:num.problem1}
\min_{(x_{1},\ldots,x_{B})\in\mathbb{R}^{Bn}}\  & -\sum_{i=1}^{B-1}\left[\frac{\alpha_{i}}{2}\|x_{i}\|^{2}+\left\langle x_{i},\beta_{i}\right\rangle \right]\nonumber\\
\text{s.t.}\  & \|x\|_{\infty}\leq \omega\\
& x_i-x_B=0\quad \text{ for }i=1,\ldots,B-1
\end{align}
where $\omega > 0$, $n\in\mathbb{N}$, $\{\alpha_i\}_{i=1}^{B-1}\subseteq [0,1]$, and $\{\beta_i\}_{i=1}^{B-1}\subseteq [0,1]^n$. It is not difficult to see that DQP fits the template of \eqref{initial.problem}. The smooth component is taken to be
    \[
f(x)= -\sum_{i=1}^{B-1}\left[\frac{\alpha_{i}}{2}\|x_{i}\|^{2}+\left\langle x_{i},\beta_{i}\right\rangle \right].
\]
The non-smooth function $h_i$ is set to the indicator of the set $\{x\in \R^n~:~ \|x_i\|_\infty\leq \omega\}$ for $1\leq i\leq B$. For $1\leq i\leq B-1$, we take $A_i\in\mathbb{R}^{n\times Bn}$ to be the operator which includes $A_i\in\mathbb{R}^n$ into the $i$-th block of  $\mathbb{R}^{Bn}$, i.e. 
    \[
    A_i=\begin{bmatrix} 0_{(i-1)n\times n}\\ I_{n\times n}\\ 0_{(B-i)n\times n}\end{bmatrix}
    \]
where $0_{j\times k}$ denotes a $j\times k$ zero matrix. The matrix $A_B\in\mathbb{R}^{n\times Bn}$ is defined by the action $A_Bx=(-x,\ldots,-x)^\top$.

We shall now outline how we conducted our DQP experiments. The number of blocks, $B$, for each experiment was set to $3$, while for the block-size, $n$, the dimensions $n=10,20,100,5000$ were considered. For each setting of $n$, we ran experiments where $\omega=10^1,10^3,10^5,10^7,10^9$. The values of $\{\alpha_i\}_{i=1}^{B-1}\subseteq [0,1]$, and $\{\beta_i\}_{i=1}^{B-1}\subseteq [0,1]^n$ were sampled uniformly at random. To generate $b$, we sampled $x_b\in [-\omega,\omega]^{Bn}$ uniformly at random, then set $b=Ax_b$. The initial iterate $x_0$ was also selected uniformly at random from $[-\omega,\omega]^{Bn}$. 

For this problem,  {\DA} was applied with $c_0=1$, $C=1$, $\alpha=10^{-2}$, $p^0=\mathbf{0}$, and each block's initial stepsize set to $10$, i.e., $\gamma^0_i=10$ for $1\leq i\leq B$. To provide an adequate benchmark for {\DA}, we compared its performance against two instances of the method from \cite{KongMonteiro2024} and three instances of the method from \cite{sun2021dual}. The method of \cite{KongMonteiro2024} was deployed with two different choices of $(\theta,\chi)$: $(0,1)$ and $(1/2,1/18)$. We call these two instances DP1 and DP2, respectively. Both DP1 and DP2 set $(\lam, c_1)=(1/2, 1)$. The method of \cite{sun2021dual} was deployed with three different settings of the penalty parameter $\rho$: $0.1$, $1.0$, and $10.0$. We call the resultant instances SD1, SD2, and SD3, respectively. Moreover, all three instances make the parameter selections $(\omega,\theta,\tau)=(4,2,1)$ and $(M_{\Psi},K_k,J_h,L_h)=(4\gamma,1,1,0)$ in accordance with \cite[Section 5.1]{sun2021dual}. We reiterate that \cite{KongMonteiro2024} provides no convergence guarantees for the pragmatic choice of $(\theta,\chi)=(0,1)$. All executed algorithms were run for a maximum of $500,000$ iterations. Any algorithm that met this limit took at least $10$ milliseconds to complete.



\begin{table}[htb!]
\caption{Performance for all algorithms applied to the DQP problem \eqref{eq:num.problem1}, with $B=3$, $C=1$ and $\alpha=10^{-2}$ for different pair of values $(n,\omega)$. The iteration and time columns record the number of iterations and time in seconds to find a $(10^{-5},10^{-5})$-stationary point.}
\centering
\resizebox{\textwidth}{!}{\begin{tabular}{cccccccccccccc}
\toprule
& & \multicolumn{6}{c}{Iteration} & \multicolumn{6}{c}{Time (ms)}  \\
\cmidrule(lr){3-8} \cmidrule(lr){9-14}
$n$ & $\omega$ & AD & DP1 & DP2 & SD1 & SD2 & SD3 & AD & DP1 & DP2 & SD1 & SD2 & SD3 \\
\midrule
10 & $10^{1}$ & \textbf{18} & 76 & 83 & 427 & 223 & 976 & \textbf{1.592} & 5.402 & 4.291 & 28.192 & 13.881 & 60.184  \\
10 & $10^{3}$ & \textbf{34} & 228 & 232 & 569 & 399 & 1855 & \textbf{2.259} & 11.752 & 11.858 & 65.049 & 30.209 & 119.417  \\
10 & $10^{5}$ & \textbf{50} & 385 & 385 & * & 581 & 2778 & \textbf{3.228} & 13.374 & 13.004 & * & 35.368 & 168.964  \\
10 & $10^{7}$ & \textbf{66} & 541 & 537 & * & * & 3701 & \textbf{4.419} & 18.706 & 18.363 & * & * & 239.235  \\
10 & $10^{9}$ & \textbf{81} & 697 & 689 & * & * & 4625 & \textbf{5.866} & 24.540 & 24.237 & * & * & 323.790  \\
20 & $10^{1}$ & \textbf{22} & 62 & 68 & 433 & 298 & 1261 & \textbf{1.538} & 2.520 & 2.484 & 27.928 & 19.164 & 80.375  \\
20 & $10^{3}$ & \textbf{44} & 166 & 171 & * & 498 & 2304 & \textbf{2.560} & 6.484 & 6.153 & * & 31.821 & 152.866  \\
20 & $10^{5}$ & \textbf{65} & 273 & 275 & * & 700 & 3347 & \textbf{4.213} & 10.548 & 9.879 & * & 45.812 & 223.235  \\
20 & $10^{7}$ & \textbf{84} & 379 & 379 & * & * & 4383 & \textbf{4.684} & 13.961 & 14.009 & * & * & 290.884  \\
20 & $10^{9}$ & \textbf{103} & 485 & 483 & * & * & 5418 & \textbf{5.629} & 17.635 & 17.393 & * & * & 365.368  \\
100 & $10^{1}$ & \textbf{20} & 40 & 46 & * & 433 & 6231 & 2.072 & \textbf{1.820} & 1.831 & * & 28.705 & 420.276  \\
100 & $10^{3}$ & \textbf{33} & 78 & 77 & * & 695 & 9444 & \textbf{2.132} & 3.013 & 2.871 & * & 44.640 & 617.636  \\
100 & $10^{5}$ & \textbf{45} & 116 & 107 & * & * & 12664 & \textbf{3.148} & 4.545 & 4.041 & * & * & 898.522  \\
100 & $10^{7}$ & \textbf{57} & 155 & 137 & * & * & 15876 & \textbf{3.736} & 5.844 & 5.465 & * & * & 1051.830  \\
100 & $10^{9}$ & \textbf{68} & 193 & 167 & * & * & 19087 & \textbf{4.841} & 7.629 & 6.436 & * & * & 1267.924  \\
5000 & $10^{1}$ & \textbf{25} & 121 & 125 & * & 646 & 2257 & \textbf{13.733} & 26.455 & 27.511 & * & 206.646 & 861.279  \\
5000 & $10^{3}$ & \textbf{37} & 221 & 223 & * & 851 & 3324 & \textbf{20.084} & 52.456 & 50.828 & * & 282.999 & 1264.225  \\
5000 & $10^{5}$ & \textbf{49} & 321 & 321 & * & * & 4390 & \textbf{27.591} & 72.829 & 72.810 & * & * & 1692.375  \\
5000 & $10^{7}$ & \textbf{61} & 422 & 419 & * & * & 5449 & \textbf{32.080} & 96.010 & 97.450 & * & * & 1968.163  \\
5000 & $10^{9}$ & \textbf{72} & 522 & 517 & * & * & 6507 & \textbf{41.682} & 118.872 & 118.632 & * & * & 2440.377\\
\hline\multicolumn{14}{l}{\footnotesize\textit{Bolded values equal to the best algorithm according to iteration count or time.}}\\ 
\multicolumn{14}{l}{\footnotesize\textit{* indicates the algorithm failed to find a  stationary point meeting the tolerances by the 500,000th iteration.}}\\
\bottomrule
\end{tabular}}
\label{tab:numeric.table1}
\end{table}

Table \ref{tab:numeric.table1}, the record of the performance of all algorithms on this experimental problem, lays bare the superior performance of {\DA}. In this table, we label {\DA} as AD for the sake of concision. In terms of iterations, {\DA} outperforms all other algorithms for all settings of $B$ and $m$. Along the dimension of time, {\DA} is faster than all algorithms, for all settings of $n$ and $\omega$, except DP1 when $n=100$ and $\omega=10$.

\newpage
\subsection{Nonconvex QP with Box Constraints}\label{sub:numerics.Problem2}


In this subsection, we evaluate the performance of {\DA} for solving a general nonconvex quadratic problem with box constraints (QP-BC). The QP-BC problem is formulated as
\begin{equation}\label{eq:num.problem2}
\min_{\|x\|_\infty\leq \omega } \left\{f(x) := \frac{1}{2}\inner{x}{Px} + \inner{r}{x}: Ax=b \right\}.    
\end{equation}
where $P\in \R^{B\times B}$ is negative definite, $A\in \R^{m\times B}$, $r,b\in \R^m \times\R^m$ and $\omega\in \R_{++}$. As for the previous problem, it is not difficult to check that QP-BC fits within the template of \eqref{initial.problem}. For this problem, we take our blocks to be single coordinates. Consequently, each column of $A$ gives rise to a $A_i$ matrix. The non-smooth components of the objective are again picked to be the indicator functions of the sets $\{x_i\in \R: |x_i|\le \omega\}$ for $i\in \{1,\ldots, B\}$.

We now describe how we orchestrated our QP-BC experiments. In all instances, $\omega=1$. To generate $\tilde r\in \R^m$, $\tilde P\in\R^{B\times B}$ and $\tilde A\in\R^{m\times B}$, we started by generating a diagonal matrix $D\in\R^{B\times B}$ whose entries are selected uniformly at random in $[1,1000]$. Next, we generated $\tilde r\in [-1,1]^m$, $\tilde P\in[-1,1]^{B\times B}$ negative definite, and $\tilde A\in[-1,1]^{m\times B}$ uniformly at random. Finally, we set $P=D\tilde P D$, $A=\tilde A D$, and $r=D\tilde r$. The vector $b\in\R^m$ was set as $b=Ax_b,$ where $x_b$ is a uniformly at random selected vector in $[-1,1]^B$. The initial starting point $x^0$ was chosen in this same fashion.

For this problem,  three instances of {\DA}, referred to as AD1, AD2, and AD3, were applied with $C=1$, $ =10^{-2}$, $p^0=\mathbf{0}$, and each block's initial stepsize set to $10$, i.e. $\gamma^0_i=1000$ for $1\leq i\leq B$. The three methods differ only in their choice of initial penalty parameter $c_0$: $c_0=10$ for AD1, $c_0=1$ for AD2, and $c_0=.1$ for AD3.
The benchmarking algorithms for this experiment were three instances of the method from \cite{KongMonteiro2024}, which we refer to as DP1, DP2 and DP3. Like the three instances of {\DA}, these instances differ only in their choice of $c_0$: $c_0=10$ in DP1, $c_0=1$ in DP2 and $c_0=0.1$ in DP3. Each of these three methods were applied with $(\theta,\chi)=(0,1)$. Yet again, we remind the reader that \cite{KongMonteiro2024} provides no convergence guarantees for this choice of $(\theta,\chi)$.  To ensure timely execution of all algorithms, each algorithm terminated upon meeting a 500,000 iteration limit or the discovery of an approximate stationary triple $(x^+, p^+, v^+)$ satisfying the relative error criterion
\[
v^+ \in \nabla f(x^+) + \partial \Psi(x^+) + A^*p^+, \quad \frac{\|v^+\|}{1+\|\nabla f(x^0)\|}\leq \rho , \quad \frac{\|Ax^+-b\|}{1+\|Ax^0-b\|}\leq \eta.
\]
for $\rho=\eta=10^{-5}$.


\begin{table}[htb!]
\caption{Performance for all algorithms applied to the QP-BC Problem \eqref{eq:num.problem2}, with $C=1$ and $\alpha=10^{-2}$, for different pair of values $(B,\omega)$. The iteration and time columns record the number of iterations and time in seconds to find a stationary point satisfying the relative error condition with $(\rho,\eta)=(10^{-5},10^{-5})$.}
\centering
\resizebox{\textwidth}{!}{\begin{tabular}{cccccccccccccc}
\toprule
& & \multicolumn{6}{c}{Iteration} & \multicolumn{6}{c}{Time (sec)}  \\
\cmidrule(lr){3-8} \cmidrule(lr){9-14}
$B$ & $m$ & AD1 & AD2 & AD3 & DP1 & DP2 & DP3 & AD1 & AD2 & AD3 & DP1 & DP2 & DP3  \\
\midrule
10 & 1 & 44 & \textbf{23} & 33 & 3554 & 3560 & 3532 & 0.067 & 0.010 & \textbf{0.008} & 0.296 & 0.286 & 0.275  \\
10 & 2 & 23 & \textbf{19} & 37 & 1355 & 1282 & 1395 & 0.025 & \textbf{0.005} & 0.007 & 0.128 & 0.111 & 0.116  \\
10 & 5 & \textbf{1280} & 2421 & 1469 & * & * & * & \textbf{0.162} & 0.287 & 0.171 & * & * & *  \\
20 & 1 & \textbf{23} & 30 & 30 & 803 & 296 & 417 & 0.021 & 0.009 & \textbf{0.008} & 0.148 & 0.052 & 0.075  \\
20 & 2 & 87 & \textbf{44} & 89 & 297 & 2233 & * & 0.032 & \textbf{0.014} & 0.026 & 0.063 & 0.441 & *  \\
20 & 5 & 147 & 144 & \textbf{114} & 1862 & 6210 & * & 0.036 & 0.034 & \textbf{0.027} & 0.333 & 1.088 & *  \\
20 & 10 & 682 & 1105 & \textbf{550} & 847 & * & * & 0.168 & 0.267 & \textbf{0.132} & 0.152 & * & *  \\
20 & 15 & \textbf{1286} & 2753 & 3691 & 1808 & * & * & \textbf{0.308} & 0.656 & 0.879 & 0.329 & * & *  \\
50 & 1 & 21 & \textbf{17} & 66 & 327 & 1616 & 1385 & 0.022 & \textbf{0.014} & 0.049 & 0.176 & 0.850 & 0.746  \\
50 & 2 & 219 & \textbf{22} & 66 & 377 & 1180 & 2772 & 0.178 & \textbf{0.019} & 0.055 & 0.243 & 0.760 & 1.704  \\
50 & 5 & 188 & \textbf{123} & 226 & 1880 & 2296 & * & 0.147 & \textbf{0.098} & 0.175 & 1.208 & 1.467 & *  \\
50 & 10 & 462 & \textbf{377} & 1647 & 1456 & 699 & * & 0.352 & \textbf{0.285} & 1.229 & 0.972 & 0.466 & *  \\
50 & 20 & \textbf{1082} & 56530 & 9363 & 2058 & * & * & \textbf{0.842} & 42.686 & 7.063 & 2.173 & * & *  \\
50 & 25 & 1326 & 2361 & 2307 & \textbf{1157} & * & * & \textbf{1.230} & 1.913 & 1.835 & 1.243 & * & *  \\
50 & 30 & 3430 & \textbf{1262} & 2045 & 3981 & * & * & 2.989 & \textbf{1.044} & 1.654 & 4.412 & * & *  \\
100 & 1 & 95 & \textbf{22} & 182 & 2792 & 1476 & 9554 & 0.446 & \textbf{0.084} & 0.572 & 9.545 & 4.887 & 33.713  \\
100 & 2 & 104 & \textbf{32} & 102 & 802 & 1531 & * & 0.429 & \textbf{0.120} & 0.361 & 3.154 & 5.996 & *  \\
100 & 5 & 449 & 256 & \textbf{83} & 4603 & * & * & 1.570 & 0.902 & \textbf{0.295} & 20.776 & * & *  \\
100 & 10 & 1675 & 37263 & \textbf{427} & 3050 & 3281 & * & 5.724 & 124.269 & \textbf{1.429} & 15.528 & 16.771 & *  \\
100 & 25 & 2388 & 12916 & \textbf{2346} & 2687 & * & * & 8.041 & 43.605 & \textbf{7.885} & 15.529 & * & *  \\
100 & 50 & 4596 & 3526 & * & \textbf{3395} & * & * & 16.336 & \textbf{12.488} & * & 23.325 & * & *  \\
100 & 75 & 7070 & 27964 & 123020 & \textbf{4816} & * & * & \textbf{26.387} & 104.134 & 459.301 & 38.100 & * & *  \\
\hline\multicolumn{14}{l}{\footnotesize\textit{Bolded values equal to the best algorithm according to iteration count or time.}}\\ 
\multicolumn{14}{l}{\footnotesize\textit{* indicates the algorithm failed to find a  stationary point meeting the tolerances by the 500,000th iteration.}}\\
\bottomrule
\end{tabular}}
\label{tab:numeric.results.table2}
\end{table}

The results for this experiment, shown in Table \ref{tab:numeric.results.table2}, echo those for its predecessor by again displaying the computational superiority of {\DA}. Measured by iteration count, {\DA} performed better in $87\%$ of the problem instances. In terms of time, {\DA} performed better in $100\%$ of the instances. It is worth mentioning that {\DA} converged for all instances, while the same cannot be said for the DP1, DP2, and DP3 benchmark methods. DP1 converged for $96\%$ instances, DP2 converged for $54\%$ instances, and DP3 converged only for $27\%$ instances. For $m=1,2,5$, our method was at least $10$ times faster in terms of iteration count and time than any DP variant. Notably, we attempted to apply multiple versions of the method from \cite{KongMonteiro2024} with choices of $(\theta,\chi)$ that theoretically should ensure convergence. None of the methods managed to find the desired point within the iteration limit, so we omitted their results from the table.

\section{Concluding Remarks}\label{sec:concluding}


We now discuss some further related research directions. First, the ability of {\DA} to allow for the inexact solution of its block subproblem opens up many possible avenues for application. 
A systematic numerical study of its performance when applied to problems requiring inexact computation would be intriguing.
Second, it would be interesting to develop a {\PD} that performs Lagrange multiplier updates
with 
$(\theta,\chi)=(0,1)$ at every iteration, rather than
just at the last iteration of each epoch. This {\PD} would then be an instance of the class of ADMMs outlined in the Introduction with
$\ell_k=1$. 
The {\PD} of \cite{KongMonteiro2024} satisfies this last property but chooses $(\theta,\chi)$ in a very conservative way, namely, satisfying
\eqref{eq:assumptio:B}.

Finally, we have assumed in this chapter that $\dom \Psi$ is bounded (see assumption (A1)). It would be interesting to extend its analysis  to  the case where ${\cal H}$
is unbounded.


\section{Technical Results for Proof of Lagrange Multipliers}\label{sec:tech.lagrange.multiplier}

This section provides some technical results about convexity and shows that the sequence of Lagrange multipliers generated by {\SA} is bounded.

The first two results,  used to prove Lemma~\ref{lem:qbounds-2},  can be found in  \cite[Lemma B.3]{goncalves2017convergence} and 
\cite[Lemma 3.10]{kong2023iteration}, respectively. 

\begin{lemma}\label{lem:linalg} 
Let $A:\R^n \to \R^l$ be a  nonzero linear operator. Then,
\[
\nu^+_A\|u\|\leq \|A^*u\|,   \quad \forall u \in A(\R^n).
\]
\end{lemma}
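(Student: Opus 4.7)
The plan is to prove the inequality using the singular value decomposition of $A$. Write $A = U \Sigma V^*$, where $U \in \R^{l \times l}$ and $V \in \R^{n \times n}$ are orthogonal matrices and $\Sigma$ is the diagonal matrix whose nonzero entries are $\sigma_1 \ge \sigma_2 \ge \cdots \ge \sigma_r > 0$, with $r = \mathrm{rank}(A)$. By definition of $\nu^+_A$, we have $\nu^+_A = \sigma_r$. Denoting the columns of $U$ and $V$ by $u_1,\ldots,u_l$ and $v_1,\ldots,v_n$ respectively, the first key fact is the identification $A(\R^n) = \mathrm{span}\{u_1,\ldots,u_r\}$, which is immediate from the SVD.

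Given $u \in A(\R^n)$, I would expand it in the orthonormal basis of this subspace as $u = \sum_{i=1}^r c_i u_i$, so that $\|u\|^2 = \sum_{i=1}^r c_i^2$. Applying $A^* = V \Sigma^* U^*$ and using $U^* u_i = e_i$ together with $\Sigma^* e_i = \sigma_i e_i$ (for $i \le r$), one obtains the explicit formula $A^* u = \sum_{i=1}^r c_i \sigma_i v_i$.

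Finally, exploiting the orthonormality of the right singular vectors $v_1,\ldots,v_r$, we compute
\[
\|A^* u\|^2 = \sum_{i=1}^r c_i^2 \sigma_i^2 \;\ge\; \sigma_r^2 \sum_{i=1}^r c_i^2 = (\nu^+_A)^2 \|u\|^2,
\]
and taking square roots yields the conclusion. There is no real obstacle here: the result is a direct consequence of the SVD once one recognizes that restricting attention to $A(\R^n)$ eliminates the troublesome zero singular values that would otherwise prevent any such lower bound. The argument essentially says that on the image of $A$, the operator $A^*$ is bounded below by $\nu^+_A$, which is the content of the lemma.
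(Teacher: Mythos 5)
Your proof is correct: the SVD argument is the standard way to establish this bound, and every step (the identification $A(\R^n)=\mathrm{span}\{u_1,\ldots,u_r\}$, the formula $A^*u=\sum_{i=1}^r c_i\sigma_i v_i$, and the final lower bound by $\sigma_r=\nu^+_A$) checks out. The paper does not prove this lemma itself but cites it from an external reference, so there is nothing further to compare against.
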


\begin{lemma}\label{lem:bound_xiN}
Let $h$ be a function as in (A5).
Then, for every $\delta \ge 0$, $z\in {\mathcal H}$,   and $\xi \in \partial_{\delta} \Psi(z)$, we have
\begin{equation*}\label{bound xi}
\|\xi\|{\rm dist}(u,\partial {\mathcal H}) \le \left[{\rm dist}(u,\partial {\mathcal H})+\|z-u\|\right]M_{\Psi} + \inner{\xi}{z-u}+\delta \quad \forall u \in {\mathcal H}
\end{equation*}
$\partial {\cal H}$ denotes the boundary of ${\cal H}$.
\end{lemma}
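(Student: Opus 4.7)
The plan is to exploit the defining inequality of the $\delta$-subdifferential at a test point that lies in ${\cal H}$ and is specifically chosen to expose the norm $\|\xi\|$, and then bound the resulting difference $\Psi(z')-\Psi(z)$ using the Lipschitz hypothesis (A5) together with the triangle inequality.

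Assume first that $\xi \ne 0$ (the case $\xi=0$ reduces the claim to $0 \le M_{\Psi}[{\rm dist}(u,\partial {\cal H}) + \|z-u\|] + \delta$, which follows immediately by evaluating the $\delta$-subgradient inequality at the test point $u$ and using Lipschitz continuity), and abbreviate $d:={\rm dist}(u,\partial {\cal H})$. I would choose the test point
\begin{equation*}
z' \;:=\; u + d\,\frac{\xi}{\|\xi\|}.
\end{equation*}
Since ${\cal H}$ is closed and convex, a standard convex-geometry fact gives ${\rm dist}(u,\partial {\cal H}) = {\rm dist}(u,{\cal H}^c)$, whence the closed ball $B(u,d)$ is contained in ${\cal H}$; in particular $z' \in {\cal H}$. (When $d=0$ this degenerates harmlessly to $z'=u$.)

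Next I would apply the definition of $\partial_\delta \Psi(z)$ at the test point $z'$:
\begin{equation*}
\Psi(z') \;\ge\; \Psi(z) + \inner{\xi}{z'-z} - \delta.
\end{equation*}
Splitting $z'-z = (z'-u) + (u-z)$ and using $\inner{\xi}{z'-u} = d\|\xi\|$ rearranges this to
\begin{equation*}
d\,\|\xi\| \;\le\; \Psi(z') - \Psi(z) + \inner{\xi}{z-u} + \delta.
\end{equation*}
Since both $z$ and $z'$ lie in ${\cal H}$, assumption (A5) and the triangle inequality yield
\begin{equation*}
\Psi(z') - \Psi(z) \;\le\; M_{\Psi} \|z'-z\| \;\le\; M_{\Psi}(\|z'-u\|+\|u-z\|) \;=\; M_{\Psi}(d+\|z-u\|),
\end{equation*}
and substituting this bound into the previous display delivers exactly the claimed inequality.

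The only non-routine point is the verification that $z' \in {\cal H}$, which I would justify by invoking the convex-geometry identity ${\rm dist}(u,\partial {\cal H}) = {\rm dist}(u,{\cal H}^c)$ valid for closed convex sets. Everything else is straightforward algebraic manipulation, so I do not expect any serious obstacle.
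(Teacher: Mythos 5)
Your proof is correct. Note that the paper does not actually prove this lemma itself—it is quoted from the literature (\cite[Lemma 3.10]{kong2023iteration})—but your argument is the standard one for results of this type: test the $\delta$-subgradient inequality at $z' = u + {\rm dist}(u,\partial{\cal H})\,\xi/\|\xi\|$, justify $z'\in{\cal H}$ via ${\rm dist}(u,\partial{\cal H})={\rm dist}(u,{\cal H}^c)$ for the closed convex set ${\cal H}$, and bound $\Psi(z')-\Psi(z)$ by Lipschitz continuity; all steps, including the degenerate cases $\xi=0$ and ${\rm dist}(u,\partial{\cal H})=0$, check out.
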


The following result, whose statement is in terms of
the $\delta$-subdifferential instead of the classical subdifferential,
is a slight generalization of \cite[Lemma B.3]{sujanani2023adaptive}.
For the sake of completeness, we also include its proof.  

\begin{lemma}\label{lem:qbounds-2}
Assume that
$b \in \R^{l}$,
linear operator $A:\mathbb{R}^n \to \mathbb{R}^l$,
and function $\Psi(\cdot)$,
satisfy assumptions (A2),
(A5) and (A6).  
If $(q^-,\varrho) \in A(\R^n) \times (0,\infty)$ and $(z,q,r,\delta) \in  \dom \Psi \times A(\mathbb{R}^n) \times \mathbb{R}^n\times \R_+$ satisfy
\begin{equation}\label{eq:cond:Lem:A1}
r \in \partial_\delta \Psi(z)+A^{*}q\quad\text{and}\quad 
q=q^-+\varrho(Az-b), 
\end{equation}
then we have 
\begin{equation}\label{q bound-2}
 \|q\|\leq \max\left\{\|q^-\|, \varphi\left(\|r\|+\delta \right) \right\}
\end{equation}
where $M_{\Psi},$ $\bar d>0$, and $D_{\Psi}$, are as in (A5), (A6), and \eqref{def:damH}, respectively, $\nu^+_A$ denotes  the smallest positive singular value of $A$, and 
\begin{equation}\label{eq:Technical.varphi.def}
\varphi(t) := \frac{2D_{\Psi}M_{\Psi}+(2D_{\Psi}+1)t}{\bar d \nu^{+}_A} \quad \forall t
\in \R_+.
\end{equation}
\end{lemma}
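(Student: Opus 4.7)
The plan is a case analysis on whether $\|q\|\le\|q^-\|$. If so, \eqref{q bound-2} holds trivially. The substantive case is $\|q\|>\|q^-\|$, where I aim to show $\|q\|\le \varphi(\|r\|+\delta)$.

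First, I would extract $\xi \in \partial_\delta \Psi(z)$ with $r=\xi+A^*q$, so $A^*q=r-\xi$. Since $q\in A(\R^n)$ and $A$ is nonzero by (A2), Lemma~\ref{lem:linalg} gives $\nu_A^+\|q\|\le \|A^*q\|\le \|r\|+\|\xi\|$. This reduces the task to an upper bound on $\|\xi\|$. To that end I would invoke Lemma~\ref{lem:bound_xiN} with $u=\bar{\mathrm{x}}$ from (A6); using $\mathrm{dist}(\bar{\mathrm{x}},\partial\mathcal{H})=\bar d$ and $\|z-\bar{\mathrm{x}}\|\le D_\Psi$ this yields
\[
\bar d\,\|\xi\| \;\le\; (\bar d + D_\Psi)M_\Psi \;+\; \inner{\xi}{z-\bar{\mathrm{x}}} \;+\; \delta.
\]

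The decisive step---and the one I expect to be the main obstacle---is controlling $\inner{\xi}{z-\bar{\mathrm{x}}}$. Substituting $\xi=r-A^*q$ and using $A\bar{\mathrm{x}}=b$ (since $\bar{\mathrm{x}}\in\mathcal{F}$ by (A6)), expand
\[
\inner{\xi}{z-\bar{\mathrm{x}}} \;=\; \inner{r}{z-\bar{\mathrm{x}}} \;-\; \inner{q}{Az-b}.
\]
Next, insert $Az-b=(q-q^-)/\varrho$ from the second condition in \eqref{eq:cond:Lem:A1}, giving $\inner{q}{Az-b}=\inner{q}{q-q^-}/\varrho$. In the assumed case $\|q\|>\|q^-\|$, Cauchy--Schwarz yields $\inner{q}{q-q^-}\ge \|q\|(\|q\|-\|q^-\|)>0$, so this contribution is nonnegative and may be discarded. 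Combined with $\|z-\bar{\mathrm{x}}\|\le D_\Psi$, this gives the clean estimate $\inner{\xi}{z-\bar{\mathrm{x}}}\le D_\Psi\|r\|$.

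Finally, I would assemble the pieces. Using $\bar d\le D_\Psi$ (which holds since the open ball of radius $\bar d$ about $\bar{\mathrm{x}}$ lies in $\mathcal{H}$, so $D_\Psi\ge \bar d$), the chain
\[
\nu_A^+\bar d\,\|q\| \;\le\; \bar d\,\|r\| + (\bar d + D_\Psi)M_\Psi + D_\Psi\|r\| + \delta \;\le\; 2D_\Psi M_\Psi + (2D_\Psi+1)(\|r\|+\delta)
\]
is a routine rearrangement and dividing by $\nu_A^+\bar d$ delivers exactly $\|q\|\le \varphi(\|r\|+\delta)$ per \eqref{eq:Technical.varphi.def}. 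This closes the nontrivial case and completes the proof.
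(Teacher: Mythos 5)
Your proof is correct and follows essentially the same route as the paper's: both extract $\xi=r-A^*q\in\partial_\delta\Psi(z)$, combine Lemma~\ref{lem:bound_xiN} at $u=\bar{\mathrm{x}}$ with Lemma~\ref{lem:linalg}, and use $A\bar{\mathrm{x}}=b$ together with $Az-b=(q-q^-)/\varrho$ to reduce the estimate to the cross term $\inner{q}{q-q^-}$. The only divergence is the endgame: the paper retains the quadratic term $\|q\|^2/\varrho$ and closes by comparing against $W=\max\{\|q^-\|,\varphi(\|r\|+\delta)\}$, whereas you split on whether $\|q\|\le\|q^-\|$ and, in the nontrivial case, discard the then-nonnegative term $\inner{q}{q-q^-}/\varrho$ outright --- an equally valid and slightly cleaner finish.
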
	
\begin{proof}
 We first claim that
\begin{equation}\label{ineq:aux9001-2}
 \bar{d}\nu_A^{+}\|q\|
\leq  2 D_{\Psi}\left(M_{\Psi} + \|r\| \right)  - \inner{q}{Az-b}+\delta
\end{equation}
holds.
The assumption on $(z,q,r,\delta)$ implies that $r-A^{*}q \in \partial_{\delta} \Psi(z)$. Hence, using the Cauchy-Schwarz inequality, the definitions of $\bar d$ and $\bar x$ in (A6), and
Lemma~\ref{lem:bound_xiN} with $\xi=r-A^{*}q$, and  $u=\bar x$, we have:
 \begin{align}\label{first inequality p-2}
   \bar d\|r-A^{*}q\|-\left[\bar d+\|z-\bar x\|\right]M_{\Psi} &\overset{\eqref{bound xi}}{\leq}  \inner{r-A^{*}q}{z-\bar x}+\delta\leq \|r\| \|z-\bar x\| - \inner{q}{Az-b}+\delta.
 \end{align}
 Now, using the above inequality,
 the triangle inequality, the definition of $D_{\Psi}$ in \eqref{def:damH}, and the facts that $\bar d \leq D_{\Psi}$ and $\|z-\bar x\|\leq D_{\Psi}$, we conclude that:
 \begin{align}\label{second inequality p-2}
 \bar d \|A^*q\| + \inner{q}{Az-b}
 &\overset{\eqref{first inequality p-2}}{\leq} \left[\bar d+\|z-\bar x\|\right]M_{\Psi} + \|r\| \left(D_{\Psi} + \bar d\right)+\delta \leq 2 D_{\Psi}\left(M_{\Psi} + \|r\| \right)+\delta.
 \end{align}
 Noting the assumption that
$q \in A(\R^n)$,
inequality
 \eqref{ineq:aux9001-2} now follows
from the above inequality
and
Lemma~\ref{lem:linalg}.

We now prove \eqref{q bound-2}. 
Relation \eqref{eq:cond:Lem:A1}  implies that $\inner{q}{Az-b}=\|q\|^2/\varrho-\inner{q^-}{q}/\varrho$, and hence that
\begin{equation}\label{q relation-2}
    \bar d \nu^{+}_A\|q\|+\frac{\|q\|^2}{\varrho}\leq 2D_{\Psi}(M_{\Psi}+\|r\|)+\frac{\inner{q^-}{q}}{\varrho}+\delta\leq 2D_{\Psi}(M_{\Psi}+\|r\|)+\frac{\|q\| }{\varrho}\|q^-\|+\delta,
\end{equation}
where the last inequality is due to the Cauchy-Schwarz inequality.
Now, letting
$W$  denote the right hand side of \eqref{q bound-2} and using \eqref{q relation-2},
we conclude that
\begin{equation}\label{prelim q bound-2}
\left(\bar d \nu^+_A+\frac{\|q\|}{\varrho}    \right)\|q\|\overset{\eqref{q relation-2}}{\leq} \left(\frac{2D_{\Psi}(M_{\Psi}+\|r\|)+\delta}{W}+\frac{\|q\| }{\varrho}\right)W\leq \left(\bar d \nu^+_A+\frac{\|q\|}{\varrho}    \right) W,
\end{equation}
and hence that \eqref{q bound-2} holds.
\end{proof}

\vspace{1em}

We conclude this section with a technical result of convexity which is used in the proof of Lemma \ref{Lemma:ABIPP}.
Its proof can be found in \cite[Lemma A1]{melo2023proximal}.

\begin{lemma}\label{conv:result}
		Assume that $\xi>0$, $\psi \in \bConv{n}$ and $Q \in {\cal S}^n_{++}$ are such that
		$\psi - (\xi/2) \|\cdot\|^2_Q$ is convex and let
		$(y,v,\eta) \in \R^n \times \R^n \times  \R_+$ be such that $v\in \partial_\eta \psi(y)$.
		Then, 
		for any $\tau>0$,
			\begin{equation}\label{eq:auxlemA1}
		\psi(u) \ge \psi(y) + \inner{v}{u-y} - (1+\tau^{-1})\eta+ \frac{(1+\tau)^{-1}\xi}{2} \|u-y\|_Q^2  \quad \forall u \in \R^n.
		\end{equation}
	\end{lemma}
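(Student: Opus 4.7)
My plan is to exploit the decomposition $\psi = \phi + (\xi/2)\|\cdot\|_Q^2$, where $\phi := \psi - (\xi/2)\|\cdot\|_Q^2$ is convex by hypothesis. The idea is that the $\eta$-subdifferential inequality for $\psi$ at $y$ gives a lower bound that is linear in $u$, and we can ``trade'' some of the $\eta$-slack to extract the missing quadratic term by evaluating at an intermediate point on the segment from $y$ to $u$ and then invoking convexity of $\phi$.

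Concretely, I would pick $t \in (0,1)$ (to be optimized later) and set $z := (1-t)y + tu$. From $v \in \partial_{\eta}\psi(y)$ applied at $z$, I get
\[
\psi(z) \ge \psi(y) + t\langle v, u-y\rangle - \eta.
\]
Splitting $\psi(z) = \phi(z) + (\xi/2)\|z\|_Q^2$ and using convexity of $\phi$ to write $\phi(z) \le (1-t)\phi(y) + t\phi(u)$, I substitute the resulting upper bound for $\psi(z)$ into the previous display and divide through by $t$.

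The next step is bookkeeping with the $Q$-norm terms: a direct expansion gives
\[
\frac{\xi}{2}\bigl(\|u\|_Q^2 - \|y\|_Q^2\bigr) - \frac{\xi}{2t}\bigl(\|z\|_Q^2 - \|y\|_Q^2\bigr) = \frac{(1-t)\xi}{2}\|u-y\|_Q^2,
\]
after which rearranging yields
\[
\psi(u) \ge \psi(y) + \langle v, u-y\rangle - \frac{\eta}{t} + \frac{(1-t)\xi}{2}\|u-y\|_Q^2.
\]
Finally, choosing $t = \tau/(1+\tau)$ gives $1/t = 1 + \tau^{-1}$ and $1-t = (1+\tau)^{-1}$, which reproduces \eqref{eq:auxlemA1} exactly.

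I do not anticipate a genuine obstacle; the only subtle point is the algebraic identity for the $Q$-norm differences, which needs to be done carefully because the cross-terms $\xi\langle y, u-y\rangle_Q$ must cancel for the bound to be clean. Once that cancellation is verified, the reparametrization $t = \tau/(1+\tau)$ is forced by matching coefficients with the statement, so no further work is required.
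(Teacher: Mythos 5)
Your proof is correct, and it takes a genuinely different route from the paper's. The paper works with $\psi_v:=\psi-\inner{v}{\cdot}$, which is $\xi$-strongly convex with respect to $\|\cdot\|_Q$ and hence has a unique minimizer $\bar y$ with quadratic growth $\psi_v(u)\ge\psi_v(\bar y)+\frac{\xi}{2}\|u-\bar y\|_Q^2$; the condition $v\in\partial_\eta\psi(y)$ gives $\psi_v(\bar y)\ge\psi_v(y)-\eta$ and, taking $u=y$, the bound $\frac{\xi}{2}\|\bar y-y\|_Q^2\le\eta$, after which a Peter--Paul inequality $(1+\tau)^{-1}\|\tilde u+u'\|_Q^2\le\|u'\|_Q^2+\tau^{-1}\|\tilde u\|_Q^2$ with $\tilde u=u-\bar y$, $u'=\bar y-y$ transfers the growth from $\bar y$ to $y$ at the cost of the extra $\tau^{-1}\eta$. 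You instead never introduce $\bar y$: you evaluate the $\eta$-subgradient inequality at the interpolant $z=(1-t)y+tu$, exploit convexity of $\phi=\psi-(\xi/2)\|\cdot\|_Q^2$ together with the exact identity $(1-t)\|y\|_Q^2+t\|u\|_Q^2-\|(1-t)y+tu\|_Q^2=t(1-t)\|u-y\|_Q^2$ (which is precisely the cancellation of cross-terms you flagged), and divide by $t$; the substitution $t=\tau/(1+\tau)$ then reproduces the constants exactly. Your argument is arguably more elementary --- it needs neither the existence of a minimizer of $\psi_v$ nor any quadratic-growth lemma, only the definition of convexity of $\phi$ and an algebraic identity --- and it would go through verbatim if $Q$ were merely positive semidefinite. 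The paper's argument is the more standard ``strong convexity implies growth around the minimizer'' template. Both deliver identical constants, so nothing is lost either way.
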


\begin{proof}
		Let $\psi_v := \psi-\inner{v}{\cdot}$.
		The assumptions imply that $\psi_v$ has a unique global minimum $\bar y $
		and that
		\begin{equation}\label{ineq:u}
		\psi_v(u)\ge \psi_v(\bar y)+\frac{\xi}{2}\|u-\bar y\|_Q^2  \ge \psi_v(y)-\eta  +\frac{\xi}{2}\|u-\bar y\|_Q^2
		\end{equation}
		for every $u \in \R^n$. 
The above inequalities with $u=y$
 imply that $(\xi/2) \|\bar y-y\|_Q^2 \le \eta$. 
On the other hand, for any $\tilde u,u'\in \R^n$ and $\tau>0$, it holds
\begin{align*}
\|\tilde u+u'\|^2 
&= \|\tilde u\|^2 + \|u'\|^2 + 2 \inner{\frac{1}{\sqrt{\tau}} \tilde u}{\sqrt{\tau} u'} \\
&\le \|\tilde u\|^2 + \|u'\|^2+\frac{1}{\tau}\|\tilde u\|^2+\tau \|u'\|^2\\
&=(1+\tau) \|u'\|^2 + (1+\tau^{-1}) \|\tilde u\|^2    
\end{align*}
which implies in
\begin{align*}
(1+\tau)^{-1} \|\tilde u+ u'\|^2&\leq \|u'\|^2+(1+\tau)^{-1}(1+\tau^{-1}) \|\tilde u\|^2\\
&=\|u'\|^2+\tau^{-1} \|\tilde u\|^2.
\end{align*}
 
Hence, adding and subtracting the term $(\tau^{-1}\xi/2)\|\bar y-y\|_Q^2$ in the right hand side of \eqref{ineq:u}
and using the previous inequality with $\tilde u=u-\bar{y}$ and $u'=\bar{y}-y$, we obtain that
		\begin{align*}
		\psi_v(u)&
		\ge  \psi_v(y)-\eta-\frac{\tau^{-1}\xi}{2}\|\bar y-y\|_Q^2 +\frac{\xi}{2}  \left(  \tau^{-1} \|y-\bar y\|_Q^2+\|u-\bar y\|_Q^2 \right) \nonumber \\
		&\ge \psi_v(y)-(1+\tau^{-1})\eta+\frac{(1+\tau)^{-1}\xi}{2}\|u-y\|_Q^2 \end{align*}
 		for every $u \in \R^n$.		
Hence, \eqref{eq:auxlemA1}  follows  from the above conclusion and the  definition of $\psi_v$.	
\end{proof}
    \noindent

%
%
%
%
%


\pagestyle{plain} 


\chapter{\MakeUppercase{The Inexact Cyclic Block Proximal Gradient Method and Properties of Inexact Proximal Maps}$^{*}$}\label{chapter:jota}

\renewcommand{\thefootnote}{\fnsymbol{footnote}} 
\footnotetext[1]{Reprinted with permission from \cite{MaiaGH24}, Copyright 2024 by Springer Nature.
}
\renewcommand{\thefootnote}{\arabic{footnote}} 


\section{Overview}

    This chapter expands the Cyclic Block Proximal Gradient method for block separable composite minimization by allowing for inexactly computed gradients and pre-conditioned proximal maps. The resultant algorithm, the Inexact Cyclic Block Proximal Gradient (I-CBPG) method, shares the same convergence rate as its exactly computed analogue provided the allowable errors decrease sufficiently quickly or are pre-selected to be sufficiently small. It is provided numerical experiments that showcase the practical computational advantage of I-CBPG for certain fixed tolerances of approximation error and for a dynamically decreasing error tolerance regime in particular. The experimental results indicate that cyclic methods with dynamically decreasing error tolerance regimes can actually outpace their randomized siblings with fixed error tolerance regimes. It is established a tight relationship between inexact pre-conditioned proximal map evaluations and $\delta$-subgradients in our $(\delta,B)$-Second Prox Theorem. This theorem forms the foundation of our convergence analysis and enables us to show that inexact gradient computations can be subsumed within a single unifying framework. 

\section{Introduction}

We propose an Inexact Cyclic Block Proximal Gradient method (I-CBPG) for the block separable composite optimization problem

\begin{equation}\label{eq.problem}
F^*:=\min\left\{F(x):=f(x)+\sum_{i=1}^p \Psi_i\left(  U_i^T  x\right):x\in\mathbb{R}^n\right\}.
\end{equation}
We assume that $f:\mathbb{R}^n\to\mathbb{R}\cup\{\infty\}$ is smooth and convex, the matrices $U_i\in\mathbb{R}^{n\times n_i}$ are chosen such that $(U_1,\ldots,U_p)$ is an $n\times n$ permutation matrix, and each $\Psi_i: \mathbb{R}^{n_i} \to \mathbb{R}\cup\{\infty\}$ is proper, closed, and convex. Problem \eqref{eq.problem} naturally arises in data science whenever regularization is present. Matrix factorization \cite{Schmidt11}, LASSO \cite{Richtarik14}, group LASSO \cite{Qin13, Simon12}, matrix completion \cite{Wright09}, and compressive sensing \cite{Donoho06, Wright09} are but a few such problems.

The class of Block Proximal Gradient (BPG) methods readily exploits block separability to provide iterates that are cheap in terms of memory and computational costs, so they are popular for large-scale versions of problem \eqref{eq.problem} \cite{Beck13, FrongilloReid15, Leventhal10, Nesterov12,Richtarik16}. Often, BPG methods make considerable progress before a single full proximal gradient step can even completely execute. BPG methods principally differ in how they select the block $i$: greedily \cite{Richtarik12}, randomly \cite{Nesterov12}, or cyclically \cite{Beck13}. Cyclic BPG methods, the focus of our work, received their first convergence analysis in \cite{Beck13} which established the benchmark $\mathcal{O}(p/k)$ convergence rate when each $\Psi_i$ is the indicator of a closed and convex set. Later \cite{Shefi16} extended the analysis to account for $\Psi_i$ functions that are more generally proper, closed, and convex. Both \cite{Beck13} and \cite{Shefi16} assume exact computation of gradients and proximal maps and therefore avoid considering the effect of inexactness on their  analyses.

Since gradients and proximal maps are the main ingredients for a broad swath of first-order algorithms, the push to achieve lower iterate costs in large-scale settings has fueled research interest around their inexact computation (``inexactness''). Such inexactness provides a variety of benefits, but from a practical standpoint the most important is the ability to compute approximate updates quickly when a closed-form solution does not exist or would be prohibitively expensive from a computational perspective. The main focus of \cite{Schmidt11} is the convergence of the unaccelerated and accelerated proximal gradient schemes equipped with inexactly computed gradients and proximal maps. The ``inexact oracle" framework of \cite{Devolder14}, which is extended in \cite{Devolder13} and \cite{Dvurechensky16}, analyzes the convergence of common gradient based-methods when gradient or gradient-type mappings are computed inexactly. 

While some prior work has explored the effect of inexact computation on BPG methods, to our knowledge these studies have either restricted their attention to randomized schemes, or required the assumption of strong convexity for cyclic schemes. We summarize these contributions below in Table \ref{tab:comparison}. A central inspiration for this work, \cite{Richtarik14}, considers how inexactly computed proximal maps and gradients affect the randomized BPG method. It further explores the benefits of incorporating pre-conditioning into proximal map evaluations. Specifically, while pre-conditioning provides the benefit of making the problem of step-size selection trivial, this advantage comes at the cost of making closed-form evaluation of the pre-conditioned proximal map no longer possible in general. This lack of a closed-form solution for the pre-conditioned proximal map drives the need for inexact proximal map evaluation. The paper \cite{Hua16} treats the linear convergence of cyclic BPG methods and associated restrictions on the degree of proximal map inexactness under the assumption that the smooth component $f$ is strongly convex. To ensure the aforementioned linear convergence, though, \cite{Hua16} requires that the errors satisfy a restrictive decrease condition. To date, we are unaware of any works considering when $f$ is merely convex. Our primary aim is to fill this apparent deficiency in the literature. To this end, our 
algorithm guarantees sublinear convergence provided that maximum allowable error sizes decrease sublinearly, and our computational approach eliminates the need for often-tedious (if not impossible) explicit checks of subdifferential set membership.

\begin{table}[htb!]
\caption{Comparison of inexact coordinate descent methods by convexity assumption, block selection, and error tolerance type. The convergence rate characterizes the number of block iterations required to find $x$ with $F(x)-F^*<\epsilon$. For the randomized scheme of \cite{Richtarik14}, this convergence occurs in expectation and probability.$^{1}$}
\centering
\begin{tabular}{lc|cc|cc|}
\cline{3-6}
                                           &                                                                   & \multicolumn{2}{c|}{\textbf{\begin{tabular}[c]{@{}c@{}}Convergence Rate \\ by Convexity Type\end{tabular}}} & \multicolumn{2}{c|}{\textbf{Error Type}}               \\ \hline
\multicolumn{1}{|c|}{\textbf{\begin{tabular}[c]{@{}c@{}}Algorithm \\ by Paper\end{tabular}}} & \textbf{\begin{tabular}[c]{@{}c@{}}Cyclic/\\ Random\end{tabular}} & \multicolumn{1}{l|}{\textit{Strongly Convex}}            & \multicolumn{1}{l|}{\textit{Convex}}            & \multicolumn{1}{c|}{\textit{Dynamic}} & \textit{Fixed} \\ \hline
\multicolumn{1}{|c|}{This Paper}        & Cyclic                                                            & \multicolumn{1}{c|}{}                                    & $\mathcal{O}(1/\epsilon)$                                            & \multicolumn{1}{c|}{X}                & X              \\ \hline
\multicolumn{1}{|c|}{\cite{Hua16}}       & Cyclic                                                            & \multicolumn{1}{c|}{$\mathcal{O}\left(\log\left(1/\epsilon\right)\right)$                                           }                                &                                                 & \multicolumn{1}{c|}{X}                &                \\ \hline
\multicolumn{1}{|c|}{\cite{Richtarik14}} & Random                                                            & \multicolumn{1}{c|}{$\mathcal{O}\left(\log\left(1/\epsilon\right)\right)$                                          }                                & $\mathcal{O}(1/\epsilon)$                                                                                       & \multicolumn{1}{c|}{}                 & X              \\ \hline
\end{tabular}
    
    \label{tab:comparison}
\end{table}


\subsection{Chapter's Organization}

We describe
 the key contributions along with the chapter's layout below.
\begin{itemize}
	\item In Section \ref{section:inexact_proximal}, we analyze inexactly computed proximal maps that incorporate pre-conditioning in the sense of \cite{Richtarik14}. Our main theorem, the $(\delta,B)$-Second Prox Theorem (Theorem \ref{thm:second-prox}), generalizes what \cite{Beck17} calls the Second Prox Theorem \cite[Theorem 6.39]{Beck17} that supports the convergence proofs of a broad swath of proximal map-based algorithms. This Theorem's main feature is the tight relationship it expresses between inexact pre-conditioned proximal map evaluations and $\delta$-subgradients of the underlying function. It also generalizes similar relationships discovered in \cite[Lemma 1]{Salzo12} for inexact proximal maps without pre-conditioning. This equivalence facilitates a simple proof of an important observation as a corollary. Namely, instead of treating errors in proximal map and gradient computations separately, it is feasible to regard them both more generally as inexactly computed proximal map evaluations (Corollary \ref{cor:gradient-inexact}).
	
	\item In Section \ref{section:inexact_block_method_composite}, we define and analyze our Inexact Cyclic Block Proximal Gradient (I-CBPG) method. To the best of our knowledge, this is the first coordinate descent-type scheme with deterministic guarantees that incorporates inexactly computed proximal maps and gradients for both smooth and non-smooth convex minimization without requiring strong convexity. The paper \cite{Hua16} exhibits a linearly convergent method for smooth and non-smooth minimization, but requires strong convexity. Our analysis provides two flavors of convergence results. First, we are able to show that, for a fixed tolerance of approximation error, the standard $\mathcal{O}(p/k)$ convergence rate for cyclic BPG methods is preserved provided the aforementioned error is pre-selected to be sufficiently small. Analogous results for randomized BPG methods with fixed errors are found in \cite{Richtarik14}. Second, we are able to show that the aforementioned rate is preserved under the relatively permissive condition that the error tolerance decreases at a $\mathcal{O}(1/k^2)$ rate. The decreasing error tolerance regime, in contrast to the fixed error tolerance regime, does not require any error tuning based on properties of the objective function, such as smoothness parameters, the initial optimality gap, or the initial iterate's distance from the set of optima. More importantly, as we see in our numerical experiments in Section \ref{sec:numerical}, the latitude that comes with looser approximations may yield significant speed advantages for early iterations in terms of CPU time. 
	\item In Section \ref{sec:numerical}, we provide numerical experiments on the well-known ordinary least squares (OLS) and LASSO problems. These experiments demonstrate the power of our method and the particular benefits of a dynamically decreasing error tolerance. For each experimental setup, we witness the computational superiority of the I-CBPG method with a dynamically decreasing error tolerance regime over its randomized analogue equipped with a fixed error tolerance as in \cite{Richtarik14}.
\end{itemize}

%
%
\section{The Inexact Proximal Map and the {$(\delta,B)$}-Second Prox Theorem}
\label{section:inexact_proximal}

In this section, we introduce a framework for analyzing the effect of inexact computation on the \emph{pre-conditioned proximal map}
\begin{equation}\label{eq.prox:precond}
\Prox_{\Psi}^B(x,g):=\argmin_{y\in\mathbb{R}^n}\left\{\inner{g}{y}+\frac{1}{2}\|y-x\|_B^2+\Psi(y)\right\},
\end{equation}
where $x,g\in\mathbb{R}^n$, $\inner{\cdot}{\cdot}$ is an inner product on $\mathbb{R}^n$, $\|\cdot\|_B$ is the norm induced by the inner product $(x,y)\mapsto \inner{Bx}{y}$ with $B\in\mathbb{R}^{n\times n}$ positive definite, and $\Psi:\mathbb{R}^n\to\mathbb{R}\cup\{\infty\}$ is proper, closed, and convex. The dual norm of $\|\cdot\|_B$, which we denote $\|\cdot\|_B^*$, is easily shown to be $\|\cdot\|_{B^{-1}}$.

We must emphasize two crucial facts about the function \eqref{eq.prox:precond}. First, it is a generalization of the standard proximal map. Indeed, by setting $B=I_n$ and $g=0$ we recover 
\[
\Prox_{\Psi}^{I_n}(x,0)=\argmin_{y\in\mathbb{R}^n}\left\{\frac{1}{2}\|y-x\|^2+\Psi(y)\right\}=:\Prox_{\Psi}(x).
\]
Second, for common choices of $\Psi$ such as the $1$-norm, $\|\cdot\|_1$, the pre-conditioned proximal map does not have a closed-form expression unless $B$ is very simple, e.g. when $B=c\cdot I_n$ for some $c\in\mathbb{R}$. Outside of these special cases, one must usually recover $\Prox_{\Psi}^B(x,g)$ via numerical approximation. 

Instead of finding the unique, exact minimizer used to define $\Prox_{\Psi}^B$, though, our goal will be to find some $y\in\mathbb{R}^n$ that solves the problem up to a small predetermined error $\delta \in \mathbb{R}_+$. We are now prepared to formally define this section's centerpiece, the set-valued \emph{inexact pre-conditioned proximal map}, as the collection of all such approximate minima at $x$ with respect to $g,\delta,B,$ and $\Psi$:
\[
\Prox^B_\Psi(x,g,\delta):=\left\{y: \inner{g}{y}+\frac{1}{2}\|y-x\|_B^2+\Psi(y)\leq\min_{z\in\mathbb{R}^n}\left\{\inner{g}{z}+\frac{1}{2}\|z-x\|_B^2+\Psi(z)\right\}+\delta\right\}.
\]
One may also regard this as a generalized pre-conditioned proximal map, since we recover the exact pre-conditioned proximal map by setting $\delta=0$.

Our central result, the $(\delta,B)$-Second Prox Theorem (Theorem \ref{thm:second-prox}) is an inexact analogue of what \cite{Beck17} calls the  ``Second Prox Theorem", a key component in the bulk of convergence proofs for proximally-driven algorithms. A special case of this theorem without pre-conditioning is contained in \cite[Lemma 1]{Salzo12}. Bregman-type generalizations of this theorem also support convergence proofs for Bregman proximal methods \cite{Lu17}. The $(\delta,B)$-Second Prox Theorem codifies the tight relationship between elements of $\Prox^B_\Psi$ and the $\delta$-subdifferential of $\Psi$. We say that $s\in\mathbb{R}^n$ is a $\delta$\emph{-subgradient of }$\Psi$\emph{ at }$x\in\dom(\Psi)$, with $\delta\geq 0$, if
\[
\Psi(y)\geq\Psi(x)+\inner{s}{y-x}-\delta\text{ for all }y\in\mathbb{R}^n.
\]
The $\delta$\emph{-subdifferential of }$\Psi$\emph{ at }$x$, $\partial \Psi_\delta(x)$, denotes the set of all $\delta$-subgradients of $\Psi$ at $x$. Rudiments of the relationship between $\Prox^B_\Psi$ and $\partial_\delta\Psi(x)$ appear in \cite{Schmidt11}, where it is shown that each $u\in\Prox^B_\Psi(x,g,\delta)$ associates to certain norm-bounded $s\in\partial_{\hat{\delta}}\Psi(x)$ for judicious selections of $\hat{\delta}$. Our $(\delta,B)$-Second Prox Theorem secures this result and its converse: each $\delta$-subgradient of $\Psi$ corresponds to a specially chosen inexact proximal map element. As the reader will see in the last half of this section, this equivalent characterization is particularly useful for theoretically determining whether $u\in\Prox^{B}_\Psi(x,g,\delta)$. Indeed, we show by verification of this condition that one may regard approximation errors in gradient and proximal map computations through the unifying framework of inexact proximal map evaluations (Corollary \ref{cor:gradient-inexact}).


Before stating and proving the $(\delta,B)$-Second Prox Theorem, we recall two rules of the $\delta$-subdifferential calculus that are crucial to its proof: a sum rule and an optimality condition.

\begin{theorem}[$\delta$-Subdifferential Calculus]\label{thm:optimality:approx}
If $\Psi,\tilde{\Psi}:\mathbb{R}^n\to\mathbb{R}\cup\{\infty\}$ are proper, closed, and convex, and $\delta\geq 0$ then
\begin{enumerate}[(i)]
	\item (Optimality Condition)  It holds that $\Psi(x)-min_{y\in\mathbb{R}^n}\Psi(y)\leq\delta$ if and only if $0\in\partial_\delta\Psi(x)$ \cite[Theorem XI 1.1.5]{Hiriart13}.
	\item (Sum Rule) If $\text{ri}[\dom(\Psi)]\cap\text{ri}\left[\dom\left(\tilde{\Psi}\right)\right] \neq \emptyset$, where $\text{ri}(\cdot)$ denotes the relative interior of a convex set, then 
	\[
	\partial_\delta \left[\Psi+\tilde{\Psi}\right](x)=\bigcup_{\hat{\delta}\in[0,\delta]}\left[\partial_{\hat{\delta}}\Psi(x)+\partial_{\left(\delta-\hat{\delta}\right)}\tilde{\Psi}(x)\right]
	\]
	for all $x\in\dom(\Psi)\cap\dom\left(\tilde{\Psi}\right)$, where the sum on the right is taken in the Minkowski sense \cite[Theorem XI 3.1.1]{Hiriart13}.
\end{enumerate}
\end{theorem}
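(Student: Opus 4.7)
The plan is to prove each part directly from the definition of the $\delta$-subdifferential, invoking Fenchel conjugate duality for the sum rule.

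For part (i), the membership $0 \in \partial_\delta \Psi(x)$ unfolds, by definition, to $\Psi(y) \ge \Psi(x) + \inner{0}{y-x} - \delta = \Psi(x) - \delta$ for every $y \in \R^n$. Taking the infimum over $y$ on the right yields $\inf_y \Psi(y) \ge \Psi(x) - \delta$, i.e., $\Psi(x) - \min_y \Psi(y) \le \delta$. Each step is a biconditional, so the converse is obtained by reversing the chain. Thus (i) is really just a direct translation of the definition.

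For part (ii), the easy inclusion $\supseteq$ follows by summation: if $s_1 \in \partial_{\hat\delta}\Psi(x)$ and $s_2 \in \partial_{\delta-\hat\delta}\tilde\Psi(x)$, then adding the two defining inequalities at an arbitrary $y$ yields $(\Psi+\tilde\Psi)(y) \ge (\Psi+\tilde\Psi)(x) + \inner{s_1+s_2}{y-x} - \delta$. The nontrivial direction $\subseteq$ is where I would invoke conjugate duality. Given $s \in \partial_\delta[\Psi+\tilde\Psi](x)$, the Fenchel--Young characterization of the $\delta$-subdifferential reads
\begin{equation*}
(\Psi+\tilde\Psi)(x) + (\Psi+\tilde\Psi)^*(s) - \inner{s}{x} \le \delta.
\end{equation*}
Under the qualification hypothesis $\mathrm{ri}[\dom\Psi] \cap \mathrm{ri}[\dom\tilde\Psi] \neq \emptyset$, a classical result in convex analysis asserts that $(\Psi+\tilde\Psi)^*$ equals the infimal convolution $\Psi^* \,\square\, \tilde\Psi^*$ and that the defining infimum is \emph{attained}. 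Hence there exist $s_1, s_2$ with $s_1 + s_2 = s$ such that $\Psi^*(s_1) + \tilde\Psi^*(s_2) = (\Psi+\tilde\Psi)^*(s)$. Setting
\begin{equation*}
a := \Psi(x) + \Psi^*(s_1) - \inner{s_1}{x}, \qquad b := \tilde\Psi(x) + \tilde\Psi^*(s_2) - \inner{s_2}{x},
\end{equation*}
Fenchel--Young gives $a, b \ge 0$, and the displayed gap inequality reduces to $a + b \le \delta$. Declaring $\hat\delta := a \in [0, \delta]$ then yields $s_1 \in \partial_{\hat\delta}\Psi(x)$ and $s_2 \in \partial_{\delta-\hat\delta}\tilde\Psi(x)$, producing the required decomposition.

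The main obstacle is establishing the \emph{attainment} of the infimum defining the inf-convolution; this is precisely where the relative-interior qualification is indispensable, since without it one only obtains $(\Psi+\tilde\Psi)^* = \mathrm{cl}(\Psi^* \,\square\, \tilde\Psi^*)$, from which no admissible splitting $(s_1,s_2)$ can be extracted. Since both parts are classical items in the Hiriart-Urruty--Lema\-r\'echal corpus, I would invoke the cited references of \cite{Hiriart13} as the formal source, with the sketch above serving as the conceptual roadmap.
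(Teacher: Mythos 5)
Your proposal is correct. The paper does not actually prove this theorem---it is stated as a known result with citations to Hiriart-Urruty and Lemar\'echal (Theorems XI~1.1.5 and XI~3.1.1)---so there is no in-paper argument to compare against; your sketch is essentially the standard textbook proof from that source: part (i) is an immediate unfolding of the definition, and part (ii) combines the easy $\supseteq$ inclusion by summing the defining inequalities with the Fenchel--Young characterization of $\partial_\delta$ and the exactness of the infimal convolution $(\Psi+\tilde\Psi)^*=\Psi^*\,\square\,\tilde\Psi^*$ under the relative-interior qualification. You correctly isolate attainment of the inf-convolution as the one place where the qualification is indispensable, and the bookkeeping $a+b\le\delta$ with $\hat\delta:=a$ does produce the claimed decomposition.
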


We now state and prove the primary result of this section, the $(\delta,B)$-Second Prox Theorem. All other results in this section, along with the majority of those in its sequel, hang on this theorem.

\begin{theorem}[$(\delta,B)$-Second Prox]\label{thm:second-prox}
Let $\Psi:\mathbb{R}^n\to\mathbb{R}\cup\{\infty\}$ be proper, closed, and convex, $\delta\geq 0$, and $B\succ 0$. The following are equivalent:
\begin{enumerate}[(i)]
	\item $u\in \Prox^B_\Psi(x,g,\delta)$.
	\item There exists $\hat{\delta}\in[0,\delta]$ and $v\in\mathbb{R}^n$ such that $\|v\|_B^*\leq\sqrt{2\left(\delta-\hat{\delta}\right)}$ and $v-g-B(u-x)\in\partial_{\hat{\delta}}\Psi (u)$.
	\item There exists $\hat{\delta}\in[0,\delta]$ and $v\in \mathbb{R}^n$ such that $\|v\|_B^*\leq\sqrt{2\left(\delta-\hat{\delta}\right)}$ and 
\begin{equation}\label{eqn:approxsecond}
\inner{v-g-B(u-x)}{y-u}\leq \Psi(y)-\Psi(u)+\hat{\delta}
\end{equation}
for all $y\in\mathbb{R}^n$.
\end{enumerate}
\end{theorem}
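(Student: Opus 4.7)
The plan is to establish (i) $\iff$ (ii) via the $\delta$-subdifferential calculus collected in Theorem \ref{thm:optimality:approx}, and then to observe that (ii) $\iff$ (iii) is essentially the definition of a $\hat\delta$-subgradient. Split the objective defining $\Prox^B_\Psi$ as $\phi := \phi_1 + \Psi$, where $\phi_1(y) := \inner{g}{y} + \frac{1}{2}\|y-x\|_B^2$ is the smooth quadratic part. By construction, (i) is the statement that $u$ is a $\delta$-minimizer of $\phi$, which by the $\delta$-optimality condition of Theorem \ref{thm:optimality:approx}(i) is equivalent to $0 \in \partial_\delta \phi(u)$.

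Next I would apply the $\delta$-sum rule of Theorem \ref{thm:optimality:approx}(ii) to $\phi = \phi_1 + \Psi$; the constraint qualification is automatic because $\dom(\phi_1) = \mathbb{R}^n$. This rewrites $0 \in \partial_\delta \phi(u)$ as the existence of a splitting $\hat\delta \in [0,\delta]$ together with $w_1 \in \partial_{\delta - \hat\delta}\phi_1(u)$ and $w_2 \in \partial_{\hat\delta}\Psi(u)$ satisfying $w_1 + w_2 = 0$.

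The crux of the argument is a sharp characterization of $\partial_\epsilon \phi_1(u)$ for the smooth quadratic $\phi_1$. Expanding directly gives $\phi_1(y) - \phi_1(u) = \inner{g + B(u-x)}{y-u} + \frac{1}{2}\|y-u\|_B^2$, so that $w \in \partial_\epsilon \phi_1(u)$ is equivalent to the inequality $\inner{v'}{z} - \frac{1}{2}\|z\|_B^2 \le \epsilon$ for every $z \in \mathbb{R}^n$, where $v' := w - g - B(u-x)$. The supremum of this convex-conjugate-type expression equals $\frac{1}{2}\|v'\|_{B^{-1}}^2 = \frac{1}{2}(\|v'\|_B^*)^2$, so the condition collapses cleanly to $\|v'\|_B^* \le \sqrt{2\epsilon}$. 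Feeding this characterization back into the sum-rule decomposition with $\epsilon = \delta - \hat\delta$ and setting $v := -v'$ (a sign flip preserving the dual norm) produces exactly the inclusion in (ii); reversing the manipulations gives the other direction.

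Finally, (ii) $\iff$ (iii) is immediate from the definition of the $\hat\delta$-subdifferential: for any vector $s$, the inclusion $s \in \partial_{\hat\delta}\Psi(u)$ is precisely the variational inequality $\inner{s}{y-u} \le \Psi(y) - \Psi(u) + \hat\delta$ for every $y$, applied with $s = v - g - B(u-x)$. The only nonroutine step is the quadratic $\epsilon$-subdifferential computation, whose Fenchel-conjugate flavor is what supplies the precise $\sqrt{2(\delta - \hat\delta)}$ dual-norm bound; everything else is bookkeeping with the sum rule and the optimality condition.
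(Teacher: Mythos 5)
Your proposal is correct and follows essentially the same route as the paper's proof: the $\delta$-optimality condition plus the $\delta$-sum rule reduce (i) to a decomposition involving $\partial_{\hat\delta}\Psi(u)$ and the $\epsilon$-subdifferential of the quadratic, and (ii) $\iff$ (iii) is definitional. The only difference is that you explicitly carry out the Fenchel-conjugate computation showing $\partial_\epsilon\bigl[\tfrac12\|\cdot-x\|_B^2\bigr](u)=B(u-x)+\{v:\|v\|_B^*\leq\sqrt{2\epsilon}\}$, which the paper asserts as ``a straightforward computation''; this is a welcome addition but not a different argument.
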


\begin{proof}
The equivalence of (ii) and (iii) is immediate from the definition of $\partial_{\hat{\delta}}\Psi(u)$ so it suffices to show the equivalence of (i) and (ii). The crux of this equivalence's proof is the expression of the $\delta$-subgradient for the function $z\mapsto \inner{g}{z}+\frac{1}{2}\|z-x\|_B^2+\Psi(z)$ for $x$ fixed,
\begin{equation}\label{eqn:approx:subgrad:sum}
\partial_{\delta}\left[\inner{g}{\cdot}+\frac{1}{2}\|\cdot-x\|_B^2+\Psi(\cdot)\right](z)=g+\bigcup_{\hat{\delta}\in[0,\delta]}\left[\partial_{\left(\delta-\hat{\delta}\right)} \frac{1}{2} \|\cdot-x\|_B^2(z)+\partial_{\hat{\delta}}\Psi(z)\right],
\end{equation}
which is a consequence of Theorem \ref{thm:optimality:approx}(ii). A straightforward computation produces
\[
\partial_{\left(\delta-\hat{\delta}\right)} \frac{1}{2} \|\cdot-x\|_B^2(z)=B(z-x)+\left\{v:\|v\|_B^*\leq\sqrt{2\left(\delta-\hat{\delta}\right)}\right\}{,}
\]
where the sum is taken in the Minkowski sense. Thus, we write \eqref{eqn:approx:subgrad:sum} more explicitly as
\begin{multline*}
\partial_{\delta}\left[\inner{g}{\cdot}+\frac{1}{2}\|\cdot-x\|_B^2+\Psi(z)\right](z)=\\
g+B(z-x)+\bigcup_{\hat{\delta}\in[0,\delta]}\left[\partial_{\hat{\delta}}\Psi(z)+\left\{v:\|v\|_B^*\leq\sqrt{2\left(\delta-\hat{\delta}\right)}\right\}\right].
\end{multline*}
In light of the $\delta$-subdifferential optimality condition (Theorem \ref{thm:optimality:approx}(i)), $u\in\Prox^B_\Psi(x,g,\delta)$ if and only if
\[
0\in g+B(u-x)+\bigcup_{\hat{\delta}\in[0,\delta]}\left[\partial_{\hat{\delta}}\Psi(u)+\left\{v:\|v\|_B^*\leq\sqrt{2\left(\delta-\hat{\delta}\right)}\right\}\right].
\]
Noting that $\|v\|_B^* = \|-v\|_B^*$ and rearranging, this inclusion is clearly equivalent to 2).
\end{proof}
\noindent The $\delta$-subdifferential characterization of the inexact pre-conditioned proximal map will facilitate easy proofs of this map's key properties and algorithmic convergence. Practically speaking, it is not necessary to directly check any $\delta$-subdifferential condition when computing these maps. In Section \ref{sec:numerical}, the conjugate gradient method and box constrained gradient projection method of \cite{Broughton11} provide easily and efficiently implementable procedures for computing inexact pre-conditioned proximal maps.

A notable product of the $(\delta,B)$-Second Prox Theorem is that inexact proximal maps exhibit Lipschitz continuity in $x${,} $g$ up to the level of inexactness, as the theorem below formalizes. It will play the same role in the convergence proof for I-CBPG as its exact analogue does in the convergence proof of the Cyclic Block Proximal Gradient method (compare Lemmata \ref{lemma:prox:block-decrease} and \ref{lemma:prox:convex:sufficient-decrease} with \cite[Lemma 11.11]{Beck17} and \cite[Lemma 11.16]{Beck17}).

\begin{theorem}[Error-Dependent Lipschitz Continuity of the $(\delta,B)$-Proximal Map]\label{thm:non-expansive}
Let $\Psi:\mathbb{R}^n\to\mathbb{R}\cup\{\infty\}$ be proper, closed, and convex, $x,y,g,h\in\mathbb{R}^n${,}  $\delta,\epsilon\geq 0$, and $B\succ 0$. Then
\begin{equation}\label{eq:non-expansive}
\|u-w\|_B\leq\|g-h\|_B^*+\|y-x\|_B+\left(1+\frac{\sqrt{2}}{2}\right)\cdot\left(\sqrt{\delta}+\sqrt{\epsilon}\right)
\end{equation}
for all $u\in \Prox^B_\Psi(x,g,\delta)$ and $w\in \Prox^B_\Psi(y,h,\epsilon)$.
\end{theorem}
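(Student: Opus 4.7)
The proof will invoke the $(\delta,B)$-Second Prox Theorem (Theorem~\ref{thm:second-prox}) as its centerpiece, producing two variational inequalities that couple $u$ and $w$, and then extracting a quadratic inequality in $\|u-w\|_B$ whose solution yields \eqref{eq:non-expansive}. Concretely, the plan is: (a) apply condition (iii) of Theorem~\ref{thm:second-prox} to $u\in\Prox^B_\Psi(x,g,\delta)$ to obtain $\hat\delta\in[0,\delta]$ and $v\in\R^n$ with $\|v\|_B^*\le\sqrt{2(\delta-\hat\delta)}$ satisfying $\langle v-g-B(u-x),y'-u\rangle\le \Psi(y')-\Psi(u)+\hat\delta$ for all $y'\in\R^n$; (b) apply the same condition to $w\in\Prox^B_\Psi(y,h,\epsilon)$ to obtain an analogous pair $(\hat\epsilon,v')$; (c) test the first inequality at $y'=w$ and the second at $z'=u$ and add. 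The $\Psi(u)$ and $\Psi(w)$ terms cancel, leaving
\begin{equation*}
\langle B(w-u)-B(y-x)+(h-g)+(v-v'),\,w-u\rangle \le \hat\delta+\hat\epsilon.
\end{equation*}

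Next, I would isolate $\|w-u\|_B^2=\langle B(w-u),w-u\rangle$ on the left and bound each remaining inner product via the Cauchy--Schwarz inequality in the pair $(\|\cdot\|_B,\|\cdot\|_B^*)$, using the identity $\|B(y-x)\|_B^*=\|y-x\|_B$ and $\|v-v'\|_B^*\le\|v\|_B^*+\|v'\|_B^*\le\sqrt{2(\delta-\hat\delta)}+\sqrt{2(\epsilon-\hat\epsilon)}$. Setting $r:=\|u-w\|_B$, this produces a scalar inequality of the form $r^2\le Kr+M$ with
\begin{equation*}
K=\|g-h\|_B^*+\|y-x\|_B+\sqrt{2(\delta-\hat\delta)}+\sqrt{2(\epsilon-\hat\epsilon)},\qquad M=\hat\delta+\hat\epsilon,
\end{equation*}
and solving this quadratic (using $\sqrt{K^2+4M}\le K+2\sqrt{M}$) gives $r\le K+\sqrt{M}$.

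The last step is the purely scalar exercise of bounding $\sqrt{2(\delta-\hat\delta)}+\sqrt{2(\epsilon-\hat\epsilon)}+\sqrt{\hat\delta+\hat\epsilon}$ by $(1+\sqrt{2}/2)(\sqrt\delta+\sqrt\epsilon)$ uniformly in the free parameters $\hat\delta\in[0,\delta]$, $\hat\epsilon\in[0,\epsilon]$. This is where the precise constant is extracted; I would use $\sqrt{\hat\delta+\hat\epsilon}\le\sqrt{\hat\delta}+\sqrt{\hat\epsilon}$ to decouple the two summands and then, on each summand separately, apply a Cauchy--Schwarz--style bound of the form $\sqrt{2a}+\sqrt{b}\le\sqrt{(2+1)(a+b)}$ or, more sharply, optimize $\sqrt{2(\delta-\hat\delta)}+\sqrt{\hat\delta}$ in $\hat\delta$ to recover the claimed constant.

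The main obstacle, in my estimation, is none of the above algebra but rather the careful application of Theorem~\ref{thm:second-prox}(iii) in step (c): one must resist the temptation to plug the \emph{same} test vector into both inequalities and instead use the exchange $y'=w$, $z'=u$ so that the $\Psi$ terms cancel exactly. Everything else is mechanical Cauchy--Schwarz and a standard quadratic estimate.
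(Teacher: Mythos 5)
Your setup — invoking Theorem \ref{thm:second-prox}(iii) for both $u$ and $w$, cross-testing at $y'=w$ and $z'=u$, adding, and extracting a quadratic inequality in $\|u-w\|_B$ — is exactly what the paper does to reach its generic bound \eqref{eq:non-expansiveness-quadratic}. The gap is in your final scalar step. After relaxing the root of the quadratic via $\sqrt{K^2+4M}\le K+2\sqrt{M}$, you are left needing
\begin{equation*}
\sqrt{2(\delta-\hat{\delta})}+\sqrt{2(\epsilon-\hat{\epsilon})}+\sqrt{\hat{\delta}+\hat{\epsilon}}\;\leq\;\left(1+\tfrac{\sqrt{2}}{2}\right)\left(\sqrt{\delta}+\sqrt{\epsilon}\right)
\end{equation*}
uniformly in $\hat{\delta}\in[0,\delta]$, $\hat{\epsilon}\in[0,\epsilon]$, and this is false: taking $\epsilon=\hat{\epsilon}=0$ and $\hat{\delta}=\delta/3$ (the maximizer of $\hat{\delta}\mapsto\sqrt{2(\delta-\hat{\delta})}+\sqrt{\hat{\delta}}$), the left side equals $\sqrt{3\delta}\approx 1.732\sqrt{\delta}$, which exceeds $(1+\sqrt{2}/2)\sqrt{\delta}\approx 1.707\sqrt{\delta}$. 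The bound $\sqrt{2a}+\sqrt{b}\le\sqrt{3(a+b)}$ you mention gives the same constant $\sqrt{3}$, not the claimed one. Since $\hat{\delta}$ is existentially supplied by the Second Prox Theorem rather than chosen by you, the argument must survive the worst case, and it does not.

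The fix is to avoid the lossy relaxation of the root. Two options: (a) keep the exact root $\tfrac{1}{2}\bigl(K+\sqrt{K^2+4M}\bigr)$, peel off $\|g-h\|_B^*+\|y-x\|_B$ using $\sqrt{(s+t)^2+m}\le s+\sqrt{t^2+m}$, and then bound $c^2+4M\le 4(\sqrt{\delta}+\sqrt{\epsilon})^2$ and $c\le\sqrt{2\delta}+\sqrt{2\epsilon}$ directly (this recovers the constant, essentially because $\hat{\delta}$ appears with coefficient $-2$ inside $c^2$ and $+4$ inside $4M$, so it can be bounded by $\delta$ in both places without the cross term you create by splitting $K$ and $\sqrt{M}$); or (b) follow the paper's route, which specializes the generic quadratic bound to four endpoint comparisons --- $u$ against $\Prox_\Psi^B(x,g,0)$, then the three exact-prox pairs obtained by perturbing $g\to h$, $x\to y$, and $0\to\epsilon$ one at a time --- and chains them with the triangle inequality. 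In each specialized case only one error parameter survives, the root is bounded by $\tfrac{1}{2}\bigl(\sqrt{2\delta}+\sqrt{2(\delta+\hat{\delta})}\bigr)\le(1+\tfrac{\sqrt{2}}{2})\sqrt{\delta}$, and the constant falls out cleanly. Your observation about using distinct test vectors so the $\Psi$ terms cancel is correct and is indeed the step the paper relies on.
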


\begin{proof}
Invoking the $(\delta,B)$-Second Prox Theorem, we may choose $v_u,v_w\in \mathbb{R}^n$, $\hat{\delta}\in[0,\delta]$, and $\hat{\epsilon}\in[0,\epsilon]$, such that $\|v_u\|_B^*\leq\sqrt{2\left(\delta-\hat{\delta}\right)}$, $\|v_w\|_B^*\leq\sqrt{2\left(\epsilon-\hat{\epsilon}\right)}$, and 
\begin{align*}
\inner{v_u-g-B(u-x)}{z-u}&\leq \Psi(z)-\Psi(u)+\hat{\delta}\\
\inner{v_w-h-B(w-y)}{z'-w}&\leq \Psi(z')-\Psi(w)+\hat{\epsilon}
\end{align*}
for all $z,z'\in\mathbb{R}^n$. If we add these two inequalities with $z=w$ and $z'=u$ then
\[
\inner{v_u-g-B(u-x)}{w-u}+\inner{v_w-h-B(w-y)}{u-w}\leq\hat{\delta}+\hat{\epsilon},
\]
which simplifies to the more informative
\[
\|u-w\|_B^2+\inner{(g-h)+(v_w-v_u)+B(y-x)}{u-w}-(\hat{\delta}+\hat{\epsilon})\leq 0.
\]
The standard Cauchy-Schwarz inequality, in conjunction with its more general form for arbitrary norms and their dual norms along with the triangle inequality, implies
\[
\|u-w\|_B^2-\left(\|(g-h)+(v_w-v_u)\|_B^*+\|y-x\|_B\right)\cdot\|u-w\|_B-(\hat{\delta}+\hat{\epsilon})\leq 0.
\]
The left-hand side of this inequality is quadratic in $\|u-w\|_B$. Thus we have the generic bound
\begin{multline}\label{eq:non-expansiveness-quadratic}
\|u-w\|_B\leq\frac{\|(g-h)+(v_w-v_u)\|_B^*+\|y-x\|_B}{2}\\+\frac{\sqrt{(\|(g-h)+(v_w-v_u)\|_B^*+\|y-x\|_B)^2+4(\hat{\delta}+\hat{\epsilon})}}{2}\;.
\end{multline}

We now reason by cases to derive four specialized versions of \eqref{eq:non-expansiveness-quadratic} (equations \eqref{eq:non-expansive-1}, \eqref{eq:non-expansive-2},  \eqref{eq:non-expansive-3}, and \eqref{eq:non-expansive-4})  that we chain together via the triangle inequality in \eqref{eq:non-expansive-triangle} to secure our result \eqref{eq:non-expansive}. First, with $x$, $g$, $\delta$, and $u$  fixed as above, suppose that $y = x$, $h = g$, and $\epsilon = 0$. Then $w=\Prox_\Psi^B(x,g,0)$, $v_w=0$, and the generic bound  \eqref{eq:non-expansiveness-quadratic} reduces to  
\begin{align}
\left\|u-\Prox_\Psi^B(x,g,0)\right\|_B\leq\frac{\|v_u\|_B^*+\sqrt{\left(\|v_u\|_B^*\right)^2+4\hat{\delta}}}{2}&\leq\frac{\sqrt{2\delta}+\sqrt{2(\delta+\hat{\delta})}}{2}\notag\\
&\leq\left(1+\frac{\sqrt{2}}{2}\right)\cdot\sqrt{\delta}{,}\label{eq:non-expansive-1}
\end{align}
with the latter two inequalities respectively resulting from $\|v_u\|_B^*<\sqrt{2\left(\delta-\hat{\delta}\right)}$ and $0\leq\hat{\delta}\leq\delta$. Second, with $x,g,$ and $h$ similarly fixed, let $y = x$ and $\delta=\epsilon=0$. Obviously,  $v_u=v_w=0$, $u=\Prox_\Psi^B(x,g,0)$, and $w=\Prox_\Psi^B(x,h,0)$ so now \eqref{eq:non-expansiveness-quadratic} reduces to
\begin{equation}\label{eq:non-expansive-2}
\|\Prox_\Psi^B(x,g,0)-\Prox_\Psi^B(x,h,0)\|_B\leq\frac{\|g-h\|_B^*}{2}+\frac{\sqrt{(\|g-h\|_B^*)^2}}{2}=\|g-h\|_B^*.
\end{equation}
Third, with $x$, $y$, and $h$ fixed as above, suppose that $\delta=\epsilon=0$ and $g=h$. Then we obtain
\begin{equation}\label{eq:non-expansive-3}
\|\Prox_\Psi^B(x,h,0)-\Prox_\Psi^B(y,h,0)\|_B\leq\frac{\|y-x\|_B}{2}+\frac{\sqrt{(\|y-x\|_B)^2}}{2}=\|y-x\|_B{,}
\end{equation}
since here $v_u=v_w=0$, $u=\Prox_\Psi^B(x,h,0)$, and $w=\Prox_\Psi^B(y,h,0)$. Finally, with $y$, $h$, and $\epsilon$ fixed at their original values, we may let $x = y$, $g = h$, and $\delta = 0$ to see that, by the same logic as  \eqref{eq:non-expansive-1},
\begin{equation}\label{eq:non-expansive-4}
\|\Prox_\Psi^B(y,h,0)-w\|_B \leq 
\frac{\|v_w\|_B^*+\sqrt{\left(\|v_w\|_B^*\right)^2+4\hat{\epsilon}}}{2}\leq \left(1+\frac{\sqrt{2}}{2}\right)\cdot\sqrt{\epsilon}.
\end{equation}

Now, we may complete our proof. Furnished with \eqref{eq:non-expansive-1}, \eqref{eq:non-expansive-2}, \eqref{eq:non-expansive-3}, and  \eqref{eq:non-expansive-4}, we compute
\begin{multline}\label{eq:non-expansive-triangle}
\|u-w\|_B\leq\|u-\Prox_\Psi^B(x,g,0)\|_B\\+
\|\Prox_\Psi^B(x,g,0)-\Prox_\Psi^B(x,h,0)\|_B+\|\Prox_\Psi^B(x,h,0)-\Prox_\Psi^B(y,h,0)\|_B\\
+\|\Prox_\Psi^B(y,h,0)-w\|_B\\
\leq \left(1+\frac{\sqrt{2}}{2}\right)\cdot\sqrt{\delta}+\|g-h\|_B^*+\|y-x\|_B+\left(1+\frac{\sqrt{2}}{2}\right)\cdot\sqrt{\epsilon}
\end{multline}
for all $u\in \Prox^B_\Psi(x,g,\delta)$ and $w\in \Prox^B_\Psi(y,h,\epsilon)$, which is precisely \eqref{eq:non-expansive}.
\end{proof}

Let us now discuss how the inexact pre-conditioned proximal map treats approximation of gradients and proximal maps in a unified manner. In the context of first-order, proximally-based algorithms, a number of researchers study approximation error in just one of either the proximal map or gradient \cite{Alexandre_2008,Villa13,Devolder14}, or, in cases where both types of errors are considered, their treatment is often handled separately \cite{Schmidt11}. For example, \cite{Schmidt11}, one of the authoritative works on the proximal gradient scheme with errors for the composite minimization problem $\min_{x\in\mathbb{R}^n} f(x)+\Psi(x)$ tenders the error dependent scheme
\begin{align*}
	x_k=&\Prox_\Psi^{I_n}\left(y_{k-1},t\left(\nabla f(y_{k-1})+e_k\right),\delta_k\right)\\
	y_k&=x_k+\beta_k(x_k-x_{k-1})
\end{align*}
where $\{e_k\}_{k\geq 1}$ records the gradient approximation error, $\{\beta_k\}\subseteq[0,\infty)$ dictates the momentum used to accelerate the algorithm, and $t>0$ is a stepsize parameter. 

Theorem \ref{thm:non-expansive} unveils an intriguing property of our $(\delta,B)$-Second Prox Theorem framework for analyzing the inexact proximal map: it is possible to unify the treatment of inexactly computed proximal maps and gradients by simply considering inexact proximal map computations. This observation is formalized via the next corollary's set inclusion.

\begin{corollary}\label{cor:gradient-inexact}
Let $\Psi:\mathbb{R}^n\to\mathbb{R}\cup\{\infty\}$ be proper, closed, and convex, and $B\succ 0$. For all $x,g,e\in\mathbb{R}^n$ and $\delta\geq 0$, we have the inclusion
\[
\Prox_\Psi^B(x,g+e,\delta)\subseteq\Prox_\Psi^B\left(x,g,\delta+\sqrt{2\delta}\|e\|_B^*+\frac{1}{2}\left(\|e\|_B^*\right)^2\right).
\]
\end{corollary}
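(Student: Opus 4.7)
The plan is to leverage the equivalence between $(i)$ and $(ii)$ in the $(\delta,B)$-Second Prox Theorem in both directions. Take any $u \in \Prox_\Psi^B(x, g+e, \delta)$. Applying the forward direction of Theorem~\ref{thm:second-prox}, I get $\hat\delta \in [0,\delta]$ and $v \in \mathbb{R}^n$ with $\|v\|_B^* \leq \sqrt{2(\delta - \hat\delta)}$ satisfying
\[
v - (g+e) - B(u-x) \in \partial_{\hat\delta}\Psi(u).
\]

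To show membership in the larger inexact proximal map on the right-hand side, I would use the reverse direction of the theorem: it suffices to produce a pair $(\hat\delta', v')$ with $\hat\delta' \in [0, \delta']$ (where $\delta' := \delta + \sqrt{2\delta}\|e\|_B^* + \tfrac12(\|e\|_B^*)^2$), satisfying $\|v'\|_B^* \leq \sqrt{2(\delta' - \hat\delta')}$ and $v' - g - B(u-x) \in \partial_{\hat\delta'}\Psi(u)$. The natural choice is $\hat\delta' := \hat\delta$ and $v' := v - e$, which by direct substitution makes the subdifferential inclusion match the one already established (the $e$ terms cancel).

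The only thing to check is the norm bound. By the triangle inequality,
\[
\|v'\|_B^* = \|v-e\|_B^* \leq \|v\|_B^* + \|e\|_B^* \leq \sqrt{2(\delta - \hat\delta)} + \|e\|_B^*.
\]
Squaring both sides (both are nonnegative) gives
\[
(\|v'\|_B^*)^2 \leq 2(\delta - \hat\delta) + 2\sqrt{2(\delta - \hat\delta)}\,\|e\|_B^* + (\|e\|_B^*)^2.
\]
Since $\hat\delta \geq 0$ we have $\sqrt{2(\delta - \hat\delta)} \leq \sqrt{2\delta}$, and so the right-hand side is bounded by $2(\delta - \hat\delta) + 2\sqrt{2\delta}\,\|e\|_B^* + (\|e\|_B^*)^2 = 2(\delta' - \hat\delta')$, where the last identity is just the definition of $\delta'$ together with $\hat\delta' = \hat\delta$. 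This yields $\|v'\|_B^* \leq \sqrt{2(\delta' - \hat\delta')}$, and $\hat\delta' \in [0,\delta']$ is immediate since $\hat\delta \leq \delta \leq \delta'$.

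There is no real obstacle here; the proof is essentially a two-line application of Theorem~\ref{thm:second-prox} once one realizes that the gradient perturbation $e$ can be absorbed into the dual variable $v$ by sliding $v \mapsto v - e$, paying only a triangle-inequality cost in the norm bound. The quadratic term $\tfrac12(\|e\|_B^*)^2$ and the cross-term $\sqrt{2\delta}\|e\|_B^*$ in $\delta'$ are precisely what appears when one expands $(\sqrt{2(\delta-\hat\delta)} + \|e\|_B^*)^2$, which explains the form of the bound in the statement.
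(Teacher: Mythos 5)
Your proof is correct and follows essentially the same route as the paper's: both absorb the gradient perturbation into the dual variable via $v\mapsto v-e$, keep the same $\hat{\delta}$, and verify the norm bound $\|v-e\|_B^*\leq\sqrt{2(\delta'-\hat{\delta})}$ by the triangle inequality and expanding the square. Your write-up is, if anything, slightly more explicit about why the cross-term $\sqrt{2\delta}\|e\|_B^*$ appears.
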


\begin{proof}
This follows from Theorem \ref{thm:second-prox}(ii). Fix $u\in\Prox_\Psi^B(x,g+e,\delta)$. There exist $\hat{\delta}\in[0,\delta]$ and $v\in\mathbb{R}^n$ such that $\|v\|_B^*\leq\sqrt{2\left(\delta-\hat{\delta}\right)}$ and $v-(g+e)-B(u-x)\in\partial_{\hat{\delta}}\Psi(u)$. Equivalently, $(v-e)-g-B(u-x)\in\partial_{\hat{\delta}}\Psi(u)$. Thus, $u\in\Prox_\Psi^B\left(x,g,\delta+\sqrt{2\delta}\|e\|_B^*+\frac{1}{2}\left(\|e\|_B^*\right)^2\right)$ since
\[
\|v-e\|_B^*\leq\sqrt{2\left[\left(\frac{1}{2}\left(\|v-e\|_B^*\right)^2+\hat{\delta}\right)-\hat{\delta}\right]}\leq\sqrt{2\left[\left(\delta+\sqrt{2\delta}\|e\|_B^*+\frac{1}{2}\left(\|e\|_B^*\right)^2\right)-\tilde{\delta}\right]}\;.
\]
\end{proof}

%
%
\section{The Inexact Cyclic Block Proximal Gradient Method}
\label{section:inexact_block_method_composite}

In this section we introduce and analyze the Inexact Cyclic Block Proximal Gradient method (I-CBPG), a variant of the cyclic block proximal gradient method for \eqref{eq.problem} that allows for approximate evaluations of (pre-conditioned) proximal maps and gradients. Throughout, we will assume that the smooth and convex component $f$ of \eqref{eq.problem} satisfies the following \emph{block smoothness} condition: for each $i=1,\ldots,p${,} there exist a positive definite matrix $B_i\in\mathbb{R}^{n_i\times n_i}$ and $L_i>0$ such that
\begin{equation}\label{eq:smooth:block}
f(x+U_i t)\leq f(x)+\inner{\nabla_i f(x)}{t}+\frac{L_i}{2}\|t\|_{(i)}^2\text{ for all }x\in\mathbb{R}^n\text{ and }t\in\mathbb{R}^{n_i},
\end{equation}
where $\|\cdot\|_{(i)}$ denotes the norm on $\mathbb{R}^{n_i}$ induced by the inner product $(x,y)\mapsto\inner{B_i x}{y}$ and $\nabla_i f(x) = U_i^T \nabla f(x)$. We will also assume that $f$ satisfies a (pre-conditioned) \emph{smoothness} condition: there exists $L_f>0$ such that\begin{equation}\label{eq:smooth}
f(x+t)\leq f(x)+\inner{\nabla f(x)}{t}+\frac{L_f}{2}\|t\|_{B}^2\text{ for all }x,t\in\mathbb{R}^n,
\end{equation}
where $\|\cdot\|_B=\sqrt{\sum_{i=1}^p\|U_i^T \cdot\|_{(i)}^2}$. Finally, we will assume that $F$ is coercive, i.e. it has bounded sublevel sets. In particular, this implies that $R(x):=\sup_{y\in X^*}\|x-y\|<\infty$ where $X^* := \argmin_x f(x)$.

To define the algorithm, we assume for each $i=1,\ldots,p$, the set-valued map $\Prox^{B_i}_{\Psi_i/L_i}$ admits a selection function
\[
(x,\delta)\mapsto T_\delta^{(i)}(x)\in\Prox^{B_i}_{\frac{\Psi_i}{L_i}}\left(U_i^Tx,\frac{1}{L_i}\nabla_i f(x),\delta\right) \subseteq \mathbb{R}^{n_i} 
\] 
that ensures the monotonic decrease condition
\begin{equation}\label{eq.mondec}
F\left( x+U_i[T_\delta^{(i)}(x)-x_i] \right)\leq F(x){,}
\end{equation}
where $x_i \in \mathbb{R}^{n_i}$ is the $i$th block of $x$, that is, $x_i = U_i^T x$.\footnote{In general, not every selection function for $\Prox^{B_i}_{\Psi_i/L_i}$ satisfies the monotone decrease condition \eqref{eq.mondec}. However, computable selection functions satisfying \eqref{eq.mondec} are available for all of our applications in Section \ref{sec:numerical}.} It is now possible to introduce our I-CBPG scheme.\\

\begin{algorithm}[H]
\caption{Inexact Cyclic Block Proximal Gradient (I-CBPG) Method}\label{algo.main}
\begin{algorithmic}[1]

\Require $x^0 \in \dom(f)$, $\delta_1\geq 0$
	\For{$k=0,1,2,\ldots$}
		\State $x^{k,0}=x^k$\;
		\For{$i=1,\ldots,p$}
		\State \begin{equation}
			x^{k,i}=x^{k,i-1}+U_i[T_{\delta_{k+1}}^{(i)}(x^{k,i-1})-x^{k,i-1}_i];
		\end{equation}
		\EndFor
		\State $x^{k+1}:= x^{k,p}$
		\State Choose $\delta_{k+2}\in[0,\delta_{k+1}]$\;		
	\EndFor	
\end{algorithmic}
\end{algorithm}



    

\noindent We call $\{\delta_k\}_{k\geq 1}$ the sequence of \emph{error tolerances}. Two types of error tolerance sequences will be considered: \emph{fixed} sequences for which we merely assume that $\delta_k=\delta\geq 0$ for $k\geq 1$ and \emph{dynamically decreasing} sequences that converge to $0$ at the sublinear rate $\mathcal{O}(1/k^2)$.
 
Our analysis of I-CBPG (Algorithm \ref{algo.main}) follows a standard three step outline for proving convergence of a first-order method purposed for convex minimization. First, we prove a sufficient decrease condition (Lemma \ref{lemma:prox:block-decrease}) that relates the suboptimality gap to the norm of the inter-iterate difference, $x^{k,i}-x^{k,i-1}$. Second, using the sufficient decrease condition, we derive a recurrence inequality (Lemma \ref{lemma:prox:convex:sufficient-decrease}) satisfied by the sequence of suboptimality gaps at each iterate. Third, we prove a technical lemma (Lemma \ref{lemma:recurrence-technical}) that describes the rate of convergence of a recurrence of the form found in Lemma \ref{lemma:prox:convex:sufficient-decrease}. Finally, we deduce our desired convergence rates as a consequence of the technical lemma and the suboptimality gap recurrence inequality. The convergence rates for fixed errors are summarized in Theorem \ref{theorem:convergence-bounded-error} and Corollary \ref{cor:convergence-bounded-error} while the convergence rates for sublinearly decreasing errors are summarized in Theorem \ref{theorem:convergence-decreasing-error} and Corollary \ref{cor:convergence-decreasing-error}.

We begin by presenting the sufficient decrease inequality.

\begin{lemma}[Sufficient Decrease Inequalities]\label{lemma:prox:block-decrease}
Let $\{x^k\}_{k\geq 0}$ and $\{\delta_k\}_{k\geq 1}$ denote the sequences of iterates and error tolerances generated by I-CBPG (Algorithm \ref{algo.main}). Then we have that 
\begin{enumerate}[(i)]
	\item For all $k\geq 0$ and $1\leq i\leq p$,
\begin{equation}\label{eqn:prox:block-decrease:main-inexact}
3L_i\delta_{k+1}+F(x^{k,i})-F(x^{k,i-1})\geq\frac{L_i}{4}\left\|x^{k,i}-x^{k,i-1}\right\|_{(i)}^2.
\end{equation}
	\item For all $k\geq 0$,
	\begin{equation}
3L_{\min}p\delta_{k+1}+F(x^k)-F(x^{k+1})\geq\frac{L_{\min}}{4}\| x^k - x^{k+1}	\|_B^2.
\end{equation}
\end{enumerate}
\end{lemma}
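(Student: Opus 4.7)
The plan is to prove part (i) by chaining the block smoothness inequality \eqref{eq:smooth:block} with a variational consequence of the $(\delta,B)$-Second Prox Theorem (Theorem \ref{thm:second-prox}), and then to obtain part (ii) by summing part (i) over $i=1,\dots,p$ and exploiting telescoping of the $F$-differences together with block separability of $\|\cdot\|_B^2$.

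For part (i), I will set $t_i := T_{\delta_{k+1}}^{(i)}(x^{k,i-1}) - U_i^T x^{k,i-1}$, so that $x^{k,i} - x^{k,i-1} = U_i t_i$ and only the $i$-th block changes. Block smoothness yields $f(x^{k,i}) - f(x^{k,i-1}) \leq \langle \nabla_i f(x^{k,i-1}), t_i\rangle + \tfrac{L_i}{2}\|t_i\|_{(i)}^2$, while the $\Psi$-difference collapses to $\Psi_i(U_i^T x^{k,i-1}+t_i) - \Psi_i(U_i^T x^{k,i-1})$. The central step is to apply Theorem \ref{thm:second-prox}(iii) to the inclusion
\[
U_i^T x^{k,i-1}+t_i \in \Prox_{\Psi_i/L_i}^{B_i}\!\left(U_i^T x^{k,i-1}, \tfrac{1}{L_i}\nabla_i f(x^{k,i-1}), \delta_{k+1}\right)
\]
with test point $z = U_i^T x^{k,i-1}$. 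After scaling by $L_i$, this produces $\hat\delta_i \in [0,\delta_{k+1}]$ and $v_i$ with $\|v_i\|_{B_i}^* \leq \sqrt{2(\delta_{k+1}-\hat\delta_i)}$ satisfying
\[
\Psi_i(U_i^T x^{k,i-1}+t_i) - \Psi_i(U_i^T x^{k,i-1}) \leq L_i\langle v_i,t_i\rangle - \langle \nabla_i f(x^{k,i-1}), t_i\rangle - L_i\|t_i\|_{(i)}^2 + L_i\hat\delta_i.
\]
Summing with the smoothness bound cancels the gradient inner product and gives $F(x^{k,i}) - F(x^{k,i-1}) \leq L_i\langle v_i,t_i\rangle - \tfrac{L_i}{2}\|t_i\|_{(i)}^2 + L_i\hat\delta_i$. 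Bounding the cross term via Cauchy--Schwarz in the $(\|\cdot\|_{B_i}^*, \|\cdot\|_{(i)})$ dual pair, together with Young's inequality $ab \leq a^2 + b^2/4$ applied to $a=\|v_i\|_{B_i}^*$, $b=\|t_i\|_{(i)}$, yields $L_i\langle v_i,t_i\rangle \leq 2L_i(\delta_{k+1}-\hat\delta_i) + \tfrac{L_i}{4}\|t_i\|_{(i)}^2$. Substituting, discarding the non-positive $-L_i\hat\delta_i$ residual, and rearranging produces \eqref{eqn:prox:block-decrease:main-inexact}.

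For part (ii), I will sum \eqref{eqn:prox:block-decrease:main-inexact} over $i=1,\dots,p$. The $F$-differences telescope into $F(x^{k+1})-F(x^k)$ because $x^{k,0}=x^k$ and $x^{k,p}=x^{k+1}$; on the right the sum accumulates to $\tfrac{1}{4}\sum_i L_i\|t_i\|_{(i)}^2$. Since only the $i$-th block of $x^{k+1}-x^k$ receives the increment $t_i$, block separability of $\|\cdot\|_B^2$ supplies $\|x^{k+1}-x^k\|_B^2 = \sum_i\|t_i\|_{(i)}^2$, so the right side is at least $\tfrac{L_{\min}}{4}\|x^{k+1}-x^k\|_B^2$, and the error sum $3\delta_{k+1}\sum_i L_i$ on the left is controlled by the $L_{\min}p$-scale majorant recorded in the statement.

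The main obstacle is step two: extracting from Theorem \ref{thm:second-prox}(iii) a pointwise inequality whose gradient contribution has the opposite sign to the smoothness inequality (so the two inner products cancel exactly), and then calibrating Young's split so that the residual $L_i/4$ quadratic term survives subtraction from $-L_i/2$ rather than being fully absorbed. Careful bookkeeping of the parameter $\hat\delta_i$ across the two ingredients is what naturally forces the $L_i\delta_{k+1}$-scale error term in (i).
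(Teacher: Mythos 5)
Part (i) of your proposal is correct and follows essentially the same route as the paper: block smoothness plus Theorem \ref{thm:second-prox}(iii) evaluated at the test point $x^{k,i-1}_i$, cancellation of the gradient inner products, and a Young/AM--GM split calibrated so that a $\tfrac{L_i}{4}\|t_i\|_{(i)}^2$ term survives. Your split $ab\leq a^2+b^2/4$ combined with keeping track of $\hat\delta_i$ actually yields the slightly sharper error constant $2L_i\delta_{k+1}$ (the paper uses the cruder bound $\|v\|_{(i)}^*\leq\sqrt{2\delta}$ and lands on $3L_i\delta_{k+1}$), which of course still implies \eqref{eqn:prox:block-decrease:main-inexact}.

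Part (ii) has a genuine gap in the error-term bookkeeping. Summing (i) directly over $i$ produces the error term $3\delta_{k+1}\sum_{i=1}^{p}L_i$, and since $\sum_{i=1}^{p}L_i\geq pL_{\min}$, this term \emph{dominates} the claimed $3L_{\min}p\,\delta_{k+1}$ rather than being controlled by it; the inequality you obtain,
\begin{equation*}
3\delta_{k+1}\sum_{i=1}^{p}L_i+F(x^k)-F(x^{k+1})\geq\frac{L_{\min}}{4}\|x^k-x^{k+1}\|_B^2,
\end{equation*}
is strictly weaker than the statement of (ii) and does not imply it. The paper avoids this by first dividing the block inequality by $L_i$, so that each block contributes only $12\delta_{k+1}$ (independent of $L_i$) to the error and $\tfrac{4}{L_i}[F(x^{k,i-1})-F(x^{k,i})]$ to the telescoping part; the monotone decrease condition \eqref{eq.mondec} guarantees each bracketed difference is non-negative, which licenses the termwise replacement $\tfrac{1}{L_i}\leq\tfrac{1}{L_{\min}}$ before telescoping, and multiplying back by $\tfrac{L_{\min}}{4}$ gives exactly $3L_{\min}p\,\delta_{k+1}$. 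You should restructure your part (ii) along these lines; note in particular that the non-negativity of the per-block decreases, which your summation never invokes, is essential to that normalization step.
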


\begin{proof} 
To streamline notation for the proofs of (i) and (ii), let us define the variables $x=x^{k,i-1}$,  $x^+=x^{k,i}$, and $\delta = \delta_{k+1}$. First, notice that the $i$-th block of $x^+$ is $x^+_i=T_\delta^{(i)}(x)$.\\

\noindent (i) By the blockwise smoothness property,
\[
f(x^+)\leq f(x)+\inner{\nabla_i f(x)}{x^+_i-x_i}+\frac{L_i}{2}\|x^+_i-x_i\|_{(i)}^2{,}
\]
so
\begin{align}
F(x^+)&\leq F(x)+\inner{\nabla_i f(x)}{x^+_i-x_i}+\frac{L_i}{2}\left\|x^+_i-x_i\right\|_{(i)}^2+\Psi_i(x^+_i)-\Psi_i(x_i).\label{eqn:prox:block-decrease:smooth1}
\end{align}
We aim to refine the right-hand side of this inequality. To this end, Theorem \ref{thm:second-prox}(iii) gives us 
\[
\inner{v-\frac{1}{L_i}\nabla_i f(x)-B_i(x^+_i-x_i)}{x_i-x^+_i}\leq \frac{\Psi_i}{L_i}(x_i)-\frac{\Psi_i}{L_i}\left(x^+_i\right)+\delta
\]
for some $v\in\mathbb{R}^n$ such that $\|v\|_{(i)}^*\leq \sqrt{2\delta}$, since $x^+_i=T_\delta^{(i)}(x)$. We rearrange this inequality to bound $\inner{\nabla_i f(x)}{x^+_i-x_i}$ according to
%
%
\begin{align}
\inner{\nabla_i f(x)}{x^+_i-x_i}&\leq-L_i\|x^+_i-x_i\|_{(i)}^2+\Psi_i(x_i)-\Psi_i\left(x^+_i\right)+L_i\inner{v}{x^+_i-x_i}+L_i\delta\notag\\
& \leq  - L_i\|x^+_i-x_i\|_{(i)}^2+\Psi_i(x_i)-\Psi_i\left(x^+_i\right)\notag\\
&\hspace{10em}+\frac{L_i}{2}\Big( 2(\|v\|_{(i)}^*)^2 + \frac{1}{2}\|x^+_i-x_i\|_{(i)}^2 \Big)+L_i\delta\label{eq:CS-app-1}\\
& \leq  - \frac{3L_i}{4}\|x^+_i-x_i\|_{(i)}^2+\Psi_i(x_i)-\Psi_i\left(x^+_i\right)+3L_i\delta{,}\label{eq:second-prox-grad-bound}
\end{align}
where on the third line we applied the AM-GM inequality to $\inner{v}{x^+_i-x_i}=\inner{\sqrt{2}v}{\frac{1}{\sqrt{2}}(x^+_i-x_i)}$. Finally, inserting \eqref{eq:second-prox-grad-bound} into \eqref{eqn:prox:block-decrease:smooth1}, we settle on a rearranged \eqref{eqn:prox:block-decrease:main-inexact},
\[
F(x^+)\leq F(x)-\frac{L_i}{4}\left\|x^+_i-x_i\right\|_{(i)}^2+3L_i\delta.
\]
\noindent (ii) Rearrange the below chain of inequalities that follows from applying (i)
\begin{align*}
\| x^k - x^{k+1}	\|_B^2=\sum_{i=1}^{p}\|x^{k,i}-x^{k,i-1}\|_{(i)}^2&\leq\sum_{i=1}^{p}\left\{\frac{4}{L_i}[F(x^{k,i-1})-F(x^{k,i})]+12\delta\right\}\\
&\leq\frac{4}{L_{\min}}[F(x^k)-F(x^{k+1})]+12p\delta.
\end{align*}
\end{proof}
We now take the second step in our analysis: deriving the main recurrence inequality.

\begin{lemma}\label{lemma:prox:convex:sufficient-decrease}
Let $\{x^k\}_{k\geq 0}$ and $\{\delta_k\}_{k\geq 1}$ denote the sequences of iterates and error tolerances generated by I-CBPG (Algorithm \ref{algo.main}). Then for all $k\geq 0$ the recurrence inequality
\[
\frac{L_{\min}}{2p(L_f+L_{\max})^2 R(x^0)^2}[F(x^{k+1})-F^*]^2\leq F(x^k)-F(x^{k+1})+ \mathcal{O}(\delta_{k+1})
\]
holds. Specifically,
\begin{multline}\label{eq:recur-main}
\frac{L_{\min}}{8p(L_f+L_{\max})^2 R(x^0)^2}[F(x^{k+1})-F^*]^2
\leq\\ F(x^k)-F(x^{k+1})+ L_{\min}\left[3p +    \frac{L^2_{\max}}{4}\left(  \frac{R(x^0)\sqrt{2} + \sqrt{p\delta_1}}{(L_f + L_{\max})R(x^0)} \right)^2\right]\delta_{k+1}.
\end{multline}
\end{lemma}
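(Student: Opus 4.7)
The plan is to follow the standard three-step template that converts a per-iterate sufficient decrease inequality into a quadratic recurrence on the suboptimality gap: (i) upper bound $F(x^{k+1})-F^{*}$ by a ``gradient-mapping'' vector paired with $\|x^{k+1}-x^{*}\|_{B}$; (ii) dominate the gradient-mapping norm by $(L_{f}+L_{\max})\|x^{k+1}-x^{k}\|_{B}$ plus an $\mathcal{O}(\sqrt{\delta_{k+1}})$ inexactness term; (iii) substitute Lemma~\ref{lemma:prox:block-decrease}(ii) into the squared bound to extract \eqref{eq:recur-main}.

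For step (i), I will apply the $(\delta,B)$-Second Prox Theorem~\ref{thm:second-prox}(iii) to the inclusion $T^{(i)}_{\delta_{k+1}}(x^{k,i-1})\in \Prox^{B_{i}}_{\Psi_{i}/L_{i}}(U_{i}^{T}x^{k,i-1},L_{i}^{-1}\nabla_{i}f(x^{k,i-1}),\delta_{k+1})$. This yields, for each block $i$, an auxiliary vector $v_{i}$ with $\|v_{i}\|_{(i)}^{*}\le\sqrt{2\delta_{k+1}}$ and an error split $\hat\delta_{i}\in[0,\delta_{k+1}]$ that together give an approximate optimality inequality at $U_{i}^{T}x^{k,i}$. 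Evaluating this inequality at $y=U_{i}^{T}x^{*}$, summing over $i$, combining with the convexity estimate $f(x^{k+1})-f(x^{*})\le\langle\nabla f(x^{k+1}),x^{k+1}-x^{*}\rangle$, and inserting the decomposition $\nabla_{i}f(x^{k+1})=\nabla_{i}f(x^{k,i-1})+[\nabla_{i}f(x^{k+1})-\nabla_{i}f(x^{k,i-1})]$ causes the $\nabla_{i}f(x^{k,i-1})$ terms to cancel, leaving
\[
F(x^{k+1})-F^{*}\le\sum_{i=1}^{p}\bigl\langle\xi_{i},\,U_{i}^{T}(x^{k+1}-x^{*})\bigr\rangle+L_{\max}p\delta_{k+1},
\]
where $\xi_{i}:=L_{i}B_{i}(U_{i}^{T}x^{k,i-1}-U_{i}^{T}x^{k,i})+[\nabla_{i}f(x^{k+1})-\nabla_{i}f(x^{k,i-1})]-L_{i}v_{i}$.

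For step (ii), Cauchy--Schwarz in the $(\|\cdot\|_{B},\|\cdot\|_{B}^{*})$ duality bounds $\sum_{i}\langle\xi_{i},U_{i}^{T}(x^{k+1}-x^{*})\rangle$ by $\|\xi\|_{B}^{*}\|x^{k+1}-x^{*}\|_{B}$. The first summand of each $\xi_{i}$ contributes $L_{\max}\|x^{k+1}-x^{k}\|_{B}$; the gradient-difference summand is tamed by the pre-conditioned smoothness \eqref{eq:smooth} and the elementary inequality $\|x^{k+1}-x^{k,i-1}\|_{B}\le\|x^{k+1}-x^{k}\|_{B}$ (which holds because $x^{k,i-1}$ agrees with $x^{k+1}$ on blocks $1,\ldots,i-1$ and with $x^{k}$ on blocks $i,\ldots,p$), producing a joint bound $(L_{f}+L_{\max})\|x^{k+1}-x^{k}\|_{B}$; and the $v_{i}$ summand adds at most $L_{\max}\sqrt{2p\delta_{k+1}}$. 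A separate Fej\'er-type estimate, obtained by iterating Theorem~\ref{thm:non-expansive} block-by-block starting at an optimal $x^{*}$ and invoking the monotone decrease \eqref{eq.mondec} to keep the iterates in the sublevel set, will supply the uniform bound $\|x^{k+1}-x^{*}\|_{B}\le\sqrt{2}\,R(x^{0})+\sqrt{p\delta_{1}}$.

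For step (iii), chaining (i) and (ii) yields $F(x^{k+1})-F^{*}\le(L_{f}+L_{\max})\bigl(\sqrt{2}\,R(x^{0})+\sqrt{p\delta_{1}}\bigr)\|x^{k+1}-x^{k}\|_{B}+\mathcal{O}(\sqrt{\delta_{k+1}})$. Squaring by means of $(a+b)^{2}\le 2a^{2}+2b^{2}$, substituting Lemma~\ref{lemma:prox:block-decrease}(ii) in the form $\|x^{k+1}-x^{k}\|_{B}^{2}\le(4/L_{\min})[F(x^{k})-F(x^{k+1})]+12p\delta_{k+1}$, and finally dividing through by $8p(L_{f}+L_{\max})^{2}R(x^{0})^{2}/L_{\min}$ will produce exactly \eqref{eq:recur-main}. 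The main obstacle is purely a bookkeeping one: every $\sqrt{\delta_{k+1}}$ and $\delta_{k+1}$ contribution must be traced through the squaring so that the final coefficient of $\delta_{k+1}$ collapses to the stated $L_{\min}\bigl[3p+(L_{\max}^{2}/4)\bigl((\sqrt{2}R(x^{0})+\sqrt{p\delta_{1}})/((L_{f}+L_{\max})R(x^{0}))\bigr)^{2}\bigr]$. A related technical subtlety is the uniform-in-$k$ iterate bound $\|x^{k+1}-x^{*}\|_{B}\le\sqrt{2}R(x^{0})+\sqrt{p\delta_{1}}$, which must be derived so that the accumulated drift stays bounded even as $k\to\infty$; the hypothesis $\delta_{k}\le\delta_{1}$ together with the $(1+\sqrt{2}/2)(\sqrt{\delta}+\sqrt{\epsilon})$ slack in Theorem~\ref{thm:non-expansive} is what makes this possible.
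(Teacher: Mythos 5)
Your steps (i) and (iii) track the paper's argument, but step (ii) contains a genuine gap: the uniform iterate bound $\|x^{k+1}-x^*\|_B\le\sqrt{2}\,R(x^0)+\sqrt{p\delta_1}$ cannot be obtained by iterating Theorem~\ref{thm:non-expansive}. That theorem is \emph{additive}: each block application contributes $\|g-h\|_B^*+\|y-x\|_B+(1+\tfrac{\sqrt2}{2})(\sqrt\delta+\sqrt\epsilon)$, so the gradient-difference terms do not telescope for merely convex $f$ (no cocoercivity is available at this level), and the inexactness slack accumulates across cycles as $\sum_{j\le k}\sqrt{\delta_j}$ — which diverges for a fixed tolerance $\delta_j\equiv\delta$ and even for $\delta_j=\Theta(1/j^2)$ (harmonic sum). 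No uniform-in-$k$ bound comes out of that route. The paper instead gets the needed bound for free from the algorithm's design: the selection function is required to satisfy the monotone decrease condition \eqref{eq.mondec}, so every iterate stays in the sublevel set $\{y:F(y)\le F(x^0)\}$, and coercivity then gives $\|x^{k+1}-x^*\|_B\le R(x^0)$ directly. Relatedly, you have misattributed the quantity $\sqrt{2}\,R(x^0)+\sqrt{p\delta_1}$: in the paper it is not a distance bound at all, but the coefficient that emerges when the three-term estimate
\[
F(x^{k+1})-F^*\le p^{1/2}(L_f+L_{\max})R(x^0)\|x^k-x^{k+1}\|_B+p^{1/2}L_{\max}R(x^0)\sqrt{2\delta_{k+1}}+pL_{\max}\delta_{k+1}
\]
is squared and the $\sqrt{\delta_{k+1}}$ and $\delta_{k+1}$ contributions are collected using $\delta_{k+1}\le\delta_1$.

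A secondary issue: with your global Cauchy--Schwarz $\|\xi\|_B^*\,\|x^{k+1}-x^*\|_B$ and your claimed distance bound, the leading term after squaring would be $2(L_f+L_{\max})^2(\sqrt2 R(x^0)+\sqrt{p\delta_1})^2\|x^k-x^{k+1}\|_B^2$, and dividing by $8p(L_f+L_{\max})^2R(x^0)^2/L_{\min}$ leaves $\tfrac{L_{\min}}{4}\|x^k-x^{k+1}\|_B^2$ only if $(\sqrt2 R(x^0)+\sqrt{p\delta_1})^2\le pR(x^0)^2$, which fails for small $p$. The paper's blockwise Cauchy--Schwarz followed by $\sum_i\|x_i^{k+1}-x_i^*\|_{(i)}\le p^{1/2}R(x^0)$ is what produces the factor of $p$ in the denominator of the final constant. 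To repair the proof, replace the Fej\'er argument with the sublevel-set/coercivity bound and apply Cauchy--Schwarz block by block as in the paper.
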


\begin{proof}
Fix $k\geq 0$ and $i\in\{1,\ldots,p\}$. Invoking the $(\delta,B)$-Second Prox Theorem (Theorem \ref{thm:second-prox}) for $T^{(i)}_{\delta_{k+1}}(x^{k,i-1})$, there exists $v^{k,i}\in\mathbb{R}^{n_i}$ with $\|v^{k,i}\|_{(i)}^*\leq \sqrt{2\delta_{k+1}}$ such that
\[
\frac{\Psi_i}{L_i}(y)-\frac{\Psi_i}{L_i}(x^{k,i}_i)+\delta_{k+1} \geq\inner{v^{k,i}-\frac{1}{L_i}\nabla_i f(x^{k,i-1})-B_iT^{(i)}_{\delta_{k+1}}(x^{k,i-1})}{y-x^{k,i}_i}
\]
for any $y\in\mathbb{R}^{n_i}$. Setting $y=x^*_i$, recognizing that $x^{k,i}_i=x^{k+1}_i$ and $T^{(i)}_{\delta_{k+1}}(x^{k,i-1})=x_i^{k+1}-x_i^k$, and multiplying both sides by $L_i$, we compute
\[
\Psi_i(x^*_i)-\Psi_i(x^{k,i}_i)+L_i\delta_{k+1}\geq L_i\inner{v^{k,i}-\frac{1}{L_i}\nabla_i f(x^{k,i-1})+B_i(x_i^k-x_i^{k+1})}{x^*_i-x^{k+1}_i}.
\]
Summing this last inequality over $i\in\{1,\ldots,p\}${,} we see
\begin{equation*}
\Psi(x^*)-\Psi(x^{k+1})+\sum_{i=1}^p L_i \delta_{k+1}\geq \sum_{i=1}^p L_i\inner{v^{k,i}-\frac{1}{L_i}\nabla_i f(x^{k,i-1})+B_i(x_i^k-x_i^{k+1})}{x^*_i-x^{k+1}_i}{,}
\end{equation*}
\noindent which we will eventually use in the rearranged form
\begin{multline}\label{eqn:prox:convex:total-nonsmooth}
\sum_{i=1}^p L_i\left(\delta_{k+1}+\inner{v^{k,i}-\frac{1}{L_i}\nabla_i f(x^{k,i-1})+B_i(x_i^k-x_i^{k+1})}{x^{k+1}_i-x^*_i}\right)\\
\geq \Psi(x^{k+1})-\Psi(x^*).
\end{multline}
The convexity of $f$ implies that 
\begin{align*}
F(x^{k+1})-F^*&=f(x^{k+1})-f(x^*)+\Psi(x^{k+1})-\Psi(x^*)\\
&\leq\inner{\nabla f(x^{k+1})}{x^{k+1}-x^*}+\Psi(x^{k+1})-\Psi(x^*)\\
&\leq\sum_{i=1}^p \inner{\nabla_i f(x^{k+1})}{x^{k+1}_i-x^*_i}+\Psi(x^{k+1})-\Psi(x^*){,}
\end{align*}
which we combine with \eqref{eqn:prox:convex:total-nonsmooth} to yield
\begin{multline}
F(x^{k+1})-F^*\leq\sum_{i=1}^p \inner{\nabla_i f(x^{k+1})}{x^{k+1}_i-x^*_i}\\
+\sum_{i=1}^p L_i\left(\delta_{k+1}+\inner{v^{k,i}-\frac{1}{L_i}\nabla_i f(x^{k,i-1})+B_i(x_i^k-x_i^{k+1})}{x^{k+1}_i-x^*_i}\right)\\
=\sum_{i=1}^p \left(L_i\delta_{k+1}+\inner{\nabla_i f(x^{k+1})-\nabla_i f(x^{k,i-1})+L_iB_i(x^k_i-x^{k+1}_i)+L_i v^{k,i}}{x^{k+1}_i-x^*_i}\right).
\end{multline}

We further compute
\begin{multline}\label{eq:sq-subopt-gap-1}
F(x^{k+1})-F^*\leq\sum_{i=1}^p \bigg[\|\nabla_i f(x^{k+1})-\nabla_i f(x^{k,i-1})\|^*_{(i)}+L_i\|x^k_i-x^{k+1}_i\|_{(i)}\\
+L_i\|v^{k,i}\|_{(i)}^*\bigg]\cdot\|x^{k+1}_i-x^*_i\|_{(i)}+pL_{\max}\delta_{k+1}\\
\leq \sum_{i=1}^p \left[L_f\|x^{k+1}-x^{k,i-1}\|_B+L_{\max}\|x^k_i-x^{k+1}_i\|_{(i)}+L_i\sqrt{2\delta_{k+1}}\right]\cdot\|x^{k+1}_i-x^*_i\|_{(i)}\\
\hspace{18em}+pL_{\max}\delta_{k+1}\\
=(L_f+L_{\max})\|x^k-x^{k+1}\|_B\cdot\sum_{i=1}^p \|x^{k+1}_i-x^*_i\|_{(i)}+L_{\max}\cdot\left(\sum_{i=1}^p \|x^{k+1}_i-x^*_i\|_{(i)}\right)\\
\cdot\sqrt{2\delta_{k+1}}+pL_{\max}\delta_{k+1}{,}
\end{multline}
where we use the Cauchy-Schwarz, triangle, block smoothness \eqref{eq:smooth:block}, and preconditioned smoothness \eqref{eq:smooth} inequalities along with the norm bounds  on the $v^{k,i}$ terms on the first line.
The norm equivalence bound $\|\cdot\|_1\leq p^{1/2}\|\cdot\|_2$ on $\mathbb{R}^p$ along with the coercivity assumption implies
\[
\sum_{i=1}^p \|x^{k+1}_i-x^*_i\|_{(i)}\leq p^{1/2}\sqrt{\sum_{i=1}^p \|x^{k+1}_i-x^*_i\|_{(i)}^2}=p^{1/2}\|x^{k+1}-x^*\|_B\leq p^{1/2}R\left(x^0\right){,}
\]
so we may refine \eqref{eq:sq-subopt-gap-1} to
\[
F(x^{k+1})-F^*\leq p^{1/2}(L_f+L_{\max})R(x^0)\|x^k-x^{k+1}\|_B+p^{1/2}L_{\max}R(x^0)\sqrt{2\delta_{k+1}}+pL_{\max}\delta_{k+1}.
\]
Now, observe that by squaring and applying the Cauchy-Schwarz inequality and monotonicity of the sequence $\{\delta_k\}_{k\geq 1}$, we get
\begin{eqnarray}
[F(x^{k+1})-F^*]^2 \leq \Big[ p^{1/2}(L_f+L_{\max})R(x^0)\|x^k-x^{k+1}\|_B+p^{1/2}L_{\max}R(x^0)\sqrt{2\delta_{k+1}}+pL_{\max}\delta_{k+1} \Big]^2\notag\\
\leq 2p(L_f+L_{\max})^2R(x^0)^2\|x^k-x^{k+1}\|_B^2  + 2\delta_{k+1} (\sqrt{2} p^{1/2}L_{\max}R(x^0) + pL_{\max}\sqrt{\delta_{k+1}}  )^2\label{eq:CS-app-2}\\
\leq 2p(L_f+L_{\max})^2R(x^0)^2\|x^k-x^{k+1}\|_B^2  + 2\delta_{k+1} (\sqrt{2} p^{1/2}L_{\max}R(x^0) + pL_{\max}\sqrt{\delta_1}  )^2.\notag
\end{eqnarray}

At this point,  we multiply both sides of the inequality by $\frac{L_{\min}}{8p(L_f+L_{\max})^2 R(x^0)^2}$ and apply the bound of Lemma \ref{lemma:prox:block-decrease} in a straightforward fashion to obtain
\begin{multline}
\frac{L_{\min}}{8p(L_f+L_{\max})^2  R(x^0)^2 }[F(x^{k+1})-F^*]^2 \leq \frac{L_{\min}}{4}\| x^k-x^{k+1}\|_B^2  \\
\hspace{12em}+\left( \frac{L_{\min}(\sqrt{2} p^{1/2}L_{\max}R(x^0) + pL_{\max}\sqrt{\delta_1}  )^2}{4p(L_f + L_{\max})^2R(x^0)^2} \right)\delta_{k+1} \\
\leq  F(x^k)-F(x^{k+1}) + 3L_{\min}p\delta_{k+1} +  \frac{L_{\min}L^2_{\max}}{4}\Bigg(   \frac{R(x^0)\sqrt{2} + \sqrt{p\delta_1}}{(L_f + L_{\max})R(x^0)} \Bigg)^2 \delta_{k+1}.
\end{multline}
\end{proof}

As promised, we see that stating and proving our main convergence results hinges upon determining the convergence rate of a sequence satisfying a certain recurrence inequality. The following technical lemma, whose proof we defer to this paper's singular appendix, accomplishes this task.

\begin{lemma}\label{lemma:recurrence-technical}
If $\{A_\ell\}_{\ell\geq 0}$ and $\{\Delta_\ell\}_{\ell\geq 1}$ are non-negative, non-increasing sequences of real numbers satisfying the recurrence inequality
\begin{equation}\label{eq:recur-ineq}
\frac{1}{\gamma}A_{\ell+1}^2\leq A_\ell-A_{\ell+1}+\Delta_{\ell+1}
\end{equation}
for some $\gamma\geq 1${,} then the following hold:
\begin{enumerate}[(i)]
\item If $\{\Delta_{\ell}\}_{\ell\geq 1}$ is a constant sequence such that $\Delta_\ell = \Delta\geq 0$ for all $\ell\geq 1$,  then for $u = \sqrt{\Delta\gamma}$, we have that 
\[
A_k\leq\max\left\{\frac{4\gamma (A_0 - u)}{(k-1)(A_0+3u)} +u ,\left(\frac{1}{2}\right)^{(k-1)/2}A_0\right\}
\]
for $k\geq 2$.
	\item If $\{\Delta_\ell\}_{\ell\geq 1}$ shrinks at the sublinear rate $\mathcal{O}(1/k^2)$, i.e. there exists $D>0$ such that $\Delta_\ell\leq D/\ell^2$ for $\ell\geq 1$, then
	\[
A_k\leq \max\left\{\frac{16\gamma}{k-3},\frac{8\sqrt{D\gamma}}{k-3},\left(\frac{1}{2}\right)^{(k-1)/2}A_0\right\}
\]
for $k\geq 4$.

\end{enumerate}
\end{lemma}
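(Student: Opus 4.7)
The plan is to prove both parts of Lemma~\ref{lemma:recurrence-technical} by a two-regime analysis, splitting each step into a \emph{halving} case (giving exponential decay) and a \emph{slow} case (giving sub-linear decay after telescoping reciprocals), then combining them via a pigeonhole argument on the number of steps of each type.

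First I would perform an affine shift to isolate a noise-free telescoping inequality. For part~(i), set $u := \sqrt{\Delta\gamma}$ and $B_\ell := A_\ell - u$. Since $u^2 = \gamma\Delta$, the hypothesized recurrence collapses on the regime $B_{\ell+1} > 0$ to
\[
\frac{B_{\ell+1}(B_{\ell+1} + 2u)}{\gamma} \leq B_\ell - B_{\ell+1},
\]
which I would divide through by $B_\ell B_{\ell+1}$ to obtain $(B_{\ell+1}+2u)/(\gamma B_\ell) \leq 1/B_{\ell+1} - 1/B_\ell$. The degenerate case $B_{\ell+1} \leq 0$ forces $A_k \leq u$ for $k \geq \ell+1$, which is already absorbed by the first entry of the claimed maximum. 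For part~(ii), I would instead use a time-varying shift $u_\ell := \sqrt{D\gamma}/\ell$ so that the per-step noise $\Delta_\ell \leq D/\ell^2$ is absorbed analogously.

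Next I would classify each step $\ell \in \{0,\ldots,k-1\}$ as halving when $A_{\ell+1} \leq A_\ell/2$ and slow otherwise, writing $H$ and $S = k-H$ for the two cardinalities. On halving steps, monotonicity of $\{A_\ell\}$ gives $A_k \leq A_0/2^H$, so the regime $H \geq (k-1)/2$ directly produces the exponential term $A_0/2^{(k-1)/2}$ of the stated maximum. On slow steps, $A_{\ell+1} > A_\ell/2$ translates to $B_{\ell+1} > (B_\ell-u)/2$, which I would combine with the telescoping inequality from the previous step and the uniform bound $B_\ell \leq B_0$ to produce a constant lower bound of order $(A_0+3u)/(\gamma(A_0-u))$ on $1/B_{\ell+1} - 1/B_\ell$. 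Summing over the $S$ slow steps and rearranging then yields the sub-linear first entry of the stated maximum whenever $S \geq (k-1)/2$. Since $H+S = k$, at least one of the two regimes must hold, completing part~(i) by pigeonhole. Part~(ii) would follow the same script, with the slow-sum estimate bifurcating into a noiseless $16\gamma/(k-3)$ term and a noise-dominated $8\sqrt{D\gamma}/(k-3)$ term, depending on which dominates in the accumulated bound.

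The hard part will be executing the slow-step telescoping with constants sharp enough to reproduce the precise coefficients in the stated bound, in particular the $(A_0+3u)$ denominator of part~(i) and the factors $16$ and $8$ of part~(ii). In part~(ii), the interaction between the time-varying shift $u_\ell$ and the reciprocal telescoping in the small-$\ell$ regime is especially delicate: it accounts for the $-3$ offset in the denominator rather than the $-1$ one might naively expect, and requires carefully handling a small initial window of indices where neither decrease mechanism is yet effective.
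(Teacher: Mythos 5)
Your plan for part (i) is essentially the paper's own argument: the constant shift $B_\ell = A_\ell - u$ with $u=\sqrt{\Delta\gamma}$, division by $B_\ell B_{\ell+1}$, and a pigeonhole split between halving steps ($A_{\ell+1}\le A_\ell/2$) and slow steps is exactly how the paper proceeds, and the constants $(A_0+3u)$ and the $(k-1)$ offset come out of that telescoping just as you describe.

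Part (ii), however, has a genuine gap. Your time-varying shift $u_\ell = \sqrt{D\gamma}/\ell$ does \emph{not} produce a noise-free telescoping inequality. Writing $B_\ell = A_\ell - u_\ell$, the recurrence becomes
\begin{equation*}
\frac{1}{\gamma}\left(B_{\ell+1}+u_{\ell+1}\right)^2 \;\le\; B_\ell - B_{\ell+1} + \left(u_\ell - u_{\ell+1}\right) + \Delta_{\ell+1}.
\end{equation*}
The term $u_{\ell+1}^2/\gamma = D/(\ell+1)^2$ on the left does absorb $\Delta_{\ell+1}$, but the shift itself deposits the fresh positive term $u_\ell - u_{\ell+1} = \sqrt{D\gamma}/(\ell(\ell+1))$ on the right, which decays at the same $1/\ell^2$ rate as the noise you were trying to remove. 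Absorbing it into the cross term $2u_{\ell+1}B_{\ell+1}/\gamma$ would require $B_{\ell+1}\ge \gamma/(2\ell)$, which is not available. So after the shift you are facing a recurrence of the same structure, and the argument does not close. Relatedly, your description of where the two terms $16\gamma/(k-3)$ and $8\sqrt{D\gamma}/(k-3)$ come from (``which dominates in the accumulated bound'') does not correspond to a concrete mechanism, and your explanation of the $-3$ offset is not the right one.

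The paper avoids the shift entirely in part (ii). It keeps the raw reciprocal inequality $1/A_{\ell+1}-1/A_\ell \ge \tfrac{1}{\gamma}A_{\ell+1}/A_\ell - \Delta_{\ell+1}/(A_\ell A_{\ell+1})$ and splits the slow steps into two subcases according to whether $\Delta_{\ell+1}/(A_\ell A_{\ell+1}) < \tfrac{1}{4\gamma}$ or $\ge \tfrac{1}{4\gamma}$. If the first alternative holds on at least $\lfloor k/4\rfloor$ slow steps, each such step contributes at least $\tfrac{1}{4\gamma}$ to the reciprocal telescoping, giving $A_k\le 16\gamma/k$. If the second holds on at least $\lfloor k/4\rfloor$ slow steps, one takes the \emph{largest} such index $\ell^*\ge k/4-1$ and uses the single non-telescoping estimate $\tfrac{1}{4\gamma}A_k^2\le \tfrac{1}{4\gamma}A_{\ell^*}A_{\ell^*+1}\le \Delta_{\ell^*+1}\le D/(k/4)^2$, giving $A_k\le 8\sqrt{D\gamma}/k$. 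The $-3$ in the denominators simply accounts for replacing $k$ by the largest smaller multiple of $4$. You would need to adopt this (or an equivalent) case analysis to complete part (ii).
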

%
%

Below we present convergence rates for I-CBPG and thus complete our theoretical developments. Our first two results, Theorem \ref{theorem:convergence-bounded-error} and Corollary \ref{cor:convergence-bounded-error}, cover merely fixed errors. A reader familiar with the analyses of cyclic BPG methods in \cite{Beck13, Beck17} will notice that the constant $\gamma$ in Theorem \ref{theorem:convergence-bounded-error} differs by a factor of $4$ from that in \cite[Theorem 11.18]{Beck17}. This constant, and thus the rate in the exact computation setting, is recoverable from our analysis with minor modification. Namely, by replacing the Cauchy-Schwarz derived bounds in equations \eqref{eq:CS-app-1} and \eqref{eq:CS-app-2}, we can recover the aforementioned constant. The cost, however, is that the dependence in Lemma \ref{lemma:prox:convex:sufficient-decrease} on $\{\delta_k\}_{k\geq 1}$ deteriorates from $\mathcal{O}(\delta_k)$ to $\mathcal{O}(\delta_k^{1/2})$.

\begin{theorem}[Convergence of I-CBPG: Fixed Error Case]
\label{theorem:convergence-bounded-error}
Let $\{x^k\}_{k\geq 0}$ and $\{\delta_k\}_{k\geq 1}$ denote the sequences of iterates and error tolerances generated by I-CBPG (Algorithm \ref{algo.main}). If the error tolerance sequence is fixed ($\delta_\ell=\delta\geq 0$ for $\ell\geq 1$) then for any $k \ge 2$,
\begin{equation}\label{eqn:convergence-bounded-error}
F(x^k) - F^* \le \max\left\{  \left(\frac{1}{2} \right)^{(k-1)/2} \left( F(x^0)  - F^*  \right ) 		, \;\; \frac{4\gamma \left( F(x^0)  - F^*-u \right ) }{(k-1)(F(x^0)  - F^*  + 3u)} + u	\right\} ,
\end{equation}
where
\[
\gamma= \frac{8p(L_f + L_{\max})^2R(x^0)^2}{L_{\min}} , \quad u  = \sqrt{  L_{\min}\left[3p +    \frac{L^2_{\max}}{4}\left(  \frac{R(x^0)\sqrt{2} + \sqrt{p\delta}}{(L_f + L_{\max})R(x^0)} \right)^2\right]\delta \gamma}\;.
\]
\end{theorem}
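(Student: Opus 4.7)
The plan is to reduce Theorem \ref{theorem:convergence-bounded-error} to a direct invocation of Lemma \ref{lemma:recurrence-technical}(i). Set $A_k := F(x^k) - F^*$ so that the target bound \eqref{eqn:convergence-bounded-error} is literally the conclusion of Lemma \ref{lemma:recurrence-technical}(i) once the correct matches for $\gamma$ and $u$ are identified. To apply that lemma I need three things: non-negativity of $\{A_k\}$, monotonicity (non-increasing) of $\{A_k\}$, and the recurrence inequality \eqref{eq:recur-ineq} with a constant error sequence.

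Non-negativity is automatic from the definition of $F^*$. Monotonicity follows from the block-wise monotone decrease condition \eqref{eq.mondec}, which is built into the selection function $T_{\delta}^{(i)}$: since each inner loop step of I-CBPG satisfies $F(x^{k,i}) \le F(x^{k,i-1})$, telescoping over $i = 1,\ldots,p$ yields $F(x^{k+1}) \le F(x^k)$, and hence $A_{k+1} \le A_k$. For the recurrence, I would simply invoke Lemma \ref{lemma:prox:convex:sufficient-decrease} with $\delta_{k+1} = \delta_1 = \delta$ for all $k \ge 0$. Reading off the coefficients, the inequality \eqref{eq:recur-main} is exactly \eqref{eq:recur-ineq} with
\[
\gamma = \frac{8p(L_f + L_{\max})^2 R(x^0)^2}{L_{\min}}, \qquad \Delta = L_{\min}\!\left[3p + \frac{L_{\max}^2}{4}\!\left(\frac{R(x^0)\sqrt{2} + \sqrt{p\delta}}{(L_f + L_{\max})R(x^0)}\right)^{\!2}\right]\!\delta.
\]
Then $u = \sqrt{\Delta \gamma}$ coincides with the $u$ in the theorem statement.

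The remaining bookkeeping is a quick verification that $\gamma \ge 1$, which is required by Lemma \ref{lemma:recurrence-technical}. Assuming the nontrivial case $R(x^0) > 0$ (otherwise $x^0 \in X^*$ and the bound holds trivially), this follows from $p \ge 1$ and $(L_f + L_{\max})^2 / L_{\min} \ge L_{\max} \ge L_{\min}$, which give $\gamma \ge 8 R(x^0)^2 L_{\max}$; if desired one may normalize the norm $\|\cdot\|_B$ so that $R(x^0) \ge 1$ or alternatively absorb this into a mild hypothesis. With $\gamma \ge 1$ in hand, Lemma \ref{lemma:recurrence-technical}(i) applied to $A_k$ delivers \eqref{eqn:convergence-bounded-error} verbatim for every $k \ge 2$.

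I do not expect a genuine obstacle here: all the hard work has been front-loaded into the $(\delta,B)$-Second Prox Theorem (for the sufficient decrease lemma) and into the technical Lemma \ref{lemma:recurrence-technical}. The only mildly delicate point is making sure the constant sequence $\Delta_\ell \equiv \Delta$ produced by Lemma \ref{lemma:prox:convex:sufficient-decrease} actually matches the constant assumed in part (i) of the technical lemma; since \eqref{eq:recur-main} has $\delta_1$ inside the bracketed factor and $\delta_{k+1}$ outside, specializing to $\delta_\ell = \delta$ for all $\ell$ collapses both occurrences to the single scalar $\delta$ appearing in the definition of $u$, and the match is exact.
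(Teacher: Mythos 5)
Your proposal is correct and follows exactly the paper's own route: identify $A_k = F(x^k)-F^*$, obtain the constant-error recurrence from Lemma \ref{lemma:prox:convex:sufficient-decrease}, and invoke Lemma \ref{lemma:recurrence-technical}(i) with the same $\gamma$, $\Delta$, and $u=\sqrt{\Delta\gamma}$. Your extra checks (monotonicity via \eqref{eq.mondec} and the $\gamma\ge 1$ hypothesis, which the paper leaves implicit) are welcome but do not change the argument.
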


\begin{proof}
The result is immediate upon invoking the technical recurrence lemma (Lemma \ref{lemma:recurrence-technical}(i)) with $A_k = F(x^k) - F^*$, $\gamma$ and $u$ as in the statement of the theorem, and 
\[
\Delta =   L_{\min}\left[3p +    \frac{L^2_{\max}}{4}\left(  \frac{R(x^0)\sqrt{2} + \sqrt{p\delta}}{(L_f + L_{\max})R(x^0)} \right)^2\right]\delta .
\]
\end{proof}

\begin{corollary}[Convergence of I-CBPG: Fixed Error Case (Restated)]\label{cor:convergence-bounded-error}
Under the same assumptions and definitions of $u$ and $\gamma$ in Theorem \ref{theorem:convergence-bounded-error}, if $\epsilon > u$ the iterates of I-CBPG (Algorithm \ref{algo.main}) achieve $F(x^k) - F^*\leq\epsilon$ for $k\geq K${,} where
\begin{equation}
K = 1+\left\lceil\max \left\{\frac{2}{\log 2}\cdot\log \frac{F(x^0)-F^*}{\epsilon},  \frac{4\gamma(F(x^0)-F^*-u)}{(\epsilon-u)(F(x^0)-F^*+3u)}   \right\} \right\rceil.
\end{equation}

\end{corollary}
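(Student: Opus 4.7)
The plan is to deduce this corollary as a direct computational consequence of the convergence bound~\eqref{eqn:convergence-bounded-error} from Theorem~\ref{theorem:convergence-bounded-error}. By that theorem, $F(x^k) - F^* \le \epsilon$ is implied whenever both terms appearing in the max on the right-hand side of~\eqref{eqn:convergence-bounded-error} are individually bounded by $\epsilon$. So the argument reduces to solving two scalar inequalities for $k$ and taking the larger lower bound.

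First I would handle the geometric term. Requiring
\[
\left(\tfrac{1}{2}\right)^{(k-1)/2}\!\bigl(F(x^0)-F^*\bigr)\le \epsilon
\]
and taking natural logarithms gives $\tfrac{(k-1)}{2}\log 2 \ge \log\!\tfrac{F(x^0)-F^*}{\epsilon}$, i.e.
\[
k \;\ge\; 1 + \frac{2}{\log 2}\log\!\frac{F(x^0)-F^*}{\epsilon}.
\]
Next I would handle the sublinear term. Using the hypothesis $\epsilon > u$ (so $\epsilon - u > 0$ and we may divide by it safely), the inequality
\[
\frac{4\gamma(F(x^0)-F^*-u)}{(k-1)(F(x^0)-F^*+3u)} + u \;\le\; \epsilon
\]
rearranges to
\[
k \;\ge\; 1 + \frac{4\gamma(F(x^0)-F^*-u)}{(\epsilon-u)(F(x^0)-F^*+3u)}.
\]
Taking the maximum of the two right-hand sides and rounding up to the next integer yields exactly the $K$ stated in the corollary. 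For $k \ge K$, both terms in the max of~\eqref{eqn:convergence-bounded-error} are at most $\epsilon$, and hence so is $F(x^k)-F^*$.

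There is no genuine obstacle here; the only mild subtlety is keeping track of the condition $\epsilon > u$, which is precisely what makes the second inequality solvable (the fixed-error algorithm cannot drive the suboptimality gap below the floor $u$, so without the assumption $\epsilon > u$ no finite $K$ would suffice). One should also note that the hypothesis $F(x^0) - F^* > \epsilon$ is implicit (otherwise the statement is trivial with $K=0$), which ensures the logarithm in the first bound is nonnegative so that $K \ge 1$.
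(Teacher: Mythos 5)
Your proposal is correct and is essentially the paper's own argument: the paper simply states that $K$ is the smallest $k\geq 2$ making the right-hand side of \eqref{eqn:convergence-bounded-error} at most $\epsilon$, and your two scalar inequalities (with the observation that $\epsilon>u$ is what makes the sublinear term solvable for $k$) are exactly the computation behind that claim.
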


\begin{proof}
Clearly, the expression for $K$ is the smallest $k\geq 2$ ensuring the right-hand side of \eqref{eqn:convergence-bounded-error} from Theorem \ref{theorem:convergence-bounded-error} is less than or equal $\epsilon$.
\end{proof}

Below we present the convergence rate when the error tolerance sequence $\{\delta_k\}_{k\geq 1}$ decreases at the sublinear rate $\mathcal{O}(1/k^2)$. It is appropriate to reiterate that we are not aware of any works on inexact cyclic coordinate descent type methods that establish a rate of decrease on the error sequence that preserves standard convergence rates with exception to \cite{Hua16} which considers linearly convergent schemes for strongly convex minimization. Section \ref{sec:numerical}'s numerical experiments strikingly illustrate the benefits of such a decreasing error tolerance sequence and  imply the potential for immense computational savings by permitting higher error tolerances during early iterations.  

\begin{theorem}[Convergence of I-CBPG: Decreasing Error Case]
\label{theorem:convergence-decreasing-error}
Let $\{x^k\}_{k\geq 0}$ and $\{\delta_k\}_{k\geq 1}$ denote the sequences of iterates and error tolerances generated by I-CBPG (Algorithm \ref{algo.main}). If the error tolerance sequence dynamically decreases at the sublinear rate $\mathcal{O}(1/k^2)${,} then for any $k \ge 4$, 
\begin{equation}\label{eqn:convergence-decreasing-error}
F(x^k) - F^* \leq \max\left\{\left(\frac{1}{2}\right)^{(k-1)/2}[F(x^0)-F^*],\frac{16\gamma}{k-3},\frac{8\sqrt{D\gamma}}{k-3}\right\}{,}
\end{equation}
where
\[
\gamma = \frac{8p(L_f+L_{\max})^2R(x^0)^2}{L_{\min}}{,}\quad D =   \tilde{D}L_{\min}\left[3p +    \frac{L^2_{\max}}{4}\left(  \frac{R(x^0)\sqrt{2} + \sqrt{p\delta_1}}{(L_f + L_{\max})R(x^0)} \right)^2\right]{,}
\]
and $\tilde{D}>0$ is a constant satisfying $\delta_k\leq\frac{\tilde{D}}{k^2}$  for all $k\geq 1$.
\end{theorem}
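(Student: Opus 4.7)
The plan is to recognize that this theorem is essentially a direct application of Lemma \ref{lemma:recurrence-technical}(ii) to the suboptimality sequence, fueled by the recurrence already established in Lemma \ref{lemma:prox:convex:sufficient-decrease}. All the technical heavy lifting has been done in the preceding lemmas; what remains is a careful bookkeeping exercise to identify the abstract quantities $A_k$, $\Delta_k$, $\gamma$, and $D$ from Lemma \ref{lemma:recurrence-technical} with their concrete counterparts in the I-CBPG setting.

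First, I would define $A_k := F(x^k) - F^*$ and observe that $\{A_k\}_{k \ge 0}$ is non-negative (by definition of $F^*$) and non-increasing. The monotonicity follows by a telescoping argument over the inner loop of Algorithm \ref{algo.main}: the selection assumption \eqref{eq.mondec} gives $F(x^{k,i}) \le F(x^{k,i-1})$ for every $i = 1, \dots, p$, so $F(x^{k+1}) = F(x^{k,p}) \le F(x^{k,0}) = F(x^k)$. Next, I would define
\[
C := L_{\min}\left[3p + \frac{L^2_{\max}}{4}\left(\frac{R(x^0)\sqrt{2} + \sqrt{p\delta_1}}{(L_f + L_{\max})R(x^0)}\right)^2\right], \qquad \Delta_{k+1} := C \, \delta_{k+1},
\]
so that $\{\Delta_k\}_{k \ge 1}$ is non-negative and non-increasing (inheriting monotonicity from $\{\delta_k\}$, which is non-increasing by Algorithm \ref{algo.main}). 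With $\gamma := 8p(L_f+L_{\max})^2 R(x^0)^2 / L_{\min}$, Lemma \ref{lemma:prox:convex:sufficient-decrease} rewrites exactly as the recurrence
\[
\frac{1}{\gamma} A_{k+1}^2 \le A_k - A_{k+1} + \Delta_{k+1},
\]
which is the hypothesis of Lemma \ref{lemma:recurrence-technical}. If $\gamma < 1$ for some degenerate problem instance, replacing $\gamma$ by $\max\{\gamma, 1\}$ only weakens the left-hand side, so the hypothesis $\gamma \ge 1$ of the recurrence lemma can always be met, and enlarging $\gamma$ only enlarges the final bound monotonically; I would note this briefly.

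Then I would verify the rate hypothesis on $\{\Delta_k\}$. By the theorem's assumption, there exists $\tilde D > 0$ with $\delta_k \le \tilde D/k^2$ for all $k \ge 1$, so
\[
\Delta_k = C \, \delta_k \le \frac{C \tilde D}{k^2} = \frac{D}{k^2},
\]
with $D = \tilde D \cdot C$ matching the definition in the theorem statement. All hypotheses of Lemma \ref{lemma:recurrence-technical}(ii) are now in place, and applying that lemma with $A_0 = F(x^0) - F^*$ yields exactly the bound \eqref{eqn:convergence-decreasing-error} for all $k \ge 4$.

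There is no real obstacle here — the architecture built by Lemmas \ref{lemma:prox:block-decrease}, \ref{lemma:prox:convex:sufficient-decrease}, and \ref{lemma:recurrence-technical} is engineered precisely for this final invocation. The only points requiring genuine care are (i) confirming the monotonicity of $\{A_k\}$ via the inner-loop monotone descent \eqref{eq.mondec} (which Lemma \ref{lemma:recurrence-technical} implicitly needs), and (ii) confirming $\gamma \ge 1$ or trivially adjusting to that case. Everything else is substitution. Accordingly, the proof itself should be quite short, essentially one paragraph invoking the two preceding lemmas in sequence.
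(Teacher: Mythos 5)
Your proposal is correct and follows essentially the same route as the paper's own proof: identify $A_k = F(x^k)-F^*$, read off $\gamma$ and the error term from Lemma \ref{lemma:prox:convex:sufficient-decrease}, and invoke Lemma \ref{lemma:recurrence-technical}(ii). In fact you are slightly more careful than the paper, which silently assumes the monotonicity of $\{A_k\}$ and the hypothesis $\gamma \ge 1$ that you explicitly verify.
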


\begin{proof}
As in the proof of Theorem \ref{theorem:convergence-bounded-error}, the result rests on appropriate identification of the sequence $\{A_\ell\}_{\ell\geq 0}$ and the constants $\gamma$, $D$, and $\lambda$ in Lemma \ref{lemma:recurrence-technical}(ii). The identification here is more straightforward than in the fixed error case. Clearly, a quick examination of \eqref{eq:recur-main} from Lemma \ref{lemma:prox:convex:sufficient-decrease} shows we should choose $\gamma$ as given in the theorem statement, and $\Delta_\ell=\frac{D}{\ell^2}$ to ensure
\[
\frac{1}{\gamma}A_{k+1}^2 \leq A_{k} - A_{k+1} +\frac{D}{k^2}.
\]
Invoking Lemma \ref{lemma:recurrence-technical}, then, we achieve
\[
F(x^k) - F^*=A_k\leq\max\left\{\left(\frac{1}{2}\right)^{(k-1)/2}[F(x^0)-F^*],\frac{16\gamma}{k-3},\frac{8\sqrt{D\gamma}}{k-3}\right\}
\]
for $k\geq 4$.
\end{proof}

\begin{corollary}[Convergence of I-CBPG: Decreasing Error Case (Restated)]\label{cor:convergence-decreasing-error}
Under the same assumptions and definitions of $\gamma$ and $D$ in Theorem \ref{theorem:convergence-decreasing-error}, the iterates of I-CBPG (Algorithm \ref{algo.main}) achieve $F(x^k) - F^* \leq\epsilon$ for $k\geq K${,} where
\[
K =  \left\lceil \max \left\{1 + \frac{2}{\log 2}\cdot\log\left( \frac{F(x^0)-F^{*}}{\epsilon}\right),3+\frac{16\gamma}{\epsilon},  3 + \frac{8\sqrt{D\gamma}}{\epsilon}\right\} \right\rceil.
\]
\end{corollary}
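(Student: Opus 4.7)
The plan is to obtain $K$ by inverting each of the three terms appearing in the $\max$ on the right-hand side of the bound \eqref{eqn:convergence-decreasing-error} from Theorem~\ref{theorem:convergence-decreasing-error}. Since for $k\geq 4$ that theorem guarantees
\[
F(x^k) - F^* \leq \max\left\{\left(\tfrac{1}{2}\right)^{(k-1)/2}[F(x^0)-F^*],\;\tfrac{16\gamma}{k-3},\;\tfrac{8\sqrt{D\gamma}}{k-3}\right\},
\]
a sufficient condition for $F(x^k)-F^*\leq\epsilon$ is that each of the three terms individually be bounded above by $\epsilon$. The corollary then follows by taking $k$ to be at least the largest of the three resulting thresholds (and taking the ceiling to get an integer).

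First I would handle the geometric term: the inequality $\left(\tfrac12\right)^{(k-1)/2}[F(x^0)-F^*]\leq\epsilon$, after taking $\log$ and rearranging, becomes $k\geq 1+\tfrac{2}{\log 2}\log\bigl(\tfrac{F(x^0)-F^{*}}{\epsilon}\bigr)$, which is exactly the first expression in the $\max$ defining $K$. Next I would handle the two sublinear terms in parallel: each is of the form $\tfrac{C}{k-3}\leq\epsilon$, which rearranges to $k\geq 3+\tfrac{C}{\epsilon}$. Substituting $C=16\gamma$ and $C=8\sqrt{D\gamma}$ yields the remaining two thresholds in the definition of $K$.

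Finally I would take the maximum of the three thresholds and ceiling the result, noting that this upper bound on $K$ automatically satisfies $K\geq 4$ (so that Theorem~\ref{theorem:convergence-decreasing-error} applies) whenever $\epsilon$ is not vacuously large, since the thresholds $3+\tfrac{16\gamma}{\epsilon}$ and $3+\tfrac{8\sqrt{D\gamma}}{\epsilon}$ already exceed $3$. There is no real obstacle here: the corollary is a direct algebraic restatement of Theorem~\ref{theorem:convergence-decreasing-error}, and the only bookkeeping is to verify the equivalence of ``$\max$ of three quantities is $\leq\epsilon$'' with ``each quantity is $\leq\epsilon$'' and to confirm the inverse computations above.
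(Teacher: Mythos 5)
Your proposal is correct and follows essentially the same route as the paper, whose proof is the one-line observation that $K$ is the smallest integer making the right-hand side of \eqref{eqn:convergence-decreasing-error} at most $\epsilon$; your explicit inversion of the three terms is exactly the algebra that remark leaves implicit. Your added check that the resulting $K$ automatically satisfies $K\geq 4$ (so that Theorem \ref{theorem:convergence-decreasing-error} applies) is a small but worthwhile point the paper glosses over, since its proof text refers to ``the smallest $k\geq 2$.''
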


\begin{proof}
Clearly, the expression for $K$ is the smallest $k\geq 2$ ensuring the right-hand side of \eqref{eqn:convergence-decreasing-error} from Theorem \ref{theorem:convergence-decreasing-error} is less than or equal to $\epsilon$.
\end{proof}


%
%


\section{Numerical Experiments}\label{sec:numerical}
In this section we present two sets of numerical experiments that demonstrate I-CBPG's performance capabilities.\footnote{Data and related code for these experiments will be made available upon reasonable request.} The main lesson of our experiments is that block cyclic methods with dynamically decreasing levels of inexactness can often beat block randomized methods with fixed errors. This contrasts with the typical superiority of randomized block proximal methods relative to their cyclic counterparts when both share the same fixed error tolerance. This reversal highlights the computational advantage offered by a dynamically decreasing error tolerance. The articles \cite{Richtarik16} and \cite{Richtarik14} thoroughly explore the numerical performance of exact and inexact block proximal gradient methods. Thus, for the sake of focusing on the effects of dynamic errors, we will not fully reproduce the experiments contained in the aforementioned works. However, our experimental setups closely resemble those in \cite{Richtarik14}, which concern randomized block proximal gradient methods with fixed errors. 

In Section \ref{sec:numerical-smooth}, we present experimental results for smooth minimization, where our objective functions are (non-strongly) convex quadratics. In Section \ref{sec:numerical-non-smooth}, we show experimental results for the famous LASSO problem, a common non-smooth optimization testbed.

\subsection{Experimental Results for Smooth Convex Minimization}
\label{sec:numerical-smooth}

For our experiments concerning smooth minimization, we selected the ordinary least squares (OLS) problem
\begin{equation}\label{eq:OLS}
\min_{x\in\mathbb{R}^n}\frac{1}{2}\|Ax-b\|_2^2,
\end{equation}
where $A$ has a $10$-block angular structure specified as
\[
A:=\left[\begin{array}{c} C \\ 
\hline 
D\end{array}\right], \quad C:=\begin{bmatrix}C_1 &  & \\
& \ddots & \\
& & C_{10}\end{bmatrix}, \quad D:=\begin{bmatrix} D_1 & \hdots & D_{10}\end{bmatrix}{,}
\]
and the dimensions of each $C_i$ and $D_i$ matrix depend upon the type of experiment. Our experiments follow the notation and setup of Section 8.1 of \cite{Richtarik14}. OLS problems featuring this block angular structure abound in optimization, with well-known instances found in scheduling, planning, and optimal control. Each of the ten variable blocks correspond to the columns of one of the block matrices
\[
A_i:=\begin{bmatrix} \\ C_i \\ \\ D_i\end{bmatrix}.
\]
Setting $U_1,\ldots,U_{10}$ to be the permutation matrices corresponding to this block decomposition, we conclude \eqref{eq:OLS} fits within the template of \eqref{eq.problem} with $\Psi_1,\ldots,\Psi_{10}\equiv 0$. 

Now, we turn to the subproblem which defines the iterate updates and our conjugate-gradient method-based approach to solving it. Letting $B_i:=A_i^\top A_i$ facilitates writing our iterate defining subproblem as
\begin{equation}\label{num:subprobOLS}
\argmin_{t\in\mathbb{R}^{n_i}}f(x^{k,i-1}+U_it)=\frac{1}{2}\|A_it-\tilde{b}^{k,i}\|_2^2{,}
\end{equation}
where $\tilde{b}^{k,i}=Ax^{k,i-1}-b$. Expanding the subproblem objective function as
\[
f(x^{k,i-1}+U_it)=f(x^{k,i-1})+\inner{\nabla_i f(x^{k,i-1})}{t}+\frac{1}{2}\inner{B_i t}{t},
\]
it is immediate that the block smoothness condition \eqref{eq:smooth:block} is satisfied with $L_1,\ldots,L_{10}=1$.

Following \cite{Richtarik14}, we solve \eqref{num:subprobOLS} by solving the equivalent system of equations that arises from its first-order optimality condition, $B_i t=A_i^\top\tilde{b}^{k,i}$. Like \cite{Richtarik14}, we opt to solve this system approximately using the conjugate gradient method (CGM). When $b$ belongs to the span of $A$'s columns, then the optimal value for \eqref{num:subprobOLS} is $0$ and the suboptimality gap at $t$ is readily measurable to be $f(x^{k,i-1}+U_it)$ itself. In the more general (and more likely in practice) situation that $b$ does not equal $Ax$ for some $x$, the optimal value is not known a priori, and by extension the suboptimality gap at a given $t$ is not readily measured exactly. As we will soon discuss, this is due to the computational burden associated with calculating the smallest singular values of each $B_i$. In this more realistic setting, CGM is terminated when the residual, $\|B_i t+A_i^\top\tilde{b}^{k,i}\|_2$, or the relative residual, $\|B_i t+A_i^\top\tilde{b}^{k,i}\|_2/\|A_i^\top\tilde{b}^{k,i}\|_2$, falls below some desired threshold. For our approach to subproblem \eqref{num:subprobOLS}, we opt to terminate CGM when the following conditions hold: i) the monotonic decrease condition \eqref{eq.mondec} is satisfied, and ii) the first of the residuals above falls below a predefined numerical threshold we denote by $\tilde{\delta}_k$.

The CGM termination threshold $\tilde{\delta}_k$ described above is distinct from, but closely related to, the error tolerance $\delta_k$. Indeed, we have chosen to define $\tilde{\delta}_k$ in relation to $\delta_k$ via the decomposition  $\tilde{\delta}_k:=\delta_k\cdot2\min_{i=1,\ldots,10}\sigma_{\min}(B_i)^2$. This notational convention is driven by the inherent difficulty in bounding \eqref{num:subprobOLS}'s suboptimality gap in terms of the residual, and reveals a key benefit of the dynamic error regime. The residual's tight upper bound on the suboptimalty gap for \eqref{num:subprobOLS} is given by 
\[
f(x^{k,i-1}+U_i t)-\argmin_{t'\in\mathbb{R}^{n_i}}f(x^{k,i-1}+U_it')\leq \frac{1}{2\sigma_{\min}(B_i)^2} \|B_i t+A_i^\top\tilde{b}^{k,i}\|_2^2{,}
\]
where $\sigma_{\min}(B_i)$ is the smallest singular value of $B_i$, a quantity which may be as expensive to compute as solving the original problem outright. In light of this, if at every I-CBPG iteration we run CGM to find $t$ satisfying
\begin{equation}\label{eq:CGM-stopping}
\|B_i t+A_i^\top\tilde{b}^{k,i}\|_2\leq \tilde{\delta}_{k+1}
\end{equation}
for the OLS problem \eqref{eq:OLS}, then we get the resulting suboptimality gap bound
\[
f(x^{k,i-1}+U_i t)-\argmin_{t'\in\mathbb{R}^{n_i}}f(x^{k,i-1}+U_it')\leq\frac{\tilde{\delta}_{k+1}}{2\sigma_{\min}(B_i)^2}\leq\delta_{k+1}.
\]
Consequently, if $\tilde{\delta}_k$ (or equivalently, $\delta_k$) shrinks at the sublinear rate $\mathcal{O}(1/k^2)$, we can ensure that I-CBPG will find an $x$ with suboptimality gap less than $\epsilon$ in $\mathcal{O}(1/\epsilon)$ iterations.  Therefore, through a dynamically decreasing sequence of residual tolerances $\tilde{\delta}_k$ for terminating CGM, one can find such an $x$ at the above rate without needing to determine numerical values of $\delta_k$ or $2\sigma_{\min}(B_i)^2$. The conceptual and notational convenience offered by this property is the motivating force behind our definition of $\tilde{\delta}_k$ above, and is introduced precisely to obviate the need for separate numerical identification of $\delta_k$ and $\sigma_{\min}(B_i)^2$. By contrast, finding such an $x$ in the fixed error regime  is only possible after incurring the heavy computational cost of lower bounding the smallest singular values of the $B_i$ matrices. 

Before turning to the results of our experiments, a few final words regarding the data generation processes for our experiments are in order. In particular, our two experiments consider two structures for the matrix $A$, which we refer to as wide and tall matrices depending on whether there are more columns than rows or vice versa. We fix $N=10^5$ throughout. For the wide setting, we take $A$ to have dimensions $(N+100)\times 2N$ so each $C_i$ and $D_i$  have dimensions $N\times N/5$ ($n_i=N/5$) and $100\times N/5$, respectively. For the tall setting, $A$ has dimensions $(N+100)\times .5N$ so each $C_i$ and $D_i$ have dimensions $N\times .5N/10$ ($n_i=.5N/10$) and $100\times .5N/10$, respectively.  In each case, $C_i$ is a sparse, randomly generated matrix with entries in $[0,1]$, with roughly $20$ non-zero entries per column, that has been padded with the addition of an identity matrix. Similarly, each linking block, $D_i$, is a randomly generated matrix with entries in $[0,1]$ with roughly one-tenth of its entries non-zero. In each experiment, we set $b=Ax^*$ where $x^*$ is a randomly generated vector with entries in $[0,1]$.

With the description of our experimental setups and  subproblem solution method complete, we now explore the effect of different error tolerance levels on the runtime performance of the OLS problem \eqref{eq:OLS} for both I-CBPG and its sibling from \cite{Richtarik14}, which instead picks blocks randomly according to the uniform distribution. For the sake of symmetry, we shall refer to the randomized scheme in \cite{Richtarik14} as the \emph{Inexact Randomized Block Proximal Gradient Method (I-RBPG)}. As we foreshadowed in the section introduction, the randomized I-RBPG will often beat I-CBPG in the fixed error regime, though I-CBPG with dynamically decreasing error tolerances will generally outpace both. We compare the algorithms' behavior under a dynamic rule for I-CBPG that sets $\tilde{\delta} = \left[ F(x^0)-F^*\right] /k^2$ with I-CBPG and I-RBPG under three different constant rules that fix $\tilde{\delta}_k$ at $10^{-2}$, $10^{-4}$, or $10^{-6}$ for all cycles, respectively. As in \cite{Richtarik14}, we terminated each of the algorithms once $F(x^k)-F^*$ fell below $10^{-1}$. Plots of the difference $F\left(x^k\right) - F^*$ against both CPU time and the number of iterations $k$ are presented in Figure \ref{fig:ols-wide} and Table \ref{tab:widematrix-ols} for the first case, where $A$ is $(N + 100) \times 2N$, and in Figure \ref{fig:ols-tall} and Table \ref{tab:tallmatrix-ols} for the second case, where $A$ is $(N+100) \times 0.5N$. To fairly compare the I-CBPG and I-RBPG algorithms, an I-RBPG cycle will denote 10 block updates. 
We performed a number of simulations of both types, but limit our discussion to a single instance of each for definiteness. Owing to the random data generation procedure, we did observe some minor variation in the quantitative values of the various ratios discussed below across different  problem instances, but the qualitative results are consistent. 

Among the fixed error tolerance regimes ($\tilde{\delta}_k$, and therefore $\delta_k$, constant), it is apparent for both types of $A$ matrix and for both I-CBPG and I-RBPG that larger error tolerances (as measured by a higher value of $\tilde{\delta}_k$) translate to achieving a given level of accuracy in a shorter amount of CPU time than is possible with a more stringent (lower) value of $\tilde{\delta}_k$, although the algorithm may run through more cycles in total to achieve a given suboptimality gap when the error is larger. This property of larger fixed values of $\tilde{\delta}_k$ translating to shorter CPU time costs echoes the results of \cite{Richtarik14}.

For the wide matrix, examining the performance of I-CBPG with the dynamic rule $\tilde{\delta}_k = \mathcal{O}(1/k^2)$ in Table \ref{tab:widematrix-ols}, we find that I-CBPG with the dynamic rule uses substantially less CPU time than either I-CBPG or I-RBPG under any of the constant rules. Across the rules we have studied, the time savings range from over 60\% at the low end to more than 94\% at the high end.  

The tall $A$ setting, reflected in Table \ref{tab:tallmatrix-ols}, exhibits similar performance. Although I-RBPG with a fixed error tolerance of $\tilde{\delta}_k=10^{-2}$ achieves a modest victory of an $8\%$ time savings over I-CBPG with a dynamic error tolerance regime, the latter algorithm vastly outpaces I-RBPG with less permissive error tolerances. Specifically, I-CBPG with a dynamic error tolerance achieves convergence in $46\%$ and $76\%$ less time, respectively than I-RBPG with the constant rules $\delta_k = 10^{-4}$ and $\delta_k = 10^{-6}$. 
\newpage



\begin{figure}[htb!]
\begin{subfigure}{.5\textwidth}
  \centering
  \includegraphics[width=0.8\linewidth]{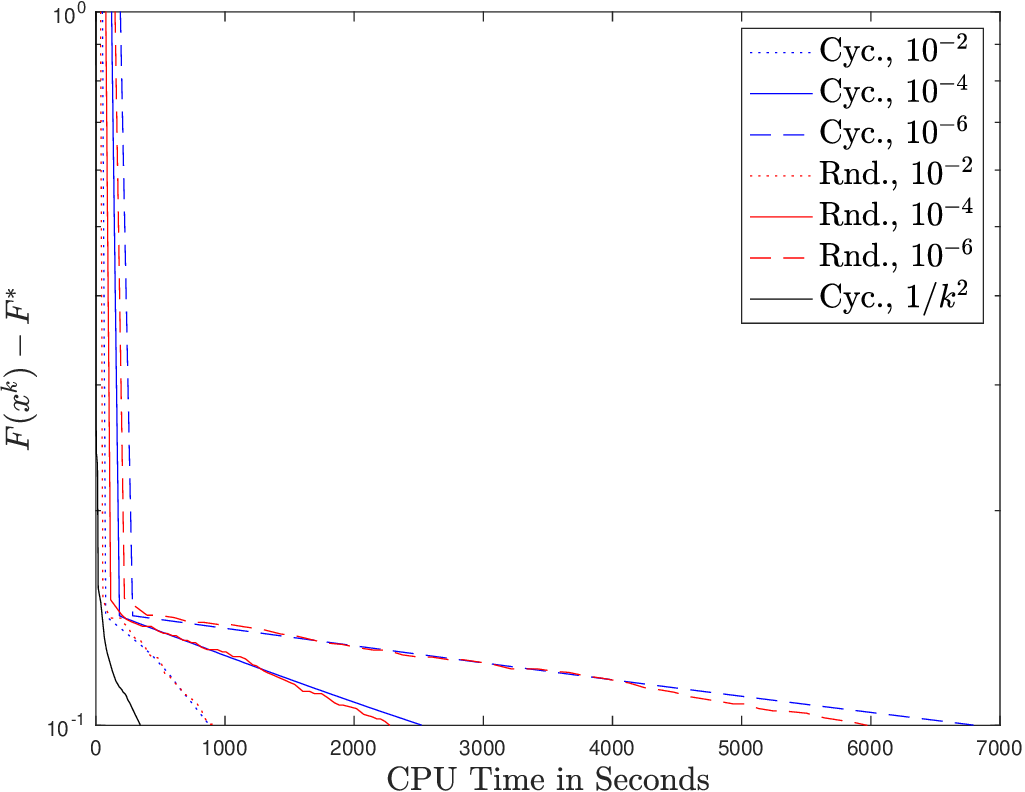}
   \caption{$F(x^k)-F^*$ vs. Total CPU Time}
  \label{fig:ols-wide-sub1}
\end{subfigure}%
\begin{subfigure}{.5\textwidth}
  \centering
  \includegraphics[width=0.8\linewidth]{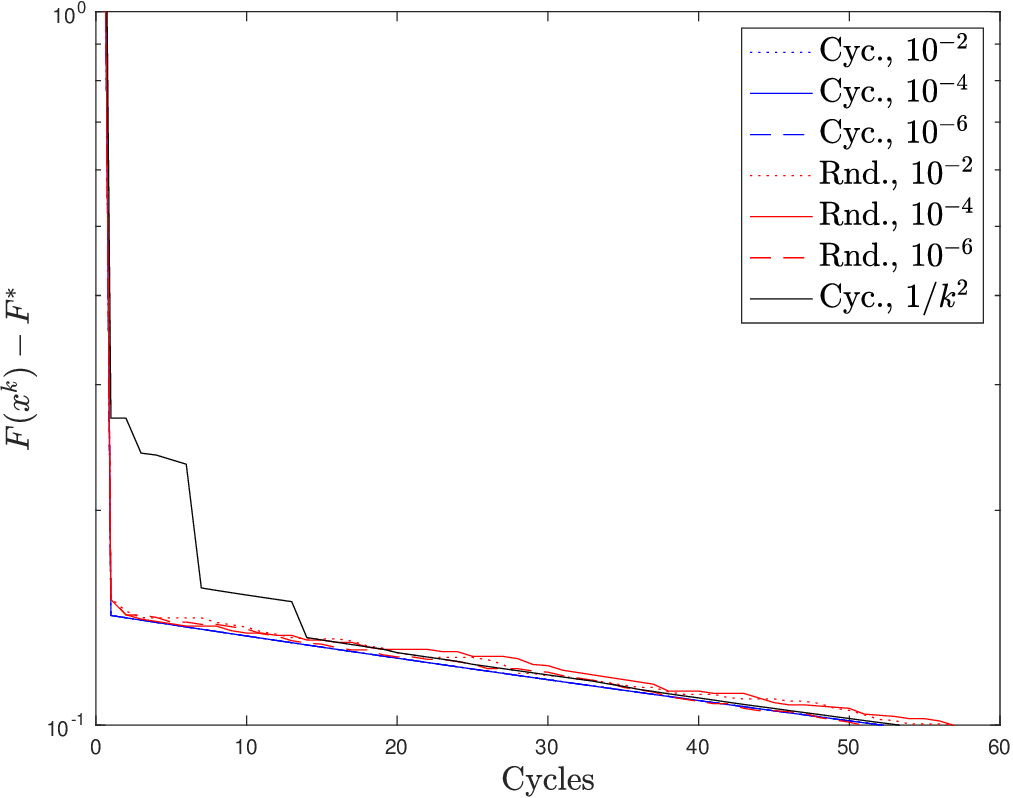}
   \caption{$F(x^k)-F^*$ vs. Cycles}
  \label{fig:ols-wide-sub2}
\end{subfigure}%

\caption{ I-CBPG performance graphs for OLS problem \eqref{eq:OLS} with wide $A$ (size $(N+100) \times 2N$, $N = 10^5$), by block scheme (cyclic or random) and $\tilde{\delta}_k$ (conjugate gradient subproblem residual tolerance rule).}
\label{fig:ols-wide}
\end{figure}



\begin{figure}[htb!]
\begin{subfigure}{.5\textwidth}
  \centering
  \includegraphics[width=0.8\linewidth]{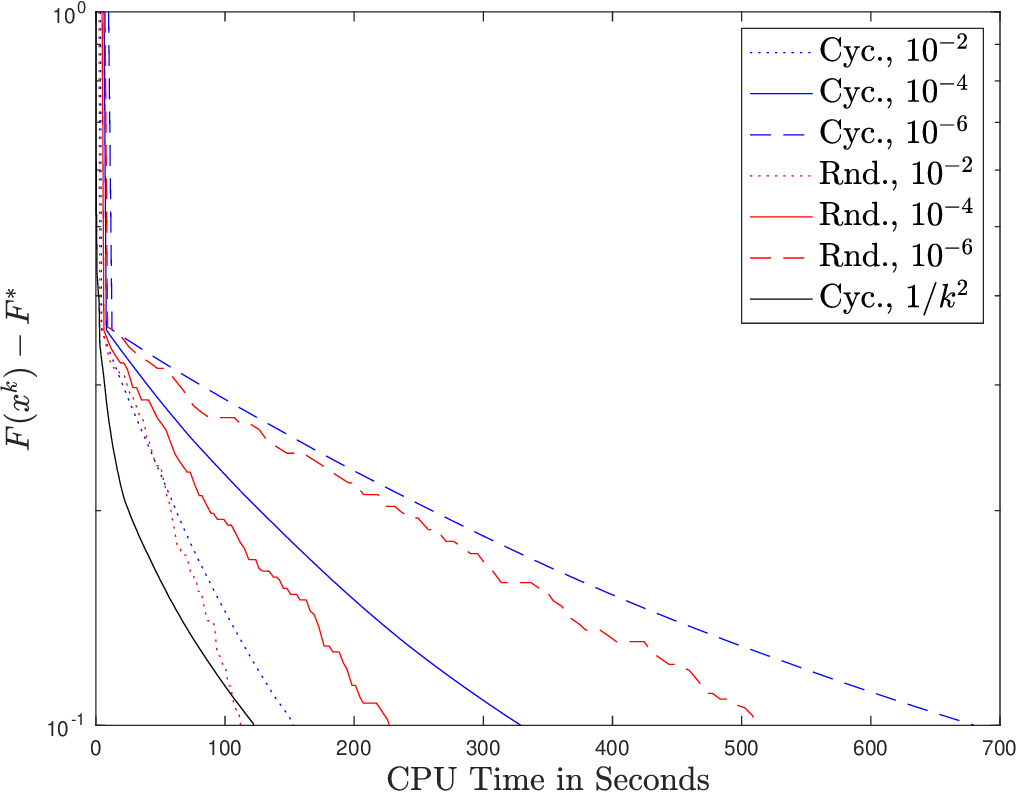}
   \caption{$F(x^k)-F^*$ vs. Total CPU Time}
  \label{fig:ols-tall-sub1}
\end{subfigure}%
\begin{subfigure}{.5\textwidth}
  \centering
  \includegraphics[width=0.8\linewidth]{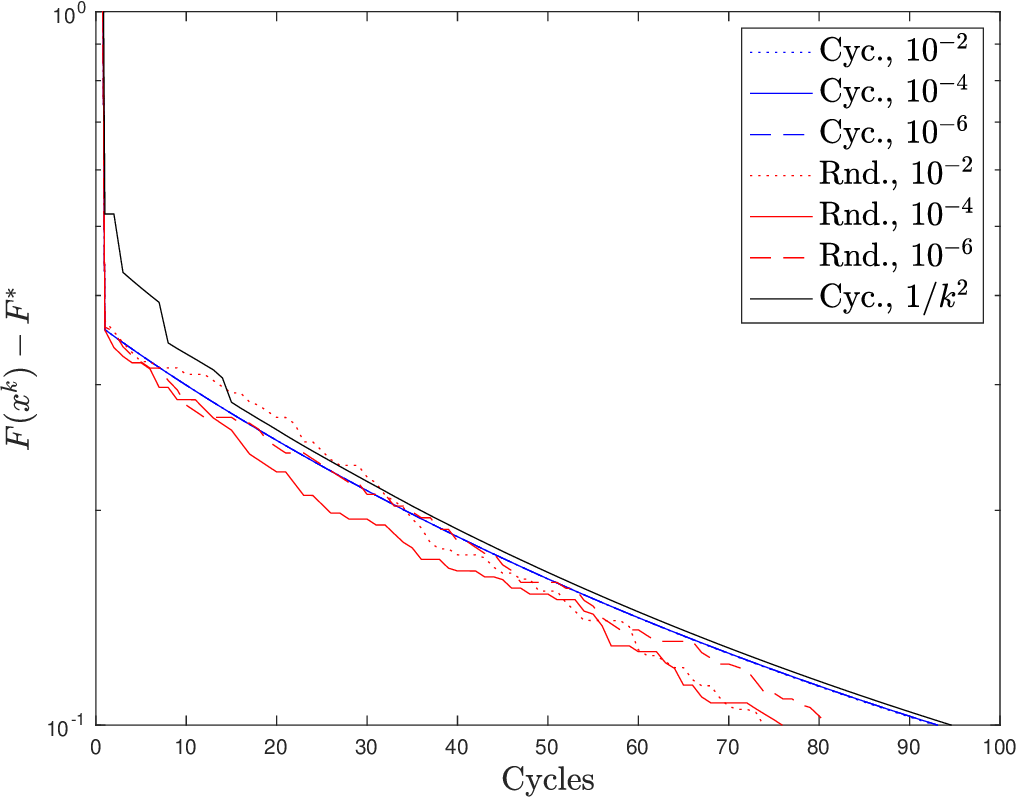}
   \caption{$F(x^k)-F^*$ vs Cycles}
  \label{fig:ols-tall-sub2}
\end{subfigure}%

\caption{ I-CBPG performance graphs for OLS problem \eqref{eq:OLS} with tall $A$ (size $(N+100) \times 2N$, $N = 10^5$) by block scheme (cyclic or random) and $\tilde{\delta}_k$ (conjugate gradient subproblem residual tolerance rule).}
\label{fig:ols-tall}
\end{figure}

\newpage

\begin{table}[htb!]
 \caption{OLS problem: total cycles, total CPU time, CPU time by cycle, and suboptimality gap by cycle for wide $A$ (size $(N+100)\times 2N$, $N = 10^5$).$^{8}$}
    \centering
    \footnotesize
    \begin{tabular}{|c|c|c|c|c|c|c|c|c|}
    \hline 
  \multicolumn{2}{| c | }{Block Scheme} & Cyclic & Cyclic & Cyclic & Random & Random & Random  & Cyclic   \\
  \multicolumn{2}{| c | }{Error Tolerance $\tilde{\delta}_k$} & $10^{-2}$ & $10^{-4}$ & $10^{-6}$ & $10^{-2}$ & $10^{-4}$ & $ 10^{-6}$ &  $k^{-2}$   \\
  \hline \multicolumn{9}{|c|}{Results at Convergence} \\
  \hline \multicolumn{2}{|c|}{Total Cycles} & 53 & 53 & 53 & 57 & 57 & 51 & 54\\
    \multicolumn{2}{|c|}{Total Time} & 878.43  &  2520.12 &  6792.83 &  903.86 & 2280.82  & 5989.25  & 343.08 \\ 
  \hline \multicolumn{9}{|c|}{Results by Cycle Number $k$} \\
  \hline

  0 &  Time (sec)  & 0  & 0  & 0  & 0  & 0  & 0  & 0 \\  
    & $F(x^k)-F^*$ & 47.89  & 47.89  & 47.89  & 47.89  & 47.89  & 47.89  & 47.89 \\  
  \hline
1 &  Time (sec)  & 77.10  & 183.28  & 285.45  & 54.53  & 115.19  & 221.67  & 1.87 \\  
    & $F(x^k)-F^*$ & 0.14  & 0.14  & 0.14  & 0.15  & 0.15  & 0.15  & 0.27 \\  
\hline
 10 &  Time (sec)  & 21.22  & 45.13  & 128.54  & 18.53  & 42.31  & 129.29  & 3.14 \\  
    & $F(x^k)-F^*$ & 0.13  & 0.13  & 0.13  & 0.14  & 0.13  & 0.14  & 0.15 \\  
\hline
 20 &  Time (sec)  & 13.19  & 43.27  & 127.12  & 14.94  & 30.34  & 130.91  & 6.37 \\  
    & $F(x^k)-F^*$ & 0.12  & 0.12  & 0.12  & 0.13  & 0.13  & 0.12  & 0.13 \\  
\hline
 30 &  Time (sec)  & 14.37  & 43.74  & 126.21  & 13.13  & 30.24  & 118.27  & 5.77 \\  
    & $F(x^k)-F^*$ & 0.12  & 0.12  & 0.12  & 0.12  & 0.12  & 0.12  & 0.12 \\  
\hline
 40 &  Time (sec)  & 12.79  & 45.83  & 127.31  & 10.01  & 26.06  & 111.15  & 5.67 \\  
    & $F(x^k)-F^*$ & 0.11  & 0.11  & 0.11  & 0.11  & 0.11  & 0.11  & 0.11 \\  
\hline
 50 &  Time (sec)  & 11.46  & 44.96  & 125.52  & 15.65  & 33.84  & 116.10  & 7.07 \\  
    & $F(x^k)-F^*$ & 0.10  & 0.10  & 0.10  & 0.10  & 0.11  & 0.10  & 0.10  

			\\          \hline
    \end{tabular}
    \normalsize
   
    \label{tab:widematrix-ols}
\end{table}


\begin{table}[htb!]
\caption{OLS problem: total cycles, total CPU time, CPU time by cycle, and suboptimality gap by cycle for tall $A$ (size $(N+100) \times .5N$, $N = 10^5$).}
    \centering
    \footnotesize
    \begin{tabular}{|c|c|c|c|c|c|c|c|c|}
    \hline 
      \multicolumn{2}{| c | }{Block Scheme} & Cyclic & Cyclic & Cyclic & Random & Random & Random  & Cyclic   \\
  \multicolumn{2}{| c | }{Error Tolerance $\tilde{\delta}_k$} & $10^{-2}$ & $10^{-4}$ & $10^{-6}$ & $10^{-2}$ & $10^{-4}$ & $ 10^{-6}$ &  $k^{-2}$   \\
  \hline \multicolumn{9}{|c|}{Results at Convergence} \\
  \hline \multicolumn{2}{|c|}{Total Cycles} & 93 & 94 & 94 & 74 & 76 & 81 & 95\\
  \multicolumn{2}{|c|}{Total Time} & 152.79 &  328.77 &  679.65 &  112.32 & 227.29  & 508.86  & 121.93 \\ 

  \hline \multicolumn{9}{|c|}{Results by Cycle Number $k$} \\
  \hline

 0 &  Time (sec)  & 0 & 0 & 0 & 0 & 0 & 0 & 0\\  
    & $F(x^k)-F^*$ & 45.82  & 45.82  & 45.82  & 45.82  & 45.82  & 45.82  & 45.82 \\  
 \hline 
  1 &  Time (sec)  & 4.47  & 8.10  & 12.51  & 3.13  & 6.22  & 9.09  & 0.32 \\  
    & $F(x^k)-F^*$ & 0.36  & 0.36  & 0.36  & 0.37  & 0.36  & 0.36  & 0.52 \\  
 \hline 
 20 &  Time (sec)  & 1.81  & 3.66  & 7.27  & 1.72  & 3.29  & 7.55  & 0.71 \\  
    & $F(x^k)-F^*$ & 0.25  & 0.25  & 0.25  & 0.27  & 0.23  & 0.25  & 0.26 \\  
 \hline 
 40 &  Time (sec)  & 1.61  & 3.44  & 7.03  & 1.34  & 3.04  & 7.24  & 1.63 \\  
    & $F(x^k)-F^*$ & 0.18  & 0.18  & 0.18  & 0.17  & 0.16  & 0.18  & 0.19 \\  
 \hline 
 60 &  Time (sec)  & 1.56  & 3.39  & 7.17  & 1.76  & 2.94  & 5.21  & 1.73 \\  
    & $F(x^k)-F^*$ & 0.14  & 0.14  & 0.14  & 0.13  & 0.13  & 0.14  & 0.14 \\  
 \hline 
 80 &  Time (sec)  & 1.46  & 3.43  & 7.20  &    &    & 6.41  & 1.68 \\  
    & $F(x^k)-F^*$ & 0.11  & 0.11  & 0.11  &    &    & 0.10  & 0.12 	\\          \hline

    \end{tabular}
    \normalsize
    
    \label{tab:tallmatrix-ols}
\end{table}



\subsection{Experimental Results for Non-smooth Convex Minimization}
\label{sec:numerical-non-smooth}

We selected a common testbed for our experiments concerning non-smooth minimization, the LASSO problem,
\begin{equation}\label{num:problem}
\min_{x\in \mathbb{R}^n} \frac{1}{2}\| Ax - b\|_2^2 + \lambda \| x\|_1{,}
\end{equation}
with $A \in \mathbb{R}^{m \times n}$, $b \in \mathbb{R}^m$, $\lambda > 0$. This problem fits within the template of \eqref{eq.problem} by recognizing $f(x) = \frac{1}{2}\| Ax - b\|_2^2 $ and $\Psi_i(U_i^Tx) = \lambda \|  U_i^T x  \|_1$ for $i = 1, \ldots, p${,} with $(U_1,\ldots,U_p)=I_n$.  

Throughout, we follow the setup of Section 8.2 of  \cite{Richtarik14JOTA} and explore two cases with $N = 10^5$: in the first, the matrix $A$ is wide with size $N \times 2N$; in the second, $A$ is tall with size $N \times 0.5N$. In both settings, $A$ is a randomly generated sparse matrix with approximately 20 nonzero entries per column. The nonzero entries are generated according to the uniform distribution on $[0,1]$. We subdivide both $A$ matrices into $p = 10$ blocks $A_i$ of equal size, and add to each block an identity matrix padded with zeros to guarantee that each $A_i$ is of full rank. With this, we have that the block smoothness condition \eqref{eq:smooth:block} is satisfied for $B_i  =A_i^TA_i$ and $L_i$ =1. We set $\lambda  = 0.01$ when $A$ is $N \times 2N$, and $\lambda = 0.1$ when  $A$ is $N \times 0.5N$. In both cases, we generated $b=\tilde{b}/[2\lambda\|A^T \tilde{b}\|_{\infty}]$, where $\tilde{b}$ is a normal random variable of appropriate dimension, to guarantee that the zero vector is an optimal solution so $F^*$ is known.

\begin{figure}[htb!]
\begin{subfigure}{.5\textwidth}
  \centering
  \includegraphics[width=0.8\linewidth]{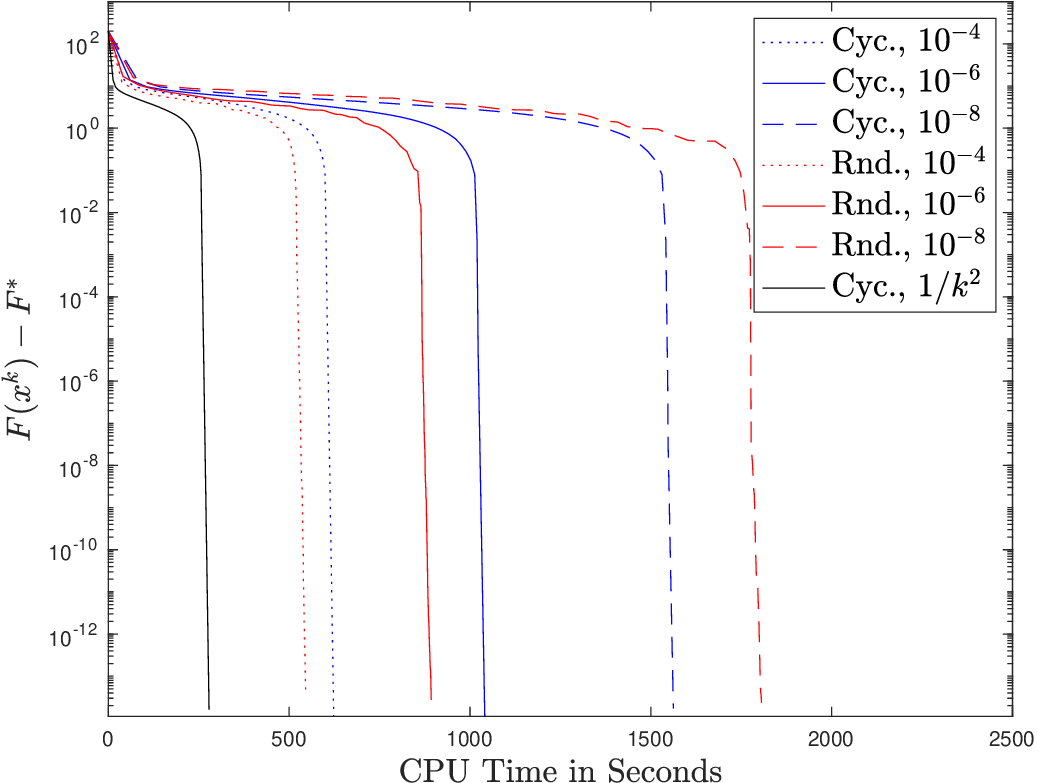}
   \caption{$F(x^k)-F^*$ vs. Total CPU Time}
  \label{fig:widematrix_sub1}
\end{subfigure}%
\begin{subfigure}{.5\textwidth}
  \centering
  \includegraphics[width=0.8\linewidth]{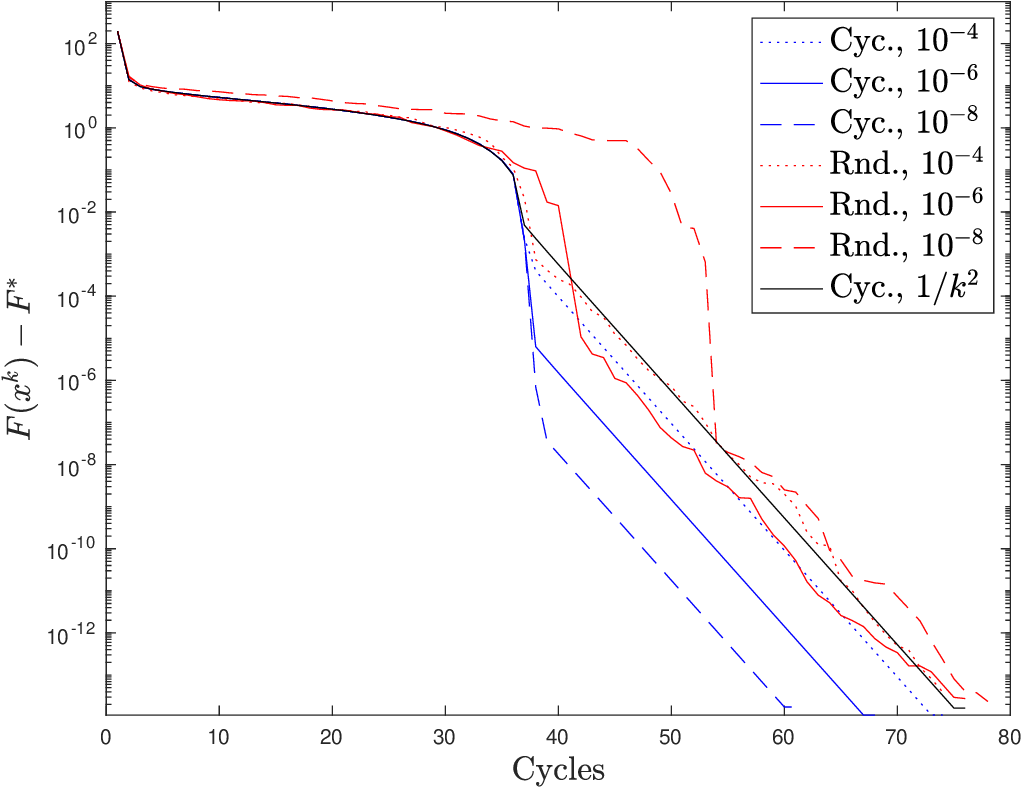}
   \caption{$F(x^k)-F^*$ vs Cycles}
  \label{fig:widematrix_sub2}
\end{subfigure}%

\caption{ I-CBPG performance graphs for LASSO problem \eqref{num:problem} with wide $A$ (size $N \times 2N$, $N = 10^5$) by block scheme (cyclic or random) and $\delta_k$ (error tolerance rule).}
\label{fig:widematrix}
\end{figure}

\begin{figure}[htb!]
\begin{subfigure}{.5\textwidth}
  \centering
  \includegraphics[width=.8\linewidth]{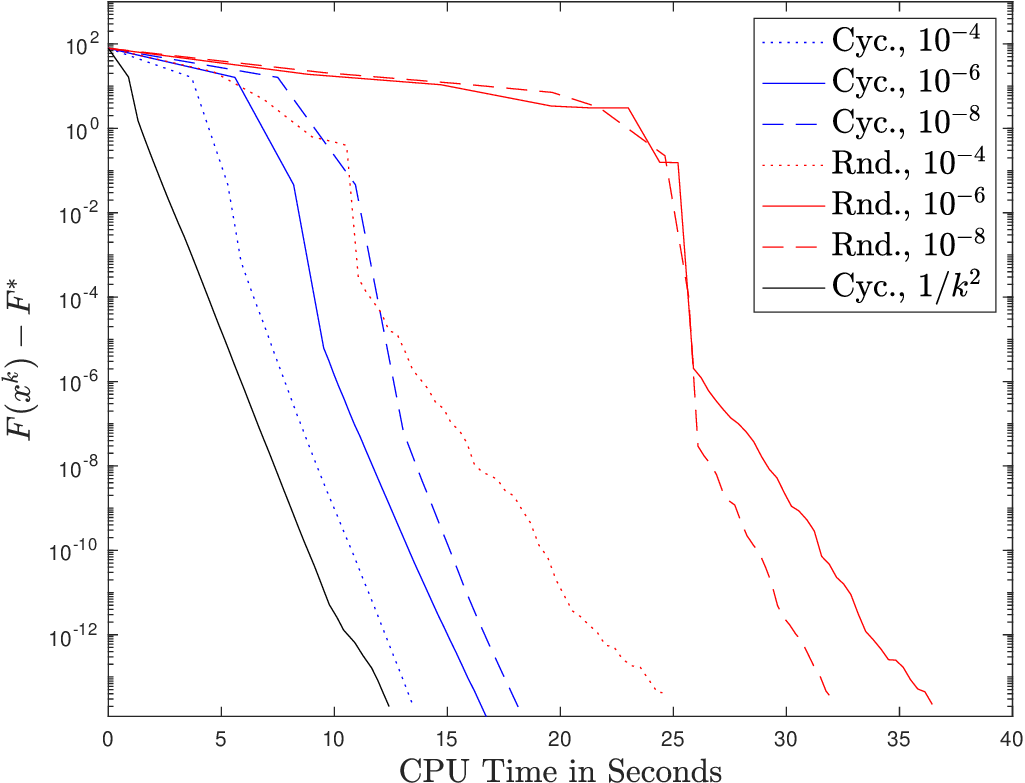}
  \caption{$F(x^k)-F^*$ vs. Total CPU Time}
  \label{fig:tallmatrix_sub1}
\end{subfigure}%
\begin{subfigure}{.5\textwidth}
  \centering
  \includegraphics[width=.8\linewidth]{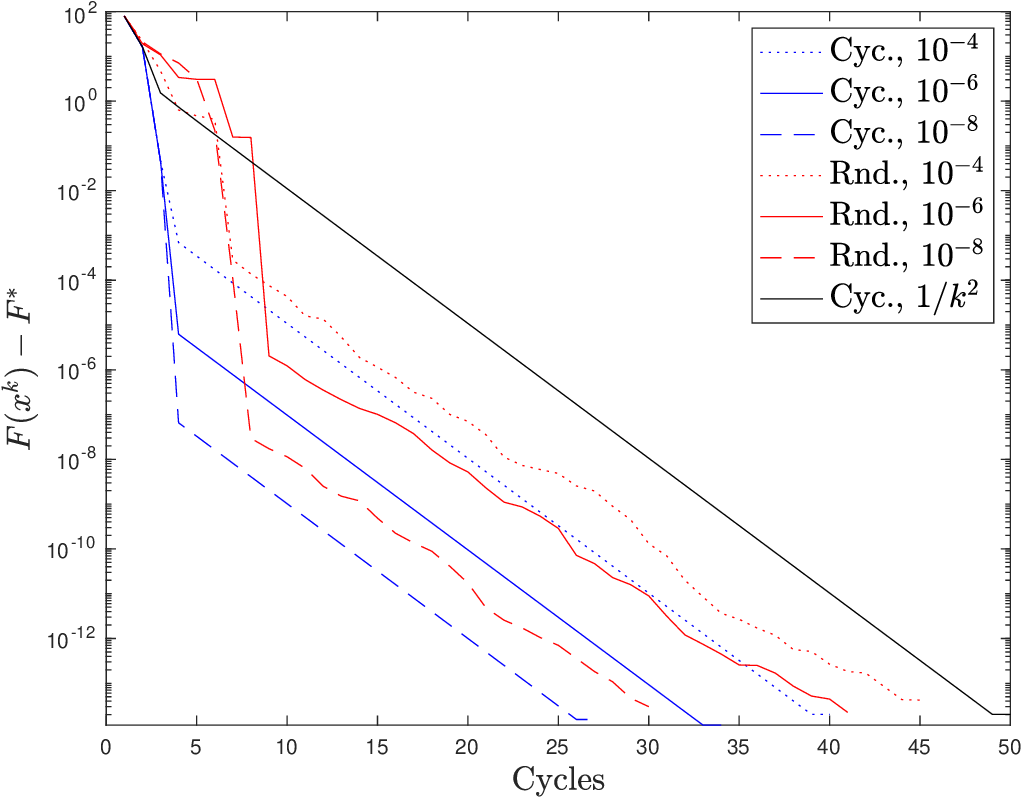}
   \caption{$F(x^k)-F^*$ vs. Cycles}
  \label{fig:tallmatrix_sub2}
\end{subfigure}%

\caption{ I-CBPG performance graphs for LASSO problem \eqref{num:problem} with tall $A$ (size $N \times 0.5N$, $N = 10^5$)  by block scheme (cyclic or random) and $\delta_k$ (error tolerance rule).}
\label{fig:tallmatrix}
\end{figure}




\begin{table}[htb!]
\caption{LASSO problem: total cycles, total elapsed CPU time, CPU time by cycle, and suboptimality gap by cycle for wide $A$ (size $N \times 2N$, $N = 10^5$).$^{12}$}
    \centering
    \footnotesize
    \begin{tabular}{|c|c|c|c|c|c|c|c|c|}
    \hline 
             \multicolumn{2}{| c | }{Block Scheme} & Cyclic & Cyclic & Cyclic & Random & Random & Random  & Cyclic   \\
  \multicolumn{2}{| c | }{Error Tolerance $\tilde{\delta}_k$} & $10^{-4}$ & $10^{-6}$ & $10^{-8}$ & $10^{-4}$ & $10^{-6}$ & $ 10^{-8}$ &  $k^{-2}$   \\
  \hline \multicolumn{9}{|c|}{Results at Convergence} \\
  \hline \multicolumn{2}{|c|}{Total Cycles} & 73& 67& 60& 73& 75& 77& 75 \\
  \multicolumn{2}{|c|}{Total Time} & 623.23& 1040.9& 1562.2& 546.14& 894.05& 1806.6&  278.45\\
  \hline \multicolumn{9}{|c|}{Results by Cycle Number $k$} \\
     \hline
         
 0 &  Time (sec)  & 0.00e+00  & 0.00e+00  & 0.00e+00  & 0.00e+00  & 0.00e+00  & 0.00e+00  & 0.00e+00 \\  
   & $F(x^k)-F^*$ & 1.98e+02  & 1.98e+02  & 1.98e+02  & 1.98e+02  & 1.98e+02  & 1.98e+02  & 1.98e+02 \\  
\hline
1 &  Time (sec)  & 4.37e+01  & 5.96e+01  & 7.99e+01  & 3.73e+01  & 4.04e+01  & 6.86e+01  & 1.31e+01 \\  
   & $F(x^k)-F^*$ & 1.40e+01  & 1.40e+01  & 1.40e+01  & 1.27e+01  & 1.69e+01  & 1.59e+01  & 1.40e+01 \\  
\hline
 5 &  Time (sec)  & 2.45e+01  & 3.87e+01  & 5.50e+01  & 2.44e+01  & 3.70e+01  & 4.34e+01  & 7.61e+00 \\  
   & $F(x^k)-F^*$ & 6.91e+00  & 6.91e+00  & 6.91e+00  & 6.11e+00  & 6.56e+00  & 8.43e+00  & 6.92e+00 \\  
\hline
20 &  Time (sec)  & 1.59e+01  & 3.20e+01  & 4.66e+01  & 1.28e+01  & 2.04e+01  & 4.00e+01  & 8.79e+00 \\  
   & $F(x^k)-F^*$ & 2.57e+00  & 2.57e+00  & 2.57e+00  & 2.64e+00  & 2.66e+00  & 3.98e+00  & 2.56e+00 \\  
\hline
40 &  Time (sec)  & 5.87e-01  & 6.30e-01  & 6.92e-01  & 6.91e-01  & 1.98e+00  & 2.69e+01  & 5.38e-01 \\  
   & $F(x^k)-F^*$ & 4.85e-05  & 7.55e-07  & 9.01e-09  & 1.93e-04  & 4.04e-04  & 7.55e-01  & 2.84e-04 \\  
\hline
60 &  Time (sec)  & 5.70e-01  & 6.38e-01  & 2.01e-02  & 6.58e-01  & 7.73e-01  & 1.21e+00  & 5.25e-01 \\  
   & $F(x^k)-F^*$ & 4.60e-11  & 7.16e-13  & 1.74e-14  & 1.02e-09  & 5.25e-11  & 2.22e-09  & 2.69e-10 

   \\ 
  \hline
    \end{tabular}
    
   \normalsize
  \label{tab:widematrix} 
\end{table} 



\begin{table}[htb!]
\caption{LASSO problem: total cycles, total elapsed CPU time, CPU time by cycle, and suboptimality gap by cycle for tall $A$ (size $2N \times N$, $N = 10^5$).$^{13}$}
    \centering
    \footnotesize
    \begin{tabular}{|c|c|c|c|c|c|c|c|c|}
    \hline 
          \multicolumn{2}{| c | }{Block Scheme} & Cyclic & Cyclic & Cyclic & Random & Random & Random  & Cyclic   \\
  \multicolumn{2}{| c | }{Error Tolerance $\tilde{\delta}_k$} & $10^{-4}$ & $10^{-6}$ & $10^{-8}$ & $10^{-4}$ & $10^{-6}$ & $ 10^{-8}$ &  $k^{-2}$   \\
  \hline \multicolumn{9}{|c|}{Results at Convergence} \\
  \hline \multicolumn{2}{|c|}{Total Cycles} & 39& 33& 26& 44& 40& 29& 49 \\
  \multicolumn{2}{|c|}{Total Time} & 13.49& 16.71& 18.22& 24.95& 36.64& 32.07&  12.42\\
  \hline \multicolumn{9}{|c|}{Results by Cycle Number $k$} \\
  \hline
         
 0 &  Time (sec)  & 0.00e+00  & 0.00e+00  & 0.00e+00  & 0.00e+00  & 0.00e+00  & 0.00e+00  & 0.00e+00 \\  
   & $F(x^k)-F^*$ & 7.89e+01  & 7.89e+01  & 7.89e+01  & 7.89e+01  & 7.89e+01  & 7.89e+01  & 7.89e+01 \\  
 \hline
1 &  Time (sec)  & 3.68e+00  & 5.60e+00  & 7.50e+00  & 4.66e+00  & 8.83e+00  & 9.50e+00  & 8.93e-01 \\  
   & $F(x^k)-F^*$ & 1.61e+01  & 1.61e+01  & 1.61e+01  & 2.18e+01  & 1.91e+01  & 2.10e+01  & 1.64e+01 \\  
\hline
10 &  Time (sec)  & 1.97e-01  & 2.62e-01  & 2.22e-01  & 3.56e-01  & 3.25e-01  & 2.82e-01  & 2.35e-01 \\  
   & $F(x^k)-F^*$ & 5.41e-06  & 4.87e-08  & 5.12e-10  & 1.60e-05  & 6.05e-07  & 6.30e-09  & 5.56e-03 \\  
\hline
20 &  Time (sec)  & 2.24e-01  & 2.45e-01  & 2.49e-01  & 3.42e-01  & 3.27e-01  & 2.78e-01  & 2.20e-01 \\  
   & $F(x^k)-F^*$ & 5.28e-09  & 4.76e-11  & 5.00e-13  & 3.53e-08  & 2.30e-09  & 4.84e-12  & 5.43e-06 \\  
\hline
30 &  Time (sec)  & 2.30e-01  & 2.87e-01  &    & 3.80e-01  & 3.22e-01  &    & 2.20e-01 \\  
   & $F(x^k)-F^*$ & 5.16e-12  & 4.64e-14  &    & 6.94e-11  & 3.14e-12  &    & 5.30e-09 \\  
\hline
40 &  Time (sec)  &    &    &    & 3.97e-01  & 3.23e-01  &    & 2.18e-01 \\  
   & $F(x^k)-F^*$ &    &    &    & 1.84e-13  & 2.24e-14  &    & 5.18e-12 

       \\  
  \hline
    \end{tabular}
    
    \normalsize
    \label{tab:tallmatrix} 
\end{table}

\newpage
At each step of the algorithm, to compute our update to the $i^{th}$ block, we find $T_{\delta_{k+1}}^{\left( i\right)}\left( x^{k,i-1} \right) $ by calculating a $\delta_{k+1}$-approximate solution (that also satisfies the monotonic decrease condition \eqref{eq.mondec}) to the smaller-dimensional (likely far smaller) problem
\begin{equation}\label{num:subproblasso}
\arg \min_{y \in \mathbb{R}^{n_i}}  \frac{1}{2} \| A_i y - \tilde{b}^{k,i}\|_2^2 + \lambda \| y\|_1{,}
\end{equation}
where $\tilde{b}^{k,i} := b-Ax^{k,i-1}+A_ix^{k,i-1}_i$. To do so, we use the box-constrained gradient projection algorithm of \cite{Broughton11} to approximately solve \eqref{num:subproblasso}. We terminate the box-constrained gradient projection algorithm when the duality gap for \eqref{num:subproblasso} falls below $\delta_{k+1}$ and the monotonic decrease condition is satisfied.

As in the smooth case, we explored the effect of different error tolerance levels on I-CBPG and I-RBPG runtime performance and cycle counts by comparing their behavior under a variety of different error tolerance regimes. Specifically, we examined I-CBPG with a dynamic rule that sets $\delta_k = 1/k^2$ as well as both I-CBPG and I-RBPG with three different constant rules that fix $\delta_k$  at $10^{-4}$, $10^{-6}$, or $10^{-8}$ for all cycles, respectively. Plots of the difference $F\left(x^k\right) - F^*$ against both CPU time and the number of iterations $k$ are presented in Figure \ref{fig:widematrix} and Table \ref{tab:widematrix} for the first case, where $A$ is $N \times 2N$, and  in Figure \ref{fig:tallmatrix} and Table \ref{tab:tallmatrix} for the second case, where $A$ is $N \times 0.5N$. As in our experiments on smooth convex minimization, we performed a number of simulations of both types, but limit our discussion to a single instance of each for definiteness. Although there was minor quantitative variation across our experiments, the qualitative relationships exhibited by this instance were representative of every single experiment.

Our key qualitative results echo those for our previous smooth experiments. In all non-smooth experiments, I-CBPG with a dynamic error tolerance rule resoundingly beats both I-CBPG and I-RBPG with fixed error tolerances regardless of the tolerance level. Additionally, relatively more permissive error tolerance regimes reliably translate to shorter convergence times, but may require more cycles to converge. We wish to call attention to the computational efficiency of I-CBPG with a dynamic error tolerance rule relative to I-RBPG with a fixed error rule. For the wide $A$ setting, I-CBPG with a dynamic error tolerance rule $\delta_k = 1/k^2$ achieves convergence in $49\%$, $68\%$, and $84\%$ less time than I-RBPG with fixed error tolerances equal to $\delta_k = 10^{-4}$, $\delta_k = 10^{-6}$, and $\delta_k = 10^{-8}$, respectively.  For the tall $A$ setting, I-CBPG with a dynamic error tolerance rule $\delta_k = 1/k^2$ achieves convergence in $50\%$, $66\%$, and $61\%$ less time than I-RBPG with fixed error tolerances equal to $\delta_k = 10^{-4}$, $\delta_k = 10^{-6}$, and $\delta_k = 10^{-8}$, respectively.

In the wide $A$ setting, Figure \ref{fig:widematrix} and Table \ref{tab:widematrix} serve especially well to illuminate the disconnect between cycle counts and CPU time while showcasing the power and performance advantage of inexact computation in general, and dynamically decreasing error tolerances in particular. While cycle times display very little variation across different $\delta$ regimes from cycle $k = 1$ onward, a greater degree of error tolerance in early cycles translates to marked improvements in speed. In particular, for the wide $A$ setting one sees that more stringent error tolerances come at significantly higher CPU time costs for early iterates. Conversely, more permissive error tolerance rules for early iterates achieve the same progress in a fraction of the time.  These time savings carry through to convergence, as shown in Tables \ref{tab:widematrix} and \ref{tab:tallmatrix}.


\section{Chapter's Conclusion}

In this chapter, we introduced inexactly computed gradients and proximal maps into the Cyclic Block Proximal Gradient scheme resulting in our I-CBPG algorithm. Our convergence analysis covers both dynamically decreasing and fixed error tolerances. Our numerical experiments show that allowing for dynamically decreasing error tolerance in coordinate descent schemes can overturn the typical superiority of randomized schemes over cyclic schemes. This invites further research into randomized coordinate descent schemes with dynamic error tolerances. In the course of our analysis, we explored a unified framework for analyzing inexact computation via inexactly computed pre-conditioned proximal maps. This framework's tools enabled us to show how the inexact proximal map subsumes inexact computation of both gradients and proximal maps.

\section{Proof of Lemma \ref{lemma:recurrence-technical}}

\begin{proof} Fix $k\geq  2$. We begin by dividing both sides of \eqref{eq:recur-ineq} by $A_\ell A_{\ell+1}$,
\[
\frac{1}{\gamma}\frac{A_{\ell+1}}{A_\ell}\leq\frac{1}{A_{\ell+1}}-\frac{1}{A_\ell}+\frac{\Delta_{\ell+1}}{A_\ell A_{\ell+1}},
\]
then rearrange and use monotonicity of $\{A_\ell\}_{\ell\geq 0}$ to simplify this to
\[
\frac{1}{A_{\ell+1}}-\frac{1}{A_\ell} \geq\frac{1}{\gamma}\frac{A_{\ell+1}}{A_\ell}-\frac{\Delta_{\ell+1}}{A_\ell A_{\ell+1}}. 
\]
This rearrangement foreshadows the important roles of $A_{\ell+1}/A_\ell$ and $\Delta_{\ell+1}/(A_\ell A_{\ell+1})$. We consider two cases, divided according to the typical size of the ratio  $A_{\ell+1}/A_\ell$ for $\ell + 1 \le k$. In the second case, when the values of $A_\ell$ fall at what one may consider a relatively slow rate over this range, we consider three subcases based on the behavior of $\{\Delta_\ell\}_{\ell\geq1}$ and the typical values of $\frac{\Delta_{\ell+1}}{A_\ell A_{\ell+1}}$.
\begin{enumerate}[(i)]
\item For at least $\lfloor k/2\rfloor$ values of $0\leq \ell\leq k-1$, we have $A_{\ell+1}/A_\ell\leq 1/2$.
\item For at least $\lfloor k/2\rfloor$ values of $0\leq \ell\leq k-1$, we have $1/2  < A_{\ell+1}/A_\ell \leq1$.
In this case, we consider three subcases based on the values of $\Delta_{\ell+1}/(A_\ell A_{\ell+1})$ and the sequence $\{\Delta_\ell\}_{\ell\geq 1}$.
\end{enumerate}

\noindent\textit{Case 1: For at least $\lfloor k/2\rfloor$ values of $0\leq \ell\leq k-1$, $\frac{A_{\ell+1}}{A_\ell}\leq\frac{1}{2}$.}\\

\noindent This is the easy case. First, assume that $k$ is even. Then we have that $A_{\ell+1}\leq\frac{1}{2}A_\ell$ for at least $k/2$ values of $0\leq \ell\leq k-1$ so 
\[
A_k\leq  \left(\frac{1}{2}\right)^{k/2}A_0,
\]
since the $A_\ell$ terms are decreasing. If $k>2$ is odd, then $k-1$ is even, so by the same logic 
\[
A_k\leq  \left(\frac{1}{2}\right)^{(k-1)/2}A_0.
\]

\noindent\textit{Case 2: For at least $\lfloor k/2\rfloor$ values of $0\leq \ell\leq k-1$, $\frac{1}{2} <  \frac{A_{\ell+1}}{A_\ell} \leq 1$.}\\

\noindent We examine the following three subcases in turn:
\begin{enumerate}[(i)]
	\item $\Delta_{\ell} = \Delta\geq 0 $ for all $\ell$.
	\item The sequence $\{\Delta_\ell\}_{\ell\geq1}$ shrinks at the sublinear rate $\mathcal{O}(1/\ell^2)$ and for at least $\lfloor k/4\rfloor$ of the values for which $\frac{1}{2}< \frac{A_{\ell+1}}{A_\ell}\leq 1$ it also holds that $\frac{1}{4\gamma}>\frac{\Delta_{\ell+1}}{A_\ell A_{\ell+1}}$.
	\item The sequence $\{\Delta_\ell\}_{\ell\geq1}$ shrinks at the sublinear rate $\mathcal{O}(1/\ell^2)$ and for at least $\lfloor k/4\rfloor$ of the values for which $\frac{1}{2}<\frac{A_{\ell+1}}{A_\ell}\leq 1$ it also holds that $\frac{1}{4\gamma} \leq \frac{\Delta_{\ell+1}}{A_\ell A_{\ell+1}}$.
\end{enumerate}

\noindent\textit{Case 2, Subcase i: $\Delta_\ell = \Delta\geq 0$ for all $\ell$}.\\

Assume for now that $k$ is even. Define $u=\sqrt{\Delta\gamma}$, and let $\tilde{A}_{\ell}=A_{\ell}-u$.  Then the recurrence   \eqref{eq:recur-ineq} implies that $ \frac{1}{\gamma}A_{\ell+1}^2\leq A_\ell-A_{\ell+1}+\Delta_{\ell+1} $, which we may express as 
\[
\frac{1}{\gamma}(\tilde{A}_{\ell+1}+u)^2=\frac{1}{\gamma}A_{\ell+1}^2\leq  A_\ell-A_{\ell+1}+\Delta=\tilde{A}_\ell-\tilde{A}_{\ell+1}+\Delta.
\]
Expanding the square on the left, using the definition of $u$, and rearranging we see
\[
\frac{1}{\gamma}\tilde{A}_{\ell+1}^2\leq \tilde{A}_\ell-\left(1+\frac{2u}{\gamma}\right)\tilde{A}_{\ell+1}.
\]
If $\tilde{A}_{k} \leq 0$, the result is immediate, so suppose that $\tilde{A}_{k} > 0$, from which it follows that the earlier $\tilde{A}_{\ell}$ terms are also positive.  Then, for any $\ell$ with $0 \le \ell \le k-1$, we may divide the recurrence inequality by the product $\tilde{A}_{\ell+1}\tilde{A}_\ell$ to obtain
\[
 \frac{1}{\tilde{A}_{\ell+1}} - \left( 1 + \frac{2u}{\gamma}\right)\frac{1}{\tilde{A}_\ell} \ge \frac{1}{\gamma}\frac{\tilde{A}_{\ell+1}}{\tilde{A}_\ell}.
\]
Now, by hypothesis, for at least $k/2$ indices in the range $0 \leq \ell \leq k -1$
\[
 \frac{1}{\tilde{A}_{\ell+1}} - \frac{1}{\tilde{A}_\ell} \geq \frac{1}{\gamma}\frac{\tilde{A}_{\ell+1}}{\tilde{A}_\ell}  +  \frac{2u}{\gamma}  \frac{1}{\tilde{A}_\ell}  \geq \frac{1}{\gamma}\frac{1 }{2}  +  \frac{2u}{\gamma}  \frac{1}{\tilde{A}_0}.
\]
Iterating backward, one obtains
\[
 \frac{1}{\tilde{A}_k} \geq \frac{1}{\tilde{A}_{k}} - \frac{1}{\tilde{A}_0}  \geq \frac{k}{2} \left( \frac{1}{2\gamma}  +  \frac{2u}{\gamma}  \frac{1}{\tilde{A}_0} \right){,}
\]
 which gives $\tilde{A}_k \leq 4\gamma \tilde{A}_0/[k (\tilde{A}_0+4u)]$. The result follows from noting that $k-1$ is even if $k$ is odd, so we may replace $k$ with $k-1$ above to obtain a generic bound. \\



\noindent\textit{Case 2, Subcase ii: The sequence $\{\Delta_\ell\}_{\ell\geq1}$ shrinks at the sublinear rate $\mathcal{O}(1/\ell^2)$ and for at least $\lfloor k/4\rfloor$ of the values for which  $\frac{1}{2} <  \frac{A_{\ell+1}}{A_\ell} \leq 1$, it also holds that$\frac{\Delta_{\ell+1}}{A_\ell A_{\ell+1}}<  \frac{1}{4\gamma}$.}\\

Our reasoning follows the same idea as when $\Delta_\ell=\Delta\geq 0$ for all $\ell\geq 1$ (Case 2, Subcase i). First, assume that $k$ is divisible by $4$. We have for $k/4$ values of $0\leq\ell\leq k-1$ that 
\[
\frac{1}{A_{\ell+1}}-\frac{1}{A_\ell}\geq\frac{1}{2\gamma}-\frac{\Delta_{\ell+1}}{A_\ell A_{\ell+1}}\geq\frac{1}{2\gamma}-\frac{1}{4\gamma}=\frac{1}{4\gamma}.
\]
This inequality iterated backward, plus monotonicity and non-negativity of the sequence $\{A_\ell\}_{\ell\geq 0}$, implies that
\[
\frac{1}{A_k}\geq\frac{1}{A_k}-\frac{1}{A_0}\geq \frac{k}{4}\left[\frac{1}{4\gamma}\right]=\frac{k}{16\gamma }.
\]
Rearranging, we have that $A_k \leq 16\gamma / k$. If $k > 4$ is not divisible by $4$, then $k-1$, $k-2$, or $k-3$ must be, so in the worst case $A_k \leq 16\gamma/(k-3)$.

\noindent\textit{Case 2, Subcase iii: The sequence $\{\Delta_\ell\}_{\ell\geq1}$ shrinks at the sublinear rate $\mathcal{O}(1/\ell^2)$ and for at least $\lfloor k/4\rfloor$ of the values for which $\frac{1}{2} <  \frac{A_{\ell+1}}{A_\ell} \leq 1$, it also holds that $\frac{\Delta_{\ell+1}}{A_\ell A_{\ell+1}}\geq\frac{1}{4\gamma}$.}\\

First, suppose $k$ is divisible by $4$. Let $\ell^*$ denote the largest  $\ell\in\{0,\ldots,k-1\}$ for which $\frac{\Delta_{\ell+1}}{A_\ell A_{\ell+1}}\geq\frac{1}{4\gamma}$ holds. By hypothesis, $\ell^*$ must be at least as big as $\frac{k}{4} - 1$, and $\Delta^2_\ell \leq D/\ell^2$, so 
\[
\frac{1}{4\gamma}\cdot A_k^2\leq \frac{1}{4\gamma}\cdot A_k A_{k-1}\leq \frac{1}{4\gamma}\cdot A_{\ell^*+1} A_{\ell^*}\leq \Delta_{\ell^*+1}\leq\Delta_{k/4}\leq\frac{D}{(k/4)^2}.
\]
Dividing by $1/4\gamma$ and taking square roots, we have $A_{k}\leq\frac{8\sqrt{\gamma D}}{k}$. If $k>4$ is not divisible by $4$, then one of $k-1$, $k-2$, or $k-3$ are, so at worst $A_k\leq  \frac{8\sqrt{\gamma D}}{k-3}$.

Having completed our analysis, we may now combine the results from Case 1 with the appropriate Subcase(s) of Case 2 to establish the results in Lemma \ref{lemma:recurrence-technical}.

\end{proof}

%
%
%
%
%


\pagestyle{plain} 


\chapter{\MakeUppercase{The Randomized Block Coordinate Descent Method in the H{\"o}lder Smooth Setting}$^{*}$}\label{chapter:holder}

\renewcommand{\thefootnote}{\fnsymbol{footnote}} 
\footnotetext[1]{Reprinted with permission from \cite{maia2024randomizedblockcoordinatedescent}, Copyright 2025 by Springer Nature.}
\renewcommand{\thefootnote}{\arabic{footnote}} 


\section{Overview}

This chapter provides the first convergence analysis for the Randomized Block Coordinate Descent method for minimizing a function that is both H\"older smooth and block H\"older smooth. The analysis applies to objective functions that are non-convex, convex, and strongly convex. For  non-convex functions, it is showed that the expected gradient norm reduces at an $\mathcal{O}\left(k^{\frac{\gamma}{1+\gamma}}\right)$ rate, where $k$ is the iteration count and $\gamma$ is the H\"older exponent. For convex functions, it is showed that the expected suboptimality gap reduces at the rate $\mathcal{O}\left(k^{-\gamma}\right)$. In the strongly convex setting, we show this rate for the expected suboptimality gap improves to $\mathcal{O}\left(k^{-\frac{2\gamma}{1-\gamma}}\right)$ when $\gamma>1$ and to a linear rate when $\gamma=1$. Notably, these new convergence rates coincide with those furnished in the existing literature for the Lipschitz smooth setting.

\section{Introduction}

\noindent
In this chapter, we provide non-asymptotic convergence rates for the Randomized Block Coordinate Descent (RBCD) method when applied to the problem
\begin{equation}\label{eq:problem}
f^{*} := \min_{x\in\mathbb{R}^d} f(x) {,}
\end{equation}
where the objective function $f:\mathbb{R}^d\to\mathbb{R}$ is H\"older smooth, a generalization of the standard (Lipschitz) smoothness, and block H\"older smooth. Formally, the continuously differentiable function, $f$, is said to be \emph{H\"older smooth} when its gradient, $\nabla f$, is H\"older continuous, i.e. there exist $L>0$ and $\gamma\in (0,1]$ guaranteeing
\begin{equation}\label{eq:holder}
\|\nabla f(y)-\nabla f(x)\|\leq L\|y-x\|^\gamma\quad\text{for all }x,y\in\mathbb{R}^d.
\end{equation}

The popularity of block coordinate methods owes to their fitness for large-scale optimization problems emerging from applications in machine learning and statistics. Essentially, randomized block coordinate descent is a (random) block-wise adaptation of gradient descent. Instead of updating all coordinates simultaneously, the randomized block coordinate descent method updates a single, randomly selected coordinate block using only that block's partial gradient. The computational economy of these block gradient updates, relative to full gradient updates, are what make the randomized block coordinate  descent method especially attractive for large-scale problems.

{G}iven an initial point $x^0$, this cheap iterate update rule is somewhat more generally realized as
\begin{equation}\label{eq:BCD}
x^k=x^{k-1}-t_k\cdot P_{i_k}\nabla f(x^{k-1}), \quad k=1,2,\ldots {,}
\end{equation}
where $t_k>0$, $i_k$ is selected randomly from $\{1,\ldots,m\}$, and $P_1,\ldots,P_m\in\mathbb{R}^{d\times d}$ are orthogonal projection matrices onto orthogonal subspaces that sum to $\mathbb{R}^d$. {The} ``block coordinate" name originates from the archetypal choice for the orthogonal subspaces projected onto: spans of collections of coordinate vectors.

For coordinate descent methods, and indeed a preponderance of first-order methods, the intimate relationship between the selection of step-sizes and $\nabla f$'s regularity determines their convergence rates \cite{Yashtini16,Sh-Dvu-Gasn, NestUnivGradMethod, SmoothBanditOptGenHoldSpa, Nemi-Yur, Nemi-Ark-1983, MultScaZOOptSmFunc,GenHolSmooConvRatFollSpecRatAssGroBoun, Gutman22}. Both Bredies \cite{Bredies08} and {Yashtini} \cite{Yashtini16} study the interplay between step-size selection and convergence for gradient descent applied to \eqref{eq:problem} in the H\"older smooth regime. Bredies \cite{Bredies08} established a $O\left(1/k^\gamma\right)$ convergence rate of the proximal gradient method, a generalization of the gradient descent method, for convex composite minimization. On the other hand, {Yashtini} \cite{Yashtini16} established that, given an appropriate step-size selection, gradient descent converges at a  $O\left(1/k^{\frac{\gamma}{1+\gamma}}\right)$ for non-convex, H\"older smooth objective functions. 

We are unaware of any studies of the randomized block coordinate descent method that assume H\"older smoothness or its block-wise adaptation, block H\"older smoothness.  We say that the continuously differentiable function, $f$, is \emph{block H\"older smooth} if for each $i=1,\ldots,m$, there exists $L_i>0$ such that
\begin{equation}\label{eq:Holder-block}
\|\nabla f(x+P_i u)-\nabla f(x)\|\leq L_i\|P_i u\|^\gamma\text{ for all }u\in\mathbb{R}^d.
\end{equation}
The seminal articles \cite{Nesterov12,Richtarik14} studying the randomized block coordinate descent method all make the more restrictive assumption that the gradient is Lipschitz continuous. 
Recently, inspired by the work of both Bredies \cite{Bredies08} and {Yashtini} \cite{Yashtini16}, Gutman and Ho-Nguyen \cite{Gutman22} produced a convergence analysis for the cyclic block coordinate descent method assuming H\"older and block H\"older smoothness in both the convex and non-convex settings. Thus, the goal of this paper is to extend this analysis to the more popular randomized block coordinate descent method in the non-convex, convex, and even strongly convex settings. 

\subsection{Chapter's Organization}

We present an outline of the chapter that includes a high-level description of each of our main contributions. This chapter is structured into four primary sections:
\begin{itemize}
	\item \textit{Section} \ref{sec:algorithm}: In this section, we introduce our RBCD step-size selection for H\"older smooth objective functions as well as the attendant notation. We also introduce two key lemmata (Lemmata \ref{lemma:duality} and \ref{lemma:block_descent}) that support our analyses.
	\item \textit{Section} \ref{sec:nonconvex}: In this section, we present our convergence analysis for general, possibly non-convex objective functions satisfying H\"older and block H\"older smoothness conditions.  For these objectives, our proposed step-size ensures RBCD shrinks the expected gradient norm at a $\mathcal{O}\left(1/k^{\frac{\gamma}{1+\gamma}}\right)$ rate (Theorem \ref{thm.conv.rate.general}). 
	\item \textit{Section} \ref{sec:convex}: In this section, we present our convergence analysis under the further assumption that the objective function is convex. In this setting, RBCD with our step-size shrinks the expected suboptimality gap at a $O\left(1/k^{\gamma}\right)$ rate for non-strongly convex objective functions (Theorem \ref{thm.conv.rate.convex}). Notably, our rates for these objective functions coincide with those of \cite{Nesterov12} when $\gamma=1$, or equivalently, when the objective is $L$-smooth. 
	\item \textit{Section} \ref{sec:strong-convex}: In this section, we present our analysis under the further assumption that the objective function is strongly convex. This analysis depends upon the value of the H\"older exponent, $\gamma$. When $\gamma=1$, we show RBCD converges at a linear rate (Theorem \ref{thm.conv.rate.strong-convex}). When $\gamma \in (0,1)$, we obtain a $\mathcal{O}\left( 1/k^{\frac{2\gamma}{1-\gamma}} \right)$ rate of convergence (Theorem \ref{thm.conv.rate.strong-convex}). Moreover, we show that our sublinear rates converge to our linear rates as $\gamma\to 1$ (Corollary \ref{cor:str-cvx.interpolation}). As for convex objectives, our rates for strongly convex objectives coincide with those of \cite{Nesterov12} when the objective is $L$-smooth. 
\end{itemize}
\section{Notation and Step-size Selection for RBCD Under H\"older Smoothness}
\label{sec:algorithm}

This short section introduces the notation necessary for all of this article's developments, and  the H\"older smoothness-based step-size selection for the RBCD method. It also exhibits two lemmata, Lemmata \ref{lemma:duality} and \ref{lemma:block_descent}, that are used throughout the paper to aid the convergence analysis of the proposed method.

Our step-size selection is an adaptation of that used for the cyclic block setting from \cite{Gutman22} to the immensely more popular randomized block setting. Thus, our notation is a synthesis of that article's notation and the notation of \cite{Nesterov12}, one of the canonical works on randomized coordinate descent. We let $\tilde{L}:=\{L_1,\ldots,L_m\}$ denote the set of the block H\"older smoothness constants. For $\alpha\in\mathbb{R}$, we define the new constant, $S_{\alpha}(f)$, as
\[
S_{\alpha}(f) = \sum_{i=1}^{m}L_i^{\alpha}.
\]
When $f$ is clear from context, we will simply write $S_\alpha$. For the sake of concision, we set $\nabla_i f(x):=P_i \nabla f(x)$ for all $x\in\mathbb{R}^d$ and $1\leq i\leq m$. We adopt the notation, $\nu:=\frac{1+\gamma}{\gamma}>1$, because the quantity $\frac{1+\gamma}{\gamma}$  frequently appears in our analysis.

Much of our analysis is framed in terms of $\tilde{L}$-weighted $q$-norms on $\mathbb{R}^d$, $\|\cdot\|_{\alpha,q}$. Given $\alpha\in\mathbb{R}$ and $q\geq 1$, we let 
  
When $\alpha=0$, $\|\cdot\|_{\alpha,q}$ reduces to the standard $q$-norm, which we write as $\|\cdot\|_q$. For simplicity, we let $\|\cdot\|:=\|\cdot\|_2$.
We bare three important notes about these weighted norms. First, $\|\cdot\|_{\alpha,q}$ generalizes the norm
\[
x\mapsto\left[\sum_{i=1}^{m}L_{i}^\alpha\|P_i x\|^2  \right]^{1/2},
\]
which plays a starring role throughout Nesterov's classical analysis of randomized coordinate descent methods from \cite{Nesterov12} in the block Lipschitz smooth setting. The flexibility provided by changing the exponents $2$ and $1/2$ to $q$ and $1/q$, respectively, is critical to capturing our more general H{\"o}lderian convergence rates.  Additionally, the parameter $\alpha$ permits us to simultaneously achieve RBCD's convergence rates for two different, common random block selection schemes:
\begin{itemize}
\item[i)] $\alpha=0$ corresponds to selecting the blocks uniformly at random;
\item[ii)] $\alpha=1$ corresponds to selecting the $i$-th block with probability $L_i/\sum_{i=1}^m L_i$.
\end{itemize}
Finally, these weighted norms possess natural duality relationships and equivalences to the Euclidean norm, which we liberally use throughout our analysis and summarize in the below lemma.
\begin{lemma}[$(\alpha,q)$-Norm Duality and Equivalences]\label{lemma:duality}
Let $\alpha\in\mathbb{R}$, $p\in[1,\infty]$, and $q$ be the H{\"o}lder conjugate of $p$, i.e. $q:=\frac{p}{p-1}$. The following hold for $\|\cdot\|_{\alpha,p}$:
\begin{enumerate}
	\item The Cauchy-Schwarz inequality
\begin{equation}\label{eq:cauchy-schwarz}
|\inner{x}{y}|\leq \|x\|_{\alpha,q}\|y\|_{-\alpha\frac{p}{q},q}
\end{equation}
holds for all $x,y\in\mathbb{R}^d$. Equality is obtained if and only if $x=0$ or 
\[
y=c\cdot\sum_{i=1}^m L_i^\alpha \|P_i x\|^{p-2}P_i x
\] 
for some $c\in\mathbb{R}$. Consequently, $\|\cdot\|_{-\alpha\frac{p}{q},q}$ is the dual norm of $\|\cdot\|_{\alpha,q}$.
	\item If $p\geq 2$ and $\alpha,\beta\in\mathbb{R}$ then the norms $\|\cdot\|_{\alpha,p}$ and $\|\cdot\|_{\beta,2}$ satisfy 
\[
\left( \max_{1\leq i\leq m}L_i^{\frac{\alpha}{p}-\frac{\beta}{2}}\right)\cdot  \|x\|_{\beta,2} \geq \|x\|_{\alpha,p} \geq \left( m^{\frac{1}{p}-\frac{1}{2}}\cdot \min_{1\leq i\leq m}L_i^{\frac{\alpha}{p}-\frac{\beta}{2}}\right) \cdot \|x\|_{\beta,2} 
\]
for all $x\in\mathbb{R}^d$.
\end{enumerate} 
\end{lemma}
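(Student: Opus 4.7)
The plan for both parts is a two-step reduction. First, the decomposition $x = \sum_{i=1}^m P_i x$ afforded by the orthogonality and completeness of the projections reduces every expression involving $x$ and $y$ to a finite sum indexed by blocks. Second, classical vector inequalities on $\mathbb{R}^m$ (standard Cauchy-Schwarz, classical H\"older, and the $\ell^p$-versus-$\ell^2$ norm equivalence) applied to the auxiliary vector $\bigl(\|P_1 x\|,\ldots,\|P_m x\|\bigr)$ with appropriate weight factors deliver the stated inequalities. Essentially no new ideas beyond careful bookkeeping of exponents are needed.

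For Part 1, I would begin with $\langle x, y\rangle = \sum_{i=1}^m \langle P_i x, P_i y\rangle$ and apply ordinary Cauchy-Schwarz termwise to obtain $|\langle x, y\rangle| \le \sum_{i=1}^m \|P_i x\|\|P_i y\|$. The weight-introducing trick is the identity $\|P_i x\|\|P_i y\| = \bigl(L_i^{\alpha/q}\|P_i x\|\bigr)\cdot\bigl(L_i^{-\alpha/q}\|P_i y\|\bigr)$, which is just a multiplicative one. Classical H\"older on $\mathbb{R}^m$ with conjugate exponents then produces the claimed weighted inequality. For the equality characterization, I would chase the equality cases backward through the two inputs: termwise Cauchy-Schwarz forces $P_i y$ to be a (block-dependent) scalar multiple of $P_i x$, and H\"older's equality condition then forces the $q$-th and $p$-th power sequences to be proportional; combining the two constraints produces a single global scalar $c$ and the explicit closed form of $y$ given in the statement. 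The dual norm assertion then drops out: the sharp inequality together with attainment at the extremal $y$ identifies the stated weighted norm as the dual of $\|\cdot\|_{\alpha,q}$ by the usual definition of a dual norm on a finite-dimensional space.

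For Part 2, I would rewrite both norms as the weighted $p$-norm and $2$-norm of the $m$-vector $v_i := L_i^{\beta/2}\|P_i x\|$ by factoring $L_i^{\alpha/p}\|P_i x\| = L_i^{\alpha/p-\beta/2}\cdot v_i$ and pulling the $L_i^{\alpha/p-\beta/2}$ factor out using the elementary bound $\min_i L_i^{\alpha/p-\beta/2} \le L_i^{\alpha/p-\beta/2} \le \max_i L_i^{\alpha/p-\beta/2}$. After this reduction it remains only to invoke the standard equivalence $\|v\|_p \le \|v\|_2 \le m^{\frac12-\frac1p}\|v\|_p$ valid for $v\in\mathbb{R}^m$ whenever $p\ge 2$. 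Combining the constant factors in the correct order yields the stated two-sided inequality. The only place the hypothesis $p\ge 2$ is used is to ensure the $\ell^p$-$\ell^2$ inequality points in the claimed direction, so no genuine obstacle arises; the entire lemma is really a pair of bookkeeping exercises on top of textbook finite-dimensional inequalities.
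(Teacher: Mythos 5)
Your proposal is correct and follows essentially the same route as the paper's own proof: termwise Cauchy--Schwarz followed by the weight-splitting trick and classical H\"older on the auxiliary vector $\bigl(\|P_1x\|,\ldots,\|P_mx\|\bigr)$ for Part 1 (with the equality case chased back through both inequalities exactly as you describe), and the $\ell^p$-versus-$\ell^2$ equivalence on $\mathbb{R}^m$ with the $L_i$ ratios extracted via min/max for Part 2. The only cosmetic difference is whether the $L_i$ factor is split as $L_i^{\alpha/p}$ or $L_i^{\alpha/q}$, which merely tracks an exponent typo in the lemma statement and does not affect the argument.
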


\noindent We defer the proof of this lemma to the appendix (Appendix \ref{app:duality}) to maintain the focus of our exposition. 

With all of the article's requisite notation in hand, we may introduce our main algorithm (Algorithm \ref{algoHolder.main}), and describe an associated descent lemma (Lemma \ref{lemma:block_descent}). We note that our step-size, $-\|\nabla_{i}f(x^k)\|^{\nu-2}/L_i^{\nu-1}$, coincides with that proposed in \cite{Nesterov12} when $\gamma=1$. Thus, we may view it as a generalization that accounts for the use of block H\"older smoothness in the place of standard block smoothness.

\begin{algorithm}[H]
\caption{Randomized Block Coordinate Descent Method (RBCD)}\label{algoHolder.main}
\begin{algorithmic}[1]
\Require $x^0 \in \dom(f)$, $\alpha\in[0,1]$
	\For{$k=0,1,2,\ldots$}
	\State Choose
		\[
		i_k\sim \left(p_1,\ldots,p_m\right):=\left(\frac{L_{1}^{\alpha}}{\sum_{j=1}^{m}L_{j}^{\alpha}},\ldots,\frac{L_{m}^{\alpha}}{\sum_{j=1}^{m}L_{j}^{\alpha}}\right);
	\]
	\State Update block $i_k$ of $x^k$ according to
		\begin{equation}\label{eq:main_update}
			x^{k+1}:=x^{k}-\frac{\|\nabla_{i}f(x^k)\|^{\nu-2}}{L_i^{\nu-1}}\cdot \nabla_i f(x^k).
		\end{equation}
	\EndFor
\end{algorithmic}
\end{algorithm}

A special case of the main descent lemma of \cite{Bredies08}, derived in \cite{Gutman22}, plays the same role in our analysis that it played for the cyclic block analysis in \cite{Gutman22}. We directly quote this special case from \cite{Gutman22} below.

\begin{lemma}
\label{lemma:block_descent}[Block H{\"o}lder Descent Lemma, \cite{Gutman22}, Lemma 1] Let $f:\mathbb{R}^{d}\rightarrow R$ be a function that satisfies the block H\"older smoothness condition. For any $i$, $1\leq i \leq m$,
\begin{equation}\label{eq:upper-model}
    f(x+U_iy) \leq f(x) + \inner{\nabla_i f(x)}{U_iy} + \frac{L_i}{1+\gamma}\|U_iy\|^{1+\gamma}_2.
\end{equation}
Moreover, if $x^+$ is the minimizer of the right-hand side of \eqref{eq:upper-model}, i.e.
\[
x^+=x-\frac{\|\nabla_{i}f(x)\|^{\nu-2}}{L_i^{\nu-1}}\cdot \nabla_i f(x),
\]
then
\[
f(x)-f(x^+)\geq\frac{1}{\nu L_i^{\nu-1}}\|\nabla_i f(x)\|^\nu.
\]
\end{lemma}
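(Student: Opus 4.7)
The first inequality is the block-wise Hölder analogue of the familiar descent lemma, which I would prove by the standard integration trick. Writing
\[
f(x+U_i y) - f(x) = \int_0^1 \inner{\nabla f(x + t U_i y)}{U_i y}\, dt
\]
via the fundamental theorem of calculus, I would add and subtract $\nabla f(x)$ inside the inner product. This produces a main term $\inner{\nabla f(x)}{U_i y}$, which equals $\inner{\nabla_i f(x)}{U_i y}$ since only the $i$-th block component of the gradient survives the pairing with $U_i y$, plus a remainder term. Applying Cauchy-Schwarz to the remainder and invoking the block Hölder condition \eqref{eq:Holder-block} with $u = t U_i y$ bounds the remainder by $L_i \|U_i y\|^{1+\gamma} \int_0^1 t^\gamma\, dt = \tfrac{L_i}{1+\gamma}\|U_i y\|^{1+\gamma}$, which is precisely \eqref{eq:upper-model}.

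For the minimization claim, I would treat the right-hand side of \eqref{eq:upper-model} as a function of $v := U_i y$ taking values in the block subspace $\mathrm{Im}(U_i)$. Since $\nabla_i f(x)$ already lies in this subspace, I may minimize the strictly convex function $v \mapsto \inner{\nabla_i f(x)}{v} + \tfrac{L_i}{1+\gamma}\|v\|^{1+\gamma}$ without any projection bookkeeping. Setting its gradient to zero gives the stationarity condition $\nabla_i f(x) + L_i \|v\|^{\gamma-1} v = 0$, which forces $v$ to be a negative scalar multiple of $\nabla_i f(x)$. Taking norms of both sides yields $\|v\| = (\|\nabla_i f(x)\|/L_i)^{1/\gamma}$, and back-substitution produces the displayed formula for $x^+ - x$ after rewriting exponents via $\nu = (1+\gamma)/\gamma$, so that $\nu - 1 = 1/\gamma$ and $\nu - 2 = (1-\gamma)/\gamma$.

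To finish, I would plug this $v$ back into the upper model. A direct computation gives $\inner{\nabla_i f(x)}{v} = -\|\nabla_i f(x)\|^\nu / L_i^{\nu-1}$ and $\tfrac{L_i}{1+\gamma}\|v\|^{1+\gamma} = \tfrac{1}{1+\gamma} \|\nabla_i f(x)\|^\nu / L_i^{\nu-1}$. Summing these and using the identity $1 - 1/(1+\gamma) = \gamma/(1+\gamma) = 1/\nu$ shows the minimum equals $-\|\nabla_i f(x)\|^\nu / (\nu L_i^{\nu-1})$, and combining with \eqref{eq:upper-model} delivers the desired descent bound $f(x) - f(x^+) \geq \|\nabla_i f(x)\|^\nu / (\nu L_i^{\nu-1})$. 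No step here is especially delicate; the only real obstacle is bookkeeping the exponents $\gamma$, $1+\gamma$, $\nu$, and $1/\gamma$, all of which collapse cleanly thanks to the definition of $\nu$.
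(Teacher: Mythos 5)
Your proof is correct, and it follows essentially the standard argument: the paper itself does not prove this lemma but quotes it directly from \cite{Gutman22} (as a special case of the descent lemma of \cite{Bredies08}), and the proof given there is exactly your integration-plus-Cauchy--Schwarz bound for \eqref{eq:upper-model} followed by explicit minimization of the upper model over the block subspace. The exponent bookkeeping via $\nu=(1+\gamma)/\gamma$ checks out, and your observation that the unconstrained minimizer already lies in $\mathrm{Im}(U_i)$ correctly disposes of the only subtlety.
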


\section{Convergence Analysis: General Objectives}
\label{sec:nonconvex}

In this section, we layout our convergence rate analysis for non-convex objectives satisfying H\"older smoothness \eqref{eq:holder} and block H\"older smoothness \eqref{eq:Holder-block}. We will present the main convergence theorem (Theorem \ref{thm.conv.rate.general}) after we elaborate our key Sufficient Decrease Lemma (Lemma \ref{lemma:sufficient-decrease}). This lemma facilitates all of our convergence analyses.

\begin{lemma}\label{lemma:sufficient-decrease}
\emph{(Sufficient Decrease)}  Let $\{x_n\}_{n=0}^{\infty}$ be the sequence generated by RBCD (Algorithm \ref{algoHolder.main}). If $f$ satisfies our H{\"o}lder smoothness \eqref{eq:holder} and block H\"older smoothness \eqref{eq:Holder-block} assumptions, then
\begin{equation}\label{eq:sufficient-decrease}
\frac{1}{\nu S_\alpha(f)}\|\nabla f(x_k)\|_{\alpha+1-\nu,\nu}^\nu\leq f(x_k)-\mathbb{E}\left[f(x^{k+1})\bigg| x^k\right]
\end{equation}
holds for all $k\geq 0$.
\end{lemma}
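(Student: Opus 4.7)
The plan is to derive the inequality by a direct computation that combines the per-block descent guarantee from Lemma~\ref{lemma:block_descent} with the block-selection probabilities of Algorithm~\ref{algoHolder.main}, and then to recognize the resulting sum as a weighted $\nu$-norm. Since the analytically heavy lifting has already been done inside Lemma~\ref{lemma:block_descent}, the argument here is mostly bookkeeping; the only real care is in lining up the three parameters $\alpha$, $\nu$, and $L_i^\bullet$ so that the final sum matches the definition of $\|\cdot\|_{\alpha+1-\nu,\nu}^\nu$.

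First, I would fix $k\ge 0$, condition on $x^k$, and apply Lemma~\ref{lemma:block_descent} to the update~\eqref{eq:main_update}: whenever the index $i_k=i$ is drawn, the corresponding block minimizer update yields
\[
f(x^k)-f(x^{k+1}) \;\ge\; \frac{1}{\nu\, L_i^{\nu-1}}\,\|\nabla_i f(x^k)\|^{\nu}.
\]
Next, I would take the conditional expectation over $i_k$, which by the sampling distribution in Algorithm~\ref{algoHolder.main} assigns probability $p_i=L_i^{\alpha}/S_\alpha(f)$ to block $i$. This yields
\[
f(x^k)-\mathbb{E}\!\left[f(x^{k+1})\,\big|\,x^k\right] \;\ge\; \sum_{i=1}^{m} \frac{L_i^{\alpha}}{S_\alpha(f)}\cdot \frac{1}{\nu\, L_i^{\nu-1}}\,\|\nabla_i f(x^k)\|^{\nu}.
\]

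The remaining step is purely algebraic: combining the $L_i$ factors gives exponent $\alpha-(\nu-1)=\alpha+1-\nu$, so the right-hand side becomes
\[
\frac{1}{\nu\,S_\alpha(f)}\sum_{i=1}^{m} L_i^{\alpha+1-\nu}\,\|\nabla_i f(x^k)\|^{\nu}.
\]
Recalling that $\nabla_i f(x^k)=P_i\nabla f(x^k)$ and invoking the definition of the weighted norm from Section~\ref{sec:algorithm},
\[
\|\nabla f(x^k)\|_{\alpha+1-\nu,\nu}^{\nu} \;=\; \sum_{i=1}^{m} L_i^{\alpha+1-\nu}\,\|P_i\nabla f(x^k)\|^{\nu},
\]
so the sum collapses to $\|\nabla f(x^k)\|_{\alpha+1-\nu,\nu}^{\nu}/(\nu S_\alpha(f))$, which is precisely~\eqref{eq:sufficient-decrease}.

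The main ``obstacle'' is not analytical but notational: one has to be careful that the exponent on $L_i$ coming from the probability weights ($+\alpha$) and from the per-block descent bound ($-(\nu-1)$) combine in the order needed to match the first index of the weighted $\nu$-norm, and that the outer exponent $\nu$ on the norm is consistent with raising $\|\nabla_i f(x^k)\|$ to the $\nu$-th power. No convexity of $f$ or additional regularity beyond block H\"older smoothness is required, which is consistent with the lemma being stated in the general (possibly non-convex) section.
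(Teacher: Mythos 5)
Your proposal is correct and follows essentially the same route as the paper's proof: expand the conditional expectation as the probability-weighted sum over blocks, apply the per-block descent bound from Lemma~\ref{lemma:block_descent} to each term, and combine the exponents $L_i^{\alpha}\cdot L_i^{-(\nu-1)}=L_i^{\alpha+1-\nu}$ to recognize the weighted norm $\|\nabla f(x^k)\|_{\alpha+1-\nu,\nu}^{\nu}$. The only cosmetic difference is the order of operations (you apply the descent lemma before averaging, the paper expands the expectation first), which is immaterial.
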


\begin{proof}
Expanding the expectation-defining sum, and applying the block descent lemma (Lemma \ref{lemma:block_descent}), we compute
\begin{multline*}
f(x_k)-\mathbb{E}\left[f(x^{k+1})\bigg| x^k\right]=\mathbb{E}\left[f(x_k)-f(x^{k+1})\bigg| x^k\right]\\
=\sum_{i=1}^{m}\left(\frac{L_i^{\alpha}}{\sum_{j=1}^{m}L_{j}^{\alpha}}\right)\cdot\left[ f(x_k)-f\left(x^{k}-\frac{\|\nabla_{i}f(x^k)\|^{\nu-2}}{L_i^{\nu-1}}\cdot \nabla_i f(x^k)\right)\right]\\
\stackrel{\text{Lemma }\ref{lemma:block_descent}}\geq \frac{1}{\nu S_\alpha}\sum_{i=1}^{m}L_i^{\alpha+1-\nu}\|\nabla_{i}f(x^k)\|^\nu=\frac{1}{\nu S_\alpha}\|\nabla f(x_k)\|_{\alpha+1-\nu,\nu}^\nu.
\end{multline*}
Rearranging the inequality and taking total expectations completes the proof.
\end{proof}

Next, we present the centerpiece of this section, our main convergence theorem for non-convex objective functions.

\begin{theorem}[RBCD Convergence: General Objective Functions]\label{thm.conv.rate.general}
Let $\{x_n\}_{n=0}^{\infty}$ be the sequence generated by RBCD (Algorithm \ref{algoHolder.main}). If $f$ satisfies our H{\"o}lder smoothness \eqref{eq:holder} and block H\"older smoothness \eqref{eq:Holder-block} assumptions, then
\[
\min_{0\leq j \leq k}\mathbb{E}\left[\|\nabla f(x^j)\|_{1+\alpha-\nu,\nu}\right] \leq  \left(\nu S_\alpha(f)\right)^{\frac{1}{\nu}} \cdot\left(\frac{f(x^0)-f^*}{k+1}\right)^{\frac{1}{\nu}}= \mathcal{O}\left( k^{-\frac{1}{\nu}}\right)
\]
holds for all $k\geq 0$. Consequently, we have the convergence rate measured in the norm $\|\cdot\|_{\beta,2}$,
\[
\min_{0\leq j \leq k}\mathbb{E}\left[\|\nabla f(x^j)\|_{\beta,2}\right] \leq\left(\frac{\displaystyle \max_{1\leq i\leq m} L_i^{\frac{\beta}{2}-\frac{1+\alpha-\nu}{\nu}}}{m^{\frac{\nu-2}{2\nu}}}\right)\cdot\left(\nu S_\alpha(f)\right)^{\frac{1}{\nu}} \cdot\left(\frac{f(x^0)-f^*}{k+1}\right)^{\frac{1}{\nu}}= \mathcal{O}\left( k^{-\frac{1}{\nu}}\right),
\]
holds for all $k\geq 0$.
\end{theorem}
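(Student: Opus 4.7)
The plan is to apply Lemma~\ref{lemma:sufficient-decrease} as a potential/telescoping recursion, then pass from the $\nu$-th power norm bound to a first-power norm bound via Jensen's inequality, and finally convert to the norm $\|\cdot\|_{\beta,2}$ using Lemma~\ref{lemma:duality}(2).

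First, I would apply Lemma~\ref{lemma:sufficient-decrease} at each iterate $x^j$, take total expectations, and telescope the resulting inequality
\[
\frac{1}{\nu S_\alpha(f)}\mathbb{E}\bigl[\|\nabla f(x^j)\|_{1+\alpha-\nu,\nu}^\nu\bigr] \leq \mathbb{E}[f(x^j)] - \mathbb{E}[f(x^{j+1})]
\]
over $j=0,\ldots,k$. Since $\mathbb{E}[f(x^{k+1})]\geq f^*$, the telescoping collapses to
\[
\sum_{j=0}^{k}\mathbb{E}\bigl[\|\nabla f(x^j)\|_{1+\alpha-\nu,\nu}^\nu\bigr] \leq \nu S_\alpha(f)\bigl(f(x^0)-f^*\bigr).
\]
Dividing by $k+1$ and bounding the minimum by the average yields
\[
\min_{0\leq j\leq k}\mathbb{E}\bigl[\|\nabla f(x^j)\|_{1+\alpha-\nu,\nu}^\nu\bigr] \leq \frac{\nu S_\alpha(f)(f(x^0)-f^*)}{k+1}.
\]

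Next, I would convert this bound on the $\nu$-th power to a bound on the norm itself. Because $\gamma\in(0,1]$ implies $\nu=(1+\gamma)/\gamma\geq 2$, the map $t\mapsto t^{1/\nu}$ is concave on $[0,\infty)$, so Jensen's inequality gives
\[
\mathbb{E}\bigl[\|\nabla f(x^j)\|_{1+\alpha-\nu,\nu}\bigr] \leq \bigl(\mathbb{E}\bigl[\|\nabla f(x^j)\|_{1+\alpha-\nu,\nu}^\nu\bigr]\bigr)^{1/\nu}.
\]
Taking the minimum over $j\in\{0,\ldots,k\}$ on both sides and applying the previous step delivers the first claimed estimate.

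For the second (Euclidean-weighted) bound, I would invoke Lemma~\ref{lemma:duality}(2) with $p=\nu$ (permissible since $\nu\geq 2$) and exponent $1+\alpha-\nu$ in place of $\alpha$. Its lower-bound half yields a deterministic pointwise comparison of $\|v\|_{\beta,2}$ with $\|v\|_{1+\alpha-\nu,\nu}$, and after rewriting $(\min_i L_i^c)^{-1}=\max_i L_i^{-c}$ and $m^{1/2-1/\nu}=m^{(\nu-2)/(2\nu)}$ the multiplicative constant is exactly the one displayed in the theorem. Applying this to $v=\nabla f(x^j)$, taking expectations, and then minimizing over $j$ completes the chain. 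I do not expect a serious obstacle: once Lemma~\ref{lemma:sufficient-decrease} is in hand, the remainder is a standard telescoping/Jensen/norm-equivalence cascade, with the only real bookkeeping being to verify $\nu\geq 2$ (so that Lemma~\ref{lemma:duality}(2) is applicable) and to carefully track constants through the min/max-of-$L_i$ rewrites.
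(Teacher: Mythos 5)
Your proposal is correct and follows essentially the same route as the paper's proof: the sufficient decrease lemma (Lemma \ref{lemma:sufficient-decrease}) telescoped over $j=0,\ldots,k$, the bound of the minimum by the average, Jensen's inequality to pass between $\mathbb{E}\left[\|\cdot\|_{1+\alpha-\nu,\nu}\right]^\nu$ and $\mathbb{E}\left[\|\cdot\|_{1+\alpha-\nu,\nu}^\nu\right]$ (the paper phrases this via convexity of $t\mapsto t^\nu$ rather than concavity of $t\mapsto t^{1/\nu}$, which is the same inequality), and finally the norm equivalence of Lemma \ref{lemma:duality}(2). One small caveat: the lower bound in Lemma \ref{lemma:duality}(2) actually places the factor $m^{\frac{1}{2}-\frac{1}{\nu}}=m^{\frac{\nu-2}{2\nu}}$ in the \emph{numerator} of the resulting constant, not the denominator as displayed in the theorem, so your assertion that the constant comes out ``exactly'' as stated inherits a bookkeeping discrepancy already present in the paper.
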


\begin{proof}
For each $k\geq 0$, observe that
\begin{align}
\min_{0\leq j\leq k}\mathbb{E}\left[\|\nabla f(x^j)\|_{\alpha+1-\nu,\nu}\right]^\nu
&\leq \frac{1}{(k+1)}\cdot\sum_{j=0}^k\mathbb{E}\left[\|\nabla f(x^j)\|_{\alpha+1-\nu,\nu}\right]^\nu \nonumber \\
&\leq \frac{1}{(k+1)}\cdot\sum_{j=0}^k\mathbb{E}\left[\|\nabla f(x^j)\|_{\alpha+1-\nu,\nu}^\nu\right] \label{thm:rbcd.conv.proof1}\\
&\leq \nu S_\alpha\cdot \frac{1}{(k+1)}\cdot\sum_{j=0}^k  \left(\mathbb{E}\left[f(x^j)\right]-\mathbb{E}\left[f(x^{j+1})\right]\right) \label{thm:rbcd.conv.proof2}\\
&=\nu S_\alpha\cdot\frac{f(x^0)-\mathbb{E}[f(x^{k+1})]}{k+1}\leq\nu S_\alpha\cdot\frac{f(x^0)-f^*}{k+1}{,} \nonumber
\end{align} 
{where we apply Jensen's inequality to the expectation operator for the convex function $x\mapsto x^\nu$ in \eqref{thm:rbcd.conv.proof1}, and Lemma \ref{lemma:sufficient-decrease} in \eqref{thm:rbcd.conv.proof2}. Taking $\nu$-th roots of both sides of the resultant inequality above, produces our first result.}

The result in terms of the $\|\cdot\|_{\beta,2}$ follows immediately from Lemma \ref{lemma:duality}.
\end{proof}

\section{Convergence Analysis: Convex Objectives}
\label{sec:convex}

In this section, we forward our convergence analysis of RBCD (Theorem \ref{thm.conv.rate.convex}) for convex objective functions that are both H\"older \eqref{eq:holder} and block H\"older \eqref{eq:Holder-block} smooth. First, we  present a Technical Recurrence Lemma (Lemma \ref{lemma_poly}) that helps produce our convergence rates in this section, and a subset of the convergence rates for strongly convex objective functions in the sequel. Next, we exhibit a techical lemma (Lemma \ref{lemma:nesterov-holder-cvx-str}) that permits us to express our rates in terms of the diameter of the initial sublevel set. Finally, the section concludes with our main convergence theorem (Theorem \ref{thm.conv.rate.convex}) and a comparison of these rates to those furnished for smooth and convex functions in \cite{Nesterov12}. 

As promised, we begin this section with a Technical Recurrence Lemma that supports the derivation of our convergence rates.

\begin{lemma}
\label{lemma_poly}(Technical Recurrence, \cite[Chapter~2, Lemma~6]{Polyak_book})\label{lemma:recurrence}
If $\{A_k\}_{k\geq 0}$ is a non-negative sequence of real numbers satisfying the recurrence
\[
A_{k+1}\leq A_k - \theta A_{k}^r 
\]
for some $\theta \geq 0$ and $r> 1$, then
\[
A_k \leq \frac{A_0}{(1+(r-1)\theta A_0^{r-1}k)^{\frac{1}{r-1}}} {.}
\]
\end{lemma}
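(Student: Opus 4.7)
My plan is to turn the nonlinear recurrence into a linear one via the substitution $B_k := A_k^{-(r-1)}$, which is the standard Polyak-style device already implicit in the reference the authors cite.

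First I would dispose of the degenerate cases. If $\theta = 0$, the recurrence merely says $\{A_k\}$ is non-increasing and the bound reduces to the triviality $A_k \leq A_0$. If $A_k = 0$ for some $k$, non-negativity combined with the recurrence forces $A_j = 0$ for all $j \geq k$, and the bound holds trivially. Henceforth I may assume $\theta > 0$ and $A_k > 0$ for every $k$. A key preliminary observation is that non-negativity of $A_{k+1}$ together with $A_{k+1} \leq A_k - \theta A_k^r$ forces $\theta A_k^{r-1} \leq 1$ for every $k$; this is precisely what allows the Bernoulli step below to be applied cleanly.

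The heart of the argument is to factor the recurrence as $A_{k+1} \leq A_k\bigl(1 - \theta A_k^{r-1}\bigr)$, raise both sides to the power $-(r-1) < 0$ (which reverses the inequality), and then apply the elementary inequality $(1-x)^{-(r-1)} \geq 1 + (r-1)x$, valid for $x \in [0,1]$ and $r>1$ either by Bernoulli or by the convexity of $t \mapsto t^{-(r-1)}$ on $(0,\infty)$. This produces the clean linear lower bound
\[
B_{k+1} \geq B_k + (r-1)\theta.
\]
Telescoping immediately gives $B_k \geq B_0 + (r-1)\theta k$, and inverting the substitution yields exactly
\[
A_k \leq \bigl(A_0^{-(r-1)} + (r-1)\theta k\bigr)^{-1/(r-1)} = \frac{A_0}{(1+(r-1)\theta A_0^{r-1}k)^{1/(r-1)}}.
\]

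The only real obstacle is the Bernoulli-type step: one must handle the boundary case $\theta A_k^{r-1} = 1$ (where the factored recurrence forces $A_{k+1}=0$) by reverting to the trivial absorbing case, and one must be careful that the sign reversal from raising to a negative power is compatible with the direction of the inequality produced by Bernoulli. Neither issue is substantive, and the whole proof collapses to a few lines once the substitution $B_k = A_k^{-(r-1)}$ is introduced.
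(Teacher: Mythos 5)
Your proof is correct. The paper does not supply its own proof of this lemma---it is quoted directly from Polyak's book---and your argument via the substitution $B_k = A_k^{-(r-1)}$, the preliminary observation that non-negativity forces $\theta A_k^{r-1}\le 1$, and the convexity/Bernoulli inequality $(1-x)^{-(r-1)}\ge 1+(r-1)x$ followed by telescoping is exactly the standard proof in that reference, so there is nothing to correct.
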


The following lemma permits us to express our convergence rates here and in the sequel section in terms of the initial sublevel set's diameter.

\begin{lemma}\label{lemma:nesterov-holder-cvx-str}
Under the Block H\"older Smoothness assumption \eqref{eq:Holder-block} and coercivity of $f$, $f$ satisfies
\begin{align*}
f(x)-f^* \leq\left(\frac{\nu S_\alpha(f)}{2}\right)^{\frac{1}{\nu-1}}\cdot\left(\frac{\nu-1}{\nu}\right)\cdot R(x)_{(1+\alpha-\nu)(1-\nu),\frac{\nu}{\nu-1}}^{\frac{\nu}{\nu-1}}
\end{align*}
for all $x\in\mathbb{R}^d$, where $R_{\beta,q}(x):=\max\left\{\|y-x^*\|_{\beta,q}:f(x^*)=f^*,f(y)\leq f(x)\right\}<\infty$.
\end{lemma}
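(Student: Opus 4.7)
My plan is to combine the (assumed) convexity of $f$ with the weighted Cauchy--Schwarz inequality of Lemma~\ref{lemma:duality} and the per-block descent of Lemma~\ref{lemma:block_descent}, and then close the resulting recurrence in $\phi := f(x)-f^*$ via a Young-style inversion. The strategy parallels Nesterov's classical Lemma 2 in the block-Lipschitz setting, generalized to arbitrary H\"older exponent $\gamma\in(0,1]$. Throughout, let $x^*$ denote a minimizer of $f$ that attains the maximum in $R(x)$; coercivity ensures $R(x)<\infty$, and since $x$ lies in its own sublevel set, $\|x-x^*\|_{(1+\alpha-\nu)(1-\nu),\nu/(\nu-1)} \leq R(x)_{(1+\alpha-\nu)(1-\nu),\nu/(\nu-1)}$.

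First, I would invoke convexity of $f$ to obtain $f(x)-f^* \leq \langle \nabla f(x), x-x^*\rangle$ and then apply the weighted Cauchy--Schwarz inequality of Lemma~\ref{lemma:duality}(1) with the dual norm pair $(\|\cdot\|_{\alpha+1-\nu,\nu},\|\cdot\|_{(1+\alpha-\nu)(1-\nu),\nu/(\nu-1)})$, which arises exactly from the choice $q=\nu$ and $p=\nu/(\nu-1)$. This yields
\[
f(x)-f^* \leq \|\nabla f(x)\|_{\alpha+1-\nu,\nu}\cdot R(x)_{(1+\alpha-\nu)(1-\nu),\nu/(\nu-1)},
\]
isolating the weighted gradient norm as the quantity left to control.

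Next, I would derive a Polyak--{\L}ojasiewicz-type upper bound on this weighted gradient norm via per-block descent. For each block $i$, Lemma~\ref{lemma:block_descent} produces a step $x^+_i$ with $f(x)-f(x^+_i) \geq \|\nabla_i f(x)\|^{\nu}/(\nu L_i^{\nu-1})$; chaining this with $f^*\leq f(x^+_i)$ gives $\|\nabla_i f(x)\|^{\nu} \leq \nu L_i^{\nu-1}(f(x)-f^*)$. Multiplying by $L_i^{\alpha+1-\nu}$ and summing over $i=1,\dots,m$ yields the PL-style bound
\[
\|\nabla f(x)\|_{\alpha+1-\nu,\nu}^{\nu} \;=\; \sum_{i=1}^{m} L_i^{\alpha+1-\nu}\|\nabla_i f(x)\|^{\nu} \;\leq\; \nu\, S_\alpha(f)\,(f(x)-f^*).
\]

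Finally, with $\phi:=f(x)-f^*$ and $R:=R(x)_{(1+\alpha-\nu)(1-\nu),\nu/(\nu-1)}$, the two steps combine to the recurrence $\phi \leq (\nu S_\alpha\,\phi)^{1/\nu} R$. Closing it requires a Young inequality of the form $ab \leq (\lambda a)^{\nu}/\nu + (\nu-1)(b/\lambda)^{\nu/(\nu-1)}/\nu$ applied with $a=\phi^{1/\nu}$, $b=(\nu S_\alpha)^{1/\nu}R$, and with the scaling $\lambda$ tuned so that the coefficient in front of $\phi$ on the right-hand side is absorbed to exactly $1/2$ (namely $\lambda^{\nu}=\nu/2$). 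Moving this term to the left and solving for $\phi$ then produces the factor $2(\nu-1)/\nu$ and transforms $(\nu S_\alpha)^{1/(\nu-1)}$ into $(\nu S_\alpha/2)^{1/(\nu-1)}$, which is exactly the stated constant.

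The main obstacle I anticipate is precisely the Young-inequality bookkeeping in this last step: the obvious ``optimal'' scaling $\lambda$ gives only the weaker bound $\phi \leq (\nu S_\alpha)^{1/(\nu-1)}R^{\nu/(\nu-1)}$, which differs from the stated bound by a $\nu$- and $\gamma$-dependent multiplicative constant. Extracting the sharper factor $(\nu S_\alpha/2)^{1/(\nu-1)}\cdot(\nu-1)/\nu$ requires deliberately choosing $\lambda$ to absorb \emph{half} of $\phi$ into the left-hand side rather than minimizing the right-hand side, and then carefully balancing the exponents on $S_\alpha$. All other ingredients---convexity, Lemma~\ref{lemma:duality}, and Lemma~\ref{lemma:block_descent}---apply directly, and coercivity of $f$ enters only to ensure $R(x)<\infty$.
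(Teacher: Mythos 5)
Your first two steps are sound, and they are in fact the same ingredients the paper deploys in the proof of Theorem~\ref{thm.conv.rate.convex}: convexity plus the weighted Cauchy--Schwarz inequality give $\phi\leq\|\nabla f(x)\|_{1+\alpha-\nu,\nu}\,R$ with $\phi:=f(x)-f^*$ and $R:=R_{(1+\alpha-\nu)(1-\nu),\frac{\nu}{\nu-1}}(x)$, and summing the per-block descent bounds of Lemma~\ref{lemma:block_descent} with weights $L_i^{\alpha+1-\nu}$ gives $\|\nabla f(x)\|_{1+\alpha-\nu,\nu}^{\nu}\leq\nu S_\alpha\phi$. The gap is in your closing step. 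The sharpest consequence of $\phi\leq(\nu S_\alpha\phi)^{1/\nu}R$ is the direct inversion $\phi\leq(\nu S_\alpha)^{1/(\nu-1)}R^{\nu/(\nu-1)}$, and no Young-type splitting can improve on it: Young's inequality is a relaxation of the product $ab$, exact only at the optimal scaling, and that optimal scaling reproduces the direct inversion. Concretely, your choice $\lambda^{\nu}=\nu/2$ gives $\phi\leq\tfrac12\phi+\tfrac{\nu-1}{\nu}(2S_\alpha)^{1/(\nu-1)}R^{\nu/(\nu-1)}$, hence $\phi\leq 2\,\tfrac{\nu-1}{\nu}\,(2S_\alpha)^{1/(\nu-1)}R^{\nu/(\nu-1)}$, which exceeds the stated bound by the factor $2\,(4/\nu)^{1/(\nu-1)}>1$ for every $\nu>1$; the direct inversion exceeds it by $2^{1/(\nu-1)}\tfrac{\nu}{\nu-1}>1$. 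So the two inequalities you derive are simply not strong enough to yield the constant $\bigl(\tfrac{\nu S_\alpha}{2}\bigr)^{1/(\nu-1)}\tfrac{\nu-1}{\nu}$; at best you prove the lemma with a strictly worse constant (enough for the same $\mathcal{O}$-rates downstream, but not for the statement as written).

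The paper reaches the sharper constant by a different mechanism: it first establishes the upper model
\begin{equation*}
f(y)\leq f(x)+\inner{\nabla f(x)}{y-x}+\left(\tfrac{\nu S_\alpha}{2}\right)^{\frac{1}{\nu-1}}\tfrac{\nu-1}{\nu}\,\|y-x\|_{(1+\alpha-\nu)(1-\nu),\frac{\nu}{\nu-1}}^{\frac{\nu}{\nu-1}}
\end{equation*}
and then evaluates it at $x=x^*$, where $\nabla f(x^*)=0$. Relative to your argument there are two missing ingredients. First, a symmetrization step: applying Lemma~\ref{lemma:block_descent} to the linearization error $f(\cdot)-f(y)-\inner{\nabla f(y)}{\cdot-y}$ (whose minimum is zero by convexity) gives the Bregman lower bound $f(x)-f(y)-\inner{\nabla f(y)}{x-y}\geq\tfrac{1}{\nu S_\alpha}\|\nabla f(x)-\nabla f(y)\|_{1+\alpha-\nu,\nu}^{\nu}$, and adding this to its mirror image yields the co-coercivity estimate $\tfrac{2}{\nu S_\alpha}\|\nabla f(x)-\nabla f(y)\|_{1+\alpha-\nu,\nu}^{\nu-1}\leq\|x-y\|_{(1+\alpha-\nu)(1-\nu),\frac{\nu}{\nu-1}}$ --- this symmetrization is the source of the $2$ in the constant. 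Second, an integration step: writing $f(x+u)-f(x)-\inner{\nabla f(x)}{u}=\int_0^1\inner{\nabla f(x+tu)-\nabla f(x)}{u}\,dt$ and inserting the co-coercivity estimate produces the factor $\int_0^1 t^{1/(\nu-1)}\,dt=\tfrac{\nu-1}{\nu}$. To repair your proof you would have to replace the single-point PL-type bound with this symmetrized, integrated estimate; the pointwise bound alone cannot see either factor.
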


\noindent We defer the proof of this lemma to the appendix (Appendix \ref{app:level-set}).

Finally, equipped with these tools, we present and prove the theorem that establishes RBCD's convergence rate for convex functions. Afterward, we explain its relationship to its analogue for smooth and convex functions in \cite{Nesterov12}.

\begin{theorem}[RBCD Convergence: Convex Objective Functions]\label{thm.conv.rate.convex}
Let $\{x_n\}_{n=1}^{\infty}$ be the sequence generated by RBCD (Algorithm \ref{algoHolder.main}). If $f$ is a convex and coercive function that satisfies our H{\"o}lder smoothness \eqref{eq:holder} and block H\"older smoothness \eqref{eq:Holder-block} assumptions, then
\[
\mathbb{E}[f(x^{k})]- f^*\leq\frac{\left(\nu S_\alpha(f) R_{(1+\alpha-\nu)(1-\nu),\frac{\nu}{\nu-1}}(x^0)^\nu\right)^{\frac{1}{\nu-1}}(\nu-1)}{\left[2\nu^{\nu-1}+(\nu-1)^\nu k\right]^{\frac{1}{\nu-1}}}=\mathcal{O}\left(k^{-\frac{1}{\nu-1}}\right) {,}
\]
where $R_{\beta,q}(x^0):=\max_{y}\left\{\|y-x^*\|_{\beta,q}:f(x^*)=f^*,f(y)\leq f(x^0)\right\}<\infty${.}
\end{theorem}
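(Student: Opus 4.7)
The plan is to combine the Sufficient Decrease Lemma (Lemma~\ref{lemma:sufficient-decrease}) with convexity and the Technical Recurrence Lemma (Lemma~\ref{lemma_poly}), closing with Lemma~\ref{lemma:nesterov-holder-cvx-str} to absorb the initial optimality gap into the final constant. First I would note that Lemma~\ref{lemma:block_descent} gives the \emph{deterministic} per-realization decrease $f(x^{k+1}) \leq f(x^k)$, so almost surely every iterate lies in the initial sublevel set. Consequently, for any minimizer $x^*$ the weighted distance $\|x^k - x^*\|_{(1+\alpha-\nu)(1-\nu),\,\nu/(\nu-1)}$ is bounded above almost surely by $R := R_{(1+\alpha-\nu)(1-\nu),\,\nu/(\nu-1)}(x^0)$, which is finite because $f$ is coercive.

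Next, convexity yields $f(x^k) - f^* \leq \inner{\nabla f(x^k)}{x^k - x^*}$, to which I apply the generalized Cauchy--Schwarz inequality from Lemma~\ref{lemma:duality}(1) with $q = \nu$ and its conjugate exponent $\nu/(\nu-1)$. Choosing the weight parameter so that the gradient side becomes $\|\nabla f(x^k)\|_{1+\alpha-\nu,\,\nu}$ forces the distance side into the same weighted norm that defines $R$, giving the pointwise bound
\[
f(x^k) - f^* \;\leq\; R \cdot \|\nabla f(x^k)\|_{1+\alpha-\nu,\,\nu}.
\]
Raising to the $\nu$-th power and invoking Lemma~\ref{lemma:sufficient-decrease} produces, again pointwise,
\[
\bigl(f(x^k) - f^*\bigr)^{\nu} \;\leq\; \nu\, S_{\alpha}(f)\, R^{\nu}\, \bigl(f(x^k) - \mathbb{E}[f(x^{k+1}) \mid x^k]\bigr).
\]
Taking total expectation and applying Jensen's inequality to the convex map $t \mapsto t^{\nu}$ on the left yields the scalar recurrence $A_k^{\nu} \leq \nu S_{\alpha}(f)\, R^{\nu}\, (A_k - A_{k+1})$, where $A_k := \mathbb{E}[f(x^k)] - f^* \geq 0$ is nonincreasing.

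Rearranging gives $A_{k+1} \leq A_k - \theta A_k^{\nu}$ with $\theta = 1/(\nu S_{\alpha}(f) R^{\nu})$, so Lemma~\ref{lemma_poly} applied with $r = \nu$ produces
\[
A_k \;\leq\; \frac{A_0}{\bigl(1 + (\nu-1)\theta A_0^{\nu-1} k\bigr)^{1/(\nu-1)}} \;=\; \frac{\bigl(\nu S_{\alpha}(f) R^{\nu}\bigr)^{1/(\nu-1)}}{\bigl(\nu S_{\alpha}(f) R^{\nu}/A_0^{\nu-1} + (\nu-1)k\bigr)^{1/(\nu-1)}}.
\]
Lemma~\ref{lemma:nesterov-holder-cvx-str} then bounds $A_0^{\nu-1} \leq (\nu S_{\alpha}(f)/2)\,((\nu-1)/\nu)^{\nu-1} R^{\nu}$, which implies the universal lower bound $\nu S_{\alpha}(f) R^{\nu}/A_0^{\nu-1} \geq 2\nu^{\nu-1}/(\nu-1)^{\nu-1}$. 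Substituting into the denominator and pulling a factor of $(\nu-1)^{\nu-1}$ under the root converts the previous bound into precisely
\[
A_k \;\leq\; \frac{(\nu-1)\bigl(\nu S_{\alpha}(f) R^{\nu}\bigr)^{1/(\nu-1)}}{\bigl(2\nu^{\nu-1} + (\nu-1)^{\nu} k\bigr)^{1/(\nu-1)}},
\]
which is the advertised bound.

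The main technical obstacles are, first, confirming that the Cauchy--Schwarz inequality of Lemma~\ref{lemma:duality}(1), applied with exponent $\nu$ on the gradient factor, yields exactly the weighted-norm subscript $(1+\alpha-\nu)(1-\nu)$ on the $x^k - x^*$ factor so that the resulting bound is compatible with $R$ as defined via Lemma~\ref{lemma:nesterov-holder-cvx-str}; and, second, the purely algebraic bookkeeping in the final step, which is where the constant $2\nu^{\nu-1}$ emerges from the interplay between the initial-gap bound and the form demanded by the theorem. It is also important that the pointwise decrease in Lemma~\ref{lemma:block_descent} be used (rather than only the in-expectation version), so that $\|x^k - x^*\|$ is bounded almost surely before Jensen's inequality is invoked.
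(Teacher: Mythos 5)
Your proposal is correct and follows essentially the same route as the paper's proof: convexity plus the weighted Cauchy--Schwarz inequality of Lemma~\ref{lemma:duality} gives $f(x^k)-f^*\le R\,\|\nabla f(x^k)\|_{1+\alpha-\nu,\nu}$, which combined with Lemma~\ref{lemma:sufficient-decrease}, Jensen's inequality, Lemma~\ref{lemma_poly} with $r=\nu$ and $\theta=1/(\nu S_\alpha R^\nu)$, and the initial-gap bound of Lemma~\ref{lemma:nesterov-holder-cvx-str} yields exactly the stated constant. Your explicit remark that the deterministic decrease from Lemma~\ref{lemma:block_descent} keeps the iterates in the initial sublevel set almost surely (so that $\|x^k-x^*\|\le R$ pointwise) is a detail the paper leaves implicit, and you correctly apply Jensen to $t\mapsto t^\nu$ where the paper's text has a typo ($x\mapsto x^{\nu/2}$).
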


\begin{proof}
The bulk of this proof centers on an application of the Technical Recurrence Lemma (Lemma \ref{lemma:recurrence}). In the context of that lemma, we let $A_i = \mathbb{E}[f(x^{i})]- f^*$ for each $i\geq 0$. By definition, $A_i\geq 0$ for each $i\geq 0$. To simplify notation, we let $R:=R_{(1+\alpha-\nu)(1-\nu),\frac{\nu}{\nu-1}}(x^0)$.

With this notation, we may restate the sufficient decrease inequality \eqref{eq:sufficient-decrease} of Lemma \ref{lemma:sufficient-decrease} as
\begin{equation*}
\mathbb{E}\left[\|\nabla f(x_k)\|_{\alpha+1-\nu,\nu}^\nu\right]\leq \nu S_\alpha\cdot\left(A_k-A_{k+1}\right),
\end{equation*}
or, equivalently,
\begin{equation}\label{eq:cvx.recur}
A_{k+1}\leq A_k-\frac{1}{\nu S_\alpha}\mathbb{E}\left[\|\nabla f(x_k)\|_{\alpha+1-\nu,\nu}^\nu\right].
\end{equation}
Thus, to apply the Technical Recurrence Lemma (Lemma \ref{lemma:recurrence}) we need only bound the expectation on the right below by $A_k^\nu$. By the Cauchy-Schwarz inequality (Lemma \ref{lemma:duality}) for $\|\cdot\|_{1+\alpha-\nu,\nu}$ and its dual $\|\cdot\|_{(1+\alpha-\nu)(1-\nu),\frac{\nu}{\nu-1}}$, we achieve for any optimum $x^*$, that
\begin{multline*}
f(x^k)-f^*\leq \inner{x^k-x^*}{\nabla f(x^k)}\\
\leq\|x^k-x^*\|_{(1+\alpha-\nu)(1-\nu),\frac{\nu}{\nu-1}}\|\nabla f(x^k)\|_{1+\alpha-\nu,\nu}\leq R\|\nabla f(x^k)\|_{1+\alpha-\nu,\nu}.
\end{multline*}
Raising each side of the above inequality to the power $\nu$, taking expectations, and applying  Jensen's inequality to the convex function $x\mapsto x^{\nu/2}$, we conclude
\begin{equation*}
A_k^\nu=\left(\mathbb{E}[f(x^k)]-f^*\right)^\nu\leq \mathbb{E}\left[\left(f(x^k)-f^*\right)^\nu\right]\\
\leq R^\nu\mathbb{E}\left[\|\nabla f(x^k)\|_{1+\alpha-\nu,\nu}^\nu\right].
\end{equation*}
Stringing together our work above, equation \eqref{eq:cvx.recur} yields the recurrence
\[
A_{k+1}\leq A_k-\frac{1}{\nu S_\alpha R^\nu} A_k^\nu
\]
for each $k\geq 0$.
We are now permitted to apply the Technical Recurrence Lemma (Lemma \ref{lemma:recurrence}) with  $r=\nu$ and $\theta=\frac{1}{\nu S_\alpha R^\nu}$ to produce
\begin{equation*}
\mathbb{E}[f(x^{k})]- f^*\leq\frac{f(x^{0})- f^*}{\left[1+(\nu-1)\cdot\theta\cdot\left(f(x^{0})- f^*\right)^{\nu-1}\cdot k\right]^{\frac{1}{\nu-1}}}.
\end{equation*}
We dedicate the remainder of this proof to simplifying this convergence bound. By factoring $f(x^{0})- f^*$ out of both the numerator and denominator, we may equivalently write the right-hand side of this bound as
 \begin{equation*}
\frac{1}{\left[\frac{1}{\left(f(x^{0})- f^*\right)^{\nu-1}}+(\nu-1)\cdot\theta\cdot k\right]^{\frac{1}{\nu-1}}}.
\end{equation*}

By considering $x=x^0$ in Lemma~\ref{lemma:nesterov-holder-cvx-str}, raising both sides to the power $\nu-1$ and applying the norm equivalence inequality from Lemma \ref{lemma:duality}, we further see
\begin{align*}
\left[f(x^0)-f(x^*)\right]^{\nu-1}&\leq \left(\frac{\nu S_\alpha}{2}\right)\cdot\left(\frac{\nu-1}{\nu}\right)^{\nu-1}\cdot R^\nu
\end{align*}
so the right-hand side of our bound simplifies to
\[
\left[\frac{1}{\left(\frac{\nu S_\alpha}{2}\right)\cdot\left(\frac{\nu-1}{\nu}\right)^{\nu-1}\cdot R^\nu}+(\nu-1)\cdot\left(\frac{1}{\nu S_\alpha R^\nu}\right)\cdot k\right]^{-\frac{1}{\nu-1}}=\frac{\left(\nu S_\alpha R^\nu\right)^{\frac{1}{\nu-1}}(\nu-1)}{\left(2\nu^{\nu-1}+(\nu-1)^\nu k\right)^{\frac{1}{\nu-1}}},
\]
which concludes the proof.
\end{proof}
Notably, our rate matches that provided by {Nesterov \cite{Nesterov12}} in the standard block smooth setting, i.e. when $\nu=2$ we recover the convergence rate,
\[
\mathbb{E}[A_{k+1}] \leq \frac{2}{k+4}S_\alpha (f) R^2(x_1) = \mathcal{O}(k^{-1}),
\]
from \cite{Nesterov12}.

\section{Convergence Analysis: Strongly Convex Objectives}
\label{sec:strong-convex}

In this final section, we conclude the paper with a convergence analysis of RBCD (Algorithm \ref{algoHolder.main}) for strongly convex objective functions that are both H\"older \eqref{eq:holder} and block H\"older \eqref{eq:Holder-block} smooth. We say that $f:\mathbb{R}^d\to \mathbb{R}$ is $\sigma$-strongly convex with respect to the norm $\|\cdot\|_{1-\alpha,2}$, where $\sigma>0$, if
\begin{equation}\label{eq:str-cvx}
f(y) \geq f(x) +\inner{\nabla f(x)}{y-x} + \frac{1}{2}\sigma \|x-y\|_{1-\alpha,2}^2
\end{equation}
for all $x,y\in\mathbb{R}^d$. The section begins with our main theorem (Theorem \ref{thm.conv.rate.strong-convex}), which provides rates in both the $L$-smooth and H\"older smooth settings. Next, we compare these rates with those in the previous section and \cite{Nesterov12}. Finally, we show that the smooth setting's linear rate is achieved in the limit as $\nu\to 2$, or equivalently, $\gamma\to 1$ (Corollary~\ref{cor:str-cvx.interpolation}). 

Without further ado, we present our main convergence theorem for strongly convex objective functions.

\begin{theorem}[RBCD Convergence: Strongly Convex Objective Functions]\label{thm.conv.rate.strong-convex}
Let $\{x_n\}_{n=1}^{\infty}$ be the sequence generated by RBCD (Algorithm \ref{algoHolder.main}). Suppose that $f:\mathbb{R}^d\to\mathbb{R}$ is $\sigma$-strongly convex and satisfies both the H{\"o}lder and block H{\"o}lder  smoothness assumptions \eqref{eq:holder} and \eqref{eq:Holder-block}. The following hold:
\begin{enumerate}
\item (Linear Rate - Smooth Setting) If $\nu=2$, i.e. $f$ is smooth, then
\begin{equation}\label{eq:linear-rate}
\mathbb{E}[f(x_k)] -f^* \leq \left(1-\frac{\sigma}{S_\alpha(f)} \right)^k \cdot \frac{S_\alpha(f)^{\frac{1}{\nu-1}}(\nu-1)R(x^0)^{\frac{\nu}{\nu-1}}}{\nu^{\frac{\nu-2}{\nu}}2^{\frac{1}{\nu-1}}}=\mathcal{O}\left(\exp\left(-\frac{\sigma}{S_\alpha(f)}k\right)\right).
\end{equation}
\item (Sublinear Rate - H{\"o}lder Smooth Setting) If $\nu>2$, i.e. $f$ is H{\"o}lder smooth but not smooth, then
\begin{equation*}
 \mathbb{E}[f(x^{k})]- f^* \leq \frac{C_0}{\left(C_1+C_2 k\right)^{\frac{2}{\nu-2}}}=\mathcal{O}\left(k^{-\frac{2}{\nu-2}}\right),
\end{equation*}
where
\begin{equation*}
\begin{gathered}
C_0=(2\nu S_{\alpha}(f))^{\frac{2}{\nu-2}}m^{\frac{1}{\nu}}(\nu-1)R(x^0)^{\frac{\nu}{\nu-1}},\quad C_1=2^{\frac{\nu-2}{2(\nu-1)}}m^{\frac{\nu-2}{2\nu}} S_{\alpha}(f)^{\frac{\nu-1}{\nu}}\nu^{\frac{\nu^2-2\nu+4}{2\nu}}\\
C_2=R(x^0)^{\frac{\nu(\nu-2)}{2(\nu-1)}}(\nu-1)^{\frac{\nu-2}{2}}(\nu-2)(2\sigma)^{\frac{\nu}{2}}\displaystyle \min_{1\leq i\leq m} L_i^{\frac{(\alpha+1)(2-\nu)}{2\nu}}
\end{gathered}
\end{equation*}
\end{enumerate}
\end{theorem}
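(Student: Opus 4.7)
The plan is to convert strong convexity into a Polyak-{\L}ojasiewicz (PL) type inequality, combine it with Lemma~\ref{lemma:sufficient-decrease}, and then handle the smooth and strictly H\"older smooth cases via elementary iteration and Lemma~\ref{lemma:recurrence}, respectively. The PL inequality is obtained as follows. By Lemma~\ref{lemma:duality}(1) applied with $p=q=2$, the dual norm of $\|\cdot\|_{1-\alpha,2}$ with respect to the standard Euclidean inner product is $\|\cdot\|_{\alpha-1,2}$. Hence, minimizing the strong convexity bound \eqref{eq:str-cvx} over $y$, or equivalently using $y=x^*$ after completing the square, yields
\begin{equation*}
f(x) - f^{*} \; \le \; \frac{1}{2\sigma}\,\|\nabla f(x)\|_{\alpha-1,2}^{2}.
\end{equation*}

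For the linear rate case ($\nu=2$), note that $1+\alpha-\nu = \alpha-1$, so the sufficient decrease from Lemma~\ref{lemma:sufficient-decrease} already uses the same norm as the PL bound. Combining the two gives
\begin{equation*}
\mathbb{E}[f(x^{k+1})\mid x^k] \;\le\; f(x^k) - \tfrac{1}{2 S_\alpha}\|\nabla f(x^k)\|_{\alpha-1,2}^{2} \;\le\; f(x^k) - \tfrac{\sigma}{S_\alpha}\bigl(f(x^k)-f^*\bigr),
\end{equation*}
and subtracting $f^{*}$, taking total expectations and iterating produces $\mathbb{E}[f(x^k)] - f^* \le (1 - \sigma/S_\alpha)^k (f(x^0)-f^*)$. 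The leading constant in \eqref{eq:linear-rate} follows by bounding $f(x^0) - f^*$ with Lemma~\ref{lemma:nesterov-holder-cvx-str}.

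For the sublinear case ($\nu>2$), the sufficient decrease controls $\|\nabla f\|_{1+\alpha-\nu,\nu}^{\nu}$, which must be linked back to $\|\nabla f\|_{\alpha-1,2}^{2}$. I would apply Lemma~\ref{lemma:duality}(2) with $p=\nu$, $\alpha \mapsto 1+\alpha-\nu$, $\beta = \alpha-1$ (a direct calculation shows $\alpha/p - \beta/2 = (1+\alpha)(2-\nu)/(2\nu)$), giving
\begin{equation*}
\|\nabla f(x^k)\|_{1+\alpha-\nu,\nu}^{\nu} \;\ge\; m^{1-\nu/2}\Bigl[\min_{1\le i\le m} L_i^{(1+\alpha)(2-\nu)/(2\nu)}\Bigr]^{\nu} \|\nabla f(x^k)\|_{\alpha-1,2}^{\nu}.
\end{equation*}
Raising the PL inequality to the power $\nu/2$ yields $\|\nabla f(x^k)\|_{\alpha-1,2}^{\nu} \ge (2\sigma)^{\nu/2}(f(x^k)-f^*)^{\nu/2}$. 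Plugging these into Lemma~\ref{lemma:sufficient-decrease}, taking total expectations, and invoking Jensen's inequality on the convex map $t\mapsto t^{\nu/2}$ (valid since $\nu\ge 2$) give the recurrence
\begin{equation*}
A_{k+1} \;\le\; A_k - \theta\, A_k^{\nu/2}, \qquad A_k := \mathbb{E}[f(x^k)] - f^{*},
\end{equation*}
with $\theta$ collecting the above constants. The Technical Recurrence Lemma (Lemma~\ref{lemma:recurrence}) with $r=\nu/2$ then delivers the $\mathcal{O}(k^{-2/(\nu-2)})$ decay.

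The main obstacle is \emph{not} conceptual but bookkeeping: matching the output of Lemma~\ref{lemma:recurrence} to the advertised constants $C_0, C_1, C_2$. This requires factoring the raw estimate $A_k \le A_0(1 + \tfrac{\nu-2}{2}\theta A_0^{(\nu-2)/2} k)^{-2/(\nu-2)}$ into $C_0/(C_1 + C_2 k)^{2/(\nu-2)}$ form, absorbing the upper bound $A_0 \le (\nu S_\alpha/2)^{1/(\nu-1)}(\nu-1)/\nu \cdot R(x^0)^{\nu/(\nu-1)}$ from Lemma~\ref{lemma:nesterov-holder-cvx-str}, and carefully tracking the norm-equivalence factor through the exponent $2/(\nu-2)$. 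No further analytic ingredients are needed.
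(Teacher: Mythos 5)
Your proposal follows essentially the same route as the paper's proof: the Polyak--{\L}ojasiewicz inequality obtained by minimizing the strong-convexity bound (with the dual-norm identification $\left(\|\cdot\|_{1-\alpha,2}\right)^{*}=\|\cdot\|_{\alpha-1,2}$), the sufficient decrease lemma, the norm equivalence of Lemma~\ref{lemma:duality} with exponent $\frac{(1+\alpha)(2-\nu)}{2\nu}$, backward induction for $\nu=2$, and the Technical Recurrence Lemma with $r=\nu/2$ for $\nu>2$, finishing with Lemma~\ref{lemma:nesterov-holder-cvx-str} to bound $f(x^0)-f^*$. The only discrepancy is in constant bookkeeping: you (correctly) raise the norm-equivalence factor $m^{\frac{1}{\nu}-\frac{1}{2}}\min_{1\le i\le m}L_i^{(1+\alpha)(2-\nu)/(2\nu)}$ to the $\nu$-th power, whereas the paper's recurrence \eqref{eq:recur-str-cvx} and the stated constants $C_1,C_2$ carry it only to the first power --- this changes the constants but not the $\mathcal{O}\left(k^{-2/(\nu-2)}\right)$ rate.
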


\begin{proof}
Let $R:=R_{(1+\alpha-\nu)(1-\nu),\frac{\nu}{\nu-1}}(x^0)$ to simplify notation. Both parts of the theorem speedily follow from the recurrence
\begin{equation}\label{eq:recur-str-cvx}
A_{k+1}\leq A_{k} - A_{k}^{\frac{\nu}{2}} \cdot \left( \frac{(2\sigma)^{\frac{\nu}{2}}\displaystyle \min_{1\leq i\leq m} L_i^{\frac{(\alpha+1)(2-\nu)}{2\nu}}}{\nu S_{\alpha}m^{\frac{1}{2}-\frac{1}{\nu}}}\right){,}
\end{equation}
where $A_i = \mathbb{E}[f(x^{i})]- f^*$ for each $i\geq 0$. After establishing \eqref{eq:recur-str-cvx}, we will separately show how each of the Theorem's two parts result from it. 

As in the proof of convergence for non-strongly convex functions, the sufficient decrease inequality \eqref{eq:sufficient-decrease} of Lemma \ref{lemma:sufficient-decrease} implies \eqref{eq:cvx.recur}, which we recall is
\[
A_{k+1}\leq A_k-\frac{1}{\nu S_\alpha}\mathbb{E}\left[\|\nabla f(x_k)\|_{\alpha+1-\nu,\nu}^\nu\right].
\]
Glancing at \eqref{eq:recur-str-cvx} and this latest inequality, it becomes immediately clear that we ought to bound $\mathbb{E}\left[\|\nabla f(x_k)\|_{\alpha+1-\nu,\nu}^\nu\right]$ below by $A_{k}^{\frac{\nu}{2}}=\mathbb{E}\left[ f(x_k)-f^*\right]^{\nu/2}$, appropriately scaled. To this end, strong convexity now makes it's main appearance. Using the standard argument of fixing $x\in\mathbb{R}^d$ in $\sigma$-strong convexity's defining inequality \eqref{eq:str-cvx} and minimizing it over $y\in\mathbb{R}^d$, we achieve the Polyak-\L ojasiewicz (PL) inequality
\[
\frac{1}{2\sigma} \left(\|\nabla f(x)\|_{1-\alpha,2}^*\right)^2 \geq f(x)-f^* .
\]
Setting $x=x_k$, raising both sides to the power $\nu/2$, and then taking expectations, we see 
\begin{equation*}
\frac{1}{(2\sigma)^{\frac{\nu}{2}}} \mathbb{E}\left[\left(\|\nabla f(x_k)\|_{1-\alpha,2}^*\right)^\nu \right] \geq \mathbb{E}\left[(f(x_k)-f^*)^{\nu/2}\right],
\end{equation*}
which, by Jensen's inequality applied to the convex function $x\mapsto x^{\nu/2}$, produces
\begin{equation}\label{eq:strcv.Jens}
\frac{1}{(2\sigma)^{\frac{\nu}{2}}} \mathbb{E}\left[\left(\|\nabla f(x_k)\|_{1-\alpha,2}^*\right)^\nu \right] \geq \left(\mathbb{E}\left[ f(x_k)-f^*\right] \right)^{\nu/2} = A_{k}^{\frac{\nu}{2}}
\end{equation}
The main bound \eqref{eq:recur-str-cvx} is secured by twice applying the $(\alpha,q)$-Norm Duality Equivalence Lemma (Lemma \ref{lemma:duality}) to connect the recurrence inequality \eqref{eq:cvx.recur} and the PL-derived bound \eqref{eq:strcv.Jens},
\begin{align*}
A_k- A_{k+1}&\stackrel{\eqref{eq:cvx.recur}}\geq \frac{1}{\nu S_\alpha}\mathbb{E}\left[\|\nabla f(x_k)  \|_{\alpha+1-\nu,\nu}^\nu\right] \\
&\stackrel{\text{Lemma }\ref{lemma:duality}}\geq \frac{1}{\nu S_\alpha}\cdot \left(\frac{\displaystyle \min_{1\leq i\leq m}L_i^{\frac{\alpha+1-\nu}{\nu}-\frac{\alpha-1}{2}}}{m^{\frac{1}{2}-\frac{1}{\nu}}} \right)\cdot \mathbb{E}\left[\|\nabla f(x_k)\|_{\alpha-1,2}^\nu\right]\\
&\stackrel{\text{Lemma }\ref{lemma:duality}}=\frac{1}{\nu S_\alpha}\cdot \left(\frac{\displaystyle \min_{1\leq i\leq m}L_i^{\frac{(\alpha+1)(2-\nu)}{2\nu}}}{m^{\frac{1}{2}-\frac{1}{\nu}}} \right)\cdot \mathbb{E}\left[\left(\|\nabla f(x_k)\|_{1-\alpha,2}^*\right)^\nu\right]\\
&\stackrel{\eqref{eq:strcv.Jens}}\geq \frac{1}{\nu S_\alpha}\cdot \left(\frac{\displaystyle \min_{1\leq i\leq m}L_i^{\frac{(\alpha+1)(2-\nu)}{2\nu}}}{m^{\frac{1}{2}-\frac{1}{\nu}}} \right)\cdot \left[(2\sigma)^{\frac{\nu}{2}}A_{k}^{\frac{\nu}{2}}\right].
\end{align*}
Now, we are prepared to prove the theorem's two constituent parts.\\

\noindent 1. If $\nu=2$, then the main recurrence inequality \eqref{eq:recur-str-cvx} becomes
\[
A_{k+1}\leq A_{k} - A_k^{} \cdot \left( \frac{\sigma}{S_{\alpha}}\right) = A_k\left(1-\frac{\sigma}{S_\alpha} \right),
\]
which by backward induction is equivalent to our desired bound,
\begin{align}\label{eq:str-cvx.nu=2}
\mathbb{E}[f(x_k)] -f^* = A_k\leq \left(1-\frac{\sigma}{S_\alpha} \right)^k\cdot  A_0&=\left(1-\frac{\sigma}{S_\alpha} \right)^k \cdot \left[f(x_0) -f^* \right]\\
&\leq \left(1-\frac{\sigma}{S_\alpha} \right)^k \cdot \frac{S_\alpha^{\frac{1}{\nu-1}}(\nu-1)R^{\frac{\nu}{\nu-1}}}{\nu^{\frac{\nu-2}{\nu}}2^{\frac{1}{\nu-1}}},\nonumber
\end{align}
where we have applied Lemma \ref{lemma:nesterov-holder-cvx-str} in the first line.\\

\noindent 2. The $\nu>2$ result requires a verification that is as straightforward as, but more tedious than, that of 1. Applying the Technical Recurrence Lemma (Lemma \ref{lemma:recurrence}) with  $r=\nu/2$, and\\ $\theta=\frac{(2\sigma)^{\frac{\nu}{2}}}{\nu S_\alpha m^{\frac{\nu-2}{2\nu}}}\cdot \displaystyle \min_{1\leq i\leq m} L_i^{\frac{(\alpha+1)(2-\nu)}{2\nu}}$ we see
\begin{equation}\label{eq:rate-for-interpolation}
\mathbb{E}[f(x^{k})]- f^*\leq \frac{f(x^{0})- f^*}{\left[1+\left(\frac{\nu-2}{2}\right)\cdot \frac{(2\sigma)^{\frac{\nu}{2}}}{\nu S_\alpha m^{\frac{\nu-2}{2\nu}}}\cdot \displaystyle \min_{1\leq i\leq m} L_i^{\frac{(\alpha+1)(2-\nu)}{2\nu}}\cdot\left(f(x^{0})- f^*\right)^{\frac{\nu-2}{2}}\cdot k\right]^{\frac{2}{\nu-2}}}.
\end{equation}
This intermediate form of our convergence rate will facilitate the proof of our later convergence rate interpolation result (Corollary \ref{cor:str-cvx.interpolation}) so we have labeled it. 

For now though, we focus on processing this expression of the rate into its final form. The first step is to simply re-arrange this to
\[
\mathbb{E}[f(x^{k})]- f^*\leq \frac{(2\nu S_{\alpha})^{\frac{2}{\nu-2}}m^{\frac{1}{\nu}}(f(x^0)-f^*)}{\left[2\nu S_\alpha m^{\frac{\nu-2}{2\nu}}+ (\nu-2)(2\sigma)^{\frac{\nu}{2}}\displaystyle \min_{1\leq i\leq m} L_i^{\frac{(\alpha+1)(2-\nu)}{2\nu}}\cdot\left(f(x^{0})- f^*\right)^{\frac{\nu-2}{2}}k\right]^{\frac{2}{\nu-2}}}.
\]
By factoring $f(x^{0})- f^*$ out of both the numerator and denominator, and by applying Lemma \ref{lemma:nesterov-holder-cvx-str} to the previous expression, its right-hand side simplifies to
\begin{align*}\label{eq:rate-for-interpolation}
 &= \frac{(2\nu S_{\alpha})^{\frac{2}{\nu-2}}m^{\frac{1}{\nu}}}{\left[\frac{\nu S_\alpha m^{\frac{\nu-2}{2\nu}}}{(f(x^0)-f^*)^{\frac{\nu-2}{2}}}+ (\nu-2)(2\sigma)^{\frac{\nu}{2}}\displaystyle \min_{1\leq i\leq m} L_i^{\frac{(\alpha+1)(2-\nu)}{2\nu}}k\right]^{\frac{2}{\nu-2}}}\\
&\stackrel{\text{Lemma }\ref{lemma:nesterov-holder-cvx-str}}\leq \frac{(2\nu S_{\alpha})^{\frac{2}{\nu-2}}m^{\frac{1}{\nu}}}{\left[\frac{\nu S_\alpha m^{\frac{\nu-2}{2\nu}}}{\left( \frac{S_{\alpha}^{\frac{\nu-2}{2(\nu-1)}} (\nu-1)^{\frac{\nu-2}{2}} R^{\frac{\nu(\nu-2)}{2(\nu-1)}}}{\nu^{\frac{(\nu-2)^2}{2\nu}}2^{\frac{\nu-2}{2(\nu-1)}}}\right)}+ (\nu-2)(2\sigma)^{\frac{\nu}{2}}\displaystyle \min_{1\leq i\leq m} L_i^{\frac{(\alpha+1)(2-\nu)}{2\nu}}k\right]^{\frac{2}{\nu-2}}}\\
&= \frac{(2\nu S_{\alpha})^{\frac{2}{\nu-2}}m^{\frac{1}{\nu}}}{\left[\frac{2^{\frac{\nu-2}{2(\nu-1)}}m^{\frac{\nu-2}{2\nu}} S_{\alpha}^{\frac{\nu-1}{\nu}}\nu^{\frac{\nu^2-2\nu+4}{2\nu}}         
}{(\nu-1)^{\frac{\nu-2}{2}}R^{\frac{\nu(\nu-2)}{2(\nu-1)}}}+ (\nu-2)(2\sigma)^{\frac{\nu}{2}}\displaystyle \min_{1\leq i\leq m} L_i^{\frac{(\alpha+1)(2-\nu)}{2\nu}}k\right]^{\frac{2}{\nu-2}}}\\
&= \frac{(2\nu S_{\alpha})^{\frac{2}{\nu-2}}m^{\frac{1}{\nu}}(\nu-1)R^{\frac{\nu}{\nu-1}}}{\left[2^{\frac{\nu-2}{2(\nu-1)}}m^{\frac{\nu-2}{2\nu}} S_{\alpha}^{\frac{\nu-1}{\nu}}\nu^{\frac{\nu^2-2\nu+4}{2\nu}}+ R^{\frac{\nu(\nu-2)}{2(\nu-1)}}(\nu-1)^{\frac{\nu-2}{2}}(\nu-2)(2\sigma)^{\frac{\nu}{2}}\displaystyle \min_{1\leq i\leq m} L_i^{\frac{(\alpha+1)(2-\nu)}{2\nu}}k\right]^{\frac{2}{\nu-2}}}
\end{align*}

\end{proof}

We now make two crucial comparisons for the rates above. First, it is noteworthy that when $\nu=2$ in the strongly-convex regime, we recover the same linear rate as that in \cite{Nesterov12}. Second, our strongly convex sublinear rate in the $\nu>2$ setting, $\mathcal{O}(k^{-\frac{2}{\nu-2}})$, is indeed faster than the $\mathcal{O}(k^{-\frac{1}{\nu-1}})$ rate occurring in the merely convex case. 

To conclude this article, we demonstrate that, in the strongly convex case, when $\nu\to 2$ the intermediate form \eqref{eq:rate-for-interpolation} of the strongly convex sublinear rate of Theorem \ref{thm.conv.rate.strong-convex} converges to its $\nu=2$ linear rate of convergence.

\begin{corollary}\label{cor:str-cvx.interpolation}
\emph{(Interpolation of Linear and Sublinear Rate)} In the strongly convex setting of Theorem \ref{thm.conv.rate.strong-convex}, if $\nu\to 2${,} then the sublinear rate converges to a linear rate of convergence. More formally, for the convergence bound
\[
\mathbb{E}[f(x^{k})]- f^*\leq \frac{f(x^{0})- f^*}{\left[1+\left(\frac{\nu-2}{2}\right)\cdot \frac{(2\sigma)^{\frac{\nu}{2}}}{\nu S_\alpha m^{\frac{\nu-2}{2\nu}}}\cdot \displaystyle \min_{1\leq i\leq m} L_i^{\frac{(\alpha+1)(2-\nu)}{2\nu}}\cdot\left(f(x^{0})- f^*\right)^{\frac{\nu-2}{2}}\cdot k\right]^{\frac{2}{\nu-2}}},
\]
for $k\geq 0$, we observe the limiting result
\begin{multline*}
\lim_{\nu\to 2^+}\frac{f(x^{0})- f^*}{\left[1+\left(\frac{\nu-2}{2}\right)\cdot \frac{(2\sigma)^{\frac{\nu}{2}}}{\nu S_\alpha m^{\frac{\nu-2}{2\nu}}}\cdot \displaystyle \min_{1\leq i\leq m} L_i^{\frac{(\alpha+1)(2-\nu)}{2\nu}}\cdot\left(f(x^{0})- f^*\right)^{\frac{\nu-2}{2}}\cdot k\right]^{\frac{2}{\nu-2}}}\\
\leq\left(1-\frac{\sigma}{2S_\alpha} \right)^k\cdot \left(f(x^{0})- f^*\right)
\end{multline*}
\end{corollary}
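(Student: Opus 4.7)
The plan is to directly evaluate the limit of the stated bound and then relate the resulting exponential expression to the claimed linear rate via a short Taylor-type inequality. The starting point is the intermediate rate labeled \eqref{eq:rate-for-interpolation} in the proof of Theorem~\ref{thm.conv.rate.strong-convex}, which is already in the form on which the corollary operates, so no fresh algorithmic analysis is required.

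Introducing $t := \nu - 2 > 0$ and the shorthand
\[
A(t) \,:=\, \frac{(2\sigma)^{(t+2)/2}}{(t+2)\, S_\alpha\, m^{t/[2(t+2)]}}
        \cdot \min_{1 \le i \le m} L_i^{-(\alpha+1)t/[2(t+2)]}
        \cdot (f(x^0)-f^*)^{t/2},
\]
the denominator inside the stated bound takes the compact form $\bigl[\,1 + A(t)\,tk/2\,\bigr]^{2/t}$. Each factor in $A(t)$ is continuous at $t = 0$: one has $(2\sigma)^{(t+2)/2} \to 2\sigma$, $(t+2)S_\alpha \to 2S_\alpha$, and the remaining three factors each have exponents tending to $0$ and therefore converge to $1$. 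Hence $A(t) \to A_0 := \sigma/S_\alpha$ as $t \to 0^+$. Invoking the classical identity $\lim_{t \to 0^+}(1 + c(t)\,t)^{1/t} = e^{c(0)}$, valid whenever $c$ is continuous at $0$, with $c(t) := A(t)\,k/2$, the bracket satisfies
\[
\bigl[\,1 + A(t)\,tk/2\,\bigr]^{2/t} \;=\; \bigl([\,1 + c(t)t\,]^{1/t}\bigr)^{2} \;\longrightarrow\; e^{\sigma k/S_\alpha} \quad \text{as } t \to 0^+,
\]
so the full limit of the bound evaluates to $(f(x^0)-f^*)\,e^{-\sigma k/S_\alpha}$.

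The final step is the elementary inequality $e^{-\sigma/S_\alpha} \le 1 - \sigma/(2S_\alpha)$. Using the standard Taylor bound $e^{-y} \le 1 - y + y^2/2$ for $y \ge 0$ with $y = \sigma/S_\alpha$, this reduces to the algebraic inequality $\sigma/S_\alpha - (\sigma/S_\alpha)^2/2 \ge \sigma/(2S_\alpha)$, equivalent to $\sigma \le S_\alpha$, which is the natural regime (the strong-convexity modulus being no larger than the aggregate smoothness constant). Raising both sides of $e^{-\sigma/S_\alpha}\le 1 - \sigma/(2S_\alpha)$ to the $k$-th power delivers the stated bound $(1 - \sigma/(2 S_\alpha))^k\,(f(x^0)-f^*)$, completing the argument.

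The chief obstacle is the limit interchange in the second step: one must carefully verify that $A(t)$ is continuous and strictly positive on a right-neighborhood of $0$ so that the $e$-limit identity can legitimately be applied with a varying exponent $c(t)$. The potentially awkward factor is $(f(x^0)-f^*)^{t/2}$ when the initial gap is zero, but in that case the whole bound is identically zero and the conclusion is trivial; otherwise continuity is immediate from $x^{t/2}\to 1$. Once this continuity is recorded, the rest is one-variable algebra.
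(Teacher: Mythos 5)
Your proof is correct and follows essentially the same route as the paper's: both evaluate the limit via the identity $\lim_{t\to 0^+}\bigl(1+c(t)\,t\bigr)^{1/t}=e^{c(0)}$ applied to the same continuously varying coefficient (the paper justifies it with l'H\^opital, you invoke it as a classical fact), arriving at $e^{-\sigma k/S_\alpha}\cdot(f(x^0)-f^*)$, and both then pass to $(1-\sigma/(2S_\alpha))^k$ by an elementary inequality. The only divergence is that final step: the paper chains $e^{-y}\le(1+y)^{-1}\le 1-y/2$ while you use $e^{-y}\le 1-y+y^2/2$; both reduce to the same requirement $y=\sigma/S_\alpha\le 1$, which you flag explicitly as an assumption (the paper asserts $(1+y)^{-1}<1-y/2$ for all $y>0$, which is false for $y\ge 1$) and which does in fact follow from the hypotheses, since combining $\sigma$-strong convexity in $\|\cdot\|_{1-\alpha,2}$ with the block descent bound along any single block gives $\sigma L_i^{1-\alpha}\le L_i$, hence $\sigma\le L_i^{\alpha}\le S_\alpha$.
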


\begin{proof}
The form of the sublinear convergence bound here was established by the immediately preceeding theorem in equation \eqref{eq:rate-for-interpolation}. The use of said rate in the proof of this corollary was foreshadowed there. 

To prove our main limit result, suppose for a moment that
\begin{equation}\label{eq:interp.limit}
\lim_{x \to 0^+} \left[1+g(x)\cdot x\right]^{-\frac{1}{x}}= e^{-g(0)}
\end{equation}
holds for any continuously differentiable $g:[0,\infty)\to[0,\infty)$ such that $g(0)>0$. Restating our sublinear convergence rate, we see that
\[
\frac{\mathbb{E}[f(x^{k})]- f^*}{f(x^0)-f^*}\leq 
 \left[ 1+g\left(\frac{\nu-2}{2}\right)\cdot \left(\frac{\nu-2}{2}\right)\right]^{\frac{2}{2-\nu}}{,}
\]
where 
\[
g(x) = \frac{(2\sigma)^{x+1}}{(2x+2)S_\alpha m^{\frac{x}{2x+2}}}\cdot \min_{1\leq i\leq m}L_i^{-\frac{(\alpha+1)x}{2x+2}} \cdot A_0^{x}\cdot k.
\]
Observe that $g$ is continuous differentiable and $g(0)=\sigma/S_\alpha>0$. Thus, it follows that
\begin{align*}
\frac{\mathbb{E}[f(x^{k})]- f^*}{f(x^0)-f^*}\leq & 
\lim_{\nu\to 2^+} \left[ 1+g\left(\frac{\nu-2}{2}\right)\cdot \left(\frac{\nu-2}{2}\right)\right]^{\frac{2}{2-\nu}}\\
\stackrel{\eqref{eq:interp.limit}}= &e^{-g(0)}=\left(e^{-\frac{\sigma}{S_\alpha}}\right)^k \\
\leq & \left(\frac{1}{1+\frac{\sigma}{S_\alpha}}\right)^k \leq \left(1-\frac{\sigma}{2S_\alpha} \right)^k{,}
\end{align*}
where we applied the standard inequalities $e^{x}\geq 1+x$ and $(1+x)^{-1}<(1-x/2)$ for all $x>0$ in the last two lines. Thus, we only need to prove \eqref{eq:interp.limit} to finish the proof.

The proof is a simply straightforward computation:
\begin{align*}
\lim_{x\to 0^+}\left[1+g(x)\cdot x\right]^{-\frac{1}{x}}&=\lim_{x\to 0^+}\exp\left(-\frac{\ln \left[1+g(x)\cdot x\right]}{x}\right)\\
&=\exp\left(-\lim_{x\to 0^+}\frac{\ln \left[1+g(x)\cdot x\right]}{x}\right)\\
&=\exp\left(-\lim_{x\to 0^+}\frac{g(x)+x\cdot g'(x)}{1+x\cdot g(x)}\right)\\
&=\exp(-g(0)){,}
\end{align*}
where continuity of $x\mapsto e^x$ is used in the second line, l'H\^opital's rule is used in the third line, and the definition of $g$ is used in the final line.
\end{proof}

\section{Proof of Lemma \ref{lemma:duality}}\label{app:duality}

In this section of the technical appendix, we prove Lemma \ref{lemma:duality}.\\

\noindent 1. We begin by choosing $x,y\in\mathbb{R}^d$. The inequality is trivial if $x=0$ so we assume $x\neq 0$. By the standard and $p$-norm versions of the Cauchy-Schwarz inequality, we compute

\begin{multline*}
\inner{x}{y}=\sum_{i=1}^m\inner{P_i x}{P_i y}\leq\sum_{i=1}^m \|P_i x\|\|P_i y\|
=\sum_{i=1}^m \left(L_i^{\alpha/p}\|P_i x\|\right)\left(\frac{\|P_i y\|}{L_i^{\alpha/p}}\right)\\
=\inner{\left(L_1^{\alpha/p}\|P_1 x\|,\ldots,L_1^{\alpha/p}\|P_1 x\|\right)}{\left(\frac{\|P_1 y\|}{L_1^{\alpha/p}},\ldots,\frac{\|P_m y\|}{L_m^{\alpha/p}}\right)}\\
\leq\left\|\left(L_1^{\alpha/p}\|P_1 x\|,\ldots,L_m^{\alpha/p}\|P_m x\|\right)\right\|_p\left\|\left(\frac{\|P_1 y\|}{L_1^{\alpha/p}},\ldots,\frac{\|P_m y\|}{L_m^{\alpha/p}}\right)\right\|_q=\|x\|_{\alpha,p}\|y\|_{-\alpha\frac{p}{q},q}
\end{multline*}
By the standard Cauchy-Schwarz inequality, the first inequality is obtained with equality if and only if one of $P_i x$ and $P_i y$ is a scalar multiple of the other for each $i=1,\ldots,m$. By the Cauchy-Schwarz inequality for $p$-norms,  and our assumption that $x\neq 0$, the second inequality obtains equality if and only if there is some $c\in\mathbb{R}$ such that
\[
c^q\cdot \left(L_1^{\alpha}\|P_1 x\|^p,\ldots,L_m^{\alpha}\|P_m x\|^p\right)=\left(\frac{\|P_1 y\|^q}{L_1^{\alpha q/p}},\ldots,\frac{\|P_m y\|^q}{L_m^{\alpha q/p}}\right).
\]
Assuming both inequalities hold then, we conclude  there are $c_1,\ldots,c_m,c\in\mathbb{R}$ such that $P_iy=c_i \cdot P_ix$ and $c^q\cdot L_i^{\alpha}\|P_i x\|^p=\frac{\|P_i y\|^q}{L_i^{\alpha q/p}}$ for $i=1,\ldots,m$. Fixing $1\leq i\leq m$, and combining the equalities, we see that $c^q\cdot L_i^{\alpha}\|P_i x\|^p=c_i^q\cdot \frac{\|P_i x\|^q}{L_i^{\alpha q/p}}$, so
\[
c_i=c\cdot L_i^{\alpha\left(1-\frac{q}{p}\right)}\|P_i x\|^{\frac{p}{q}-1}=c\cdot L_i^\alpha \|P_i x\|^{p-2}
\]
where we use the definition of $q$ as $p$'s H{\"o}lder conjugate to produce the second equality. This completes the proof of 1.\\

\noindent 2. Given $x\in\mathbb{R}^d$, consider the vector $(\|P_j x\|_j)_{j=1}^{m}$. For any $p\geq 2$, the norm equivalence inequality yields
\[
\|x\|_{\alpha,p} = \left(\sum_{j=1}^m L_j^\alpha\|P_jx\|_2^p\right)^{\frac{1}{p}} = \left(\sum_{j=1}^m \|L_j^{\alpha/p}\cdot P_jx\|_2^p\right)^{\frac{1}{p}} \leq \left(\sum_{j=1}^m \|L_j^{\alpha/p}\cdot P_jx\|_2^2\right)^{\frac{1}{2}} 
\]
But,
\[
\left(\sum_{j=1}^m \|L_j^{\alpha/p}\cdot P_jx\|_2^2\right)^{\frac{1}{2}} = \left(\sum_{j=1}^m L_j^{2\alpha/p} \|P_jx\|_2^2\right)^{\frac{1}{2}} \leq \|x\|_{\beta,2} \cdot \max_{1\leq j\leq m}L_m^{\frac{\alpha}{p}-\frac{\beta}{2}} 
\]
as, for any $i$, with $1\leq i\leq m$, $L_i^{2\alpha/p} \leq L^{\beta}\left(\max_{1\leq j\leq m} L_m^{\frac{\alpha}{p}-\frac{\beta}{2}}\right)^2$ and we complete the first part of the inequality. The second part is quite similar. By the equivalence norm inequality, 
\[
\|x\|_{\beta,2} = \left(\sum_{j=1}^m L_j^{\beta} \|P_jx\|_2^2\right)^{\frac{1}{2}}  = \left(\sum_{j=1}^m  \|L_j^{\beta/2} \cdot P_jx\|_2^2\right)^{\frac{1}{2}} \leq m^{\frac{1}{2}-\frac{1}{p}}\cdot \left(\sum_{j=1}^m  \|L_j^{\beta/2} \cdot P_jx\|_2^p\right)^{\frac{1}{p}}
\]
But
\[
\left(\sum_{j=1}^m  \|L_j^{\beta/2} \cdot P_jx\|_2^p\right)^{\frac{1}{p}} = \left(\sum_{j=1}^m  L_j^{\beta p/2}\|P_jx\|_2^p\right)^{\frac{1}{p}} \leq \|x\|_{\alpha,p}\cdot \max_{1\leq j\leq m}L_{j}^{\frac{\beta}{2}-\frac{\alpha}{p}} = \frac{\|x\|_{\alpha,p}}{\displaystyle \min_{1\leq j\leq m}L_{j}^{\frac{\alpha}{p}-\frac{\beta}{2}}}
\]
and we are done.

\section{Proof of Lemma \ref{lemma:nesterov-holder-cvx-str}}\label{app:level-set}

Suppose we are able to prove for all $x,y\in\mathbb{R}^d$ that 
\begin{equation}\label{eq:holder-modified}
f(y)\leq f(x)+\inner{\nabla f(x)}{y-x}+\left(\frac{\nu S_\alpha}{2}\right)^{\frac{1}{\nu-1}}\cdot\left(\frac{\nu-1}{\nu}\right)\cdot\|u\|_{(1+\alpha-\nu)(1-\nu),\frac{\nu}{\nu-1}}^{\frac{\nu}{\nu-1}}
\end{equation}
Then $x^*$, the first-order condition $\nabla f(x^*)$ naturally holds so \eqref{eq:holder-modified} implies
\begin{align*}
f(x)-f(x^*)&\leq \inner{\nabla f(x^*)}{x-x^*}+\left(\frac{\nu S_\alpha}{2}\right)^{\frac{1}{\nu-1}}\cdot\left(\frac{\nu-1}{\nu}\right)\cdot\|x-x^*\|_{(1+\alpha-\nu)(1-\nu),\frac{\nu}{\nu-1}}^{\frac{\nu}{\nu-1}}\\
&=\left(\frac{\nu S_\alpha}{2}\right)^{\frac{1}{\nu-1}}\cdot\left(\frac{\nu-1}{\nu}\right)\cdot\|x-x^*\|_{(1+\alpha-\nu)(1-\nu),\frac{\nu}{\nu-1}}^{\frac{\nu}{\nu-1}}\\
&\leq\left(\frac{\nu S_\alpha}{2}\right)^{\frac{1}{\nu-1}}\cdot\left(\frac{\nu-1}{\nu}\right)\cdot R(x)_{(1+\alpha-\nu)(1-\nu),\frac{\nu}{\nu-1}}^{\frac{\nu}{\nu-1}}
\end{align*}
and this completes the proof. Thus, it suffices to prove \eqref{eq:holder-modified}. Suppose that 
\begin{equation}\label{eq:holder-cocoercive}
\frac{2}{\nu S_\alpha}\|\nabla f(x)-\nabla f(y)\|_{1+\alpha-\nu,\nu}^{\nu-1}\leq \|x-y\|_{(1+\alpha-\nu)(1-\nu),\frac{\nu}{\nu-1}}
\end{equation}
holds for all $x,y\in\mathbb{R}^d$. Then, given $u\in\mathbb{R}^d$, the integral formulation of the mean value theorem states
\[
f(x+u)-f(x)=\int_0^1\inner{\nabla f(x+tu)}{u}dt,
\]
Thus, the Cauchy-Schwartz inequality and our previous inequality imply
\begin{align*}
f(x+u)-f(x)-\inner{\nabla f(x)}{u}&=\int_0^1\inner{\nabla f(x+tu)-\nabla f(x)}{u}dt\\
&\leq\int_0^1 \|\nabla f(x+tu)-\nabla f(x)\|_{1+\alpha-\nu,\nu}\|u\|_{(1+\alpha-\nu)(1-\nu),\frac{\nu}{\nu-1}}dt\\
&\leq\int_0^1 \left(\frac{\nu S_\alpha}{2}\|tu\|_{(1+\alpha-\nu)(1-\nu),\frac{\nu}{\nu-1}}\right)^{\frac{1}{\nu-1}}\|u\|_{(1+\alpha-\nu)(1-\nu),\frac{\nu}{\nu-1}}dt\\
&=\left(\frac{\nu S_\alpha}{2}\right)^{\frac{1}{\nu-1}}\|u\|_{(1+\alpha-\nu)(1-\nu),\frac{\nu}{\nu-1}}^{\frac{\nu}{\nu-1}}\int_0^1 t^{\frac{1}{\nu-1}}dt\\
&=\left(\frac{\nu S_\alpha}{2}\right)^{\frac{1}{\nu-1}}\cdot\left(\frac{\nu-1}{\nu}\right)\cdot\|u\|_{(1+\alpha-\nu)(1-\nu),\frac{\nu}{\nu-1}}^{\frac{\nu}{\nu-1}},
\end{align*}
so taking $u=y-x$ in \eqref{eq:holder-cocoercive} completes the proof. Consequently, we now need only prove the H{\"o}lderian co-coercivity condition \eqref{eq:holder-cocoercive}, which we do presently.

Given $y\in\mathbb{R}^d$, the function $x\mapsto\phi(x):=f(x)-f(y)-\inner{\nabla f(y)}{x-y}$ is readily seen to be H{\"o}lder block smooth because $f$. Moreover, $\phi$ has the same block H{\"o}lder smoothness constants and $\nabla \phi(x)=\nabla f(x)-\nabla f(y)$. Thus, by the Block H{\"o}lder Descent Lemma (Lemma \ref{lemma:block_descent}),
\begin{multline*}
f(x)-f(y)-\inner{\nabla f(y)}{y-x}= \phi(y)-\min_{z\in\mathbb{R}^d}\phi(z)\geq\max_{1\leq i\leq m}\left[\frac{1}{\nu L_i^{\nu-1}}\|\nabla_i f(x)-\nabla_i f(y)\|^\nu\right]\\
\geq \frac{1}{\nu S_{\alpha}}\sum_{i=1}^m\frac{L_i^\alpha}{L_i^{\nu-1}}\|\nabla_i f(x)-\nabla_i f(y)\|^\nu
= \frac{1}{\nu S_{\alpha}}\|\nabla f(x)-\nabla f(y)\|_{1+\alpha-\nu,\nu}^\nu,
\end{multline*}
which we may restate as
\[
f(x)\geq f(y)+\inner{\nabla f(y)}{y-x}+\frac{1}{\nu S_{\alpha}}\|\nabla f(x)-\nabla f(y)\|_{1+\alpha-\nu,\nu}^\nu.
\]
Adding this inequality to its analogue with the roles of $x$ and $y$ reversed, we see produce
\[
\frac{2}{\nu S_\alpha}\|\nabla f(x)-\nabla f(y)\|_{1+\alpha-\nu,\nu}^\nu\leq\inner{\nabla f(x)-\nabla f(y)}{x-y}.
\]
By Cauchy-Schwarz, we then see that
\[
\frac{2}{\nu S_\alpha}\|\nabla f(x)-\nabla f(y)\|_{1+\alpha-\nu,\nu}^\nu\leq\|\nabla f(x)-\nabla f(y)\|_{1+\alpha-\nu,\nu}\|x-y\|_{(1+\alpha-\nu)(1-\nu),\frac{\nu}{\nu-1}},
\]
or equivalently
\[
\frac{2}{\nu S_\alpha}\|\nabla f(x)-\nabla f(y)\|_{1+\alpha-\nu,\nu}^{\nu-1}\leq \|x-y\|_{(1+\alpha-\nu)(1-\nu),\frac{\nu}{\nu-1}}.
\]
Given $u\in\mathbb{R}^d$, the integral formulation of the mean value theorem states
\[
f(x+u)-f(x)=\int_0^1\inner{\nabla f(x+tu)}{u}dt,
\]
so the Cauchy-Schwarz inequality and the previous inequality imply
\begin{align*}
f(x+u)-f(x)-\inner{\nabla f(x)}{u}&=\int_0^1\inner{\nabla f(x+tu)-\nabla f(x)}{u}dt\\
&\leq\int_0^1 \|\nabla f(x+tu)-\nabla f(x)\|_{1+\alpha-\nu,\nu}\|u\|_{(1+\alpha-\nu)(1-\nu),\frac{\nu}{\nu-1}}dt\\
&\leq\int_0^1 \left(\frac{\nu S_\alpha}{2}\|tu\|_{(1+\alpha-\nu)(1-\nu),\frac{\nu}{\nu-1}}\right)^{\frac{1}{\nu-1}}\|u\|_{(1+\alpha-\nu)(1-\nu),\frac{\nu}{\nu-1}}dt\\
&=\left(\frac{\nu S_\alpha}{2}\right)^{\frac{1}{\nu-1}}\|u\|_{(1+\alpha-\nu)(1-\nu),\frac{\nu}{\nu-1}}^{\frac{\nu}{\nu-1}}\int_0^1 t^{\frac{1}{\nu-1}}dt\\
&=\left(\frac{\nu S_\alpha}{2}\right)^{\frac{1}{\nu-1}}\cdot\left(\frac{\nu-1}{\nu}\right)\cdot\|u\|_{(1+\alpha-\nu)(1-\nu),\frac{\nu}{\nu-1}}^{\frac{\nu}{\nu-1}}.
\end{align*}
Taking $u=y-x$ completes the proof.





\let\oldbibitem\bibitem
\renewcommand{\bibitem}{\setlength{\itemsep}{0pt}\oldbibitem}
\bibliographystyle{ieeetr}

\phantomsection
\addcontentsline{toc}{chapter}{REFERENCES}

\renewcommand{\bibname}{{\normalsize\rm REFERENCES}}

\bibliography{data/myReference.bib}

@article{MaiaGH24,
  author = {Farias Maia, Leandro and Gutman, David H. and Hughes, Ryan C.},
  title = {The Inexact Cyclic Block Proximal Gradient Method and Properties of Inexact Proximal Maps},
  journal = {Journal of Optimization Theory and Application},
  volume = {201},
  number = {2},
  pages = {668--698},
  year = {2024},
  url = {https://link.springer.com/article/10.1007/S10957-024-02404-7},
  doi = {10.1007/S10957-024-02404-7},
    note    = {DOI: https://doi.org/10.1007/s10957-024-02404-7}
}

@article{maia2024randomizedblockcoordinatedescent,
  title = {The Randomized Block Coordinate Descent Method in the H\"older Smooth Setting},
  journal = {Optimization Letters},
  author = {Farias Maia, Leandro and Gutman, David H.},
  year = {2024},
  url = {https://doi.org/10.1007/s11590-024-02161-6},
 note = {https://doi.org/10.1007/s11590-024-02161-6}
}

@article{Aybatpenalty,
author = "N.S. Aybat and G. Iyengar",
title = "A First-Order Smoothed Penalty Method for Compressed Sensing",
journal = "SIAM J. Optim.",
year = "2011",
volume = "21",
number = "1",
pages = "287-313",
}

@Article{LanRen2013PenMet,
author="Lan, G. and Monteiro, Renato {D}. {C}.",
title="Iteration-complexity of first-order penalty methods for convex programming",
journal="Math.  Program.",
year="2013",
month="Apr",
day="01",
volume="138",
number="1",
pages="115--139",
}

@Article{AybatAugLag,
  author  = {Aybat, N.S. and Iyengar, G.},
  title   = {A First-Order Augmented {{L}agrangian} Method for Compressed Sensing},
  journal = {SIAM J. Optim.},
  year    = {2012},
  volume  = {22},
  number  = {2},
  pages   = {429-459},
  doi     = {10.1137/100786721},
}

@Article{LanMonteiroAugLag,
author="Lan, G.
and Monteiro, R. {D} {C}.",
title="Iteration-complexity of first-order augmented {L}agrangian methods for convex programming",
journal="Math.  Program.",
year="2016",
month="Jan",
day="01",
volume="155",
number="1",
pages="511--547",
}

@Article{ShiqiaMaAugLag16,
  author    = {Liu, Y.F. and Liu, X. and Ma, S.},
  title     = {On the nonergodic convergence rate of an inexact augmented {L}agrangian framework for composite convex programming},
  journal   = {Math. Oper. Res.},
  year      = {2019},
  volume    = {44},
  number    = {2},
  pages     = {632--650},
  publisher = {INFORMS},
}

@Article{zhaosongAugLag18,
  title={Iteration-complexity of first-order augmented {L}agrangian methods for convex conic programming},
  author={Lu, Zhaosong and Zhou, Zirui},
  journal={SIAM journal on optimization},
  volume={33},
  number={2},
  pages={1159--1190},
  year={2023},
  publisher={SIAM}
}

@Article{IterComplConicprog,
  author    = {I. Necoara and A. Patrascu and F. Glineur},
  title     = {Complexity of first-order inexact {L}agrangian and penalty methods for conic convex programming},
  journal   = {Optim. Methods Softw.},
  year      = {2017},
  pages     = {1-31},
  publisher = {Taylor \& Francis},
}

@Article{Patrascu2017,
author="Patrascu, A.
and Necoara, I.
and Tran-Dinh, Q.",
title="Adaptive inexact fast augmented {L}agrangian methods for constrained convex optimization",
journal="Optim. Lett.",
year="2017",
day="01",
volume="11",
number="3",
pages="609--626",
}

@Article{YangyangAugLag17,
author={Xu, Y.},
title={Iteration complexity of inexact augmented {L}agrangian methods for constrained convex programming},
journal={Math.  Program.},
year={2019},
issn={1436-4646},
doi={10.1007/s10107-019-01425-9},
url={https://doi.org/10.1007/s10107-019-01425-9}
}

@Article{HongPertAugLag,
author="D. Hajinezhad and M. Hong",
title="Perturbed proximal primal–dual algorithm for nonconvex
nonsmooth optimization",
journal="Math.  Program.",
year="2019",
volume="176",
pages="207--245",
}

@article{RenWilmelo2020iteration,
author = {W. Kong and J.G. Melo and Monteiro, Renato {D}. {C}.},
title = {Iteration Complexity of an Inner Accelerated Inexact Proximal Augmented {L}agrangian Method Based on the Classical {L}agrangian Function},
journal = {SIAM Journal on Optimization},
volume = {33},
number = {1},
pages = {181-210},
year = {2023},
doi = {10.1137/20M136147X},

URL = { 
    
        https://doi.org/10.1137/20M136147X
    
    

},
eprint = { 
    
        https://doi.org/10.1137/20M136147X
    
    

}
,
    abstract = { Abstract. This paper establishes the iteration complexity of an inner accelerated inexact proximal augmented {L}agrangian (IAIPAL) method for solving linearly constrained smooth nonconvex composite optimization problems that is based on the classical augmented {L}agrangian (AL) function. More specifically, each IAIPAL iteration consists of inexactly solving a proximal AL subproblem by an accelerated composite gradient (ACG) method followed by a classical {L}agrange multiplier update. Under the assumption that the domain of the composite function is bounded and the problem has a Slater point, it is shown that IAIPAL generates an approximate stationary solution in \({\mathcal O}(\varepsilon ^{-5/2}\log^2 \varepsilon ^{-1})\) ACG iterations where \(\varepsilon \gt 0\) is a tolerance for both stationarity and feasibility. Moreover, the above bound is derived without assuming that the initial point is feasible. Finally, numerical results are presented to demonstrate the strong practical performance of IAIPAL. }
}

@article{NL-IPAL,
  title={Iteration-complexity of a proximal augmented {L}agrangian method for solving nonconvex composite optimization problems with nonlinear convex constraints},
  author={W. Kong and Melo, J.G. and Monteiro, Renato {D}. {C}.},
  journal={Mathematics of Operations Research},
  year={2023}
}

@article{AIDAL,
  title={An Accelerated Inexact Dampened Augmented {L}agrangian Method for Linearly-Constrained Nonconvex Composite Optimization Problems},
  author={Kong, W. and Monteiro, Renato {D}. {C}.},
  journal = {Comput. Optim. Appl.},
  year    = {2023}
}

@Article{RJWIPAAL2020,
  author  = {Melo, J.C and Monteiro, Renato {D}. {C}. and H. Wang},
  journal = {Available on arXiv:2006.08048},
  title   = {Iteration-complexity of an inexact proximal accelerated augmented {L}agrangian method for solving linearly constrained smooth nonconvex composite optimization problems},
  year = {2020},
}

@Article{YinMoreau,
  author   = {Zeng, J. and Yin, W. and Zhou, D.},
  title    = {Moreau {E}nvelope {A}ugmented {L}agrangian Method for {N}onconvex {O}ptimization with {L}inear {C}onstraints},
  journal  = {J.  Scientific Comp.},
  year     = {2022},
  volume   = {91},
  number   = {61},
  month    = {April},
}

@Article{ErrorBoundJzhang-ZQLuo2020,
  title={A global dual error bound and its application to the analysis of linearly constrained nonconvex optimization},
  author={Zhang, Jiawei and Luo, Zhi-Quan},
  journal={SIAM Journal on Optimization},
  volume={32},
  number={3},
  pages={2319--2346},
  year={2022},
  publisher={SIAM}
}

@article{ADMMJzhang-ZQLuo2020,
author = {J. Zhang and  Z.-Q. Luo},
title = {A Proximal Alternating Direction Method of Multiplier for Linearly Constrained Nonconvex Minimization},
journal = {SIAM J. Optim.},
volume = {30},
number = {3},
pages = {2272-2302},
year = {2020},
doi = {10.1137/19M1242276},
}

@article{HeMonteiro2015,
  title={Accelerating Block-Decomposition First-Order Methods for Solving Composite Saddle-Point and Two-Player {N}ash Equilibrium Problems},
  author={Yunlong He and Renato {D}. {C}. Monteiro},
  journal={SIAM J. Optim.},
  year={2015},
  volume={25},
  pages={2182-2211}
}

@article{sujanani2023adaptive,
  title={An adaptive superfast inexact proximal augmented {L}agrangian method for smooth nonconvex composite optimization problems},
  author={Sujanani, Arnesh and Monteiro, Renato {D}. {C}.},
  journal={Journal of Scientific Computing},
  volume={97},
  number={2},
  pages={34},
  year={2023},
  publisher={Springer}
}

@article{kong2023iteration,
  title={Iteration complexity of an inner accelerated inexact proximal augmented {L}agrangian method based on the classical {L}agrangian function},
  author={Kong, Weiwei and Melo, Jefferson G and Monteiro, Renato {D}. {C}.},
  journal={SIAM Journal on Optimization},
  volume={33},
  number={1},
  pages={181--210},
  year={2023},
  publisher={SIAM}
}

@article{KongMonteiro2024,
author = {Kong, Weiwei and Monteiro, Renato {D}. {C}.},
title = {Global Complexity Bound of a Proximal ADMM for Linearly Constrained Nonseparable Nonconvex Composite Programming},
journal = {SIAM Journal on Optimization},
volume = {34},
number = {1},
pages = {201-224},
year = {2024},
doi = {10.1137/22M1503129},
}

@Article{jia2021incremental,
  author    = {Jia, Zehui and Huang, Jieru and Wu, Zhongming},
  journal   = {Journal of Computational and Applied Mathematics},
  title     = {An incremental aggregated proximal {ADMM} for linearly constrained nonconvex optimization with application to sparse logistic regression problems},
  year      = {2021},
  pages     = {113384},
  volume    = {390},
  publisher = {Elsevier},
}

@Article{themelis2020douglas,
  author    = {Themelis, Andreas and Patrinos, Panagiotis},
  journal   = {SIAM Journal on Optimization},
  title     = {{D}ouglas--{R}achford Splitting and {ADMM} for Nonconvex Optimization: Tight Convergence Results},
  year      = {2020},
  number    = {1},
  pages     = {149--181},
  volume    = {30},
  publisher = {SIAM},
}

@Article{zhang2020proximal,
  author    = {Zhang, Jiawei and Luo, Zhi-Quan},
  journal   = {SIAM Journal on Optimization},
  title     = {A proximal alternating direction method of multiplier for linearly constrained nonconvex minimization},
  year      = {2020},
  number    = {3},
  pages     = {2272--2302},
  volume    = {30},
  publisher = {SIAM},
}

@Article{wang2019global,
  author    = {Wang, Yu and Yin, Wotao and Zeng, Jinshan},
  journal   = {Journal of Scientific Computing},
  title     = {Global convergence of {ADMM} in nonconvex nonsmooth optimization},
  year      = {2019},
  number    = {1},
  pages     = {29--63},
  volume    = {78},
  publisher = {Springer},
}

@Article{jiang2019structured,
  author    = {Jiang, Bo and Lin, Tianyi and Ma, Shiqian and Zhang, Shuzhong},
  journal   = {Computational Optimization and Applications},
  title     = {Structured nonconvex and nonsmooth optimization: algorithms and iteration complexity analysis},
  year      = {2019},
  number    = {1},
  pages     = {115--157},
  volume    = {72},
  publisher = {Springer},
}

@Article{hong2016convergence,
  author    = {Hong, Mingyi and Luo, Zhi-Quan and Razaviyayn, Meisam},
  journal   = {SIAM Journal on Optimization},
  title     = {Convergence analysis of alternating direction method of multipliers for a family of nonconvex problems},
  year      = {2016},
  number    = {1},
  pages     = {337--364},
  volume    = {26},
  publisher = {SIAM},
}

@Book{bertsekas2016nonlinear,
  author    = {Bertsekas, Dimitri P},
  publisher = {Taylor \& Francis},
  title     = {Nonlinear programming},
  year      = {2016},
  edition   = {3ED},
}

@article{melo2023proximal,
  title={A Proximal Augmented {L}agrangian Method for Linearly Constrained Nonconvex Composite Optimization Problems},
  author={Melo, Jefferson G and Monteiro, Renato {D}. {C}. and Wang, Hairong},
  journal={Journal of Optimization Theory and Applications},
  pages={1--33},
  year={2023},
  publisher={Springer}
}

@Book{boyd2011distributed,
  author    = {Boyd, Stephen and Parikh, Neal and Chu, Eric},
  publisher = {Now Publishers Inc},
  title     = {Distributed optimization and statistical learning via the alternating direction method of multipliers},
  year      = {2011},
}

@Article{chao2020inertial,
  author    = {Chao, Mian Tao and Zhang, Ye and Jian, Jin Bao},
  journal   = {International Journal of Computer Mathematics},
  title     = {An inertial proximal alternating direction method of multipliers for nonconvex optimization},
  year      = {2020},
  pages     = {1--19},
  publisher = {Taylor \& Francis},
}

@Book{lemarechal1993,
  author    = {Hiriart-Urruty, J.~B and Lemarechal, C},
  publisher = {Springer, Berlin},
  title     = {Convex Analysis and Minimization Algorithms II. Advanced Theory and Bundle Methods},
  year      = {1993},
}

@Article{eckstein1992douglas,
  author    = {Eckstein, Jonathan and Bertsekas, Dimitri P},
  journal   = {Mathematical Programming},
  title     = {On the {D}ouglas--{R}achford splitting method and the proximal point algorithm for maximal monotone operators},
  year      = {1992},
  number    = {1},
  pages     = {293--318},
  volume    = {55},
  publisher = {Springer},
}

@Article{eckstein1998operator,
  author    = {Eckstein, Jonathan and Ferris, Michael C},
  journal   = {INFORMS Journal on Computing},
  title     = {Operator-splitting methods for monotone affine variational inequalities, with a parallel application to optimal control},
  year      = {1998},
  number    = {2},
  pages     = {218--235},
  volume    = {10},
  publisher = {INFORMS},
}

@Article{ruszczynski1989augmented,
  author    = {Ruszczy{\'n}ski, Andrzej},
  journal   = {Operations Research Letters},
  title     = {An augmented {L}agrangian decomposition method for block diagonal linear programming problems},
  year      = {1989},
  number    = {5},
  pages     = {287--294},
  volume    = {8},
  publisher = {Elsevier},
}

@InCollection{eckstein1994some,
  author    = {Eckstein, Jonathan and Fukushima, Masao},
  booktitle = {Large scale optimization},
  publisher = {Springer},
  title     = {Some reformulations and applications of the alternating direction method of multipliers},
  year      = {1994},
  pages     = {115--134},
}

@Article{rockafellar1976augmented,
  author    = {Rockafellar, Ralph Tyrrell},
  journal   = {Mathematics of operations research},
  title     = {Augmented {L}agrangians and applications of the proximal point algorithm in convex programming},
  year      = {1976},
  number    = {2},
  pages     = {97--116},
  volume    = {1},
  publisher = {INFORMS},
}

@InCollection{gabay1983chapter,
  author    = {Gabay, Daniel},
  booktitle = {Studies in mathematics and its applications},
  publisher = {Elsevier},
  title     = {Applications of the method of multipliers to variational inequalities},
  year      = {1983},
  pages     = {299--331},
  volume    = {15},
}

@Article{glowinski1975approximation,
  author  = {Glowinski, Roland and Marroco, Americo},
  journal = {ESAIM: Mathematical Modelling and Numerical Analysis-Mod{\'e}lisation Math{\'e}matique et Analyse Num{\'e}rique},
  title   = {Sur l'approximation, par {\'e}l{\'e}ments finis d'ordre un, et la r{\'e}solution, par p{\'e}nalisation-dualit{\'e} d'une classe de probl{\`e}mes de Dirichlet non lin{\'e}aires},
  year    = {1975},
  number  = {R2},
  pages   = {41--76},
  volume  = {9},
}

@Article{gabay1976dual,
  author    = {Gabay, Daniel and Mercier, Bertrand},
  journal   = {Computers \& mathematics with applications},
  title     = {A dual algorithm for the solution of nonlinear variational problems via finite element approximation},
  year      = {1976},
  number    = {1},
  pages     = {17--40},
  volume    = {2},
  publisher = {Elsevier},
}

@Article{eckstein2008family,
  author    = {Eckstein, Jonathan and Svaiter, Benar Fux},
  journal   = {Mathematical Programming},
  title     = {A family of projective splitting methods for the sum of two maximal monotone operators},
  year      = {2008},
  number    = {1},
  pages     = {173--199},
  volume    = {111},
  publisher = {Springer},
}

@Article{eckstein2009general,
  author    = {Eckstein, Jonathan and Svaiter, Benar Fux},
  journal   = {SIAM Journal on Control and Optimization},
  title     = {General projective splitting methods for sums of maximal monotone operators},
  year      = {2009},
  number    = {2},
  pages     = {787--811},
  volume    = {48},
  publisher = {SIAM},
}

@Article{sun2021dual,
  title={Dual Descent Augmented {L}agrangian Method and Alternating Direction Method of Multipliers},
  author={Sun, Kaizhao and Sun, Xu Andy},
  journal={SIAM Journal on Optimization},
  volume={34},
  number={2},
  pages={1679--1707},
  year={2024},
  publisher={SIAM}
}

@article{goncalves2017convergence,
  title={Convergence rate bounds for a proximal {ADMM} with over-relaxation stepsize parameter for solving nonconvex linearly constrained problems},
  author={Goncalves, Max L.~N. and Melo, Jefferson G. and Monteiro, Renato {D}. {C}.},
  journal={Pacific Journal of Optimization},
  volume={15},
  pages={379--398},
  year={2019}
}

@article{melo2017iterationGauss,
  title={Iteration-complexity of a linearized proximal multiblock {ADMM} class for linearly constrained nonconvex optimization problems},
  author={Melo, Jefferson G. and Monteiro, Renato {D}. {C}.},
  journal={Optimization Online preprint},
  year={2017}
}

@article{melo2017iterationJacobi,
  title={Iteration-complexity of a {J}acobi-type non-Euclidean {ADMM} for multi-block linearly constrained nonconvex programs},
  author={Melo, Jefferson G. and Monteiro, Renato {D}. {C}.},
  journal={arXiv preprint arXiv:1705.07229},
  year={2017}
}

@article{monteiro2013iteration,
  title={Iteration-complexity of block-decomposition algorithms and the alternating direction method of multipliers},
  author={Monteiro, Renato {D}. {C}. and Svaiter, Benar F.},
  journal={SIAM Journal on Optimization},
  volume={23},
  number={1},
  pages={475--507},
  year={2013},
  publisher={SIAM}
}

@article{solodovGlobConv,
author = {Izmailov, A. and Solodov, M. and Uskov, E.I.},
year = {2012},
month = {02},
pages = {1579-1606},
title = {Global Convergence of Augmented {L}agrangian Methods Applied to Optimization Problems with Degenerate Constraints, Including Problems with Complementarity Constraints},
volume = {22},
journal = {SIAM Journal on Optimization},
doi = {10.1137/120868359}
}

@article{birgin2,
author = {Birgin, E. and Haeser, Gabriel and Martínez, José Mario},
year = {2024},
month = {04},
pages = {1-19},
title = {Safeguarded augmented {L}agrangian algorithms with scaled stopping criterion for the subproblems},
journal = {Computational Optimization and Applications},
doi = {10.1007/s10589-024-00572-w}
}

@article{MaiaGM24_arXiv,
  author       = {Farias Maia, Leandro and Gutman, David H. and Monteiro, Renato D.C. and Silva, Gilson N.},
  title        = {An Adaptive Proximal ADMM for Nonconvex Linearly Constrained Composite Programs},
  journal      = {arXiv},
  year         = {2024},
  eprint       = {2407.09927},
  archivePrefix= {arXiv},
  primaryClass = {math.OC},
  note = {arXiv:2407.09927, version 2}
}

@InProceedings{applicationCS1,
author="Zhao, Chen and Wang, Ronggang and Gao, Wen",
title="Better and Faster, when {ADMM} Meets {CNN}: Compressive-Sensed Image Reconstruction",
booktitle="Advances in Multimedia Information Processing -- PCM 2017",
year="2018",
publisher="Springer International Publishing",
address="Cham",
pages="370--379",
isbn="978-3-319-77380-3"
}

@article{applicationMC1,
title={Predictive Low Rank Matrix Learning under Partial Observations: Mixed-Projection {ADMM}}, 
author={Dimitris Bertsimas and Nicholas A. G. Johnson},
year={2024},
eprint={2407.13731},
archivePrefix={arXiv},
primaryClass={stat.ML},
url={https://arxiv.org/abs/2407.13731}, 
}

@UNPUBLISHED{ImageAnalysis,
  title    = "A Simple but Universal Fully Linearized {ADMM} Algorithm for
              Optimization Based Image Reconstruction",
  author   = "Qiao, Zhiwei and Redler, Gage and Epel, Boris and Halpern, Howard",
  journal  = "Res Sq",
  month    =  apr,
  year     =  2023,
  keywords = "computed tomography; fully linearized ADMM; image reconstruction;
              optimization; total variation"
}

@inproceedings{PortfolioOptimization, author = {Xie, Tengyang and Liu, Bo and Xu, Yangyang and Ghavamzadeh, Mohammad and Chow, Yinlam and Lyu, Daoming and Yoon, Daesub},
title = {A block coordinate ascent algorithm for mean-variance optimization},
year = {2018},
publisher = {Curran Associates Inc.},
booktitle = {Proceedings of the 32nd International Conference on Neural Information Processing Systems},
pages = {1073–1083},
numpages = {11}, 
series = {NIPS'18}
}

@ARTICLE{Military,
  author={Li, Wenchao and Zhang, Boyang and Li, Kefeng and Yang, Jianyu and Wu, Junjie and Zhang, Yin and Huang, Yulin},
  journal={IEEE Journal of Selected Topics in Applied Earth Observations and Remote Sensing}, 
  title={LRSD-ADMM-NET: Simultaneous Super- Resolution Imaging and Target Detection for Forward-Looking Scanning Radar}, 
  year={2024},
  volume={17},
  number={},
  pages={4052-4061},
  doi={10.1109/JSTARS.2024.3356193}
}

@book{Hiriart13,
  title={Convex analysis and minimization algorithms II: Advanced Theory and Bundle Methods},
  author={Hiriart-Urruty, Jean-Baptiste and Lemar{\'e}chal, Claude},
  year={2013},
  publisher={Springer science \& business media}
}

@book{Beck17,
  title={First-order methods in optimization},
  author={Beck, Amir},
  year={2017},
  publisher={SIAM}
}

@book{Polyak_book,
  title={Introduction to Optimization},
  author={Polyak, Boris},
  year={1987},
  publisher={Optimization Software}
}

@article{Bredies08,
  title={A forward--backward splitting algorithm for the minimization of non-smooth convex functionals in Banach space},
  author={Bredies, Kristian},
  journal={Inverse Problems},
  volume={25},
  number={1},
  pages={015005},
  year={2008},
  publisher={IOP Publishing}
}

@article{Gutman22,
  title={Cyclic coordinate descent in the {H}{\"o}lder smooth setting},
  author={Gutman, David Huckleberry and Ho-Nguyen, Nam},
  journal={Operations Research Letters},
  volume={50},
  number={5},
  pages={458--462},
  year={2022},
  publisher={Elsevier}
}

@book{Nemi-Ark-1983,
publisher = {Wiley},
author = {Nemirovskii, Arkadi and Yudin, David},
isbn = {0471103454},
keywords = {Programming (Mathematics)},
language = {eng},
lccn = {82011065},
year = {1983},
series = {Wiley-Interscience {S}eries in {D}iscrete {M}athematics},
title = {Problem complexity and ethod efficiency in optimization.},
}

@article{Nemi-Yur,
  title={Optimal methods of smooth convex minimization},
  author={Nemirovskii, Arkadi and Nesterov, Yuri},
  journal={{USSR} Computational Mathematics and Mathematical Physics},
  year={1986},
  volume={25},
  pages={21-30}
}

@article{Sh-Dvu-Gasn,
  title={Zeroth-order methods for noisy {H}{\"o}lder-gradient functions},
  author={Shibaev, Innokentiy and Dvurechensky, Pavel and Gasnikov, Alexander},
  journal={Optimization Letters},
  volume={16},
  number={7},
  pages={2123--2143},
  year={2022},
  publisher={Springer}
}

@article{NestUnivGradMethod,
	author = {Nesterov, Yuri},
	date = {2015/08/01},
	doi = {10.1007/s10107-014-0790-0},
	id = {Nesterov2015},
	isbn = {1436-4646},
	journal = {Mathematical Programming},
	number = {1},
	pages = {381--404},
	title = {Universal gradient methods for convex optimization problems},
	url = {https://doi.org/10.1007/s10107-014-0790-0},
	volume = {152},
	year = {2015},
	bdsk-url-1 = {https://doi.org/10.1007/s10107-014-0790-0}
}

@INPROCEEDINGS{MultScaZOOptSmFunc,
	title={Multi-scale zero-order optimization of smooth functions in an {RKHS}},
  author={Lee, Madison and Shekhar, Shubhanshu and Javidi, Tara},
  booktitle={2022 IEEE International Symposium on Information Theory (ISIT)},
  pages={288--293},
  year={2022},
  organization={IEEE}
}

@inproceedings{SmoothBanditOptGenHoldSpa,
  title={Smooth Bandit Optimization: Generalization to {H}\"older Space},
  author={Liu, Yusha and Wang, Yining and Singh, Aarti},
  booktitle={International Conference on Artificial Intelligence and Statistics},
  pages={2206--2214},
  year={2021},
  organization={PMLR}
}

@article{GenHolSmooConvRatFollSpecRatAssGroBoun,
  title={General {H}{\"o}lder smooth convergence rates follow from specialized rates assuming growth bounds},
  author={Grimmer, Benjamin},
  journal={Journal of Optimization Theory and Applications},
  volume={197},
  number={1},
  pages={51--70},
  year={2023},
  publisher={Springer}
}

@article{nesterov2013gradient,
  title={Gradient methods for minimizing composite functions},
  author={Nesterov, Yu},
  journal={Mathematical programming},
  volume={140},
  number={1},
  pages={125--161},
  year={2013},
  publisher={Springer}
}

@article{Nesterov12,
author = {Nesterov, Yuri},
title = {Efficiency of Coordinate Descent Methods on Huge-Scale Optimization Problems},
journal = {SIAM Journal on Optimization},
volume = {22},
number = {2},
pages = {341-362},
year = {2012},
doi = {10.1137/100802001},
URL = {https://doi.org/10.1137/100802001},
eprint = {https://doi.org/10.1137/100802001}
}

@article{Richtarik14,
  title={Iteration complexity of randomized block-coordinate descent methods for minimizing a composite function},
  author={Richt{\'a}rik, Peter and Tak{\'a}{\v{c}}, Martin},
  journal={Mathematical Programming},
  volume={144},
  number={1},
  pages={1--38},
  year={2014},
  publisher={Springer}
}

@article{Yashtini16,
  title={On the global convergence rate of the gradient descent method for functions with {H}{\"o}lder continuous gradients},
  author={Yashtini, Maryam},
  journal={Optimization Letters},
  volume={10},
  pages={1361--1370},
  year={2016},
  publisher={Springer}
}

@article{Devolder14,
  title={First-order methods of smooth convex optimization with inexact oracle},
  author={Devolder, Olivier and Glineur, Fran{\c{c}}ois and Nesterov, Yurii},
  journal={Mathematical Programming},
  volume={146},
  number={1},
  pages={37--75},
  year={2014},
  publisher={Springer}
}

@article{Richtarik14JOTA,
  title={Inexact coordinate descent: complexity and preconditioning},
  author={Tappenden, Rachael and Richt{\'a}rik, Peter and Gondzio, Jacek},
  journal={Journal of Optimization Theory and Applications},
  volume={170},
  number={1},
  pages={144--176},
  year={2016},
  publisher={Springer}
}

@article{Alexandre_2008,
author = {D'Aspremont, A.},
title = {Smooth Optimization with Approximate Gradient},
journal = {SIAM Journal on Optimization},
volume = {19},
number = {3},
pages = {1171-1183},
year = {2008},
}

@article{Beck13,
author = {Beck, A. and Tetruashvili, L.},
title = {On the Convergence of Block Coordinate Descent Type Methods},
journal = {SIAM Journal on Optimization},
volume = {23},
number = {4},
pages = {2037-2060},
year = {2013},
doi = {10.1137/120887679},
URL = { https://doi.org/10.1137/120887679},
eprint = {https://doi.org/10.1137/120887679}
}

@article{Broughton11,
  title={A box constrained gradient projection algorithm for compressed sensing},
  author={Broughton, R. and Coope, I. and Renaud, P. and Tappenden, R.},
  journal={Signal Processing},
  volume={91},
  number={8},
  pages={1985--1992},
  year={2011},
  publisher={Elsevier}
}

@techreport{Devolder13,
  title={Intermediate gradient methods for smooth convex problems with inexact oracle},
  author={Devolder, O. and Glineur, F. and Nesterov, Y.},
  number={CORE-2013017},
  year={2013},
  institution={Center for Operations Research}
}

@article{Donoho06,
  title={Compressed sensing},
  author={Donoho, D.},
  journal={IEEE Transactions on Information Theory},
  volume={\\52},
  number={4},
  pages={1289--1306},
  year={2006},
  publisher={IEEE}
}

@article{Dvurechensky16,
  title={Stochastic intermediate gradient method for convex problems with stochastic inexact oracle},
  author={Dvurechensky, P. and Gasnikov, A.},
  journal={Journal of Optimization Theory and Applications},
  volume={171},
  number={1},
  pages={121--145},
  year={2016},
  publisher={Springer}
}

@incollection{FrongilloReid15,
	title = {Convergence Analysis of Prediction Markets via Randomized Subspace Descent},
	author = {Frongillo, R. and Reid, M.},
	booktitle = {Advances in Neural Information Processing Systems},
	editor = {Cortes, C. and Lawrence, N. and Lee, D. and Sugiyama, M. and Garnett, R.},
	volume={28},
	pages = {},
	year = {2015},
	publisher = {Curran Associates, Inc.},
}

@article{Hua16,
author = {Hua, X. and Yamashita, N.},
title = {Block coordinate proximal gradient methods with variable {B}regman functions for nonsmooth separable optimization},
journal = {Mathematical Programming},
 volume={160},
  number={1},
  pages={1--32},
  year={2016},
  publisher={Springer}
}

@article{Leventhal10,
  title={Randomized methods for linear constraints: convergence rates and conditioning},
  author={Leventhal, D. and Lewis, A.},
  journal={Mathematics of Operations Research},
  volume={35},
  number={3},
  pages={641--654},
  year={2010},
  publisher={INFORMS}
}

@article{Lu17,
  title={Relatively smooth convex optimization by first-order methods, and applications},
  author={Lu, H. and Freund, R. and Nesterov, Y.},
  journal={SIAM Journal on Optimization},
  volume={28},
  number={1},
  pages={333--354},
  year={2018},
  publisher={SIAM}
}

@article{Qin13,
  title={Efficient block-coordinate descent algorithms for the group lasso},
  author={Qin, Z. and Scheinberg, K. and Goldfarb, D.},
  journal={Mathematical Programming Computation},
  volume={5},
  number={2},
  pages={143--169},
  year={2013},
  publisher={Springer}
}

@incollection{Richtarik12,
  title={Efficient serial and parallel coordinate descent methods for huge-scale truss topology design},
  author={Richt{\'a}rik, P. and Tak{\'a}{\v{c}}, M.},
  editor={Klatte, D. and L{\"u}thi, H. and Schmedders, K.},
  booktitle={Operations Research Proceedings 2011},
  pages={27--32},
  year={2012},
  publisher={Springer}
}

@article{Richtarik16,
  title={Parallel coordinate descent methods for big data optimization},
  author={Richt{\'a}rik, P. and Tak{\'a}{\v{c}}, M.},
  journal={Mathematical Programming},
  volume={156},
  number={1-2},
  pages={433--484},
  year={2016},
  publisher={Springer}
}

@article{Salzo12,
  title={Inexact and accelerated proximal point algorithms},
  author={Salzo, S. and Villa, S.},
  journal={Journal of Convex Analysis},
  volume={19},
  number={4},
  pages={1167--1192},
  year={2012}
}

@incollection{Schmidt11,
 author = {Schmidt, M. and Roux, N. and Bach, F.},
 booktitle = {Advances in Neural Information Processing Systems},
 editor = {Shawe-Taylor, J. and Zemel, R. and Bartlett, P. and Pereira, F. and Weinberger, K.},
 pages = {},
 publisher = {Curran Associates, Inc.},
 title = {Convergence Rates of Inexact Proximal-Gradient Methods for Convex Optimization},
 url = {https://proceedings.neurips.cc/paper/2011/file/8f7d807e1f53eff5f9efbe5cb81090fb-Paper.pdf},
 volume = {24},
 year = {2011}
}

@article{Shefi16,
  title={On the rate of convergence of the proximal alternating linearized minimization algorithm for convex problems},
  author={Shefi, R. and Teboulle, M.},
  journal={EURO Journal on Computational Optimization},
  volume={4},
  number={1},
  pages={27--46},
  year={2016},
  publisher={Springer}
}

@article{Simon12,
	title={Standardization and the group LASSO penalty},
	author={Simon, N. and Tibshirani, R.},
	journal={Statistica Sinica},
  	volume={22},
  	number={2},
 	 pages={983--1002},
 	 year={2012}
 }

@article{Villa13,
  title={Accelerated and inexact forward-backward algorithms},
  author={Villa, S. and Salzo, S. and Baldassarre, L. and Verri, A.},
  journal={SIAM Journal on Optimization},
  volume={23},
  number={3},
  pages={1607--1633},
  year={2013},
  publisher={SIAM}
}

@article{Wright09,
  title={Sparse reconstruction by separable approximation},
  author={Wright, S. and Nowak, R. and Figueiredo, M.},
  journal={IEEE Transactions on Signal Processing},
  volume={57},
  number={7},
  pages={2479--2493},
  year={2009},
  publisher={IEEE}
}


\end{document}